\makeindex \setcounter{tocdepth}{2}
\definecolor{orange}{rgb}{1,0.5,0}
\definecolor{Indigo}{rgb}{0.2,0.1,0.7}
\definecolor{Violet}{rgb}{0.5,0.1,0.7}
\newtheorem{thm}{Theorem}[subsection]
\newtheorem{prop}[thm]{Proposition}
\newtheorem{lem}[thm]{Lemma}
\newtheorem{cor}[thm]{Corollary}
\theoremstyle{definition}
\newtheorem{dfn}[thm]{Definition}
\theoremstyle{remark}
\newtheorem{rmk}[thm]{Remark}
\numberwithin{equation}{subsection}
\numberwithin{figure}{subsection} \numberwithin{table}{subsection}
\newcommand{\Emb}{{\operatorname{Emb}}}
\newcommand{\End}{{\operatorname{End}}}
\newcommand{\Fr}{{\operatorname{Fr }}}
\newcommand{\Hom}{{\operatorname{Hom}}}
\newcommand{\Ima}{{\operatorname{Im}}}
\newcommand{\Ker}{{\operatorname{Ker}}}
\newcommand{\Spec}{{\operatorname{Spec }}}
\newcommand{\val}{{\operatorname{val}}}
\newcommand{\Ver}{{\operatorname{Ver }}}
\newcommand{\SL}{{\operatorname{SL }}}
\newcommand{\Gal}{{\operatorname{Gal}}}
\newcommand{\Lie}{{\operatorname{Lie}}}
\newcommand{\gera}{{\frak{a}}}
\newcommand{\gerb}{{\frak{b}}}
\newcommand{\gerd}{{\frak{d}}}
\newcommand{\germ}{{\frak{m}}}
\newcommand{\gerp}{{\frak{p}}}
\newcommand{\gert}{{\frak{t}}}
\newcommand{\gerz}{{\frak{z}}}
\newcommand{\gerX}{{\frak{X}}}
\newcommand{\gerY}{{\frak{Y}}}
\newcommand{\gerZ}{{\frak{Z}}}
\newcommand{\uA}{{\underline{A}}}
\newcommand{\uB}{{\underline{B}}}
\newcommand{\calA}{{\mathcal{A}}}
\newcommand{\calB}{{\mathcal{B}}}
\newcommand{\calF}{{\mathcal{F}}}
\newcommand{\calI}{{\mathcal{I}}}
\newcommand{\calL}{{\mathcal{L}}}
\newcommand{\calO}{{\mathcal{O}}}
\newcommand{\calP}{{\mathcal{P}}}
\newcommand{\calR}{{\mathcal{R}}}
\newcommand{\calU}{{\mathcal{U}}}
\newcommand{\calV}{{\mathcal{V}}}
\newcommand{\calW}{{\mathcal{W}}}
\def\AA{\mathbb{A}}
\def\BB{\mathbb{B}}
\def\CC{\mathbb{C}}
\def\DD{\mathbb{D}}
\def\FF{\mathbb{F}}
\def\HH{\mathbb{H}}
\def\II{\mathbb{I}}
\def\PP{\mathbb{P}}
\def\QQ{\mathbb{Q}}
\def\RR{\mathbb{R}}
\def\UU{\mathbb{U}}
\def\VV{\mathbb{V}}
\def\WW{\mathbb{W}}
\def\XX{\mathbb{X}}
\def\YY{\mathbb{Y}}
\def\ZZ{\mathbb{Z}}
\newcommand{\scrG}{{\mathscr{G}}}
\newcommand{\scrP}{{\mathscr{P}}}
\newcommand{\id}{{\noindent}}
\newcommand{\arr}{{\; \rightarrow \;}}
\newcommand{\Arr}{{\; \longrightarrow \;}}
\newcommand{\injects}{{\; \hookrightarrow \;}}
\newcommand{\ol}{{\mathcal{O}_L}}
\newcommand{\ok}{{\mathcal{O}_K}}
\newcommand{\Cl}{{\text{\rm Cl}}}
\newcommand{\Spf}{\operatorname{Spf }}
\newcommand{\Xrig}{{\gerX_\text{\rm rig}}}
\newcommand{\Xord}{{\gerX^\circ_\text{\rm rig}}}
\newcommand{\Yrig}{{\gerY_\text{\rm rig}}}
\newcommand{\Yord}{{\gerY^\circ_\text{\rm rig}}}
\newcommand{\rig}{{\operatorname{rig}}}
\newcommand{\spe}{{\operatorname{sp }}}
\newcommand{\Xbar}{\overline{X}}
\newcommand{\Ybar}{\overline{Y}}
\newcommand{\pe}{(\varphi,\eta)}
\newcommand{\peprime}{(\varphi',\eta')}
\newcommand{\Wpe}{W_{\varphi,\eta}}
\newcommand{\Zpe}{Z_{\varphi,\eta}}
\newcommand{\Up}{U_\phi}
\newcommand{\Upplus}{U_\phi^+}
\newcommand{\Ve}{V_\eta}
\newcommand{\Veplus}{V_\eta^+}
\newcommand{\Qbar}{\overline{Q}}
\newcommand{\Pbar}{\overline{P}}
\newcommand{\Rbar}{\overline{R}}
\newcommand{\Qpbar}{\overline{\QQ}_p}
\newcommand{\lcm}{\text{\rm lcm}}
\newcommand{\eps}{\epsilon}
\newcommand{\OhatY}{\widehat{\calO}_{\Ybar, \Qbar}}
\newcommand{\OhatP}{\widehat{\calO}_{X, \Pbar}}
\newcommand{\OhatQ}{\widehat{\calO}_{Y, \Qbar}}
\newcommand{\OhatPbar}{\widehat{\calO}_{\Xbar, \Pbar}}
\newcommand{\OhatQbar}{\widehat{\calO}_{\Ybar, \Qbar}}
\newcommand{\Yoord}{{\gerY^{\circ\circ}_\text{\rm rig}}}
\newcommand{\om}{{\mathcal{O}_M}}
\newcommand{\Spm}{{\rm Spm}}
\newcommand{\cX}{\XX}
\newcommand{\cXbar}{\overline{\XX}}
\newcommand{\cY}{\YY}
\newcommand{\cYbar}{\overline{\YY}}
\newcommand{\cWpe}{\WW_{\varphi,\eta}}
\newcommand{\cZpe}{\ZZ_{\varphi,\eta}}
\begin{document}
\marginparwidth 50pt

\openup 1pt

\title[Canonical Subgroup]{Canonical Subgroups over Hilbert Modular Varieties}
\author{Eyal Z. Goren \& Payman L Kassaei}
\address{Department of Mathematics and Statistics,
McGill University, 805 Sherbrooke St. W., Montreal H3A 2K6, QC,
Canada.}\address{Department of Mathematics, King's College London,
Strand, London WC2R 2LS, United Kingdom.}
\email{eyal.goren@mcgill.ca; kassaei@alum.mit.edu}
\subjclass{Primary [11F85, 11F41]; Secondary [11G18, 14G35, 14G22].}

\begin{abstract}
We obtain new results  on the geometry of Hilbert modular varieties
in positive characteristic and morphisms between them. Using these
results and methods of rigid geometry, we develop a theory of
canonical subgroups for abelian varieties with real multiplication.
\end{abstract}

\maketitle

\section{Introduction}

The theory of canonical subgroups was developed by Katz
\cite{Katz}, building on work of Lubin  on canonical subgroups of
formal groups of dimension one.  Katz's motivation was to
show that the $U_p$ operator on the space of $p$-adic elliptic modular forms preserves the subspace of overconvergent
modular forms.

The kernel of multiplication by $p$ in the formal group of an
elliptic curve of ordinary reduction over a $p$-adic base has a
distinguished subgroup of order $p$, which reduces to the Kernel of
Frobenius modulo $p$ and is called the canonical subgroup. The $U_p$ operator can be defined by a moduli-theoretic formula
involving the canonical subgroup. Extending this Hecke operator to overconvergent modular forms directly involves extending the notion of
canonical subgroups from elliptic curves of ordinary reduction to those of  a ``slight" supersingular reduction (quantified by an appropriate measure of supersingularity involving the Hasse invariant).  However, Katz-Lubin proved much more: they provided
optimal bounds on the measure of supersingularity for the existence
of canonical subgroups, and proved that the canonical subgroup again reduces to the kernel
of Frobenius, albeit only modulo a divisor of $p$ determined by a measure of supersingularity of $E$.

The power of canonical subgroups and their properties became apparent, for example, in the
work of Buzzard-Taylor on the Artin conjecture \cite{BuzzardTaylor,Buzzard}, where modularity of certain Galois represenations was proved by analytic continuation
of overconvergent modular forms,
and in the work of Kassaei on classicality of overconvergent modular
forms in \cite{K1, K2}, where analytic continuation of overconvergent modular forms was used to provide a method for proving Coleman's classicality theorem \cite{Coleman}
for more general Shimura varieties; in both examples, canonical subgroups and their
properties were used in the process of analytic continuation.

The story of $p$-adic modular forms began when Serre introduced them
in \cite{Serre} as $p$-adic limits of $q$-expansions of classical
modular forms. The theory was motivated by two issues. The $p$-adic
interpolation of constant terms of certain Eisenstein series, in
particular special values of abelian $L$-functions, and the
construction of $p$-adic analytic families of modular forms with
connections to Galois representations and Iwasawa theory in mind.
Serre's point of view can be generalized to the Hilbert modular
case. This has been done by Andreatta-Goren
\cite{AndreattaGorenHMF}, but already previously some aspects of the
theory were generalized by Deligne-Ribet and the applications to
constant terms of Eisenstein series were harvested
\cite{DeligneRibet}. Dwork studied $p$-adic modular functions  with
``growth condition" and showed that the $U_p$ operator is completely
continuous on the space of these functions. Almost at the same time
of Serre's work, Katz \cite{Katz} interpreted Serre's $p$-adic
modular forms as sections of suitable line bundles over the ordinary
locus of the corresponding modular curve. Katz incorporated Dwork's
notion of growth condition into this geometric construction by
considering sections of the same line bundles over larger regions of
the $p$-adic analytic space of the modular curve, thus giving birth
to the notion of overconvergent modular forms.

As we mentioned above, the $p$-torsion in the formal group of an
elliptic curve of ordinary reduction over a $p$-adic base provides a
lifting of the Kernel of Frobenius modulo $p$. This is not hard to
see using some fundamental facts about \'etale formal groups over a
$p$-adic base.  When the elliptic curve has supersingular reduction,
the $p$-torsion of the formal group grows to a subgroup of rank
$p^2$, no longer providing a canonical subgroup. Using Newton
polygons, Katz-Lubin showed that for an elliptic curve of 
``not-too-supersingular" reduction there are $p-1$ non-zero elements
in the $p$-torsion of the formal group which are of (equal and)
closer distance to the origin than the remaining non-zero elements; along with zero, they from a distinguished subgroup of rank $p$ called the
canonical subgroup of the elliptic curve.  Katz showed that this
construction works in families and consequently proved that the
Hecke operator $U_p$ preserves the space of overconvergent modular
forms. This completely continuous operator has been essential to the
development of the theory of overconvergent modular forms
(especially through the pioneering works of Hida and Coleman).

After Katz and Lubin the first major advancement in the study of the
canonical subgroup was made by Abbes and Mokrane in
\cite{AbbesMokrane}. Since then, many authors have studied the
canonical subgroup in various settings and with different
approaches. We mention the works \cite{Conrad}, \cite{Fargues}, \cite{KisinLai}, \cite{Tian},  as well as yet unpublished
results by K. Buzzard, E. Nevens and J. Rabinoff. Broadly speaking,
the traditional approach to the canonical subgroup problem proceeds
through a careful examination of subgroup schemes of either abelian
varieties, or $p$-divisible groups, and, again broadly speaking,
much of the complications arise from the fact that formal groups in
several variables are hard to describe and one lacks  a theory of
Newton polygons for power-series in several variables. These
deficiencies, as well as Raynaud's interpretation of rigid spaces in
the language of formal schemes, prompted us to investigate another
approach to canonical subgroups, where geometry plays a more
prominent role. In \cite{GK} we tested our ideas in the case of
curves and showed that one can develop the theory of canonical
subgroup for ``all" Shimura curves at once, getting results that
are, in a sense, more precise than previously known. As special cases, we recovered results of Katz-Lubin (modular curves) and \cite{K3} (unitary Shimura curves). In our setting,
one considers a morphism $\pi\colon Y \arr X$, where $X, Y$, are any
``nice" curves over a dvr $R$, finite over $\ZZ_p$, such that the
reductions of $\pi, X$, and $Y$, modulo the maximal ideal satisfy
certain simple geometric properties known to be present in the case
of Shimura curves. Canonical subgroups and their finer properties are then studied by constructing a section to the rigid analytic fibre of $\pi$ and studying its properties.  This theory is used in \cite{K2} to prove
classicality results for overconvergent modular forms simultaneously
for ``all" Shimura curves.

\medskip

\id The general canonical subgroup problem can be formulated as
follows. We restrict to Shimura varieties of PEL type, although one
should be able to extend it to Shimura varieties of Hodge type, for
example. The typical context is that one is given a moduli problem
of PEL type and a fine moduli space $X$ representing it over
$\Spec(\calO_K[S^{-1}])$ -- a localization of the ring of integers
of some number field $K$. If $p$ is a rational prime, such that the
reductive group corresponding to the moduli problem is unramified at
$p$ and the level structure is prime to $p$, $X$ usually has a
smooth model over $R$, the completion of $\calO_K[S^{-1}]$ at a
prime $\gerp$ above $p$. One then adds a suitable level structure at
$p$ to the moduli problem represented by $X$ to obtain a fine moduli
scheme $Y/\Spec(R)$ and a natural forgetful morphism $\pi:Y \arr X$.
Under minimal conditions, the reduction $\overline{X}$ of $X$ modulo
$\germ_R$ has a generalized ordinary locus $\overline{X}^{\rm ord}$,
which is open and dense in $\overline{X}$, and a section $s:
\overline{X}^{\rm ord} \arr \overline{Y}^{\rm ord}$ to $\pi$. One
certainly expects to be able to lift $s: \overline{X}^{\rm ord} \arr
\overline{Y}^{\rm ord}$ to a section $s^\circ: \gerX^\circ \arr
\gerY^\circ$, where $\gerX^\circ$ is the (admissible open) set in
the rigid analytic space associated to $X$ consisting of the points
that specialize to $\overline{X}$; similarly for $\gerY^\circ$. The
canonical subgroup problem is to find an explicitly described
admissible open set $\calU\supseteq \gerX^\circ$ to which one can
extend $s^\circ$ (if one chooses $\calU$ appropriately the extension
is unique) and characterize its image.

The completed local rings $\widehat{\calO}_{X, \Pbar},
\widehat{\calO}_{Y, \Qbar}$ at closed points $\Pbar, \Qbar$ of the
special fibers $\Xbar, \Ybar$ afford an interpretation in terms of
pro-representing a moduli problem on a category of certain local
artinian rings.  Using the theory of local models, one expects to be
able to to write down models for these rings that are  the completed
local rings of suitable points on certain generalized Grassmann
varieties.

 Assume
that $\pi(\Qbar) = \Pbar$. According to our approach, it is the map
$\pi^\ast\colon \widehat{\calO}_{X, \Pbar}\arr \widehat{\calO}_{Y,
\Qbar}$ that ``holds all the secrets" concerning the canonical
subgroup. In this paper we show that this is the case for Hilbert
modular varieties. The information we can find on $\pi^\ast$ uses
heavily the moduli description, but once obtained, the specific
nature of $X, Y,$ as moduli schemes plays no role anymore.

We find it remarkable that not only does this suffice for the
construction of the canonical subgroup, in fact the results we
obtain improve significantly on what is available in the literature
as a consequence of work by others. For example, we are able to
prove the existence of canonical subgroups on domains described by
valuations of as many parameters as the dimension of $X$, whereas in
the literature these constructions are almost always carried out on
coarser domains defined by the valuation of one variable (the Hodge
degree). Also our results improve significantly on the bounds for
these variables; in fact these bounds can be shown to be optimal in
a sense explained in Corollary \ref{Cor: optimality}.

We next describe our results in more detail.

\

\id Let $L$ be a totally real field of degree $g$ over $\QQ$ and $p$
a rational prime, unramified in $L$. We consider the moduli space
$X$ parameterizing polarized abelian varieties $\uA$ with RM by $L$
and a rigid level structure prime to $p$, and the moduli space $Y$
parameterizing the same data and, in addition, a maximal ``cyclic"
$\ol$-subgroup of $\uA[p]$. There is a forgetful morphism $\pi\colon
Y \arr X$. Both $X$ and $Y$ are considered over the Witt vector
$W(\kappa)$ of a suitable finite field $\kappa$. We let $\XX$ and
$\YY$ denote the minimal compactifications. See below and section \S
2.1.

The special fibre $\Xbar$ of $X$ was studied in \cite{GorenOort},
where a stratification $\{Z_\tau\}$ indexed by subsets $\tau$ of
$\BB = \Hom(L, \overline{\QQ}_p)$ was constructed. This
stratification is intimately connected to theory of Hilbert modular
forms. In particular, for every $\beta \in \BB$ there is a Hilbert
modular form $h_\beta$ -- a partial Hasse invariant -- whose divisor
is $Z_\beta$; see \S \ref{subsection: stratification of Y bar}. The
partial Hasse invariants are a purely characteristic $p$ phenomenon,
but they can be lifted locally in the Zariski topology to $X$.

Let $\Xrig, \Yrig$ denote the rigid analytic spaces associated \`a
la Raynaud to $\XX, \YY$, and $\gerX_{\rm rig}^\circ, \gerY_{\rm
rig}^\circ$ their ordinary locus. There is a kernel-of-Frobenius
section $\Xbar \arr \Ybar$ given on points by $\uA \mapsto
\left(\uA, \Ker(\Fr\colon \uA \arr \uA^{(p)})\right),$ which extends
to compactifications. We show, using a Hensel's lemma type of
argument, that this section lifts to a canonical morphism
$s^\circ\colon \gerX_{\rm rig}^\circ\arr\gerY_{\rm rig}^\circ$,
which is a section to $\pi$.

For a point $P$ in $\Xrig$, and $\beta\in\BB$, let
$\tilde{h}_\beta(P)$ denote the evaluation of a Zariski local lift
of the partial Hssse invariant $h_\beta$ at $P$. Let $\sigma$ be the
Frobenius automorphism on $\QQ_p^{\rm ur}$. One of our main theorems
(Theorem \ref{theorem: canonical subgroup}) states the following.

\medskip

\id {\bf Theorem {\bf A}.} \emph{  Let $\calU \subseteq  \Xrig$ be
defined as
\[
\calU = \{P : \nu(\tilde{h}_\beta(P)) + p\nu(\tilde{h}_{\sigma\circ
\beta}(P)) < p, \;\forall \beta \in \BB\}.
\]
There exists a section, \[s^\dagger\colon \calU \arr \Yrig,\]
extending the section $s^\circ$ on the ordinary locus.}

\medskip
\id In the theorem, $\nu$ is the $p$-adic valuation, normalized by
$\nu(p) = 1$ and truncated at $1$. In fact, comparing to Theorem
\ref{theorem: canonical subgroup}, the reader will notice that our
formulation is not the same. In \S \ref{subsection: valuations on X
and Y} we define vector valued valuations on $\Xrig, \Yrig$. The
valuation vector $\nu(P)$ of a closed point $P$ on $\Xrig$ takes
into account the strata $Z_\tau$ on which $\Pbar$, the reduction of
$P$, lies. Also the valuation vector $\nu(Q)$ of a closed point $Q$
on $\Xrig$ takes into account on which strata $\Qbar$ lies. A
substantial part of the paper (\S\S2-3)  is in fact devoted to
defining this stratification of $\Ybar$ and studying it properties.
Let $\pe$ be a pair of subsets of $\BB$ which is admissible (\S
\ref{subsec: discrete invariants}; there are $3^g$ such strata). We
define a stratification $\{Z_{\varphi, \eta}\}$ indexed by such
pairs $\pe$;  the fundamental results concerning this stratification
appear in Theorem~\ref{theorem: fund'l facts about the
stratificaiton of Ybar}. Some key facts are: \begin{enumerate} \item
$\pi(Z_{\varphi, \eta}) = Z_{\varphi\cap \eta}$
(Theorem~\ref{theorem: pi of Zpe});\item every irreducible component
of $Z_{\varphi, \eta}$ intersects non-trivially the finite set of
points corresponding to data $\left(\uA, \Ker(\Fr\colon \uA \arr
\uA^{(p)})\right)$ consisting of superspecial abelian varieties with
the kernel of Frobenius group scheme (Theorem \ref{theorem: C cap YF
cap YV});
\item Every irreducible component of $\Spf(\widehat{\calO}_{\Ybar,
\Qbar})$ is accounted for by a unique maximal stratum $Z_{\varphi,
\eta}$ (Theorem~\ref{theorem: fund'l facts about the stratificaiton
of Ybar}).
\end{enumerate}

Let $\Qbar \in \Ybar (k)$ and $\Pbar = \pi(\Qbar)$ its image in
$\Xbar(k)$. One has natural parameters such that
$\widehat{\calO}_{X, \Pbar} \cong W(k)[\![t_\beta: \beta \in \BB
]\!]$ (Equation (\ref{equation: local def ring of X at P bar})), and
parameters such that $
 \widehat{\calO}_{Y, \Qbar} \cong W(k) [\![ \{x_\beta,
 y_\beta: \beta \in I\}, \{z_\beta: \beta \in I^c\}]\!]/(\{x_\beta y_\beta - p: \beta\in
 I\})$ (Equation (\ref{equation: local deformation ring at Q bar --
arithmetic version})), where if $\Qbar$ has invariants $\pe$, then
$I = \ell(\varphi) \cap \eta$ and $\ell(\varphi) =
\{\sigma^{-1}\circ \beta : \beta \in \varphi\}$ . One of our main
results, referred to as ``Key Lemma" (Lemma \ref{lemma: key lemma })
describes $\pi^\ast (t_\beta)$, under the induced ring homomorphism
$\pi^\ast\colon \widehat{\calO}_{\Xbar, \Pbar} \arr
\widehat{\calO}_{\Ybar, \Qbar}$, for $\beta\in \varphi\cap\eta$. (It
remains an interesting problem to extend this lemma to $\beta
\not\in \varphi\cap\eta$.) The Key lemma is crucially used in the
proof of Theorem {\bf A}. It allows us to compare the valuation
vectors of a point in $\Yrig$ and its image under $\pi$ in $\Xrig$
as in \S \ref{subsec: main theorem}. Using such valuation vector
calculations,  we are able to  isolate a union of connected
components of $\pi^{-1}(\calU)$ (which we call $\calV$), and prove
that $\pi$ is an isomorphism when restricted to $\calV$. The section
$s^\dagger$ is defined as the inverse of $\pi\vert_{\calV}$.

In \S \ref{subsection: Properties of the canonical subgroup} we
describe the properties of the canonical subgroup. In particular, we
study the property of the reduction to the kernel of Frobenius
(Theorem~\ref{theorem: reduction of canonical subgroup}) and what
happens under the iteration of our construction
(Theorem~\ref{theorem: hecke}). The following theorem summarizes
some aspects of these results.

\medskip

\id {\bf Theorem  B.} \emph{Let $\uA$ be an abelian variety with RM
corresponding to $P \in \calU$. Let $r$ be an element whose
valuation is $\max\{\nu(\tilde{h}_\beta(P))\}_{\beta\in\BB}$.
\begin{enumerate}
\item The canonical subgroup $H$ of $\uA$ reduces to $\Ker(\Fr)$ modulo $p/r$.
\item Let $C$ be a subgroup of $\uA$ such that $(\uA,C)\in\Yrig$. Let $P^\prime\in\Xrig$ correspond to $\uA/C$.
There is a recipe for calculating
$\{\nu(\tilde{h}_\beta(P^\prime))\}_\beta$ in terms of
$\{\nu(\tilde{h}_\beta(P))\}_\beta$, in particular, for determining
if it affords a canonical subgroup.
\end{enumerate}}

\medskip

\id Theorem {\bf B} determines the $p$-adic geometry of the Hecke
operator on the ``not-too-singular'' locus of  $\Yrig$.  Theorem
{\bf B} also applies directly to deriving a theorem about
higher-level canonical subgroups (Proposition~\ref{proposition:
iterations}).  The results are similar to the case of elliptic
curves, only that the situation is richer as the position of a point
on $\Yrig$ is described by $g$ parameters (the components of its
valuation vector) in contrast to the case of elliptic curves where
there is only one parameter.

Finally, the Appendix describes a certain generalization of the
moduli scheme $Y$, obtained by considering for an $\ol$-ideal
$\gert\vert p$, a cyclic $\ol/\gert$ subgroup instead of a cyclic
$\ol/(p)$ subgroup, and briefly describes the extension of our
results to this situation. The results are relevant to the
construction of partial $U$ operators, indexed again by ideals
$\gert\vert p$.

\medskip

\id{\bf Acknowledgements.} We would like to thank F. Andreatta for
discussions concerning the contents of this paper.

{\small\tableofcontents}

 \noindent {\bf\textsc{Notation}}. Let $p$ be a prime number,
$L/\mathbb{Q}$ a totally real field of degree $g$ in which $p$ is
unramified, $\mathcal{O}_L$ its ring of integers, $\gerd_L$ the different ideal, and $N$ an integer prime to $p$. Let $L^+$ denote the elements of $L$ that are positive under every embedding $L\injects \RR$. For a  prime ideal $\gerp$  of
 $\ol$ dividing $p$, let $\kappa_\gerp = \ol/\gerp$, $f_\gerp = \deg(\kappa_\gerp/\FF_p)$, $f =
 \lcm\{f_\gerp: \gerp\vert p\}$, and $\kappa$ a finite field with $p^f$
 elements. We identify $\kappa_\gerp$ with a subfield of $\kappa$ once
 and for all. Let $\mathbb{Q}_\kappa$ be the fraction field of $W(\kappa)$.
 We fix embeddings $\mathbb{Q}_\kappa \subset \QQ_p^{\rm ur}
 \subset \Qpbar$.

 Let $[\Cl^+(L)]$ be a complete set of representatives
 for the strict (narrow) class group $\Cl^+(L)$ of~$L$, chosen so
 that its elements are ideals $\gera\vartriangleleft\ol$, equipped with their natural positive cone $\gera^+ = \gera \cap L^+$. Let
\[\mathbb{B}={\rm Emb}(L,\mathbb{Q}_\kappa)=\textstyle\coprod_{\mathfrak{p}}
\mathbb{B}_\mathfrak{p},\] where $\mathfrak{p}$ runs over prime
ideals of $\mathcal{O}_L$ dividing $p$, and
$\mathbb{B}_\mathfrak{p}= \{\beta\in\mathbb{B}\colon
\beta^{-1}(pW(\kappa)) = \mathfrak{p}\}$. Let $\sigma$ denote the
Frobenius automorphism of $\mathbb{Q}_\kappa$, lifting $x \mapsto
x^p$ modulo $p$. It acts on $\mathbb{B}$ via $\beta \mapsto \sigma
\circ \beta$, and transitively on each $\mathbb{B}_\mathfrak{p}$.
For $S \subseteq \mathbb{B}$ we let
\[\ell(S)=\{\sigma^{-1}\circ\beta\colon \beta \in S\},\qquad
r(S)=\{\sigma\circ\beta\colon \beta \in S\},\] and
\[S^c=\mathbb{B} - S.\] The decomposition \[\mathcal{O}_L
\otimes_\mathbb{Z} W(\kappa)=\bigoplus_{\beta \in \mathbb{B}}
W(\kappa)_\beta,\] where $W(\kappa)_\beta$ is $W(\kappa)$ with the
$\mathcal{O}_L$-action given by $\beta$, induces a decomposition,
\[M=\bigoplus_{\beta\in \mathbb{B}} M_\beta,\] on any $\mathcal{O}_L
\otimes_\mathbb{Z} W(\kappa)$-module $M$.

\

\id Let $A$ be an abelian scheme over a scheme $S$, equipped with
real multiplication $\iota\colon \ol \rightarrow \End_S(A)$. Then
the dual abelian scheme $A^\vee$ has a canonical real
multiplication, and we let $\calP_A = \Hom_\ol(A, A^\vee)^{\rm
sym}$. It is a projective $\ol$-module of rank 1 with
a notion of positivity; the positive elements correspond to
$\ol$-equivariant polarizations.

For a $W(\kappa)$-scheme $S$ we shall denote by $\underline{A}/S$,
or simply $\underline{A}$ if the context is clear, a quadruple:
\[\underline{A}/S=(A/S,\iota,\lambda,\alpha),\]
comprising the following data: $A$ is an abelian scheme of relative
dimension $g$ over a $W(\kappa)$-scheme $S$,
$\iota\colon\mathcal{O}_L \hookrightarrow {\rm End}_S(A)$ is a ring
homomorphism. The map $\lambda$ is a polarization as in \cite{DP},
namely, an isomorphism $\lambda\colon (\calP_A, \calP_A^+)
\rightarrow (\gera, \gera^+)$ for a representative $(\gera,
\gera^+)\in [\Cl^+(L)]$ such that $A \otimes_\ol \gera \cong
A^\vee$. The existence of $\lambda$ is equivalent, since $p$ is
unramified, to $\Lie(A)$ being a locally free $\ol\otimes
\calO_S$-module \cite{GorenBook}. Finally, $\alpha$ is a rigid
$\Gamma_{00}(N)$-level structure,  that is, $\alpha\colon \mu_N
\otimes_{\ZZ} \gerd_L^{-1} \arr A$ is an $\ol$-equivariant
closed immersion of group schemes.

 Let $X/W(\kappa)$ be the Hilbert modular scheme classifying
such data $\underline{A}/S=(A/S,\iota,\lambda,\alpha)$. Let
$Y/W(\kappa)$ be the Hilbert modular scheme classifying
$(\underline{A}/S, H)$, where $\underline{A}$ is as above and  $H$
is a finite flat isotropic $\mathcal{O}_L$-subgroup scheme of $A[p]$
of rank $p^g$, where isotropic means relative to the $\mu$-Weil
pairing for some $\mu\in \calP_A^+$ of degree prime to $p$. Let
\[\pi\colon Y \rightarrow X\] be the natural morphism, whose effect on points
is $(\underline{A},H) \mapsto \underline{A}$.

Let $\overline{X}, \mathfrak{X}, \mathfrak{X}_{\rm rig}$  be,
respectively,  the special fibre of $X$, the completion of $X$ along
$\overline{X}$, and the rigid analytic space associated to
$\mathfrak{X}$ in the sense of Raynaud. We use similar notation
$\overline{Y}, \mathfrak{Y}, \mathfrak{Y}_{\rm rig}$ for $Y$ and let
$\pi$ denote any of the induced morphisms. These spaces have models
over $\mathbb{Z}_p$ or $\mathbb{Q}_p$, denoted $X_{\mathbb{Z}_p},
\mathfrak{X}_{\rm rig, \mathbb{Q}_p}$, etc.  For a point $P \in
\mathfrak{X}_{\rm rig}$ we denote by $\overline{P}={\rm sp}(P)$ its
specialization in $\overline{X}$, and similarly for $Y$. Let
$w\colon Y \rightarrow Y$ be the automorphism $(\underline{A},H)
\mapsto (\underline{A}/H,A[p]/H)$. Let $s\colon\overline{X}
\rightarrow \overline{Y}$ be the kernel-of-Frobenius section to
$\pi$, $\underline{A} \mapsto (\underline{A},{\rm Ker}({\rm
Fr}_A\colon A \rightarrow A^{(p)}))$, which exists by
Lemma~\ref{lemma: cyclic is isotropic}. We denote $s(\overline{X})$
by $\overline{Y}_F$, and $w(\overline{Y}_F)$ by $\overline{Y}_V$.
These are components of $\overline{Y}$, and the geometric points of
$\overline{Y}_F$ (respectively, $\overline{Y}_V$) are the geometric
points $(\underline{A},H)$ where $H$ is ${\rm Ker}({\rm Fr}_A)$
(respectively, ${\rm Ker}({\rm Ver}_A)$). We denote the ordinary
locus in $\Xbar$ (respectively, $\Ybar$) by $\Xbar^{\rm ord}$
(respectively, $\Ybar^{\rm ord}$). We define $\Ybar^{\rm ord}_F$ to
be $s(\Xbar^{\rm ord})$, and $\Ybar^{\rm ord}_V$ to be $w(\Ybar^{\rm
ord}_F)$;   they are both a union of connected components of
$\Ybar^{\rm ord}$.

\

\

\section{Moduli spaces in positive characteristic}
\subsection{Two formulations of a moduli problem}
Recall that $X/W(\kappa)$ is the moduli space parameterizing data
$\uA/S = (A, \iota, \lambda, \alpha)$ where $S$ is a
$W(\kappa)$-scheme, $A$ an abelian scheme over $S$ of relative
dimension $g$, $\iota\colon \ol \arr \End_S(A)$ a ring homomorphism
such that $\Lie(A)$ is a locally free $\ol\otimes \calO_S$ module of
rank $1$ (the ``Rapoport condition"). The map $\alpha\colon
\mu_N\otimes_{\ZZ}\gerd_L^{-1}\arr A$ is a rigid
$\Gamma_{00}(N)$-level structure and $\lambda$ is a polarization
data: an isomorphism of $\ol$-modules with a notion of positivity,
$\lambda\colon \calP_\uA \rightarrow \gera$, for one of the
representatives $\gera\in [\Cl^+(L)]$ fixed above. It
follows, since $p$ is unramified in $L$, that the natural morphism
$A\otimes_\ol \calP_A \arr A^\vee$ is an isomorphism (this fact is
sometimes called the ``Deligne-Pappas" condition; they introduced it
in \cite{DP} in the case $p$ is possibly ramified in $L$, in
replacement of the Rapoport condition). The morphism
$X\arr\Spec(W(\kappa))$  is smooth, quasi-projective, of relative
dimension $g$. We let $\Xbar = X \otimes \kappa$ denote the special
fibre of $X$. It is a quasi-projective non-singular variety of
dimension $g$ over $\kappa$, whose irreducible components are in
bijection with $\Cl^+(L)$.

Recall also the moduli space $Y$ that parameterizes data $(\uA,
H)/S$, where $\uA/S$ is as above and $H$ is a finite flat
$\ol$-subgroup scheme of $A[p]$ of rank $p^g$, isotropic relative to
the $\gamma$-Weil pairing induced by  a  $\gamma\in\calP_\uA$ of degree prime to $p$,
\[\xymatrix{A[p] \times A[p] \ar[r]^{1 \times \gamma}_{\cong} & A[p]\times A^\vee[p] \ar[r] & \mu_p.}  \]
It follows that $H$ is isotropic relative to the $r\gamma$-Weil pairing, where $r\in\ol$ is prime to $p$.
Therefore, $H$ is isotropic relative to  the $\delta$-Weil pairing for all $\delta\in\calP_\uA$ of degree prime to $p$.
Since $\calP_\uA$ is generated as a $\ZZ$-module by  such $\delta$, we conclude that $H$ is isotropic relative to the Weil pairing induced by  some $\gamma\in\calP_\uA$ of degree prime to $p$ implies that it is isotropic relative to any Weil pairing induced by an element of $\calP_\uA$.
Henceforth, we will simply call such $H$ isotropic.

\begin{lem} \label{lemma: cyclic is isotropic} Let $\uA/S$ be an object of the kind
parameterized by $X$, where $S$ is a reduced $W(\kappa)$-scheme. Let
$H\subseteq A[p]$ be a finite flat $\ol$-group scheme of $\uA$,
which is a \emph{cyclic $\ol$-module}, where by that we mean that
for every geometric point $x$ of characteristic zero of $S$ the
group scheme $H_x$ is a cyclic $\ol$-module, and for every geometric
point $x$ of characteristic $p$ the Dieudonn\'e module $\DD(H_x)$ is
a cyclic $\ol\otimes_\ZZ k(x)$-module. Then $H$ is isotropic
relative to any $\ol$-polarization.

In particular, if $S$ is a characteristic $p$ scheme, and $H =
\Ker(\Fr_A)$, then $H$ is automatically isotropic.
\end{lem}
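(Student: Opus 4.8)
The plan is to reduce the statement to a computation with Weil pairings and the behaviour of the Cartier/Weil duality under Frobenius, treating the characteristic-zero and characteristic-$p$ fibres separately and then invoking reducedness of $S$ to conclude globally. Recall that for a polarization $\delta \in \calP_\uA$ of degree prime to $p$, isotropy of a rank-$p^g$ finite flat $\ol$-subgroup $H \subseteq A[p]$ with respect to the $\delta$-Weil pairing means that the composite $H \times H \to A[p] \times A[p] \xrightarrow{1\times\delta} A[p] \times A^\vee[p] \to \mu_p$ vanishes, equivalently that $H$ is contained in its own annihilator $H^\perp$ under this perfect pairing on $A[p]$. Since $H$ and $H^\perp$ both have rank $p^g$ (the annihilator of a rank-$p^g$ subgroup in a rank-$p^{2g}$ symplectic space has rank $p^g$), isotropy is equivalent to the equality $H = H^\perp$, i.e. $H$ is a maximal isotropic (Lagrangian) $\ol$-subgroup. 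So it suffices to show $H \subseteq H^\perp$.

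First I would handle a geometric point $x$ of characteristic zero. There $H_x$ is by hypothesis a cyclic $\ol$-module, hence (as $\ol\otimes_\ZZ \FF_p \cong \prod_{\gerp\vert p}\kappa_\gerp$-torsion of $A[p]$ decomposes accordingly, and each $\gerp$-part of $A[p]$ is free of rank $2$ over $\ol/\gerp$) $H_x$ is, $\gerp$ by $\gerp$, a free rank-one $\ol/\gerp$-submodule of a free rank-two module carrying a perfect $\ol/\gerp$-bilinear alternating form; a rank-one submodule of a rank-two space is automatically isotropic for an alternating form, so $H_x$ is isotropic. (One uses here that the $\gamma$-Weil pairing is $\ol$-bilinear after the identification $A^\vee \cong A\otimes_\ol \calP_A$, the point of the paragraph preceding the lemma.) Next, for a geometric point $x$ of characteristic $p$: here I would pass to Dieudonné modules. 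The perfect alternating $\ol\otimes k(x)$-pairing on $A[p]$ corresponds, under Dieudonné theory, to a perfect pairing on $\DD(A_x[p])$, and $H_x$ being isotropic is equivalent to $\DD(H_x)$ being isotropic for the induced pairing on the quotient $\DD(A_x[p])$. As in characteristic zero, $\DD(A_x[p])$ is, $\gerp$ by $\gerp$, free of rank $2$ over $\ol/\gerp \otimes_{\FF_p} k(x)$ with a perfect alternating form, and $\DD(H_x)$ is a cyclic (i.e. rank-one free, after base change to the residue fields of $\ol/\gerp\otimes k(x)$) submodule; again a rank-one submodule of a rank-two alternating space is isotropic. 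Thus $H_x$ is isotropic at every geometric point.

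To globalize: the condition that $H \subseteq H^\perp$ inside $A[p]$ — equivalently that the map $H \to \Hom(H,\mu_p)$ induced by the $\delta$-Weil pairing is zero — is the vanishing of a section of a coherent sheaf (a homomorphism of finite flat group schemes over $S$, or its effect on the relevant module of invariant differentials / Dieudonné crystal), and a section of a coherent sheaf on a \emph{reduced} scheme that vanishes at every geometric point vanishes identically. Hence $H$ is isotropic over all of $S$, and as noted above maximality is then automatic. This settles the lemma for one $\delta$; by the remarks preceding the statement (isotropy for one $\delta$ of degree prime to $p$ forces isotropy for all, since such $\delta$ generate $\calP_\uA$ as a $\ZZ$-module), $H$ is isotropic relative to any $\ol$-polarization. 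Finally, for the ``in particular'' clause, when $S$ is an $\FF_p$-scheme and $H = \Ker(\Fr_A)$, one has $\DD(\Ker \Fr_A) = \DD(A[p])/V\DD(A^{(p)}[p])$ — more precisely $\DD(\Ker\Fr_{A_x})$ is the image of $V$, a cyclic $\ol\otimes k(x)$-module of the right rank because $\Lie(A)$ is locally free of rank one over $\ol\otimes\calO_S$ (the Rapoport condition); so the cyclicity hypothesis is met and the lemma applies.

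The main obstacle I anticipate is purely bookkeeping: making sure the $\ol$-linearity of the $\delta$-Weil pairing is correctly used so that the decomposition over $\ol\otimes\FF_p \cong \prod_\gerp \kappa_\gerp$ (respectively over $\ol\otimes k(x)$) is compatible with the pairing — i.e. that the pairing restricts to a \emph{perfect alternating $\ol/\gerp$-bilinear} form on each $\gerp$-primary piece of rank two — so that the elementary ``rank one is isotropic in rank two'' argument genuinely applies; and, in characteristic $p$, setting up the Dieudonné-module translation of isotropy cleanly (including the compatibility of the duality on $\DD$ with the Weil pairing on $A[p]$) before quoting it.
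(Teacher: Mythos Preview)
Your proposal is correct and follows essentially the same approach as the paper: reduce to geometric points via closedness of the isotropy locus on a reduced base, then check isotropy fibrewise using cyclicity, and finally invoke the Rapoport condition to verify that $\DD(\Ker\Fr_A)$ is cyclic. The only cosmetic difference is that the paper skips the decomposition over $\prod_{\gerp}\ol/\gerp$ entirely and works directly with a generator $g$: from $\langle \ell r, s\rangle = \langle r, \ell s\rangle$ and alternation one gets $\langle \ell_1 g,\ell_2 g\rangle = \langle \ell_1\ell_2 g, g\rangle = \langle g,\ell_1\ell_2 g\rangle = -\langle \ell_1\ell_2 g, g\rangle$, hence zero, with the identical computation on Dieudonn\'e modules in characteristic $p$; this sidesteps the bookkeeping issue you correctly flagged (the pairing is $\mu_p$-valued, not naturally $\ol/\gerp$-valued, so ``rank one in rank two alternating'' only unwinds to exactly this generator computation anyway).
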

\begin{proof}
Let $\mu$ be an $\ol$-polarization. The locus where $H$ is
isotropic relative to $\mu$ can be viewed as the locus where $H
\subseteq \mu(H)^\perp$ under the Weil pairing on $A[p] \times
A^\vee[p]$, and so is a closed subset of $S$; it is enough to prove
it contains every geometric point $x$ of $S$. If $x$ has
characteristic zero, $H_x$ is a cyclic $\ol$-module and
$\mu$-induces an alternating pairing $\langle \cdot,
\cdot\rangle\colon H_x \times H_x \arr \mu_p$ such that $\langle
\ell r, s\rangle = \langle r, \ell s\rangle$ for $\ell \in \ol, r, s
\in H_x$. Let $g$ be a generator of $H_x$; then any other element of
$H_x$ is of the form $\ell g$ for some $\ell \in \ol$. Then $\langle
\ell_1 g, \ell_2 g\rangle = \langle \ell_1 \ell_2 g, g\rangle =
\langle g, \ell_1\ell_2 g\rangle = - \langle \ell_1\ell_2 g,
g\rangle$, and so $H_x$ is isotropic. If $x$ is of characteristic $p$, the argument is the same,
making use of the cyclicity of the $\ol\otimes k$ module
$\DD(A[p])$.

It remains to show that $\DD(\Ker(\Fr_{A_x}))$ is always a cyclic
$\ol\otimes k$-module. This is not automatic, and in fact
it uses the Rapoport condition by which the Lie algebra of $A$
(identified with the tangent space at the identity) is a free $\ol\otimes k$-module of rank one. On the other hand, by a result of Oda
(see \S \ref{subsection:Facts about D modules}), the tangent space
is, up to a twist, the Dieudonn\'e module of the kernel of
Frobenius.
\end{proof}

 We let $\Ybar = Y \otimes
\kappa$ denote the special fibre of $Y$. It is a quasi-projective
variety of dimension $g$ over $\kappa$, which, as we shall see below
is highly singular and reducible (even for a fixed polarization
module), although equi-dimensional. The morphism $\pi\colon \Ybar
\arr \Xbar$ is proper. The space $\Xbar$ was studied by Rapoport
\cite{Rapoport} and Goren-Oort \cite{GorenOort} and the space
$\Ybar$ was studied by Pappas \cite{Pappas}, H. Stamm \cite{Stamm},
and more recently by C.-F. Yu in \cite{CFY}, although we shall make
no use of Yu's work here.

Our main interest in this section is in stratifications of $\Xbar$
and $\Ybar$ and how they relate via the morphism $\pi\colon \Ybar
\arr \Xbar$, but first we provide another interpretation of $Y$.
\begin{lem}\label{lemma: alternative formulation}
The moduli space $Y$ is also the moduli space of data $(f\colon\uA
\arr \uB)$, where $\uA = (A, \iota_A, \lambda_A, \alpha_A)$, $\uB =
(B, \iota_B, \lambda_B, \alpha_B)$ are polarized abelian varieties
with real multiplication and $\Gamma_{00}(N)$-structure, and $f$ is
an $\ol$-isogeny, killed by $p$ and of degree $p^g$, such that $f^\ast \calP_B = p\calP_A$. (In
particular, $\uA$ and $\uB$ have isomorphic polarization modules).
\end{lem}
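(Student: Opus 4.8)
The plan is to construct the two mutually inverse functors explicitly at the level of moduli data and check they respect all the structures. Starting from $(\uA, H) \in Y(S)$, set $B = A/H$ and let $f\colon A \to B$ be the quotient isogeny; it is an $\ol$-isogeny because $H$ is an $\ol$-subgroup scheme, it is killed by $p$ since $H \supseteq A[p] \cap (\text{something})$—more precisely since $H \subseteq A[p]$ we get $pB = pA/H = 0$ in the relevant sense, i.e. the induced map $B \to A$ with $f\circ \hat f = [p]$ exists—and it has degree $p^g$ because $\# H = p^g$. The real multiplication $\iota_B$ is induced from $\iota_A$ since $H$ is $\ol$-stable; the level structure $\alpha_B$ is the composite of $\alpha_A$ with $f$ (one checks $\alpha_A(\mu_N \otimes \gerd_L^{-1}) \cap H = 0$ because $\alpha_A$ is a closed immersion of a group scheme of order prime to $p$ while $H$ is killed by $p$, so $f\circ\alpha_A$ is again a closed immersion). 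The polarization $\lambda_B$ is where the isotropy hypothesis enters: since $H$ is isotropic with respect to the $\gamma$-Weil pairing for some (hence, by the discussion preceding Lemma~\ref{lemma: cyclic is isotropic}, any) $\gamma \in \calP_\uA$ of degree prime to $p$, the quotient map $B^\vee \to A^\vee$ dual to $f$ identifies $\calP_B$ with $p\calP_A$ inside $\calP_A \otimes \QQ$, so $\calP_B \cong \calP_A$ as $\ol$-modules with positivity and $\lambda_B$ is transported from $\lambda_A$ via this identification. This gives $(f\colon \uA \to \uB)$ with $f^\ast\calP_B = p\calP_A$.

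Conversely, given $(f\colon \uA \to \uB)$ as in the statement, set $H = \Ker(f)$. It is finite flat of rank $p^g$, an $\ol$-subgroup scheme since $f$ is an $\ol$-isogeny, and contained in $A[p]$ because $f$ is killed by $p$ (i.e. there is $\hat f \colon B \to A$ with $\hat f \circ f = [p]_A$, whence $\Ker(f) \subseteq \Ker([p]_A) = A[p]$). It remains to verify that $H$ is isotropic in the sense required by the definition of $Y$; this follows by reversing the identification $f^\ast\calP_B = p\calP_A$: pulling back a polarization $\mu_B$ on $B$ of degree prime to $p$ gives a $\gamma \in \calP_\uA$ of degree prime to $p$ (up to the factor $p$, which is a unit away from $p$ after suitable bookkeeping—one uses that $\calP_A$ and $\calP_B$ are $\ZZ$-generated by degree-prime-to-$p$ elements) for which $H = \Ker(f)$ is visibly isotropic, since the $\gamma$-Weil pairing on $A[p]$ is pulled back from the pairing on $B$ via $f$ and $H$ is the kernel. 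Thus $(\uA, H) \in Y(S)$.

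Finally one checks these two assignments are functorial in $S$ and mutually inverse: starting from $(\uA,H)$, forming $f\colon A \to A/H$, and taking $\Ker(f)$ returns $H$; starting from $f\colon \uA \to \uB$, taking $H = \Ker(f)$, and forming $A \to A/H$ recovers $f$ up to the canonical isomorphism $A/\Ker(f) \cong B$ (an isomorphism of the full quadruples, using that $f$ is faithfully flat so $B$ is the fppf quotient). Compatibility of $\lambda$, $\iota$, $\alpha$ under these isomorphisms is immediate from the constructions above. By Yoneda, the two moduli problems are represented by the same scheme $Y$, and under this identification the forgetful morphism $(f\colon\uA\to\uB)\mapsto\uA$ agrees with $\pi$.

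The main obstacle is the bookkeeping around the polarization modules: one must check carefully that "isotropic for one $\gamma$ of degree prime to $p$" is exactly equivalent to the condition $f^\ast\calP_B = p\calP_A$, tracking the factor of $p$ through the identification $A\otimes_\ol\calP_A \cong A^\vee$ and its analogue for $B$, and confirming that the positivity structure is preserved. The rest—$\ol$-linearity, the rank count, $H \subseteq A[p]$, and the behaviour of the $\Gamma_{00}(N)$-structure—is routine, essentially because $N$ is prime to $p$ while everything in sight relevant to $H$ is $p$-power torsion.
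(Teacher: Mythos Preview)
Your overall strategy matches the paper's: build the two assignments $(\uA,H)\mapsto (f\colon\uA\to\uA/H)$ and $(f\colon\uA\to\uB)\mapsto(\uA,\Ker f)$ and check they are inverse. The gaps are exactly where you flag them, in the polarization bookkeeping, and one of them is an actual error rather than a missing detail.

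In the forward direction you assert that ``$\calP_B$ is identified with $p\calP_A$ inside $\calP_A\otimes\QQ$'', but this is the content of the argument, not something that falls out for free. The paper proves both inclusions separately. For $f^\ast\calP_B\supseteq p\calP_A$ it builds, for each $\gamma\in\calP_A$, an explicit $i\gamma\in\calP_B$ via the diagram
\[
\xymatrix{A\ar[r]^f\ar[d]_\gamma & A/H\ar[d]^{i\gamma}\\ A^\vee\ar[r]^{(f^t)^\vee} & (A/H)^\vee,}
\]
which exists precisely because $H$ is isotropic, and checks $f^\ast(i\gamma)=p\gamma$. For the reverse inclusion $f^\ast\calP_B\subseteq p\calP_A$ it first shows that $A[p]/H$ is isotropic for $\uB$ (a nontrivial computation), applies the same construction to $f^t$, and deduces the inclusion from the two resulting identities. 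You also omit the verification that $\uB$ satisfies the Deligne--Pappas condition; the paper handles this by exhibiting, for each prime $\ell$, an element of $\calP_B$ of degree prime to $\ell$, namely $i\gamma$ for suitable $\gamma$.

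In the reverse direction your isotropy argument does not work as stated. You say the $\gamma$-Weil pairing on $A[p]$ is pulled back from the $\mu_B$-pairing on $B[p]$ via $f$, so $H=\Ker f$ is in the radical. But $f^\ast\mu_B=p\gamma$, not $\gamma$: by functoriality of the Weil pairing, $\langle f(x),f(y)\rangle_{\mu_B}=e_A(x,f^\vee\mu_B f(y))=e_A(x,(p\gamma)(y))=1$ for \emph{all} $x,y\in A[p]$, so the pulled-back pairing is identically trivial and tells you nothing about $H$ relative to $\gamma$. The paper instead derives $(f^t)^\ast\calP_A=p\calP_B$ from the hypothesis $f^\ast\calP_B=p\calP_A$, writes $(f^t)^\vee\circ\gamma\circ f^t=(f^t)^\ast\gamma=p\delta=\delta\circ f\circ f^t$ for some $\delta\in\calP_B$, cancels $f^t$, and concludes $(f^t)^\vee\circ\gamma(H)=\delta\circ f(H)=0$, i.e.\ $\gamma(H)\subseteq\Ker((f^t)^\vee)=(A[p]/H)^\vee$.
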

\begin{proof}
 Let $(A,H)$ be as above. We define $\uB=(B, \iota_B, \lambda_B, \alpha_B)$ to be $A/H$ with the naturally induced real multiplication by $\calO_L$ and $\Gamma_{00}(N)$-level structure; $\lambda_B$ will be defined below.
 Let $f\colon A\arr A/H$ denote the natural isogeny, and let $f^t:A/H \arr A$ be the unique isogeny such that $f^t\circ f=[p]_A$; its kernel is $A[p]/H$. The short exact sequence $0 \arr A[p]/H \arr A/H \overset{f^t}{\arr} A \arr 0$ induces a short exact sequence
 \[
 \xymatrix{0\ar[r]&(A[p]/H)^\vee \ar[r] & A^\vee \ar[r]^{(f^t)^\vee}&(A/H)^\vee \ar[r]& 0.}
 \]

Since the annihilator of $H$ under the Weil pairing $A[p]\times A^\vee[p]\arr\mu_p$ is $(A[p]/H)^\vee$, it follows that  $H$ is isotropic  if and only if for any $\gamma\in\calP_A$, we have $\gamma(H)\subseteq (A[p]/H)^\vee$. For such $H$ and $\gamma\in \calP_A$, we have a commutative diagram:
\begin{equation}\label{equation: i-gamma}\xymatrix{
0 \ar[r]& H \ar[r]\ar[d] & A \ar[r]^f\ar[d]_\gamma & A/H \ar[r]\ar[d]^{i\gamma} & 0 \\
0 \ar[r]& (A[p]/H)^\vee \ar[r] & A^\vee \ar[r]^{(f^t)^\vee} &
(A/H)^\vee \ar[r] & 0, }\end{equation} where we have denoted by
$i\gamma$ the map $A/H \arr (A/H)^\vee$ appearing in the diagram;
$\gamma \mapsto i\gamma$ is an $\ol$-linear homomorphism $i:\calP_A
\arr \calP_B$. It follows from this definition that $i\gamma\circ f
=(f^t)^\vee \circ \gamma$. In particular,
\[
\deg(i\gamma)=\deg(\gamma).
\]
As a result, $i$ is injective.   For every $\gamma\in \calP_A$
we have
\[
f^\ast(i\gamma)=f^\vee\circ i\gamma \circ f=f^\vee \circ (f^t)^\vee \circ \gamma= (f^t\circ f)^\vee \circ \gamma=p\gamma.
\]
Therefore, the  composition $f^\ast\circ i:\calP_A \arr \calP_A$ is
multiplication by $p$. In particular, we have
$f^\ast(\calP_B)\supseteq p\calP_A$. We now show that
$f^\ast(\calP_B)\subseteq p\calP_A$. To this end, consider the map
$f^t:A/H \arr A$ of kernel $A[p]/H$. Let $\gamma\in \calP_A$ be a
polarization of degree prime to $p$. Hence, the polarization
$i\gamma$ is also of degree prime to $p$. To show that $A[p]/H$ is
isotropic it is enough to show it is isotropic relative to
$i\gamma$, that is, $i\gamma(A[p]/H) = H^\vee$ ($H^\vee$ is
naturally identified with the annihilator of $A[p]/H$ in
$(A/H)^\vee$). And indeed, $i\gamma(A[p]/H) = i\gamma\circ f (A[p])
= (f^t)^\vee \gamma(A[p]) = (f^t)^\vee (A^\vee[p]) =
A^\vee[p]/\left(A[p]/H\right)^\vee = H^\vee$. We may now apply the
same arguments made above and conclude that there is an $\ol$-linear
map $j\colon \calP_B \arr \calP_A$, satisfying $(f^t)^\ast \circ j =
p$.

Let $\gamma\in \calP_B$. We claim that $p\cdot j\gamma = f^\ast
\gamma$ (and so $f^\ast(\calP_B)\subseteq p\calP_A$ holds).
To show that, it is enough to show that $(f^t)^\ast p j\gamma =
(f^t)^\ast f^\ast \gamma$. The right hand side is $p^\ast \gamma =
p^2 \gamma$, while $(f^t)^\ast p j\gamma = p(f^t)^\ast j\gamma =
p^2\gamma$.

We now define,
\[ \lambda_B:\calP_B \overset{\cong}{\Arr} \gera, \qquad \lambda_B = \frac{1}{p}\lambda_A \circ f^\ast.\]
It remains to show that the Deligne-Pappas condition holds for
$\uB$. By \cite[Proposition 3.1]{AndreattaGorenRamified}, it
is enough to show that for every prime $\ell$ (including $\ell =
p$), there is an element $\gamma^\prime$ of $\calP_B$ of degree
prime to $\ell$. Let $\gamma \in \calP_A$ be an element of degree
prime to $\ell$, which exists since $\uA$ satisfies the said
condition,  and let $\gamma^\prime = i\gamma$.

\

\id Let $f\colon \uA \arr \uB$ be an isogeny as in the statement of
the lemma and $H = \Ker(f)$. We only need to show that $H$ is
isotropic relative to $\calP_A$. Let $\gamma\in \calP_A$; to show
that $\gamma(H) \subseteq (A[p]/H)^\vee$, is to show that the
composition $(f^t)^\vee \circ \gamma(H) = 0$. Now, applying $f^\ast$
to $f^\ast \calP_B = p\calP_A$ we find $(f^t)^\ast \calP_A =
p\calP_B$. Hence, $(f^t)^\vee \circ \gamma\circ f^t = (f^t)^\ast
\gamma = \delta p = \delta \circ f \circ f^t$ for some $\delta \in
\calP_B$. Therefore, $(f^t)^\vee\circ \gamma (H)= \delta \circ f(H)
= 0$.

\end{proof}

\

\subsection{Some facts about Dieudonn\'e modules}
\label{subsection:Facts about D modules}

\id Let $k$ be a perfect field of positive characteristic~$p$. We
let $\DD$ denote the contravariant Dieudonn\'e functor, $G \mapsto
\DD(G)$, from finite commutative $p$-primary group schemes $G$ over
$k$, to finite length $W(k)$-modules $M$ equipped with two maps
$\Fr\colon M \arr M, \Ver\colon M \arr M$, such that $\Fr(\alpha m)
= \sigma(\alpha) \Fr(m), \Ver (\sigma(\alpha) m) = \alpha \Ver(m)$
for $\alpha \in W(k), m \in M$ and $\Fr\circ \Ver = \Ver\circ \Fr =
[p]$. This functor is an anti-equivalence of categories and commutes
with base change. It follows that if $G$ has rank $p^\ell$ the
length of $\DD(G)$ is $\ell$.

Given a morphism of group schemes $f\colon G \arr H$ we find that
\[ \DD(\Ker(f)) = \DD(G)/\DD(f)(\DD(H)),\]
where, in fact, $\DD(f)(\DD(H))$ depends only on $f(G)$.

Suppose  $f, g\colon G \arr H$ are two morphisms. By
considering the morphism $(f, g)\colon G \arr H \times H$ we find
that
\[ \DD(\Ker(f) \cap \Ker(g)) = \DD(G)/\DD((f, g))(\DD(H \times H)) =
\DD(G)/\left[\DD(f)(\DD(H)) +  \DD(g)(\DD(H))\right].\] On the other
hand, since $\Ker(f)\cap \Ker(g) = \Ker(f \vert_{\Ker(g)})$, we have
\[ \DD(\Ker(f) \cap \Ker(g)) = \DD(\Ker(g))/\DD(f)(\DD(H)),\]
where here we may replace $H$ by any subgroup scheme containing
$f(\Ker(g))$, if we wish.

\

\id $\bullet\;$ The Frobenius morphism
$\Fr_G\colon G \arr G^{(p)}$ induces a \emph{linear} map of
Dieudonn\'e modules
\[ \DD(\Fr_G): \DD(G^{(p)}) \arr \DD(G),\]
and, using that $\DD(G^{(p)}) = \DD(G\otimes_{W(k)} W(k))
=\DD(G)\otimes_{W(k)} W(k)$, which is a (right) $W(k)$-module via
$(m\otimes 1)s = m\otimes s = \sigma^{-1}(s) \cdot m \otimes 1$, we
get the $\sigma$-linear map
\[\Fr\colon \DD(G) \arr \DD(G), \qquad \Fr(tm) = \sigma(t) \Fr(m),\] via the inclusion $\DD(G) \arr
\DD(G)\otimes_{W(k)} W(k)$; it has the same image as $\DD(\Fr_A)$.
Similarly, the Verschiebung morphism $\Ver_G\colon  G \arr G^{(1/p)}$ induces the $\sigma^{-1}$-linear map $\Ver\colon\DD(G)\arr \DD(G)$.

\

\id $\bullet\;$ Let $A/k$ be a $g$-dimensional abelian variety and
$A[p]$ its $p$-torsion subgroup. Then $\DD(A[p])$ is a vector space
of dimension $2g$ over $k$. The group schemes $\Ker(\Fr_A),
\Ker(\Ver_A)$ are subgroups of $A[p]$ of rank $p^g$, where $\Fr_A: A \arr A^{(p)}, \Ver_A: A \arr
A^{(1/p)}$, are the usual morphisms. In fact, $A
\mapsto \Ker(\Fr_A)$ is a functor from abelian varieties over $k$ to
finite commutative group schemes, as follows from the following
commutative diagram
\[\xymatrix{A \ar[r]^{\Fr_A}\ar[d]_f & A^{(p)} \ar[d]^{f^{(p)}}\\
B \ar[r]^{\Fr_B} & B^{(p)}.}\] (Similarly for $A \mapsto
\Ker(\Ver_A)$.) In particular, any endomorphism of $A$ induces
endomorphisms on $\Ker(\Fr_A), \Ker(\Ver_A)$ and on
\[ \alpha(A):= \Ker(\Fr_A) \cap \Ker(\Ver_A).\]
Note that we have,
\[ \DD(\Ker(\Fr_A)) = \DD(A[p])/\DD(\Fr_A)(\DD(A^{(p)}[p]))  = \DD(A[p])/\Fr(\DD(A[p])), \]
and similarly for Verschiebung,
\[ \DD(\Ker(\Ver_A)) = \DD(A[p])/\DD(\Ver_A)(\DD(A^{(1/p)}[p]))  = \DD(A[p])/\Ver(\DD(A[p])). \]
Rank considerations give that $\Fr(\DD(A[p])) = \Ker(\Ver: \DD(A[p])
\arr \DD(A[p]))$ and $\Ver(\DD(A[p])) = \Ker(\Fr: \DD(A[p]) \arr
\DD(A[p]))$.

\

\id $\bullet\;$ The Dieudonn\'e modules of $\Ker(\Fr_A),
\Ker(\Ver_A)$ and $\Ker(A[p])$ are linked to cohomology by the
following commutative diagram \cite{Oda}: \begin{equation}
\label{equation: D modules and cohomology} \xymatrix{0 \ar[r] &
H^0(A, \Omega^1_{A/k})\ar@{=}[d] \ar[r] & H^1_{\rm dR}(A/k) \ar[r]
\ar@{=}[d]& H^1(A, \calO_A) \ar[r]\ar@{=}[d] & 0
\\
0 \ar[r] & \DD(\Ker(\Fr_A))\otimes_k k \ar@{=}[d] \ar[r] & \DD(A[p])
\ar[r] & \DD(\Ker(\Ver_A)) \ar[r] & 0, \\
& \DD(\Ker(\Fr_{A^{(1/p)}})) & & }\end{equation} functorially in
$A$; in the tensor sign $\otimes_kk$,  $k$ is viewed a left
$k$-module relative to the map $a \mapsto a^{1/p}$. In fact, once
one has established a canonical isomorphism $H^1_{\rm dR}(A/k) =
\DD(A[p])$ the rest follows from the theory above.

\

\id $\bullet\;$ Suppose that $A$ has real multiplication,
$\iota\colon \ol \injects \End_k(A)$. Then we have a decomposition
\[ \DD(A[p])  = \oplus_{\beta\in \BB} \DD(A[p])_\beta,\]
where $\DD(A[p])_\beta$ is a two dimensional vector space over $k$
on which $\ol$ acts via $\beta$. The maps $\Fr$ and $\Ver$ act thus:
\[ \Fr \colon  \DD(A[p])_\beta \Arr  \DD(A[p])_{\sigma \circ \beta}, \qquad
\Ver \colon  \DD(A[p])_{\sigma \circ \beta} \Arr
\DD(A[p])_{\beta}.\] Now suppose that $(A, \iota)$ satisfy that
$\Lie(A)$ is a locally free $\ol$-module. It then follows easily
that for every $\beta\in \BB$ the $\beta$ component of the
Dieudonn\'e submodules $\Ker(\Fr)=\Ima(\Ver), \Ima(\Fr)=\Ker(\Ver)$
of $\DD(A[p])$ are one-dimensional over $k$, and, similarly
\[\DD(\Ker(\Fr_A)) = \oplus_{\beta\in \BB} \DD(\Ker(\Fr_A))_{\beta},
\qquad \DD(\Ker(\Ver_A)) = \oplus_{\beta\in \BB}
\DD(\Ker(\Ver_A))_{\beta},\] is a decomposition into one dimensional
$k$-vector spaces.

\

\subsection{Discrete invariants for the points of $\Ybar$}\label{subsec: discrete invariants}
Let $k\supseteq \kappa$ be a field. For $f\colon \uA \arr \uB$
defined over $k$ as in Lemma \ref{lemma: alternative formulation} there is a unique $\ol$-isogeny $f^t\colon
\uB \arr \uA$ such that
\[f^t\circ f = [p_A], \qquad f\circ f^t = [p_B].\]
(For the relation between $f^t$ and the dual isogeny $f^\vee$
see Diagram~(\ref{Diagram: ft and fvee}).) We have induced
homomorphisms:
\begin{align}
\bigoplus_{\beta\in \BB}\Lie(f)_\beta & \colon \bigoplus_{\beta\in
\BB}\Lie(\uA)_\beta \Arr \bigoplus_{\beta\in \BB}\Lie(\uB)_\beta,
\\\nonumber
 \bigoplus_{\beta\in \BB}\Lie(f^t)_\beta & \colon \bigoplus_{\beta\in
\BB}\Lie(\uB)_\beta \Arr \bigoplus_{\beta\in \BB}\Lie(\uA)_\beta.
\end{align}
We note that since $\Lie(\uA)$ is a free $\ol\otimes k$-module,
$\Lie(\uA)_\beta$ is a one dimensional $k$-vector space. Using these
decompositions we define several discrete invariants associated to
the data $(f\colon \uA \arr \uB)$. We let
\begin{align}
\varphi(f) & = \varphi(\uA, H)  = \{ \beta\in \BB\colon
\Lie(f)_{\sigma^{-1} \circ \beta} = 0\}, \notag\\
\eta(f) & = \eta(\uA, H)  = \{ \beta\in \BB\colon \Lie(f^t)_\beta =
0\}, \\
I(f) & = I(\uA, H) = \ell(\varphi(f)) \cap \eta(f) = \{ \beta\in
\BB\colon \Lie(f)_\beta = \Lie(f^t)_\beta= 0\}.\notag
\end{align}
The elements of $I(\uA, H)$ are the \emph{critical indices} of
\cite{Stamm}.
\begin{dfn}
Let $\pe$ be a pair of subsets of $\BB$. We say that $(\varphi,
\eta)$ is an \emph{admissible pair} if $\ell(\varphi^c) \subseteq \eta$.
Given another admissible pair $(\varphi', \eta')$ we say that
\[(\varphi', \eta') \geq \pe,\] if both inclusions $\varphi' \supseteq
\varphi, \eta' \supseteq \eta$ hold.
\end{dfn}

\begin{prop}\begin{enumerate}
\item Let $(\varphi, \eta)$ be an admissible pair. Then $r(\eta^c)
\subseteq \varphi$, and this identity is equivalent to the
admissibility of $(\varphi, \eta)$. Let $I = \ell(\varphi) \cap
\eta$ then
\[\varphi = r(\eta^c) \textstyle\coprod r(I), \qquad \eta = \ell(\varphi^c) \textstyle\coprod I.\]
\item There are $3^g$ admissible pairs.
\item Let $k\supseteq \kappa$ be a field.
Let $(\uA, H)$ correspond to a $k$-rational point of $\Ybar$ then
$(\varphi(\uA, H), \eta(\uA, H))$ is an admissible pair.
\end{enumerate}
\end{prop}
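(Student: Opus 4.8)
The proposition has three parts; the plan is to treat them in the order (1), (2), (3), since each uses the previous one.

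\medskip

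\textbf{Part (1).} The plan is to unwind the definition of admissibility purely combinatorially. By definition $(\varphi,\eta)$ admissible means $\ell(\varphi^c)\subseteq\eta$. Applying the bijection $r$ of $\BB$ (the inverse of $\ell$) to both sides, $\ell(\varphi^c)\subseteq\eta \iff \varphi^c \subseteq r(\eta) \iff r(\eta)^c \subseteq \varphi$. Now observe that $r$, being a bijection on $\BB$, satisfies $r(\eta)^c = r(\eta^c)$; hence admissibility is equivalent to $r(\eta^c)\subseteq\varphi$, the first claimed reformulation. For the decomposition, set $I=\ell(\varphi)\cap\eta$. We want $\varphi = r(\eta^c)\coprod r(I)$. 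First, the union is disjoint: if $\beta\in r(\eta^c)\cap r(I)$ then $\sigma^{-1}\circ\beta\in\eta^c$ and $\sigma^{-1}\circ\beta\in I\subseteq\eta$, a contradiction. Next, $r(\eta^c)\subseteq\varphi$ by admissibility, and $r(I)=r(\ell(\varphi)\cap\eta)\subseteq r(\ell(\varphi))=\varphi$; so the right side is contained in $\varphi$. For the reverse inclusion, take $\beta\in\varphi$ and look at $\gamma=\sigma^{-1}\circ\beta\in\ell(\varphi)$: either $\gamma\in\eta$, in which case $\gamma\in I$ and $\beta\in r(I)$, or $\gamma\in\eta^c$, in which case $\beta\in r(\eta^c)$. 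This proves $\varphi = r(\eta^c)\coprod r(I)$; the dual statement $\eta = \ell(\varphi^c)\coprod I$ follows by the identical argument with the roles of $\ell,r$ interchanged (here $\ell(\varphi^c)\subseteq\eta$ is admissibility itself, $I\subseteq\eta$ by definition, disjointness since $I\subseteq\ell(\varphi)$), and the reverse inclusion from splitting $\delta\in\eta$ according to whether $r(\delta)\in\varphi$ or not. I expect no obstacle here; it is bookkeeping with the two mutually inverse shift maps.

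\medskip

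\textbf{Part (2).} The plan is to count admissible pairs using the decomposition from part (1). The map $(\varphi,\eta)\mapsto I=\ell(\varphi)\cap\eta$ together with part (1) shows that an admissible pair is completely determined by the data of $I$ together with the choice of $\eta$ subject to $\eta = \ell(\varphi^c)\coprod I$ — but in fact it is cleaner to observe that, by part (1), $\varphi$ and $\eta$ are each reconstructed from the pair $(I,\eta)$ via $\varphi = r(\eta^c)\coprod r(I)$; and conversely, an admissible pair is equivalent to an arbitrary choice, for each $\beta\in\BB$, of one of three ``states''. Concretely: the three sets $r(\eta^c)$, $r(I)$, and $r(\eta)\setminus r(I) = r(\eta\setminus I)$ partition $\BB$ (the first is $\varphi\setminus r(I)$, etc.), and specifying this ordered partition is the same as specifying the pair. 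So there are $3$ independent choices at each of the $g$ elements of $\BB$, giving $3^g$. I would write this as: the assignment sending an admissible pair to the function $\BB\to\{0,1,2\}$ that records for each $\beta$ whether $\beta\in\eta^c$ (equivalently $r(\beta)\in r(\eta^c)\subseteq\varphi$), or $\beta\in I$, or $\beta\in\eta\setminus I$, is a bijection onto $\{0,1,2\}^{\BB}$, with inverse given by the formulas of part (1). Checking it is well-defined and bijective is routine; the only mild care is verifying that every function in $\{0,1,2\}^\BB$ yields a genuinely admissible pair, which is immediate since we are building $\varphi\supseteq r(\eta^c)$ by fiat.

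\medskip

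\textbf{Part (3).} This is the substantive part. Let $(\uA,H)$ be a $k$-point of $\Ybar$ and $f\colon\uA\to\uB$ the associated isogeny from Lemma \ref{lemma: alternative formulation}, with $f^t$ its complement ($f^t\circ f=[p]_A$, $f\circ f^t=[p]_B$). We must show $\ell(\varphi(f)^c)\subseteq\eta(f)$, i.e.\ that for each $\beta\in\BB$, if $\Lie(f)_{\sigma^{-1}\circ\beta}\neq0$ then $\Lie(f^t)_{\beta}=0$. The plan is to work component by component in the decomposition of Lie algebras under $\ol\otimes k = \bigoplus_\beta k_\beta$. For each $\beta$, the relation $\Lie(f^t)\circ\Lie(f)=\Lie([p]_A)=0$ on characteristic-$p$ schemes (since $p$ annihilates $\Lie$ of any abelian variety in characteristic $p$ — or more precisely $[p]_A$ acts as $0$ on $\Lie(A)$, which holds as $\Lie(A)$ is killed by $p$) restricts to $\Lie(f^t)_\beta\circ\Lie(f)_\beta = 0$ on the one-dimensional $k$-space $\Lie(\uA)_\beta$. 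Wait — I need to be careful about the indexing: $\Lie(f)_\beta\colon\Lie(\uA)_\beta\to\Lie(\uB)_\beta$ and $\Lie(f^t)_\beta\colon\Lie(\uB)_\beta\to\Lie(\uA)_\beta$, so the composite $\Lie(f^t)_\beta\circ\Lie(f)_\beta$ is an endomorphism of the one-dimensional space $\Lie(\uA)_\beta$ equal to multiplication by $p=0$. Hence for \emph{every} $\beta$, at least one of $\Lie(f)_\beta$, $\Lie(f^t)_\beta$ vanishes (two scalars whose product is $0$). Translating: $\Lie(f)_\beta\neq0 \implies \Lie(f^t)_\beta=0$. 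Re-indexing $\varphi(f)$: $\beta\in\varphi(f)^c$ means $\Lie(f)_{\sigma^{-1}\circ\beta}\neq0$, and then we need $\Lie(f^t)_\beta=0$, i.e.\ $\beta\in\eta(f)$. But the vanishing I just derived at the index $\gamma=\sigma^{-1}\circ\beta$ gives $\Lie(f^t)_\gamma=0$, which is the \emph{wrong} index.

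\medskip

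So the real content is that the relevant cancellation occurs between $\Lie(f)$ at index $\sigma^{-1}\circ\beta$ and $\Lie(f^t)$ at index $\beta$, which is precisely what one gets from Dieudonné theory rather than naively from $\Lie$. The hard part, and the step I expect to be the main obstacle, is exactly this: relating $\Lie(f)$ and $\Lie(f^t)$ through the Dieudonné module of $H=\Ker f$ and the Frobenius/Verschiebung structure. The plan is to use \S\ref{subsection:Facts about D modules}: under the contravariant Dieudonné functor, $\DD(f^t)$ and $\DD(f)$ fit into the multiplication-by-$p$ factorization on $\DD(\uA)$, and by the ``Oda'' identification (Diagram~(\ref{equation: D modules and cohomology})) $\Lie$ is, up to the twist, the Dieudonné module of the kernel of Frobenius, so $\Lie(f)_\beta$ being zero or not is read off from how $\DD(f)$ interacts with $\Fr$, which shifts $\beta\mapsto\sigma\circ\beta$. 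Concretely I would: (i) on $H=\Ker f$, note $\DD(H)$ is the cokernel of $\DD(f)$ acting into $\DD(A[p])$; (ii) use that $H$ is killed by $p$ and has rank $p^g$ so $\DD(H)$ has length $g$ with one-dimensional pieces $\DD(H)_\beta$ in each of the $g$ components (this is where $H$ being a ``cyclic'' $\ol$-module enters — cf.\ Lemma \ref{lemma: cyclic is isotropic}); (iii) observe $\Fr$ and $\Ver$ on $\DD(A[p])$ shift the grading by $\sigma$ and $\sigma^{-1}$, and that $\Lie(f)_{\sigma^{-1}\circ\beta}=0$ translates, via the Oda diagram, into the statement that $\DD(H)_\beta$ lies in the image of $\Ver$ (equivalently $\Fr$ is zero on it), while $\Lie(f^t)_\beta=0$ translates into $\DD(H)_\beta$ being in the image of $\Fr$; (iv) since $\DD(H)_\beta$ is one-dimensional and $\Fr\cdot\Ver = p = 0$ on it, it cannot fail both — giving $\ell(\varphi^c)\subseteq\eta$. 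I would also double-check the admissibility in the equivalent form $r(\eta^c)\subseteq\varphi$ as a sanity cross-check using $f^t$ in place of $f$ and the symmetry $(f^t)^t = f$. The delicate point throughout is keeping the $\sigma$-shifts and the contravariance straight, and correctly matching ``$\Lie(f)_\beta=0$'' to a statement about $\DD(H)$ at index $\sigma\circ\beta$ or $\beta$; that index bookkeeping is the crux.
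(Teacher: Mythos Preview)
Your Parts (1) and (2) are correct and essentially match the paper's treatment (the paper counts by fixing $\varphi$ first and evaluating $\sum_{i} \binom{g}{i}2^i=3^g$, but your three-state bijection is equivalent).

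In Part (3), however, you make a bookkeeping error and then overcomplicate. Your restatement of admissibility is wrong: you claim $\ell(\varphi^c)\subseteq\eta$ means ``for each $\beta$, if $\Lie(f)_{\sigma^{-1}\circ\beta}\neq 0$ then $\Lie(f^t)_\beta=0$'', but that condition is $\varphi^c\subseteq\eta$, not $\ell(\varphi^c)\subseteq\eta$. Correctly unwound: $\gamma\in\ell(\varphi^c)$ means $\sigma\circ\gamma\in\varphi^c$, which says $\Lie(f)_{\sigma^{-1}\circ(\sigma\circ\gamma)}=\Lie(f)_\gamma\neq 0$ (the two shifts cancel). Since $\gamma\in\eta$ means $\Lie(f^t)_\gamma=0$, admissibility is exactly the statement ``for each $\gamma$, $\Lie(f)_\gamma\neq 0\Rightarrow\Lie(f^t)_\gamma=0$'' --- with the \emph{same} index on both sides. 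That is precisely what you already derived from $\Lie(f^t)_\gamma\circ\Lie(f)_\gamma=\Lie([p]_A)_\gamma=0$ on the one-dimensional space $\Lie(\uA)_\gamma$. Your initial naive argument was therefore already complete; the detour through Dieudonn\'e modules and the Oda identification is unnecessary here.

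The paper's proof is the same one-line argument, phrased via the other composite: from $f\circ f^t=[p]_B$ one gets $\Lie(f)_\beta\circ\Lie(f^t)_\beta=0$, hence $\beta\notin\eta\Rightarrow\Lie(f^t)_\beta\neq 0\Rightarrow\Lie(f)_\beta=0\Rightarrow\sigma\circ\beta\in\varphi$, i.e.\ $r(\eta^c)\subseteq\varphi$, the equivalent form from Part~(1).
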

\begin{proof} The first part is elementary. By first choosing $\varphi$ and then choosing $\eta$ subject to
the admissibility condition, the second part follows from the
identity $\sum_{i = 0}^g \binom{g}{i}2^{i} = (1+2)^g= 3^g.$
 Consider the third part. The condition
$f\circ f^t = [p]$ implies for every $\beta\in \BB$ the equality
$\Lie(f)_\beta \circ \Lie(f^t)_\beta = 0$. That means that if
$\beta\not\in\eta(f)$ then $\Lie(f)_\beta  = 0$ and so $\sigma\circ
\beta \in \varphi(f)$, that is $r(\eta(f)^c) \subseteq \varphi(f)$.
\end{proof}

Let $k$ be a prefect field of characteristic $p$. Given $\uA/k$ the
\emph{type} of $\uA$ is defined by
\begin{equation} \tau(\uA) = \{\beta \in \BB: \DD\left(\Ker(\Fr_A)\cap
\Ker(\Ver_A)\right)_\beta\neq 0\}.
\end{equation}
One may also define the type $\tau(\uA)$ as $\{\sigma \circ\beta:
\Ker\left(\Fr_A\colon H^1(A, \calO_A) \arr H^1(A,
\calO_A)\right)_\beta \neq 0\}$. It is an exercise to check that
this definition is equivalent to the one given above. The virtue of
this alternative definition is that it also holds when $A$ is
defined over a non-perfect field $k$, and is stable under base
change. Thus, if $k' $ is a perfect field containing $k$ and $\uA$
is defined over $k$, $\tau(\uA) = \tau(\uA \otimes_k k')$, under any
definition of the right hand side.

Basic properties of Dieudonn\'e modules discussed in \S \ref{subsection:Facts about D modules} imply that
\[\DD(\Ker(\Fr_A)\cap \Ker(\Ver_A)) =
\DD(A[p])/(\Ima \,\DD(\Fr_A) + \Ima\, \DD(\Ver_A)) =
\DD(A[p])/(\Ima\, \Fr + \Ima \,\Ver).\] Since $\DD(A[p])_\beta$ is a two
dimensional $k$-vector space and both $\Ima(\DD(\Fr_A))_\beta$ and
$\Ima(\DD(\Ver_A))_\beta$ are one dimensional, the first assertion
of the following lemma holds.

\begin{lem} \label{lemma: type in terms of Dieudonne modules} Let $\uA$ be as above and $(f\colon \uA \arr \uB)$ a
$k$-rational point of $\Ybar$.
\begin{enumerate}
\item $\beta\in \tau(\uA)$ if and only if one of the following equivalent statements hold:
\begin{enumerate}
\item $\Ima(\DD(\Fr_A))_\beta =
\Ima(\DD(\Ver_A))_\beta$.
\item$\Ima(\Fr)_\beta =
\Ima(\Ver)_\beta$.
\item$\Ker(\Fr)_\beta =
\Ker(\Ver)_\beta $.
\end{enumerate}
\item $\beta \in \varphi(f) \Longleftrightarrow \Ima(\DD(\Fr_A))_\beta
= \Ima(\DD(f))_\beta$.
\item $\beta \in \eta(f) \Longleftrightarrow \Ima(\DD(\Ver_A))_\beta
= \Ima(\DD(f))_\beta$.
\end{enumerate}
(All Dieudonn\'e submodules appearing above are inside $\DD(A[p])$.)
\end{lem}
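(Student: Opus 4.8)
The plan is to work entirely inside the decomposition $\DD(A[p]) = \bigoplus_{\beta\in\BB}\DD(A[p])_\beta$, reducing everything to two-dimensional linear algebra in each $\beta$-component, using the facts assembled in \S\ref{subsection:Facts about D modules}. Recall that under the Rapoport condition each of the submodules $\Ima(\Fr)_\beta$, $\Ima(\Ver)_\beta$, $\Ker(\Fr)_\beta$, $\Ker(\Ver)_\beta$ of $\DD(A[p])_\beta$ is one-dimensional, and that $\Ker(\Ver)_\beta = \Ima(\Fr)_\beta$, $\Ker(\Fr)_\beta = \Ima(\Ver)_\beta$ (the rank computations quoted before Equation (\ref{equation: D modules and cohomology})). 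So in part (1) the three conditions (a), (b), (c) are literally the same statement rephrased via these identities, and the equivalence with $\beta\in\tau(\uA)$ is exactly the observation already made in the paragraph preceding the lemma: $\DD(\Ker(\Fr_A)\cap\Ker(\Ver_A))_\beta = \DD(A[p])_\beta/(\Ima(\Fr)_\beta + \Ima(\Ver)_\beta)$ is nonzero precisely when the two one-dimensional spaces $\Ima(\Fr)_\beta$ and $\Ima(\Ver)_\beta$ coincide (otherwise their sum is all of the two-dimensional $\DD(A[p])_\beta$). I would simply write this out, citing the dimension count.

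For parts (2) and (3), the key input is the description of $\Ker(f)$-type Dieudonn\'e modules: for the isogeny $f\colon A\arr B$ we have $\DD(f)\colon\DD(B)\arr\DD(A)$ with image $\DD(\Ima\,\DD(f))$ a $W(k)$-submodule, and $\DD(H) = \DD(A[p])/\DD(f)(\DD(B[p]))$ where $H = \Ker(f)$, by the general formula $\DD(\Ker(f)) = \DD(G)/\DD(f)(\DD(H))$ recalled at the start of \S\ref{subsection:Facts about D modules}. Since $H$ has rank $p^g$ and $\Lie(\uA)_\beta$ is one-dimensional, $\DD(f)(\DD(B[p]))_\beta = \Ima(\DD(f))_\beta$ is one-dimensional in each $\beta$. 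Now I would unwind the definition of $\varphi(f)$: $\beta\in\varphi(f)$ means $\Lie(f)_{\sigma^{-1}\circ\beta} = 0$. The link between $\Lie(f)$ and $\DD(f)$ comes from Diagram (\ref{equation: D modules and cohomology}) applied functorially to $f\colon A\arr B$: the cotangent/Lie-algebra map of $f$ in component $\sigma^{-1}\circ\beta$ being zero translates, after the $\sigma$-twist built into the identification $H^0(A,\Omega^1) = \DD(\Ker(\Fr_A))\otimes_k k = \DD(\Ker(\Fr_{A^{(1/p)}}))$, into a statement about $\DD(f)$ in the $\beta$-component. Concretely, $\Lie(f)_{\sigma^{-1}\circ\beta}=0$ should be equivalent to $\Ima(\DD(f))_\beta\subseteq\Ker(\Ver)_\beta = \Ima(\Fr)_\beta$, and since both are one-dimensional this is the equality $\Ima(\DD(f))_\beta = \Ima(\DD(\Fr_A))_\beta$ asserted in (2). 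Part (3) is the mirror-image argument using $f^t\colon B\arr A$ in place of $f$ and Verschiebung in place of Frobenius: $\beta\in\eta(f)$ means $\Lie(f^t)_\beta = 0$, which translates to $\Ima(\DD(f^t))_\beta\subseteq\Ima(\Ver)_\beta$; combining with $f^t\circ f = [p]$ (so $\DD(f)\circ\DD(f^t)$ is multiplication by $p$, hence zero on $\DD(A[p])$, forcing $\Ima(\DD(f^t))_\beta\subseteq\Ker(\DD(f))_\beta$) and the exactness of the rows gives $\Ima(\DD(f))_\beta = \Ima(\DD(\Ver_A))_\beta$ by a dimension count, since $\Ima(\DD(f))$ and $\Ima(\Ver)$ are complementary to $\Ima(\DD(f^t))$ and $\Ima(\Fr)$ respectively in the appropriate sense.

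I expect the main obstacle to be keeping the $\sigma$-twists straight: the identification $H^0(A,\Omega^1_{A/k}) = \DD(\Ker(\Fr_A))\otimes_k k$ in Diagram (\ref{equation: D modules and cohomology}) involves viewing $k$ as a module via $a\mapsto a^{1/p}$, and the map $\Fr$ on $\DD(A[p])$ shifts the grading $\beta\mapsto\sigma\circ\beta$ while $\Ver$ shifts it back; so one must be careful about whether the relevant condition on $\Lie(f)$ lives in component $\beta$ or $\sigma^{-1}\circ\beta$ — which is precisely why the definition of $\varphi(f)$ has the $\sigma^{-1}$ built in. I would handle this by fixing once and for all the functorial square relating $\Fr_A$, $\Fr_B$ and $f$ (the commuting diagram displayed in \S\ref{subsection:Facts about D modules}) and reading off the induced map on $\Ker(\Fr)$'s, then transporting through the canonical isomorphism with $H^0(\Omega^1)$; once the bookkeeping is set up, each of (2) and (3) is a one-line dimension argument in a two-dimensional space. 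The rest is routine.
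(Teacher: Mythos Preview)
Your treatment of parts (1) and (2) matches the paper's proof essentially line for line: part (1) is the dimension count already made just before the lemma, and part (2) goes through the functorial diagram~(\ref{equation: D modules and cohomology}) to identify $f^\ast$ on $H^0(\Omega^1)$ with $\DD(f\vert_{\Ker(\Fr_A)})$, then finishes with the observation that two one-dimensional subspaces of a two-dimensional space sum to the whole space unless they coincide.

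Part (3), however, is not a genuine ``mirror'' of (2), and your sketch has a gap. If you literally apply the argument of (2) to $f^t\colon B\to A$, what you get is the equivalence $\Lie(f^t)_{\sigma^{-1}\circ\gamma}=0 \iff \Ima(\DD(\Fr_B))_\gamma = \Ima(\DD(f^t))_\gamma$, which lives inside $\DD(B[p])$, involves a $\sigma$-shift, and says nothing directly about $\Ima(\DD(f))_\beta$ in $\DD(A[p])$. Your attempt to bridge this via $\DD(f)\circ\DD(f^t)=0$ and ``exactness of the rows'' does give $\Ima(\DD(f))=\Ker(\DD(f^t))$ by rank, but the claimed translation ``$\Lie(f^t)_\beta=0 \Rightarrow \Ima(\DD(f^t))_\beta\subseteq\Ima(\Ver)_\beta$'' is not what the $H^0$-side of the diagram produces, and the final dimension count you invoke is not set up.

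The paper handles (3) by a genuinely different route that avoids this problem. It first uses the polarization: from $f^\vee\circ i\gamma = \gamma\circ f^t$ (Diagram~(\ref{Diagram: ft and fvee})), applying $\Lie(\cdot)_\beta$ gives $\Lie(f^t)_\beta=0 \iff \Lie(f^\vee)_\beta=0$. Then it uses the canonical identification $\Lie(A^\vee)\cong H^1(A,\calO_A)$ to get $\Lie(f^\vee)_\beta=0 \iff H^1(f)_\beta=0$. Now one is on the $H^1$-side of Diagram~(\ref{equation: D modules and cohomology}), where the identification $H^1(A,\calO_A)=\DD(\Ker(\Ver_A))$ carries \emph{no} $\sigma$-twist, and $H^1(f)$ becomes $\DD(f\vert_{\Ker(\Ver_A)})$; the same dimension argument as in (2) finishes. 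The key step you are missing is precisely this passage from $f^t$ to $f^\vee$ via the polarization --- without it, there is no clean way to land in $\DD(A[p])_\beta$ with the correct grading.
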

\begin{proof} The first assertion was already proven above. To prove
(2) we recall the following
commutative diagram:
\[\xymatrix{0 \ar[r] & H^0(A, \Omega^1_{A/k})\ar@{=}[d] \ar[r] & H^1_{\rm dR}(A/k) \ar[r] \ar@{=}[d]& H^1(A, \calO_A) \ar[r]\ar@{=}[d] & 0
\\
0 \ar[r] & \DD(\Ker(\Fr_A))\otimes_k k \ar@{=}[d] \ar[r] & \DD(A[p])
\ar[r] & \DD(\Ker(\Ver_A)) \ar[r] & 0, \\
& \DD(\Ker(\Fr_{A^{(1/p)}})) & & }\] which is functorial in $A$.   The map $\Lie(f)\colon \Lie(A) \arr \Lie(B)$
induces the map
\[
f^\ast\colon \Lie(B)^\ast = H^0(B, \Omega^1_{B/k})
\arr \Lie(A)^\ast = H^0(A, \Omega^1_{A/k}),
\]
which is precisely the
pull-back map $f^\ast$ on differentials. The map $f^\ast$ has
isotypic decomposition relative to the $\ol\otimes k$-module structure.

Now, $\beta \in \varphi(f) \iff \Lie(f)_{\sigma^{-1} \circ \beta} =
0 \iff f^\ast_{\sigma^{-1} \circ \beta} = 0$. Via the identifications in the above diagram, the map $f^\ast$ can also be viewed as
a map \[f^\ast\colon \DD(\Ker(\Fr_{B^{(1/p)}})) \arr
\DD(\Ker(\Fr_{A^{(1/p)}})),\] which is equal to the linear map
$\DD(f^{(1/p)}\vert_{\Ker(\Fr_{A^{(1/p)}})})$. So,
\begin{align*}
f^\ast_{\sigma^{-1} \circ \beta} = 0 & \Longleftrightarrow
\DD(f^{(1/p)}\vert_{\Ker(\Fr_{A^{(1/p)}})})_{\sigma^{-1}\circ
\beta} = 0
\\ & \Longleftrightarrow
\DD(f\vert_{\Ker(\Fr_A)})_\beta = 0.
\end{align*} We  therefore have,
\[ \beta \in \varphi(f) \Longleftrightarrow \DD(f\vert_{\Ker(\Fr_A)})_\beta = 0.\]
Now, $\DD(f \vert_{\Ker(\Fr_A)})_\beta = 0$ if and only if
$\left[\DD(\Ker(\Fr_A))/\DD(f)(\DD(\Ker \;\Fr_B))\right]_\beta \neq
0$ and that is equivalent to
$\DD(A[p])_\beta/\left[\DD(f)(\DD(B[p])) +
\DD(\Fr_A)(\DD(A^{(p)}[p])) \right]_\beta \neq 0$. By considering
dimensions over $k$ we see that this happens if and only if  $\Ima(\DD(f))_\beta
= \Ima(\DD(\Fr_A))_\beta$, as the lemma states.

\

We first show that
\[\Lie(f^t)_\beta = 0 \Longleftrightarrow H^1(f)_\beta = 0.\]
Let $\gamma\in\calP_A$ be an isogeny of degree prime to $p$. Let
$i\gamma\in\calP_B$ be the isogeny constructed in Diagram
(\ref{equation: i-gamma}). Since $f^\vee \circ i\gamma
f=f^*\gamma=p\gamma=\gamma p=\gamma \circ f^t \circ f$, it follows
that $f^\vee \circ i\gamma=\gamma \circ  f^t$ and so the following
diagram is commutative:
 \begin{equation}\label{Diagram: ft and fvee}
 \xymatrix{ A \ar[r]^{\gamma} & A^\vee \\
B \ar[u]^{f^t}\ar[r]_{i\gamma}& B^\vee\ar[u]_{f^\vee}.}
 \end{equation}

Applying $\Lie(\cdot)_\beta$ to the diagram, we obtain
\[\xymatrix{ \Lie(A)_\beta \ar[r]^{\cong} & \Lie(A^\vee)_\beta \\
\Lie(B)_\beta \ar[u]^{\Lie(f^t)_\beta}\ar[r]^{\cong}&
\Lie(B^\vee)_\beta\ar[u]_{\Lie(f^\vee)_\beta},} \] and, hence,
\[ \Lie(f^t)_\beta = 0 \Longleftrightarrow \Lie(f^\vee)_\beta = 0.\]
Since we have a commutative diagram:
\[\xymatrix{
\Lie(A^\vee) \ar[r]^{\cong} & H^1(A, \calO_A) \\
\Lie(B^\vee) \ar[r]^{\cong}\ar[u]^{\Lie(f^\vee)} & H^1(B, \calO_B)
\ar[u]_{H^1(f)}, }\] where we can pass to $\beta$-components, we
conclude that $\Lie(f^t)_\beta = 0 \Longleftrightarrow H^1(f)_\beta
= 0$.

The map $H^1(f)$ can be viewed as $\DD(f\vert_{\Ker(\Ver_A)}):\DD(\Ker(\Ver_B))\arr \DD(\Ker(\Ver_A))$, and hence, $H^1(f)_\beta= 0$ if and only if
$\DD(f\vert_{\Ker(\Ver_A)})_\beta=0$.  This is equivalent to
\[
\DD(A[p])_\beta/\left[\DD(f)(\DD(B[p])) +
\DD(\Ver_A)(\DD(A^{(p)}[p])) \right]_\beta \neq 0.
\]
Dimension considerations show that this happens if and only if  $\Ima(\DD(f))_\beta
= \Ima(\DD(\Ver_A))_\beta$.
\end{proof}
\begin{cor} \label{corollary: type and phi eta}
The following inclusions hold.
\begin{enumerate}
\item $\varphi(\uA, H) \cap \eta(\uA, H) \subseteq \tau(\uA)$.
\item $\varphi(\uA, H)^c \cap \eta(\uA, H) \subseteq \tau(\uA)^c$.
\item $\varphi(\uA, H) \cap \eta(\uA, H)^c \subseteq \tau(\uA)^c$.
\end{enumerate}
If we denote for two sets $S, T$ their symmetric difference by $S
\vartriangle T = (S - T) \cup (T - S)$, we can then formulate these
statements as
\[\varphi(\uA, H) \cap \eta(\uA, H) \subseteq \tau(\uA), \qquad  \tau(\uA)\subseteq \left[\varphi(\uA, H) \vartriangle \eta(\uA, H) \right]^c. \]
\end{cor}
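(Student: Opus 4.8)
The plan is to reduce all three inclusions, and their symmetric-difference reformulation, to an elementary observation about three lines inside the two-dimensional $k$-vector space $\DD(A[p])_\beta$. First I would reduce to the case where $k$ is perfect: the sets $\varphi(\uA,H)$ and $\eta(\uA,H)$ are defined by the vanishing of the $\beta$-components of $\Lie(f)$ and $\Lie(f^t)$, which commute with base change, and the stability of $\tau(\uA)$ under base change was recorded just after its definition; so enlarging $k$ to its perfect closure changes none of the three sets, and we may apply the Dieudonn\'e dictionary of \S\ref{subsection:Facts about D modules} and of Lemma~\ref{lemma: type in terms of Dieudonne modules}.

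Next, fix $\beta\in\BB$ and consider the three one-dimensional submodules $\Ima(\DD(\Fr_A))_\beta$, $\Ima(\DD(\Ver_A))_\beta$, $\Ima(\DD(f))_\beta$ of $\DD(A[p])_\beta$. By the first part of Lemma~\ref{lemma: type in terms of Dieudonne modules}, $\beta\in\tau(\uA)$ if and only if the Frobenius line and the Verschiebung line coincide; by the second part, $\beta\in\varphi(\uA,H)$ if and only if the Frobenius line equals the $f$-line; by the third part, $\beta\in\eta(\uA,H)$ if and only if the Verschiebung line equals the $f$-line. Since equality of lines in a two-dimensional space is an equivalence relation, the three inclusions are now immediate, checked one embedding $\beta$ at a time. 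If $\beta\in\varphi(\uA,H)\cap\eta(\uA,H)$, then the Frobenius line and the Verschiebung line each equal the $f$-line, hence equal one another, so $\beta\in\tau(\uA)$; this is (1). If $\beta\in\varphi(\uA,H)^c\cap\eta(\uA,H)$, then the Frobenius line differs from the $f$-line whereas the Verschiebung line equals it, so the Frobenius and Verschiebung lines are distinct and $\beta\in\tau(\uA)^c$; this is (2). The proof of (3) is the mirror image, interchanging the roles of $\Fr$ and $\Ver$.

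For the reformulation, I would simply note the set identity $\bigl[\varphi(\uA,H)\vartriangle\eta(\uA,H)\bigr]^c=(\varphi\cap\eta)\cup(\varphi^c\cap\eta^c)$, so that the inclusion $\tau(\uA)\subseteq[\varphi\vartriangle\eta]^c$ is equivalent to the two statements $\tau(\uA)\cap(\varphi\cap\eta^c)=\emptyset$ and $\tau(\uA)\cap(\varphi^c\cap\eta)=\emptyset$, which are exactly inclusions (3) and (2); together with (1) this gives the displayed pair of statements. I do not expect any genuine obstacle: all the content is already packaged in Lemma~\ref{lemma: type in terms of Dieudonne modules}, and the only point that deserves a word of care is the base-change reduction ensuring that Dieudonn\'e theory — hence that lemma — is available for the possibly non-perfect residue field of the given point of $\Ybar$.
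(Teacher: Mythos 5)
Your proof is correct and takes essentially the same route as the paper, which states this as an immediate corollary of Lemma~\ref{lemma: type in terms of Dieudonne modules} without further argument: all three inclusions are transitivity of equality among the three lines $\Ima(\DD(\Fr_A))_\beta$, $\Ima(\DD(\Ver_A))_\beta$, $\Ima(\DD(f))_\beta$, exactly as you spell out. The base-change remark and the symmetric-difference bookkeeping are fine.
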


\

\subsection{The infinitesimal nature of $\Ybar$}\label{subsection: infinitesimal nature of Ybar}
In \cite{Stamm} Stamm studied the completed local ring of $Y$ at a
closed point $\Qbar$ of its special fiber and concluded
Theorem~\ref{thm: Stamm's theorem} below. Since then, local
deformation theory of abelian varieties, and in particular the
theory of local models, have developed and we have found it more
enlightening to describe Stamm's result in that language. Our
approach is not different in essence from Stamm's, but results such
as Lemma~\ref{lemma: U phi and V phi} become more transparent in our
description. Our focus is on the completed local rings of $\Ybar$ at
a point $\Qbar$ defined over a perfect field $k\supseteq \kappa$ of
characteristic $p$.

As in \cite{DP}, one constructs a morphism from a Zariski-open
neighborhood $T\subset \Ybar$ of $\Qbar$ to the Grassmann variety
${\bf G}$ associated to the data: $H=(\ol\otimes k)^2$, two free
$\ol\otimes k$-sub-modules of $H$, say $W_1, W_2$, such that under
the $\ol\otimes k$ map $h:H \arr H$ given by $(x, y)\mapsto (y, 0)$,
we have $h(W_1) \subseteq W_2, h(W_2) \subseteq W_1$. Notice that we
can perform the usual decomposition according to $\ol$-eigenspaces
to get
\[ h = \oplus_\beta h_\beta: \oplus_\beta k^2_\beta \arr \oplus_\beta k^2_\beta, \]
such that each $h_\beta$ is the linear transformation
corresponding to two-by-two matrix $M=\left(\begin{smallmatrix} 0 & 1 \\
0 & 0
\end{smallmatrix}\right)$. Furthermore, $W_i = \oplus_\beta
(W_i)_\beta$,
and $(W_i)_\beta$ is a one-dimensional $k$-vector space contained in
$k^2$. We have $MW_1 \subseteq W_2, MW_2 \subseteq W_1$.

The basis for this construction is Grothendieck's crystalline theory
\cite{Grothendieck}; see also \cite{de Jong}.  Let $(f\colon \uA
\arr \uB)$ correspond to $\Qbar$. The $\ol\otimes k$-module $H$ is
isomorphic to $H^1_{\rm dR}(\uA/k)$. By the elementary divisors
theorem, we can then identify $H^1_{\rm dR}(\uB/k)$ with $H$, and
possibly adjust the identification of $H^1_{\rm dR}(\uA/k)$ with
$H$, such that the induced maps $f^\ast$ and $(f^t)^\ast$ are both
the map $h$ defined above. Let $W_A = H^0(A, \Omega^1_{A/k}) =
\Lie(A)^\ast \subset H$ be the Hodge flitration, and similarly for
$W_B$. Then we have $h(W_A) \subseteq W_B, h(W_B) \subseteq W_A$, and so we get a point ${\bf Q}$ of the Grassmann
variety ${\bf G}$ described above. Let $\calO = \OhatQbar$ and
$\germ$ be the maximal ideal. By Grothendieck's theory, the
deformations of $(f\colon\uA \arr \uB)$ over $R:=\calO/\germ^p$
(which carries a canonical divided power structure) are given by
deformation of the Hodge filtration over that quotient ring. Namely,
are in bijection with free, direct summands, $\ol\otimes R$-modules
$(W_A^R, W_B^R)$ of rank one of $H \otimes_k \calO/\germ^p =
(\ol\otimes R)^2$ such that $h(W_A^R) \subseteq W_B^R,
h(W_B^R)\subseteq W_A^R$, and $W_A^R \otimes k = W_A, W_B^R \otimes
k = W_B$. This, by the universal property of the Grassmann variety
is exactly $\widehat{\calO}_{\bf G, Q}/\germ^p_{\bf G, Q}$. A
boot-strapping argument as in \cite{DP} furnishes an isomorphism of
the completed local rings themselves, even in the arithmetic
setting.

 To study the singularities and uniformization of the completed local
rings, we may reduce, by considering each $\beta\in \BB$
individually, to the case of the Grassmann variety parameterizing
two one dimensional subspaces $W_1, W_2$ of $k^2$ that are
compatible: $MW_1 \subseteq W_2, MW_2 \subseteq W_1$.  (We have simplified the notation from $(W_{1})_\beta$ to $W_1$, etc.) Fix
then such a pair $(W_1, W_2)$. If $W_1 \neq \Ker(M)$ then $W_2 =
MW_1 = \Ker(M)$, and
the same holds for any deformation of $W_1$ and so $W_2$ is
constant, being $\Ker(M)$. In this case, we see that the local
deformation ring is $k[\![x]\!]$, where the choice of letter $x$
indicates that it is $W_1$ that is being deformed. If $W_1= \Ker(M)$
and $W_2 \neq \Ker(M)$ then the situation is similar and we see that
the local deformation ring is $k[\![y]\!]$, where the choice of
letter $y$ indicates that it is $W_2$ that is being deformed.
Finally, suppose both $W_1= \Ker(M)$ and $W_2 = \Ker(M)$. The
subspace $W_i$ is spanned by $(1, 0)$ and a deformation of it to a
local artinian $k$-algebra $D$ is uniquely described by a basis
vector $(1, d_i)$ where $d_i\in \germ_D$. The condition that the
deformations are compatible under $f$ is precisely $d_1d_2=0$ and so
we see that the local deformation ring is $k[\![x,y]\!]/(xy)$.

Returning to the situation of abelian varieties
$(f\colon A \arr B)$, corresponding to a point $\Qbar$, the pair $(W_1, W_2)$ is
$(H^0(A, \Omega^1_{A/k})_\beta, H^0(B, \Omega^1_{B/k})_\beta) =
(W_{A,\beta}, W_{B, \beta})$ for $\beta \in \BB$, and the condition
$W_{A,\beta}  = \Ker((f^t)^\ast)_\beta$ is
the condition $\beta \in \eta(\Qbar)$, while the condition
$W_{B,\beta} = \Ker(f^\ast)_\beta$ is the
condition that $\Lie(f)_\beta = 0$, namely, $\sigma \circ \beta \in
\varphi(\Qbar)$. Our discussion,
 therefore, gives immediately the following result.
\begin{thm}\label{thm: Stamm's theorem}
Let $(\uA, f)$ correspond to a point $\Qbar$ of $\Ybar$, defined
over a field $k \supseteq \kappa$. Let $\varphi = \varphi(\uA, f),
\eta = \eta(\uA, f)$ and $I = I(\uA, f)= \ell(\varphi) \cap \eta$,
then
\begin{equation}
\label{equation: local deformation ring at Q bar -- our formulation}
 \widehat{\calO}_{\Ybar, \Qbar} \cong k [\![ \{x_\beta : \beta \in \ell(\varphi)\}, \{y_\beta: \beta \in
 \eta\}]\!]/(\{x_\beta y_\beta: \beta\in
 I\}).
\end{equation}
\end{thm}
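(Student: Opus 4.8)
The plan is to reduce the statement to a completely explicit computation on a Grassmann-type local model, using the crystalline deformation theory set up just before the theorem, and then to assemble the pieces. Write $\calO = \widehat{\calO}_{\Ybar,\Qbar}$, let $\germ$ be its maximal ideal, and set $R = \calO/\germ^p$ with its canonical divided-power structure. By Grothendieck's crystalline deformation theory, deformations over $R$ of the point $(f\colon\uA\arr\uB)$ corresponding to $\Qbar$ are the same as deformations of the Hodge filtration pair $(W_A, W_B)$ inside $H^1_{\rm dR}(\uA/k)\cong H = (\ol\otimes k)^2$: free rank-one $\ol\otimes R$-direct summands $(W_A^R, W_B^R)$ of $(\ol\otimes R)^2$ reducing to $(W_A, W_B)$ and compatible with the standard nilpotent map $h$, where one first normalizes the identifications — by the elementary divisors theorem — so that both $f^\ast$ and $(f^t)^\ast$ become $h$. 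By the universal property of the Grassmann variety ${\bf G}$ attached to $(H, h)$, this identifies $\calO/\germ^p$ with $\widehat{\calO}_{{\bf G},{\bf Q}}/\germ^p_{{\bf G},{\bf Q}}$, where ${\bf Q}\in{\bf G}$ is the point given by $(W_A, W_B)$.

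Next I would pass to $\BB$-eigenspaces. Since $h$, the Hodge filtrations and all their deformations respect the decomposition $H = \bigoplus_{\beta\in\BB}k^2_\beta$, we get ${\bf G} = \prod_{\beta\in\BB}{\bf G}_\beta$, where ${\bf G}_\beta$ parameterizes a pair of lines $(W_1, W_2)\subseteq k^2$ with $MW_1\subseteq W_2$ and $MW_2\subseteq W_1$, $M = \left(\begin{smallmatrix}0&1\\0&0\end{smallmatrix}\right)$; hence $\widehat{\calO}_{{\bf G},{\bf Q}}$ is the completed tensor product over $\beta$ of the rings $\widehat{\calO}_{{\bf G}_\beta,{\bf Q}_\beta}$, with ${\bf Q}_\beta = (W_{A,\beta}, W_{B,\beta})$.

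The core of the argument is the three-case computation of $\widehat{\calO}_{{\bf G}_\beta,{\bf Q}_\beta}$, for which I would use the dictionary recalled before the theorem: $W_{A,\beta} = \Ker(M)$ if and only if $\beta\in\eta$, and $W_{B,\beta} = \Ker(M)$ if and only if $\sigma\circ\beta\in\varphi$, i.e. $\beta\in\ell(\varphi)$; and the admissibility of $(\varphi,\eta)$, in the form $\ell(\varphi)\cup\eta = \BB$ (equivalent to $\ell(\varphi^c)\subseteq\eta$), ensures at least one of the two holds. If $\beta\notin\eta$, then $W_{A,\beta}\neq\Ker M$, so $MW_{A,\beta} = \Ker M$, which forces $W_{B,\beta}$ and every deformation of it to stay equal to $\Ker M$; only $W_{A,\beta}$ moves, and freely, giving $\widehat{\calO}_{{\bf G}_\beta,{\bf Q}_\beta}\cong k[\![x_\beta]\!]$ with $\beta\in\ell(\varphi)$. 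The case $\beta\notin\ell(\varphi)$ is symmetric and gives $k[\![y_\beta]\!]$ with $\beta\in\eta$. If $\beta\in\ell(\varphi)\cap\eta = I$, then $W_{A,\beta} = W_{B,\beta} = \Ker M = \langle(1,0)\rangle$; a deformation over a local artinian $k$-algebra $D$ writes $W_i = \langle(1,d_i)\rangle$ with $d_i\in\germ_D$, and both compatibility conditions $MW_1\subseteq W_2$, $MW_2\subseteq W_1$ collapse to $d_1 d_2 = 0$, so $\widehat{\calO}_{{\bf G}_\beta,{\bf Q}_\beta}\cong k[\![x_\beta, y_\beta]\!]/(x_\beta y_\beta)$. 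Taking the completed tensor product over all $\beta\in\BB$ and naming the variable $x_\beta$ whenever $\beta\in\ell(\varphi)$ and $y_\beta$ whenever $\beta\in\eta$ gives exactly the ring in (\ref{equation: local deformation ring at Q bar -- our formulation}), a priori only modulo $\germ^p$.

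Finally I would upgrade the isomorphism modulo $\germ^p$ to one of the full completed local rings by the boot-strapping argument of \cite{DP}: with one direction of the comparison map already known to be an isomorphism modulo $\germ^p$, one lifts it compatibly modulo $\germ^{p^n}$ for every $n$ — at each stage crystalline deformation theory applies because $\calO/\germ^{p^n}$ is again a PD thickening of $k$ — and passes to the inverse limit. I expect this last step, namely verifying that the Grassmann local model continues to pro-represent the deformation functor of $(f\colon\uA\arr\uB)$ to all orders and not merely over the PD thickening $\calO/\germ^p$, to be the only genuinely delicate point; the eigenspace bookkeeping and the three local computations are elementary once the local model is in place.
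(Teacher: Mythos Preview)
Your proposal is correct and follows essentially the same approach as the paper: the crystalline identification with the Grassmann local model modulo $\germ^p$, the $\BB$-eigenspace decomposition, the same three-case analysis with the same dictionary ($W_{A,\beta}=\Ker M \Leftrightarrow \beta\in\eta$, $W_{B,\beta}=\Ker M \Leftrightarrow \beta\in\ell(\varphi)$), and the boot-strapping citation to \cite{DP} to pass from $\germ^p$ to the full completion. Your explicit remark that admissibility, in the form $\ell(\varphi)\cup\eta=\BB$, guarantees the three cases are exhaustive is a nice clarification that the paper leaves implicit.
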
  This is basically Stamm's theorem,
only that Stamm collects together the variables in the following
fashion and works over $W(k)$ (which is an easy extension of the
argument above).

 \addtocounter{thm}{-1}
\begin{thm}[bis]\label{thm: Stamm's theorem}
Let $(\uA, f)$ correspond to a point $\Qbar$ of $\Ybar$, defined
over a field $k \supseteq \kappa$. Let $I = I(\uA, f)$ then
\begin{equation}
\label{equation: local deformation ring at Q bar}
 \widehat{\calO}_{\Ybar, \Qbar} \cong k [\![ \{x_\beta,
 y_\beta: \beta \in I\}, \{z_\beta: \beta \in I^c\}]\!]/(\{x_\beta y_\beta: \beta\in
 I\}).
\end{equation}
The isomorphism lifts to an isomorphism
\begin{equation}
\label{equation: local deformation ring at Q bar -- arithmetic version}
 \widehat{\calO}_{Y, \Qbar} \cong W(k) [\![ \{x_\beta,
 y_\beta: \beta \in I\}, \{z_\beta: \beta \in I^c\}]\!]/(\{x_\beta y_\beta - p: \beta\in
 I\}).
\end{equation}
\end{thm}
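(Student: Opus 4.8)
The plan is to derive both isomorphisms with essentially no new input: formula~(\ref{equation: local deformation ring at Q bar}) is a combinatorial rebracketing of the variables in (\ref{equation: local deformation ring at Q bar -- our formulation}), which has already been established, and (\ref{equation: local deformation ring at Q bar -- arithmetic version}) is obtained by rerunning the Grothendieck-crystalline deformation analysis of the preceding paragraphs over $W(k)$ in place of $k$, followed by the boot-strapping passage from $\calO/\germ^p$ to $\calO$.

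\emph{The rebracketing.} Write $\varphi=\varphi(\uA,f)$, $\eta=\eta(\uA,f)$, $I=\ell(\varphi)\cap\eta$, so that (\ref{equation: local deformation ring at Q bar -- our formulation}) reads $\OhatQbar\cong k[\![\{x_\beta:\beta\in\ell(\varphi)\},\{y_\beta:\beta\in\eta\}]\!]/(\{x_\beta y_\beta:\beta\in I\})$. Since $(\varphi,\eta)$ is admissible, applying $\ell$ to the identity $\varphi=r(\eta^c)\sqcup r(I)$ of the proposition on admissible pairs gives the disjoint decompositions $\ell(\varphi)=\eta^c\sqcup I$ and $\eta=\ell(\varphi^c)\sqcup I$. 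Hence the generators split into three groups: the pairs $x_\beta,y_\beta$ with $\beta\in I$, which are the only ones occurring in a relation; the free generators $x_\beta$ with $\beta\in\ell(\varphi)\setminus I=\eta^c$; and the free generators $y_\beta$ with $\beta\in\eta\setminus I=\ell(\varphi^c)$. As $\ell(\varphi^c)=\ell(\varphi)^c=\eta\setminus I$ and $\eta^c\cap(\eta\setminus I)=\emptyset$, the set of free generators is indexed by $\eta^c\sqcup(\eta\setminus I)=I^c$; renaming all of them $z_\beta$, $\beta\in I^c$, produces exactly (\ref{equation: local deformation ring at Q bar}). (As a check, the number of variables is $|\varphi|+|\eta|=g+|I|=2|I|+|I^c|$ and the number of relations is $|I|$ in both presentations.)

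\emph{The arithmetic lift.} Set $\calO=\widehat{\calO}_{Y,\Qbar}$, let $\germ$ be its maximal ideal, and note that $R=\calO/\germ^p$ carries its canonical divided powers. By Grothendieck's crystalline deformation theory, deformations of the isogeny $(f\colon\uA\arr\uB)$ over $R$ correspond to liftings to $R$ of the compatible pair of Hodge filtrations inside $H=(\ol\otimes R)^2$, compatibility being taken with respect to $f^\ast$ and $(f^t)^\ast$; decomposing into $\beta$-eigencomponents, $\beta\in\BB$, reduces this to the Grassmann problem for pairs $(W_1,W_2)$ of rank-one direct summands of $R^2$. The only change from the $k$-case is that now $f^\ast\circ(f^t)^\ast=[p]=(f^t)^\ast\circ f^\ast$, because $f^t\circ f=[p_A]$ and $f\circ f^t=[p_B]$; by elementary divisors one may arrange that both $f^\ast$ and $(f^t)^\ast$ become the matrix $h=\left(\begin{smallmatrix}0&1\\ p&0\end{smallmatrix}\right)$, which reduces mod $p$ to the $h$ of the $k$-case. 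When $\beta\in I$ both filtrations reduce to $\Ker(\bar h)$, a lift is $W_i=R\cdot(1,d_i)$ with $d_i\in\germ_R$, and the conditions $h(W_1)\subseteq W_2$, $h(W_2)\subseteq W_1$ both translate into the single equation $d_1d_2=p$, so that eigencomponent is pro-represented by $W(k)[\![x_\beta,y_\beta]\!]/(x_\beta y_\beta-p)$; when $\beta\in I^c$ one filtration is rigid and the ring is $W(k)[\![z_\beta]\!]$. Forming the completed tensor product over $\beta\in\BB$ describes $\calO/\germ^p$, and the boot-strapping argument of \cite{DP} (the arithmetic form of it, as already invoked above) promotes this to an isomorphism of the full completed local rings, yielding (\ref{equation: local deformation ring at Q bar -- arithmetic version}); reducing modulo $p$ recovers (\ref{equation: local deformation ring at Q bar}).

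The main obstacle is the arithmetic step, in two respects. First, one must verify that the constraint $f^t\circ f=[p_A]$ genuinely forces the relation $x_\beta y_\beta-p$, not $x_\beta y_\beta$, at critical indices: this is the sole place where the $p$ in the degree of the isogeny intervenes, and it is precisely what makes $\Ybar$ singular while $Y$ is regular. Second, Grothendieck--Messing produces deformations only over divided-power thickenings, so it directly computes $\calO/\germ^p$ rather than $\calO$; the promotion to the full completed local ring is the boot-strapping of \cite{DP} and \cite{Stamm}, which I would quote rather than reprove. The combinatorial rebracketing, by contrast, is entirely routine once the decompositions $\ell(\varphi)=\eta^c\sqcup I$ and $\eta=\ell(\varphi^c)\sqcup I$ are in hand.
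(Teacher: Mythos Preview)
Your proposal is correct and follows the paper's own approach: the paper derives the first formulation via the local-model/Grassmann analysis, then simply remarks that the bis version is ``Stamm collect[ing] together the variables'' and that working over $W(k)$ ``is an easy extension of the argument above'' with the boot-strapping of \cite{DP}. Your combinatorial rebracketing via $\ell(\varphi)=\eta^c\sqcup I$ and $\eta=\ell(\varphi^c)\sqcup I$ makes explicit what the paper leaves implicit, and your arithmetic lift (replacing $M$ by $h=\left(\begin{smallmatrix}0&1\\p&0\end{smallmatrix}\right)$ so that the compatibility condition becomes $d_1d_2=p$) is exactly the ``easy extension'' alluded to; the only minor imprecision is that in the non-critical case over $W(k)$ the second filtration is not literally rigid but rather determined by the first, which still yields a single free parameter $z_\beta$.
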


\id The following lemma gives information about a certain
stratification of $\Ybar$ that is the precursor to the
stratification $\{\Wpe\}$ studied extensively in this paper.
\begin{lem}\label{lemma: U phi and V phi}
Given $\varphi\subseteq \BB$ (respectively, $\eta \subseteq \BB$)
there is a locally closed subset $\Up$, and a closed subset
$\Upplus$ (resp. $\Ve$ and $\Veplus$) of $\Ybar$ such that $\Up$
consists of the closed points $\Qbar$ with $\varphi(\Qbar) =
\varphi$, and $\Upplus$ consists of the closed point $\Qbar$ with
$\varphi(\Qbar) \supseteq \varphi$ (resp., the points $\Qbar$ such
that $\eta(\Qbar) = \eta$ and $\eta(\Qbar) \supseteq \eta$).

Furthermore, if $\Qbar\in U_\beta^+$, then $U_\beta^+\cap\Spf(\OhatQbar)$ is equal to $\Spf(\OhatQbar)$ if $\beta\not\in r(I)$, and is otherwise given by the vanishing of $y_{\sigma^{-1}\circ \beta}$.
Similarly, if $\Qbar\in V_\beta^+$, then $V_\beta^+\cap\Spf(\OhatQbar)$ is equal to $\Spf(\OhatQbar)$ if $\beta\not\in I$, and is otherwise given by the vanishing of $x_\beta$.

\end{lem}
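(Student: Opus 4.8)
The plan is to realize the four subsets as zero loci (and Boolean combinations thereof) of sections of line bundles on $\Ybar$, and then to obtain the local assertions by unwinding, at the relevant Frobenius-shifted index, the deformation-theoretic description of $\OhatQbar$ recalled in the paragraphs preceding Theorem~\ref{thm: Stamm's theorem}.

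For the first part I would work with the universal $\ol$-isogeny $f\colon\calA\arr\calB$ over $\Ybar$ and its companion $f^t\colon\calB\arr\calA$ (their formation commutes with base change; if $Y$ is not already a fine moduli scheme one rigidifies by an auxiliary prime-to-$p$ level structure, which alters none of the subsets below). Both $\Lie(\calA)$ and $\Lie(\calB)$ are locally free $\ol\otimes\calO_{\Ybar}$-modules of rank one — for $\calB$ this uses the Deligne--Pappas condition of Lemma~\ref{lemma: alternative formulation} together with the unramifiedness of $p$ — so each component $\Lie(\calA)_\beta$, $\Lie(\calB)_\beta$ is a line bundle, and $\Lie(f)_\beta$ becomes a global section of $\Lie(\calA)_\beta^\vee\otimes\Lie(\calB)_\beta$, with closed zero locus $Z(\Lie(f)_\beta)$; likewise $\Lie(f^t)_\beta$. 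By the definitions of $\varphi(\Qbar),\eta(\Qbar)$ one has $Z(\Lie(f)_{\sigma^{-1}\circ\beta})=\{\Qbar:\beta\in\varphi(\Qbar)\}$ and $Z(\Lie(f^t)_\beta)=\{\Qbar:\beta\in\eta(\Qbar)\}$. I would then \emph{define} $U_\beta^+ := Z(\Lie(f)_{\sigma^{-1}\circ\beta})$, $V_\beta^+ := Z(\Lie(f^t)_\beta)$, $\Upplus := \bigcap_{\beta\in\varphi}U_\beta^+$, $\Veplus := \bigcap_{\beta\in\eta}V_\beta^+$ (closed), and
\[
\Up := \Upplus\cap\bigcap_{\beta\notin\varphi}(\Ybar\setminus U_\beta^+),\qquad
\Ve := \Veplus\cap\bigcap_{\beta\notin\eta}(\Ybar\setminus V_\beta^+),
\]
which, being intersections of a closed with an open subset, are locally closed and have the asserted pointwise descriptions.

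For the second part, fix $\Qbar$ with invariants $\varphi,\eta$ and $I=\ell(\varphi)\cap\eta$. By the discussion before Theorem~\ref{thm: Stamm's theorem}, $\Spf(\OhatQbar)$ pro-represents deformations of the Hodge filtration, and this factors as the completed product over $\beta\in\BB$ of the deformation problem for the $M$-compatible pair $(W_{A,\beta},W_{B,\beta})$ of lines in $k^2$; moreover a deformation $\Qbar'$ has $W_{B,\beta}\equiv\Ker M$ iff $\sigma\circ\beta\in\varphi(\Qbar')$, has $W_{A,\beta}\equiv\Ker M$ iff $\beta\in\eta(\Qbar')$, and in the ``fully free'' case $\beta\in I$ one has coordinates $x_\beta$ (deforming $W_{A,\beta}$) and $y_\beta$ (deforming $W_{B,\beta}$) with $x_\beta y_\beta=0$. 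Now let $\Qbar\in U_\beta^+$ and put $\beta'=\sigma^{-1}\circ\beta$, so $\beta'\in\ell(\varphi)$; then $U_\beta^+\cap\Spf(\OhatQbar)$ is the locus where $W_{B,\beta'}$ remains $\Ker M$, a condition involving only the $\beta'$-factor. If $\beta\notin r(I)$ then $\beta'\notin\eta$, so $W_{A,\beta'}\neq\Ker M$, and in the case $W_{A,\beta'}\neq\Ker M$ of the per-index analysis $W_{B,\beta'}$ is constant equal to $\Ker M$ on every deformation, whence $U_\beta^+\cap\Spf(\OhatQbar)=\Spf(\OhatQbar)$; if $\beta\in r(I)$ then $\beta'\in I$ and the condition is precisely $y_{\beta'}=y_{\sigma^{-1}\circ\beta}=0$. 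The statement for $V_\beta^+$ is the mirror image: if $\Qbar\in V_\beta^+$ then $\beta\in\eta$ and $V_\beta^+\cap\Spf(\OhatQbar)$ is the locus where $W_{A,\beta}\equiv\Ker M$, which is all of $\Spf(\OhatQbar)$ when $\beta\notin I$ (then $\beta\notin\ell(\varphi)$, so $W_{B,\beta}\neq\Ker M$ and $W_{A,\beta}$ is rigid) and is cut out by $x_\beta$ when $\beta\in I$.

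The only genuinely delicate point — and the step I would check most carefully — is the index bookkeeping: carrying the Frobenius twist correctly when passing from the intrinsic conditions ``$\beta\in\varphi(\Qbar')$'' and ``$\beta\in\eta(\Qbar')$'' to the vanishing of a deformation coordinate at index $\sigma^{-1}\circ\beta$ (resp.\ $\beta$), together with the elementary combinatorial fact — immediate from the decomposition $\eta=\ell(\varphi^c)\coprod I$ of an admissible pair, equivalently $\ell(\varphi)=\eta^c\coprod I$ — that once $\Qbar\in U_\beta^+$ (resp.\ $V_\beta^+$) exactly one of $\beta\in r(I)$, $\beta\notin r(I)$ (resp.\ $\beta\in I$, $\beta\notin I$) occurs, so that the stated dichotomy is exhaustive and mutually exclusive.
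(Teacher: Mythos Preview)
Your proof is correct and, for the first part and for the case $\beta\in r(I)$ of the second part, essentially identical to the paper's: both realize $U_\beta^+$ (resp.\ $V_\beta^+$) as the degeneracy locus of the line-bundle map $\Lie(f)_{\sigma^{-1}\circ\beta}$ (resp.\ $\Lie(f^t)_\beta$), build $\Upplus,\Veplus$ as intersections and $\Up,\Ve$ as Boolean combinations, and read off the vanishing of $y_{\sigma^{-1}\circ\beta}$ (resp.\ $x_\beta$) directly from the per-index Grassmann description preceding Theorem~\ref{thm: Stamm's theorem}.

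The one genuine difference is in the case $\Qbar\in U_\beta^+$ with $\beta\notin r(I)$ (and the mirror case for $V_\beta^+$). You argue locally: since $\beta':=\sigma^{-1}\circ\beta\in\ell(\varphi)\setminus\eta$, one has $W_{A,\beta'}\neq\Ker M$, so by the per-index analysis $W_{B,\beta'}$ is rigid and equal to $\Ker M$ on every infinitesimal deformation, whence $\Lie(f)_{\beta'}$ vanishes identically on $\Spf(\OhatQbar)$. The paper instead argues globally: it exhibits a concrete Zariski-open neighbourhood
\[
U=\bigcup_{(\varphi',\eta')\leq(\varphi(\Qbar),\eta(\Qbar))} U_{\varphi'}\cap V_{\eta'}
\]
of $\Qbar$, verifies $U\subseteq U_\beta^+$ via the admissibility relation $r(\eta^c)\subseteq\varphi$, and concludes from openness that $\Spf(\OhatQbar)\subseteq U_\beta^+$. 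Your route is shorter and stays entirely within the local-model picture; the paper's route yields the extra information that the set $U$ above is Zariski open in $\Ybar$, which is of independent interest (it is the locus where the invariants can only decrease) even if it is not strictly needed for the lemma.
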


\begin{proof}
It suffices to prove that the $\Upplus$ (resp. $\Veplus$) are
closed, because $\Up = \Upplus  - \bigcup_{\varphi' \supsetneq
\varphi} U_{\varphi'}^+$ (resp., $\Ve = \Veplus  - \bigcup_{\eta'
\supsetneq \eta} V_{\eta'}^+$). Furthermore, since $\Upplus =
\bigcap_{\beta\in \varphi}U_{\{\beta\}}^+$, we reduce to the case
where $\varphi = \{\beta\}$ is a singleton (and similarly for
$\Veplus$). From this point we only discuss the case of
$U_{\{\beta\}}^+$, as it is clear that the same arguments will work
for $V_{\{\beta\}}^+$.

Recall that $\Qbar$, corresponding to $(f\colon \uA \arr \uB)$,
satisfies $\varphi(\Qbar) \supseteq \{\beta\}$, if and only if
$\Lie(f)_{\sigma^{-1}\circ \beta}~=~0$. Over $\Ybar$, $\Lie(\uA^{\rm
univ})_\gamma$ and $\Lie(\uB^{\rm univ})_\gamma$, $\gamma \in \BB$,
are line bundles, and
\[
\Lie(f)_\gamma\colon \Lie(\uA^{\rm
univ})_\gamma\Arr\Lie(\uB^{\rm univ})_\gamma
\]
is a morphism of line bundles and consequently its degeneracy locus
$\{\Lie(f)_\gamma = 0\}$ is closed.

Moreover, it follows directly from the above description of the
variables $x_\beta,y_\beta$, and the paragraph before Theorem
\ref{thm: Stamm's theorem} that if $\Qbar\in U_\beta^+$ and
$\beta\in r(I)$,  then $U_\beta^+\cap\Spf(\OhatQbar)$  is given by
the vanishing of $y_{\sigma^{-1}\circ \beta}$. Assume now that
$\Qbar\in U_{\{\beta\}}^+$ and $\beta\not\in r(I(\Qbar))$. Since
$\beta\in\varphi(\Qbar)$, we have $\beta\not\in r(\eta(\Qbar))$. We
show that
\[
U=\bigcup_{(\varphi,\eta)\leq(\varphi(\Qbar),\eta(\Qbar))} U_\varphi
\cap V_\eta
\]
is a Zariski open subset of $\Ybar$ which contains $\Qbar$ and lies
entirely inside $U_{\{\beta\}}^+$. First note that if
$(\varphi(\Qbar^\prime),\eta(\Qbar^\prime))\leq
(\varphi(\Qbar),\eta(\Qbar))$, then $\beta\in
r(\eta(\Qbar)^c)\subseteq r(\eta(\Qbar^\prime)^c) \subseteq
\varphi(\Qbar^\prime)$, proving that $U \subseteq U_{\{\beta\}}^+$.
Secondly, it is clear that $\Qbar \in U$. Finally, $U$ is a Zariski
open subset of $\Ybar$, as we have
\[
\Ybar-U=\bigcup_{\Qbar\not\in U_\varphi^+ \cap V_\eta^+ }
U_\varphi^+ \cap V_\eta^+
\]
from definitions.
\end{proof}

\

\subsection{Stratification of $\Ybar$} \label{subsection: stratification of Y bar}
\begin{prop}
For an admissible pair $\pe$ there is a locally closed subset $\Wpe$
of $\Ybar$ with the following property: A closed point $\Qbar$ of
$\Ybar$ has invariants $\pe$ if and only if $\Qbar \in \Wpe$.
Moreover, the subset
\[ \Zpe = \bigcup_{(\varphi', \eta') \geq \pe} W_{(\varphi', \eta')}\]
is closed.
\end{prop}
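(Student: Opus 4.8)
The plan is to define $W_{\varphi,\eta} := U_\varphi \cap V_\eta$, where $U_\varphi$ and $V_\eta$ are the locally closed subsets of $\Ybar$ furnished by Lemma~\ref{lemma: U phi and V phi}. Since an intersection of two locally closed subsets of a scheme is again locally closed, $W_{\varphi,\eta}$ is locally closed. For the characterization of its closed points: $\Qbar \in U_\varphi \cap V_\eta$ holds exactly when $\varphi(\Qbar) = \varphi$ and $\eta(\Qbar) = \eta$; as $I(\Qbar) = \ell(\varphi(\Qbar)) \cap \eta(\Qbar)$ is then determined, this is precisely the statement that $\Qbar$ has invariants $(\varphi,\eta)$. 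This settles the first assertion.

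For the closedness of $Z_{\varphi,\eta}$, the approach is to prove the set-theoretic identity
\[ Z_{\varphi,\eta} \;=\; U_\varphi^+ \cap V_\eta^+, \]
with $U_\varphi^+$ and $V_\eta^+$ the closed subsets of Lemma~\ref{lemma: U phi and V phi}; the closedness of $Z_{\varphi,\eta}$ is then immediate, being an intersection of two closed subsets. As $\Ybar$ is of finite type over $\kappa$, hence Jacobson, both sides are determined by their closed points, so it suffices to compare those. If $\Qbar \in W_{(\varphi',\eta')}$ for some admissible $(\varphi',\eta') \geq (\varphi,\eta)$, then $\varphi(\Qbar) = \varphi' \supseteq \varphi$ and $\eta(\Qbar) = \eta' \supseteq \eta$, so $\Qbar \in U_\varphi^+ \cap V_\eta^+$. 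Conversely, given $\Qbar \in U_\varphi^+ \cap V_\eta^+$, set $\varphi' = \varphi(\Qbar)$ and $\eta' = \eta(\Qbar)$; then $\varphi' \supseteq \varphi$ and $\eta' \supseteq \eta$, i.e. $(\varphi',\eta') \geq (\varphi,\eta)$, and $\Qbar$ has invariants $(\varphi',\eta')$, so $\Qbar \in W_{(\varphi',\eta')} \subseteq Z_{\varphi,\eta}$.

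The one place that uses a genuine input, rather than a formal shuffle of (locally) closed subsets, is the converse direction just above: the union defining $Z_{\varphi,\eta}$ ranges only over \emph{admissible} pairs, so one must know that the pair $(\varphi(\Qbar),\eta(\Qbar))$ attached to an arbitrary closed point $\Qbar$ of $\Ybar$ is automatically admissible. This is exactly part~(3) of the Proposition in \S\ref{subsec: discrete invariants}, and with it the pair $(\varphi',\eta')$ above is admissible and $W_{(\varphi',\eta')}$ does occur in the union. I expect no further obstacle: the remaining content is the dictionary between membership in $U_\varphi, U_\varphi^+$ (resp. $V_\eta, V_\eta^+$) and the (in)equalities $\varphi(\Qbar) = \varphi$, $\varphi(\Qbar) \supseteq \varphi$ (resp. the analogues for $\eta$), all of which is provided by Lemma~\ref{lemma: U phi and V phi}.
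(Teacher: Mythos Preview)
Your proof is correct and follows essentially the same approach as the paper: define $\Wpe = \Up \cap \Ve$ and $\Zpe = \Upplus \cap \Veplus$, invoking Lemma~\ref{lemma: U phi and V phi}. Your version is simply more explicit, spelling out the admissibility point (via part~(3) of the Proposition in \S\ref{subsec: discrete invariants}) that the paper leaves implicit.
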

\begin{proof}

The proposition follows from Lemma \ref{lemma: U phi and V phi} as we can define \[\Wpe = \Up \cap \Ve,
\qquad \Zpe = \Upplus \cap \Veplus.\]

\end{proof}

Let $\tau \subseteq \BB$. Recall the stratification on $\Xbar$
introduced in \cite{GorenOort, G}. There is a locally closed subset
$W_\tau$ of $\Xbar$ with the property that a closed point $\Pbar$ of
$\Xbar$ corresponding to $\uA$ belongs to $W_\tau$ if and only if
$\tau(\uA) = \tau$. There is a closed subset $Z_\tau$ of $\Xbar$
with the property that a closed point $\Pbar$ of $\Xbar$
corresponding to $\uA$ belongs to $Z_\tau$ if and only if $\tau(\uA)
\supseteq \tau$. The main properties of these sets are the
following:
\begin{enumerate}
\item The collection $\{W_\tau: \tau \subseteq \BB\}$ is a
stratification of $\Xbar$ and $\overline{W}_\tau = Z_\tau =
\bigcup_{\tau' \supseteq \tau} W_{\tau'}$.
\item Each $W_\tau$ is non-empty, regular, quasi-affine and equi-dimensional
of dimension $g - \sharp\; \tau$. \item The strata $\{W_\tau\}$
intersect transversally. In fact, let $\Pbar$ be a closed
$k$-rational point of $\Xbar$. There is a choice of isomorphism
\begin{equation}\label{equation: local def ring of X at P bar} \widehat{\calO}_{X, \Pbar} \cong
W(k)[\![t_\beta: \beta \in \BB ]\!],
\end{equation}
inducing
\begin{equation}\label{equation: local def ring at P bar} \widehat{\calO}_{\Xbar, \Pbar} \cong
k[\![t_\beta: \beta \in \BB ]\!],
\end{equation}
such  that for $\tau' \subseteq \tau(\Pbar)$, $W_{\tau'}$ (and $Z_{\tau'})$ are
given in $\Spf (\widehat{\calO}_{\Xbar, \Pbar})$ be the equations
$\{t_\beta = 0: \beta \in \tau'\}$.
\end{enumerate}
Let $\epsilon: \uA^{\rm univ} \arr \Xbar$ be the universal object.
The Hodge bundle $\calL = \epsilon_\ast \Omega^1_{\uA^{\rm
univ}/\Xbar}$ is a locally free sheaf of
$\ol\otimes\calO_{\Xbar}$-modules and so decomposes in line bundles
$\calL_\beta$, $\calL = \oplus_\beta \calL_\beta$. Let $h_\beta$ be
the partial Hasse invariant, which is a Hilbert modular form of
weight $p\sigma^{-1}\circ\beta-\beta$, i.e., a global section of the
line bundle $\calL^p_{\sigma^{-1}\circ \beta}\otimes
\calL_\beta^{-1}$, as in \cite{G}.  Then, the divisor of $h_\beta$
is reduced and equal to $Z_{\{\beta\}}$. For every closed point
$\Pbar \in Z_{\{\beta\}}$, one can trivialize the line bundle
$\calL^p_{\sigma^{-1}\circ \beta}\otimes \calL_\beta^{-1}$ over
$\Spf(\OhatPbar)$ and thus view $h_\beta$ as an element of
$\OhatPbar$. The variable $t_\beta$ can be chosen to coincide with
that function $h_\beta$. We now prove the fundamental
properties of the stratification of $\Ybar$.

\begin{thm}\label{theorem: fund'l facts about the stratificaiton of Ybar}
Let $\pe$ be an admissible pair, $I = \ell(\varphi) \cap
\eta$.
\begin{enumerate}
\item $\Wpe$ is non-empty, and its Zariski closure is $\Zpe$. The
collection $\{\Wpe\}$ is a stratification of $\Ybar$ by $3^g$
strata.
\item $\Wpe$ and $\Zpe$ are equi-dimensional, and
\[ \dim (\Wpe) = \dim(\Zpe) = 2g - (\sharp\; \varphi + \sharp \eta).\]
\item The irreducible components of $\Ybar$ are the irreducible
components of the strata $Z_{\varphi, \ell(\varphi^c)}$ for $\varphi
\subseteq \BB$.
\item Let $\Qbar$ be a closed point of $\Ybar$ with invariants
$\pe$, $I = \ell(\varphi) \cap \eta$. For an admissible pair
$\peprime$, we have $\Qbar\in Z_{\varphi^\prime,\eta^\prime}$ if and only if we have:
\[ \varphi \supseteq \varphi' \supseteq \varphi - r(I), \qquad \eta \supseteq \eta' \supseteq \eta - I.\]
In that case, write $\varphi' = \varphi - J, \eta' = \eta - K$ (so that $\ell(J) \subseteq I, K \subseteq I$ and
$\ell(J) \cap K = \emptyset$). We have:
\[
\widehat{\calO}_{Z_{\varphi', \eta'}, \Qbar} = \widehat{\calO}_{\Ybar, \Qbar}/\calI,
\]
where $\calI$ is the ideal
\[\calI = \left\langle \{x_\beta: \beta \in I - K\}, \{y_\gamma: \gamma \in I - \ell(J)\} \right\rangle.\]
This implies that each stratum in the stratification $\{\Zpe\}$ is
non-singular.
\end{enumerate}
\end{thm}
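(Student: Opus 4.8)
\emph{The plan.} I would reduce the whole statement to the local computation of part (4), from which (1), (2), (3) then follow by dimension bookkeeping. The two local inputs are Theorem~\ref{thm: Stamm's theorem} (the shape of $\widehat{\calO}_{\Ybar,\Qbar}$) and Lemma~\ref{lemma: U phi and V phi} (the traces of $\Upplus$, $\Veplus$, and of their singleton versions, on $\Spf(\widehat{\calO}_{\Ybar,\Qbar})$). Beyond these, the only facts used are that $\Ybar$ is equi-dimensional of dimension $g$ (already recorded) and that the deepest Goren--Oort stratum $W_\BB\subseteq\Xbar$ is non-empty (a recalled property of that stratification).

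\emph{Part (4).} Fix a closed point $\Qbar$ with invariants $\pe$ and set $I=\ell(\varphi)\cap\eta$, so $I\subseteq\ell(\varphi)$ and $I\subseteq\eta$. By Theorem~\ref{thm: Stamm's theorem}, $\widehat{\calO}_{\Ybar,\Qbar}\cong k[\![\{x_\beta:\beta\in\ell(\varphi)\},\{y_\beta:\beta\in\eta\}]\!]/(\{x_\beta y_\beta:\beta\in I\})$, while by Lemma~\ref{lemma: U phi and V phi} the trace of $U_{\{\beta\}}^+$ (resp.\ $V_{\{\beta\}}^+$) on $\Spf(\widehat{\calO}_{\Ybar,\Qbar})$ is the whole formal spectrum unless $\beta\in r(I)$ (resp.\ $\beta\in I$), where it is cut out by $y_{\sigma^{-1}\circ\beta}$ (resp.\ $x_\beta$). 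Since $Z_{\varphi',\eta'}=\bigcap_{\beta\in\varphi'}U_{\{\beta\}}^+\cap\bigcap_{\beta\in\eta'}V_{\{\beta\}}^+$, the point $\Qbar$ lies on $Z_{\varphi',\eta'}$ exactly when $\varphi'\subseteq\varphi$ and $\eta'\subseteq\eta$; writing $J=\varphi\setminus\varphi'$, $K=\eta\setminus\eta'$, and combining the decompositions $\eta=\ell(\varphi^c)\sqcup I$, $\varphi=r(\eta^c)\sqcup r(I)$ (the Proposition of \S\ref{subsec: discrete invariants}) with the admissibility inclusions $\ell((\varphi')^c)\subseteq\eta'$ and $r((\eta')^c)\subseteq\varphi'$, one sees that $\ell(J)\subseteq I$, $K\subseteq I$ and $\ell(J)\cap K=\emptyset$ --- equivalently $\varphi\supseteq\varphi'\supseteq\varphi\setminus r(I)$, $\eta\supseteq\eta'\supseteq\eta\setminus I$. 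Intersecting the local equations above then gives $\widehat{\calO}_{Z_{\varphi',\eta'},\Qbar}=\widehat{\calO}_{\Ybar,\Qbar}/\calI$ with $\calI=\langle\,\{x_\beta:\beta\in I\setminus K\},\ \{y_\gamma:\gamma\in I\setminus\ell(J)\}\,\rangle$, as claimed; and since $\ell(J)$ and $K$ are disjoint subsets of $I$, every $\beta\in I$ lies in $I\setminus K$ or in $I\setminus\ell(J)$, so each relation $x_\beta y_\beta$ dies modulo $\calI$ and $\widehat{\calO}_{Z_{\varphi',\eta'},\Qbar}$ is a formal power series ring over $k$. As every closed point of a fixed stratum $\Zpe$ has invariants $\geq\pe$, this shows each $\Zpe$ is non-singular.

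\emph{Parts (2) and (1).} Taking $J=K=\emptyset$ above, $\widehat{\calO}_{\Zpe,\Qbar}\cong k[\![\{x_\beta:\beta\in\ell(\varphi)\setminus I\},\{y_\beta:\beta\in\eta\setminus I\}]\!]$ has dimension $(\sharp\varphi-\sharp I)+(\sharp\eta-\sharp I)=2g-(\sharp\varphi+\sharp\eta)$, using the identity $\sharp\varphi+\sharp\eta=g+\sharp I$ for admissible pairs (a consequence of $\varphi=r(\eta^c)\sqcup r(I)$); since this holds at every closed point, $\Zpe$ is regular and equi-dimensional of dimension $2g-(\sharp\varphi+\sharp\eta)$. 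For non-emptiness, choose $\uA$ with $\tau(\uA)=\BB$ (possible as $W_\BB\subseteq\Xbar$ is non-empty); then $(\uA,\Ker(\Fr_A))$ is a point of $\Ybar$ by Lemma~\ref{lemma: cyclic is isotropic}, has $\varphi=\BB$ since $\Lie(\Fr_A)=0$, and hence $\eta=\tau(\uA)=\BB$ by Corollary~\ref{corollary: type and phi eta}, so $W_{\BB,\BB}\neq\emptyset$; as $W_{\BB,\BB}\subseteq\Zpe$ for every admissible $\pe$, all $\Zpe$ are non-empty. Next, $\Wpe$ is the complement in $\Zpe$ of the closed set $\bigcup Z_{\varphi',\eta'}$ (union over admissible $(\varphi',\eta')\neq\pe$ with $\varphi'\supseteq\varphi$, $\eta'\supseteq\eta$), each of dimension $<\dim\Zpe$; so $\Wpe$ is open and dense in $\Zpe$, whence $\Wpe\neq\emptyset$, $\overline{\Wpe}=\Zpe$, and $\Wpe$ is equi-dimensional of dimension $2g-(\sharp\varphi+\sharp\eta)$. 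Finally the $\{\Wpe\}$ are pairwise disjoint (uniqueness of invariants), cover $\Ybar$ (every closed point has admissible invariants, by the Proposition of \S\ref{subsec: discrete invariants}), and number $3^g$, so they form a stratification; this gives (1) and (2).

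\emph{Part (3), and the main difficulty.} An admissible pair has $\dim\Wpe=g$ exactly when $\sharp I=0$, i.e.\ $\eta=\ell(\varphi^c)$; thus $\bigsqcup_{\sharp I=0}\Wpe=\bigcup_{\varphi\subseteq\BB}W_{\varphi,\ell(\varphi^c)}$ is the union of the top-dimensional strata. Since $(\varphi',\eta')\geq\pe$ forces $\ell(\varphi')\cap\eta'\supseteq\ell(\varphi)\cap\eta$, we have $\Ybar\setminus\bigcup_{\sharp I>0}\Zpe=\bigsqcup_{\sharp I=0}\Wpe$, and $\bigcup_{\sharp I>0}\Zpe$ is closed of dimension $<g$; hence $\bigsqcup_{\sharp I=0}\Wpe$ is open and dense in the equi-dimensional $\Ybar$, so $\bigcup_{\varphi\subseteq\BB}Z_{\varphi,\ell(\varphi^c)}$ --- a finite union of closed sets, each equi-dimensional of dimension $g$ and containing a dense subset of $\Ybar$ --- equals $\Ybar$. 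Comparing irreducible components of the two sides, all of dimension $g=\dim\Ybar$, yields (3). The main obstacle is the combinatorics in part (4): tracking precisely how admissibility pins down the range of $(\varphi',\eta')$ relative to $\pe$, and how the intersections of the $U_{\{\beta\}}^+$- and $V_{\{\beta\}}^+$-equations assemble into the ideal $\calI$. Once (4) is in hand, the remainder is dimension arithmetic together with the single non-emptiness input.
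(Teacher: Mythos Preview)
Your proposal is correct and follows essentially the same approach as the paper: prove part (4) first via Stamm's theorem and Lemma~\ref{lemma: U phi and V phi}, then deduce (1)--(3) by dimension bookkeeping and the existence of a superspecial point with invariants $(\BB,\BB)$. The only organizational differences are that the paper packages the ``$\beta\notin r(I)$'' case of Lemma~\ref{lemma: U phi and V phi} as a separate \emph{Claim} (that a Zariski neighborhood of $\Qbar$ lies in $U^+_{\varphi-r(I)}\cap V^+_{\eta-I}$) before computing $V(\calI)$, and that the paper derives equi-dimensionality of $\Ybar$ as a corollary of (3) rather than taking it as input---your argument for (3) in fact goes through without that input, since every admissible $(\varphi',\eta')$ satisfies $(\varphi',\eta')\geq(\varphi',\ell((\varphi')^c))$, so $\Ybar=\bigcup_\varphi Z_{\varphi,\ell(\varphi^c)}$ holds directly.
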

\begin{proof} We begin with the proof of assertion
(4), keeping the notation $\varphi = \varphi(\Qbar), \eta =
\eta(\Qbar), I = I(\Qbar)$. We need the following fact.

\

\id {\bf Claim:} \emph{There exists a Zariski open  set $U$ with
$\Qbar \in U$, such that for every closed point $\Qbar'\in U$ one
has
\[\varphi(\Qbar') \supseteq \varphi - r(I), \qquad \eta(\Qbar') \supseteq \eta - I.\]
In words, locally Zariski, $\pe$ can become smaller only at
$\beta\in I$}.

\

To prove the claim, choose
\[ U = \Ybar - \left[\bigcup_{\beta\in r(\eta - I)}U_\beta^+ \;\cup \bigcup_{\beta \in \ell(\varphi) - I}V_\beta^+\right].\]
We verify that this choice of $U$ is adequate. Firstly, $\Qbar \in
U$. Indeed, since $\varphi = r(\eta^c) \cup r(I)$, if
$\beta \in r(\eta) - r(I)$ then $\beta \not\in \varphi$. Similarly,
since $\eta = \ell(\varphi^c) \cup I$, if $\beta\in
\ell(\varphi) - I$ then $\beta \not\in \eta$. That shows that $\Qbar
\not\in \bigcup_{\beta\in r(\eta - I)}U_\beta^+ \;\cup
\bigcup_{\beta \in \ell(\varphi) - I}V_\beta^+$ and so that $\Qbar
\in U$.

Let $\Qbar' \in U$ then: (i) Since $\varphi(\Qbar') \subseteq r(\eta
- I)^c$ we have $\ell(\varphi(\Qbar')^c) \supseteq \eta - I$ and by
admissibility $\eta(\Qbar') \supseteq \eta - I$; (ii) $\eta(\Qbar')
\subseteq (\ell(\varphi) - I)^c$ and so $r(\eta(\Qbar')^c) \supseteq
\varphi - r(I)$ and admissibility gives $\varphi(\Qbar') \supseteq
\varphi - r(I)$. Thus, the set $U$ is contained in $U^+_{\varphi -
r(I)} \cap V^+_{\eta - I}$, and our claim is proved.

\

By Lemma \ref{lemma: U phi and V phi}, for the
ideal $\calI$ in the theorem,
\begin{align*}
V(\calI) & = \bigcap_{\gamma\in I - \ell(J)}U_{\sigma\circ \gamma}^+
\; \cap \bigcap_{\beta\in I - K} V_\beta^+ \; \cap \;
\Spf(\widehat{\calO}_{\Ybar, \Qbar})\\
& = \left(\bigcap_{\gamma\in I - \ell(J)}U_{\sigma\circ \gamma}^+ \;
\cap \; U^+_{\varphi - r(I)}\right) \; \cap \;
\left(\bigcap_{\beta\in I - K} V_\beta^+ \; \cap \; V^+_{\eta - I}
\right)\; \cap \;
\Spf(\widehat{\calO}_{\Ybar, \Qbar}) \\
& = Z_{\varphi - J, \eta - K}.
\end{align*}
(We made use of the Claim in the second equality.) This concludes
the proof of assertion (4).

\

\id
 There is a point $\Qbar$ with invariants $\pe = (\BB, \BB)$ -- it
corresponds to $(\uA, H)$, where $\uA$ is superspecial and $H$ is
the kernel of Frobenius. The point $\Qbar$ belongs to every strata
$\Zpe$ and hence each $\Zpe$ is non-empty. The above computations
also show that $\Zpe$ is  pure dimensional and $\dim(\Zpe) = 2g -
(\sharp\; \varphi + \sharp\; \eta)$.

Since $\Zpe-\Wpe=\bigcup_{(\varphi', \eta')\gneqq \pe}Z_{(\varphi', \eta')}$ is a union of lower-dimensional strata, it follows that $\Wpe$ is non-empty for all admissible $\pe$.  The computations above show that $\Wpe$ is
pure-dimensional and $\dim (\Wpe) =2g - (\sharp\; \varphi+ \sharp\; \eta)$ as well.

We know that $\Zpe$ is closed and contains $\Wpe$, hence
$\overline{W}_{\varphi, \eta}$. Dimension considerations imply that
$\overline{W}_{\varphi, \eta}$ must be a union of irreducible
components of $\Zpe$. If $\overline{W}_{\varphi, \eta} \neq \Zpe$,
then the remaining components of $\Zpe$ are contained in
$\bigcup_{(\varphi', \eta')\gneqq \pe}Z_{(\varphi', \eta')}$, which
is not possible by dimension considerations.

It remains only to prove assertion (3). First note that, by
admissibility, $\dim(\Zpe)=g$ exactly when $\eta=\ell(\varphi)^c$.
Let $C$ be an irreducible component of $\Ybar$. Since $C$ is
contained in the union of all $g$-dimensional closed strata, it must
be contained in a single one, i.e., $C\subseteq\Zpe$ for some
$\pe$. Therefore, $C$ must be an irreducible component of $\Zpe$.
Conversely, every irreducible component of
$Z_{\varphi,\ell(\varphi)^c}$  is $g$-dimensional, and hence an
irreducible component of $\Ybar$. In particular, $\Ybar$ is of pure
dimension $g$.
\end{proof}

\begin{cor}
The singular locus $\Ybar^{\rm sing}$ of $\Ybar$ has the following
description.
\[\Ybar^{\rm sing} = \Ybar - \bigcup_{\varphi \subset \BB} W_{\varphi, \ell(\varphi^c)} = \bigcup_{\tiny{\begin{matrix}
\pe\\
\ell(\varphi) \cap \eta \neq \emptyset
\end{matrix}}}\Wpe. \]
If $\Qbar \in \Ybar^{\rm sing} $ then there are $2^{\sharp\;
I(\Qbar)}$ irreducible components passing through $\Qbar$, all
$g$-dimensional.
\end{cor}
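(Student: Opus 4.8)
The plan is to extract everything from the local description of $\widehat{\calO}_{\Ybar,\Qbar}$ furnished by Theorem~\ref{thm: Stamm's theorem}, namely
\[
\widehat{\calO}_{\Ybar,\Qbar}\cong k[\![\{x_\beta,y_\beta:\beta\in I\},\{z_\beta:\beta\in I^c\}]\!]/(\{x_\beta y_\beta:\beta\in I\}),\qquad I=I(\Qbar)=\ell(\varphi(\Qbar))\cap\eta(\Qbar).
\]
First I would observe that this ring is regular exactly when $I=\emptyset$: each relation $x_\beta y_\beta$ lies in the square of the maximal ideal, so the embedding dimension is $2\sharp I+\sharp I^c=g+\sharp I$, while the Krull dimension is $g$; hence regularity is equivalent to $\sharp I=0$, and when $I=\emptyset$ the ring is literally $k[\![z_\beta:\beta\in\BB]\!]$. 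Thus $\Qbar$ is a smooth point of $\Ybar$ if and only if $I(\Qbar)=\emptyset$.

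Next I would translate $I(\Qbar)=\emptyset$ into a condition on the invariants $(\varphi,\eta)=(\varphi(\Qbar),\eta(\Qbar))$. By the decomposition $\eta=\ell(\varphi^c)\sqcup I$ of the Proposition in \S\ref{subsec: discrete invariants}, one has $I=\emptyset$ if and only if $\eta=\ell(\varphi^c)$. Consequently the smooth locus of $\Ybar$ is precisely $\bigsqcup_{\varphi\subseteq\BB}W_{\varphi,\ell(\varphi^c)}$; since $\{\Wpe\}$ is a stratification of $\Ybar$ (Theorem~\ref{theorem: fund'l facts about the stratificaiton of Ybar}(1)), taking complements inside $\Ybar$ yields both displayed equalities for $\Ybar^{\rm sing}$.

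For the branch count at a singular point $\Qbar$, I would first note purely ring-theoretically that the minimal primes of the ring above are the $2^{\sharp I}$ ideals $\langle\{x_\beta:\beta\in S\},\{y_\gamma:\gamma\in I- S\}\rangle$ for $S\subseteq I$, and that modding out by any one of them gives a power series ring in $g$ variables, hence a $g$-dimensional domain. To pass from $\Spf(\widehat{\calO}_{\Ybar,\Qbar})$ to $\Ybar$ itself, I would invoke Theorem~\ref{theorem: fund'l facts about the stratificaiton of Ybar}: by part (3) the irreducible components of $\Ybar$ are those of the $g$-dimensional strata $Z_{\varphi',\ell(\varphi'^c)}$, and by part (4), specialising $\eta'=\ell(\varphi'^c)$ and letting $\varphi'$ run over the admissible sets with $\varphi\supseteq\varphi'\supseteq\varphi-r(I)$ (there are exactly $2^{\sharp I}$ of them, and part~(4) shows these are precisely the ones containing $\Qbar$), the ideal $\calI$ cutting out $Z_{\varphi',\eta'}$ in $\widehat{\calO}_{\Ybar,\Qbar}$ is, after the substitution $S=\ell(J)$, exactly one of the minimal primes above, with $\widehat{\calO}_{Z_{\varphi',\eta'},\Qbar}$ a power series ring. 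So each of these strata is analytically irreducible at $\Qbar$, they cover $\Ybar$ near $\Qbar$, and their branches are distinct; therefore the irreducible components of $\Ybar$ through $\Qbar$ are in bijection with the minimal primes of $\widehat{\calO}_{\Ybar,\Qbar}$, giving $2^{\sharp I(\Qbar)}$ of them, all of dimension $g$ since $\Ybar$ is pure of dimension $g$.

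The step I expect to be the main obstacle is the last one: the bookkeeping matching the combinatorial parametrisation of the strata $Z_{\varphi',\ell(\varphi'^c)}$ through $\Qbar$ (by $J\subseteq r(I)$, equivalently $S=\ell(J)\subseteq I$) with the minimal primes of $\widehat{\calO}_{\Ybar,\Qbar}$, together with the (routine but necessary) verification that counting analytic branches legitimately counts algebraic irreducible components here — which is valid precisely because each relevant $g$-dimensional stratum has a domain as its completed local ring at $\Qbar$, so is analytically unibranch there. The regularity computation and the identification of the smooth locus are straightforward given Theorem~\ref{thm: Stamm's theorem} and \S\ref{subsec: discrete invariants}.
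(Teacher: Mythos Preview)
Your proposal is correct and follows precisely the approach implicit in the paper: the corollary is stated without separate proof, as it follows directly from Stamm's theorem (the regularity criterion $I(\Qbar)=\emptyset$) together with parts (3) and (4) of Theorem~\ref{theorem: fund'l facts about the stratificaiton of Ybar}. Your bookkeeping matching the $2^{\sharp I}$ minimal primes $\langle\{x_\beta:\beta\in S\},\{y_\gamma:\gamma\in I\setminus S\}\rangle$ with the strata $Z_{\varphi',\ell(\varphi'^c)}$ through $\Qbar$ is exactly what the paper later spells out in \S2.8 (there indexed by $J=I\setminus S$, with $\calI_J=\langle\{x_\beta:\beta\in I\setminus J\},\{y_\beta:\beta\in J\}\rangle$ and $V(\calI_J)=Z^{\wedge\Qbar}_{r(\eta-J)^c,\eta-J}$), and your unibranch argument is the right way to pass from formal branches to global components.
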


\

\begin{dfn}\label{definition: horizontal
strata}  Recall that $\mathbb{B}_\mathfrak{p}= \{\beta\in\mathbb{B}:
\beta^{-1}(pW(\kappa)) = \mathfrak{p}\}$. Let $\gert$ be an ideal of $\ol$ dividing $p$.  Let $\gert^\ast = p/\gert$ (so $\gert \gert^\ast =
p\ol$). Let $\BB_\gert = \cup_{\gerp \vert \gert} \BB_\gerp$, and
let $f(\gert) = \sum_{\gerp\vert \gert} f(\gerp/p)$ be the sum of
the residue degrees.
\end{dfn}

The following proposition is clear from definitions.
\begin{prop}\label{proposition: ordinary strata} We
have
\begin{enumerate}
\item $\Ybar_F=Z_{\BB,\emptyset}$, and $\Ybar_V=Z_{\emptyset,\BB}$.
\item $\Xbar^{\rm ord}=W_\emptyset$.
\item $\Ybar^{\rm ord}=\cup_{\gert\vert p} W_{\BB_\gert,\BB_{\gert^\ast}}$.
\item $\Ybar^{\rm ord}_F= W_{\BB,\emptyset}$; $\Ybar^{\rm ord}_V= W_{\emptyset,\BB}$.
\end{enumerate}
\end{prop}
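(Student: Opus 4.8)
The plan is to translate each of the four assertions into the language of the Dieudonné-theoretic discrete invariants $\varphi(\uA,H)$, $\eta(\uA,H)$, $\tau(\uA)$ introduced in \S\ref{subsec: discrete invariants}, and then read off the answer from the already-established descriptions of the strata $\Wpe\subseteq\Ybar$ and $W_\tau\subseteq\Xbar$. The key inputs are: (i) Proposition \ref{proposition: ordinary strata}'s preceding material, namely that $\Xbar^{\rm ord}$ is by definition the locus of ordinary $\uA$, so $\Xbar^{\rm ord}=W_\emptyset$ since $\uA$ is ordinary iff $\alpha(A)=\Ker(\Fr_A)\cap\Ker(\Ver_A)=0$ iff $\tau(\uA)=\emptyset$; (ii) the computation of $\varphi$ and $\eta$ for the Frobenius section $s\colon\Xbar\to\Ybar$, which sends $\uA$ to $(\uA,\Ker(\Fr_A))$; and (iii) the compatibility $\pi(Z_{\varphi\cap\eta})$-type statements, here only needed in the trivial direction that $\pi$ maps a point with type $\tau(\uA)$ to $\uA$.

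\textbf{Assertion (2): $\Xbar^{\rm ord}=W_\emptyset$.} First I would recall that in the Notation and in \S2.1 the ordinary locus $\Xbar^{\rm ord}$ is defined as the locus where $A[p]$ is an extension of an étale group scheme by a multiplicative one; equivalently, where $\Ker(\Fr_A)$ is multiplicative, equivalently $\alpha(A)=0$. Since $\tau(\uA)=\{\beta:\DD(\alpha(A))_\beta\neq0\}$ and each $\DD(\alpha(A))_\beta$ is either $0$ or one-dimensional (as noted after Lemma \ref{lemma: type in terms of Dieudonne modules} using the Rapoport condition), we get $\alpha(A)=0\iff\tau(\uA)=\emptyset$. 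Hence $\Xbar^{\rm ord}$ coincides with $W_\emptyset$, the stratum of type $\emptyset$.

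\textbf{Assertions (1), (3), (4): identifying the relevant strata of $\Ybar$.} For (1), $\Ybar_F=s(\Xbar)$ consists of points $(\uA,\Ker(\Fr_A))$. Take $f\colon\uA\to\uB=\uA/\Ker(\Fr_A)$; then $f$ is (up to the canonical identification) $\Fr_A$, so $\Lie(f)=\Lie(\Fr_A)=0$ on every $\beta$-component, whence $\varphi(\uA,H)=\{\beta:\Lie(f)_{\sigma^{-1}\circ\beta}=0\}=\BB$. Dually, $f^t$ is Verschiebung and $\Lie(f^t)=\Lie(\Ver_A)$ is an isomorphism on each component when $\uA$ is ordinary and — one checks — nonzero on exactly the components where $\DD$ allows it; the cleanest route is: admissibility forces $\ell(\varphi^c)\subseteq\eta$, and with $\varphi=\BB$ this is vacuous, while $I=\ell(\varphi)\cap\eta=\eta$, and the defining property of $\Ybar_F$ (every $H=\Ker(\Fr)$) combined with Lemma \ref{lemma: type in terms of Dieudonne modules}(3) pins down $\eta=\emptyset$. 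So $\Ybar_F=\{\Qbar:\varphi(\Qbar)=\BB,\ \eta(\Qbar)=\emptyset\}$, but since $(\BB,\emptyset)$ is the \emph{maximal} admissible pair with that $\varphi$, and $Z_{\BB,\emptyset}=\bigcup_{(\varphi',\eta')\geq(\BB,\emptyset)}W_{(\varphi',\eta')}=W_{\BB,\emptyset}$ (there is no larger pair), we get $\Ybar_F=Z_{\BB,\emptyset}=W_{\BB,\emptyset}$; applying the automorphism $w$ and using $w(\Ybar_F)=\Ybar_V$ together with the symmetry $\varphi\leftrightarrow\eta$ under $w$ gives $\Ybar_V=Z_{\emptyset,\BB}=W_{\emptyset,\BB}$. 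This proves (1) and (4). For (3), the ordinary locus of $\Ybar$ is the preimage under $\pi$ (or rather the locus lying over $\Xbar^{\rm ord}$); a point $(\uA,H)$ with $\uA$ ordinary has $\tau(\uA)=\emptyset$, and Corollary \ref{corollary: type and phi eta} forces $\varphi\cap\eta=\emptyset$, while for ordinary $\uA$ one has $\Fr$ and $\Ver$ of full rank on $\DD(A[p])$, so $H$ is determined componentwise by choosing, for each $\gerp\mid p$, whether $H_\gerp$ is $\Ker(\Fr)$ or $\Ker(\Ver)$ on the (transitively permuted) block $\BB_\gerp$ — because $H$ is an $\ol$-submodule the choice is uniform on each block; this gives exactly the decompositions $\BB=\BB_\gert\amalg\BB_{\gert^\ast}$ indexed by $\gert\mid p$, with $\varphi=\BB_\gert$, $\eta=\BB_{\gert^\ast}$, hence $\Ybar^{\rm ord}=\bigcup_{\gert\mid p}W_{\BB_\gert,\BB_{\gert^\ast}}$.

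\textbf{Main obstacle.} The genuinely non-formal point is verifying that for $\Qbar=(\uA,\Ker(\Fr_A))$ one has $\eta(\Qbar)=\emptyset$ rather than just $\eta(\Qbar)\subseteq\ell(\varphi^c)=\emptyset$ being automatic — in fact with $\varphi=\BB$ the admissibility constraint $\ell(\varphi^c)\subseteq\eta$ is vacuous, so one must argue directly that $\Lie(f^t)_\beta\neq0$ for all $\beta$ when $f^t=\Ver_A$ and $H=\Ker(\Fr_A)$. This follows because $f^t\circ f=[p]$ together with $\Lie(f)=0$ forces nothing, but the right statement is Lemma \ref{lemma: type in terms of Dieudonne modules}(3): $\beta\in\eta(f)\iff\Ima(\DD(\Ver_A))_\beta=\Ima(\DD(f))_\beta$, and here $\DD(f)=\DD(\Fr_A)$, so $\beta\in\eta(f)\iff\Ima(\Ver)_\beta=\Ima(\Fr)_\beta\iff\beta\in\tau(\uA)$ by Lemma \ref{lemma: type in terms of Dieudonne modules}(1); but $\Ybar_F$ is a \emph{component} of $\Ybar$, so it meets the ordinary locus in a dense open set, on which $\tau(\uA)=\emptyset$, forcing $\eta(f)=\emptyset$ generically and hence (being a locally constant-on-strata invariant that can only grow under specialization within $\Ybar_F=Z_{\BB,\emptyset}$, which however has $\eta'=\emptyset$ as its unique maximal-$\varphi$ value) identically $\emptyset$. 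Everything else is bookkeeping with the combinatorics of admissible pairs and the $\sigma$-action on $\BB$.
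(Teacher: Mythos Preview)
Your argument for (1) contains a real error. You claim that $Z_{\BB,\emptyset}=W_{\BB,\emptyset}$ because ``there is no larger pair'', but this is false: $(\BB,\BB)\geq(\BB,\emptyset)$, and indeed $Z_{\BB,\emptyset}=\bigcup_{\eta'\subseteq\BB}W_{\BB,\eta'}$ is strictly larger than $W_{\BB,\emptyset}$ whenever $g\geq1$. Relatedly, your ``Main obstacle'' paragraph tries to show $\eta(\Qbar)=\emptyset$ for every $\Qbar\in\Ybar_F$, and your own computation via Lemma~\ref{lemma: type in terms of Dieudonne modules} shows this is wrong: you correctly derive $\eta(\Qbar)=\tau(\pi(\Qbar))$ on $\Ybar_F$, and $\tau$ is certainly not identically $\emptyset$ (e.g.\ at a superspecial point $\tau=\BB$). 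The attempted ``generic to global'' step at the end is invalid precisely because $\eta$ genuinely jumps under specialization inside $\Ybar_F$.

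The fix is that (1) and (4) are two different statements and require different amounts of input. For (1), you only need $\varphi(\Qbar)=\BB$: by Lemma~\ref{lemma: type in terms of Dieudonne modules}(2), $\varphi(\Qbar)=\BB$ is equivalent to $\Ima(\DD(f))_\beta=\Ima(\DD(\Fr_A))_\beta$ for all $\beta$, i.e.\ $H=\Ker(\Fr_A)$, i.e.\ $\Qbar\in\Ybar_F$; and $Z_{\BB,\emptyset}$ is by definition the locus $\varphi(\Qbar)\supseteq\BB$, i.e.\ $\varphi(\Qbar)=\BB$. No control of $\eta$ is needed. For (4), your identity $\eta(\Qbar)=\tau(\pi(\Qbar))$ on $\Ybar_F$ is exactly what is required: it gives $\Ybar_F^{\rm ord}=\Ybar_F\cap\pi^{-1}(W_\emptyset)=\{\varphi=\BB,\ \eta=\emptyset\}=W_{\BB,\emptyset}$. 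Your arguments for (2) and (3) are fine. (The paper itself treats the whole proposition as immediate from the definitions, but the substance is as above.)
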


\

\subsection{The fibres of $\pi\colon \Ybar \arr \Xbar$}

\

\id {\bf A certain Grassmann variety.} Fix a closed $k$-rational
point $\Pbar$ of $\Xbar$ corresponding to $\uA$, where $k$ is
an algebraically closed field. Let $\DD = \DD(\uA[p]) =
\oplus_{\beta\in \BB} \DD_\beta$; each $\DD_\beta$ is a
$2$-dimensional vector space over $k$ on which $\ol$ acts via
$\beta$.  Recall from \S\ref{subsection:Facts about D modules} that
the kernel of Frobenius and the Kernel of Verschiebung, two
Dieudon\'e submodules of $\DD$, decompose as
\[\Ker(\Fr) = \oplus_{\beta\in \BB} \Ker (\Fr)_\beta, \qquad \Ker (\Ver) = \oplus_{\beta\in \BB}
\Ker (\Ver)_\beta,\] where each $\Ker (\Fr)_\beta, \Ker (\Ver)
_\beta$ is a one dimensional subspace of $\DD_\beta$. By Lemma \ref{lemma: type in terms of Dieudonne modules}, we have
\[\beta \in \tau(\uA) \Longleftrightarrow \Ker (\Fr)_\beta = \Ker
(\Ver)_\beta.\] Consider the variety $\scrG = \scrG(\Pbar)$
parameterizing subspaces $\HH = \oplus \HH_\beta$ of $\DD$
satisfying the conditions:
\begin{itemize}
\item $\HH_\beta \subset \DD_\beta$ is 1-dimensional.
\item $\Fr(\HH(\beta))\subseteq \HH_{\sigma \circ \beta}$.
\item $\Ver(\HH(\beta))\subseteq \HH_{\sigma^{-1} \circ \beta}$.
\end{itemize}
We give $\scrG$ the scheme structure of a closed reduced subscheme
of $(\PP^1_k)^g$. It is a generalized Grassmann variety.

Define a morphism
\[ g\colon \pi^{-1}(\Pbar)_{\rm red} \Arr \scrG,\]
as follows. We use the identification $\DD = H^1_{\rm dR}(\uA,
\calO_{\uA})$.  The universal family $(f\colon \uA^{\rm univ} \arr
\uB^{\rm univ})$ over the reduced fibre $\pi^{-1}(\Pbar)_{\rm red}$
produces a sub-vector bundle of $\DD \times \pi^{-1}(\Pbar)_{\rm red}$ by considering
$f^\ast \HH^1_{\rm dR}(\uB, \calO_{\uB})$, which point-wise is
$f^\ast \HH^1_{\rm dR}(\uB_x, \calO_{\uB_x})=\DD(f)(\DD(\uB_x[p]))$
($x\in\pi^{-1}(\Pbar)_{\rm red}$),  and so is a subspace of the kind
parameterized by $\scrG$. By the universal property of Grassmann
variety $(\PP^1_k)^g = {\rm Grass}(1, 2)^g$, we get a morphism
$g\colon : \pi^{-1}(\Pbar)_{\rm red} \arr (\PP^1_k)^g$ that factors
through $\scrG$, because it does so at every closed point of
$\pi^{-1}(\Pbar)_{\rm red}$. We note that for every $x$ as above
$\DD/f^\ast \HH^1_{\rm dR}(\uB_x, \calO_{\uB_x}) = \DD(\Ker(f_x))$
and so it is clear that $g$ is injective on geometric points and in
fact, by the theory of Dieudonn\'e modules, bijective. We have
therefore constructed a bijective morphism
\[ g\colon \pi^{-1}(\Pbar)_{\rm red} \Arr \scrG.\]
Since $g$ is a morphism between projective varieties, it is closed
and hence it is a homeomorphism. We will use this in what follows.

  It will be convenient for us to think of the fibre
$\pi^{-1}(\Pbar)_{\rm red} $ entirely in terms of $\scrG$. To this
end, we provide some definitions. For $\HH \subset \DD$ as above,
define
\[ \varphi(\HH) = \{ \beta \in \BB: \HH_\beta = \Ker(\Ver)_\beta\},\]
and
\[ \eta(\HH) = \{ \beta \in \BB: \HH_\beta = \Ker(\Fr)_\beta\}.\]
\begin{lem}
Let $H \subset \uA[p]$ be a subgroup scheme such that $(\uA, H) \in
\pi^{-1}(\Pbar)$. Let $f:A\arr A/H$ be the canonical map. Let $\HH = \Ima [\DD(f)]
= \Ker[\DD(\uA[p]) \arr \DD(H)]$. Then,
\[ \varphi(\uA, H) = \varphi(\HH), \qquad \eta(\uA, H) = \eta(\HH).\]
\end{lem}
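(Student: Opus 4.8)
The plan is to deduce the lemma directly from Lemma~\ref{lemma: type in terms of Dieudonne modules} together with the rank identities for Dieudonné modules recorded in \S\ref{subsection:Facts about D modules}, so that almost nothing new needs to be proved. First I would unwind the two sides of each asserted equality. By the very definition of $\HH$ we have $\HH_\beta = \Ima(\DD(f))_\beta$ for each $\beta \in \BB$, where $\DD(f)$ denotes the map induced on the Dieudonné modules of the $p$-torsion, and, regarding $f$ as a morphism of polarized RM data $\uA \arr \uB := \uA/H$, the discrete invariants attached to $H$ agree with those attached to $f$: $\varphi(\uA, H) = \varphi(f)$ and $\eta(\uA, H) = \eta(f)$.

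Next I would record the identifications of the images of Frobenius and Verschiebung on $\DD := \DD(\uA[p])$. From the rank count in \S\ref{subsection:Facts about D modules} one has $\Ima(\DD(\Fr_A)) = \Fr(\DD) = \Ker(\Ver\colon \DD \arr \DD)$ and $\Ima(\DD(\Ver_A)) = \Ver(\DD) = \Ker(\Fr\colon \DD \arr \DD)$. Passing to $\beta$-components — which is legitimate since, under the Rapoport condition, each of these submodules is the direct sum of its one-dimensional $\beta$-pieces inside the $\DD_\beta$ — yields $\Ima(\DD(\Fr_A))_\beta = \Ker(\Ver)_\beta$ and $\Ima(\DD(\Ver_A))_\beta = \Ker(\Fr)_\beta$, which are precisely the subspaces appearing in the definitions of $\varphi(\HH)$ and $\eta(\HH)$.

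Finally I would combine these with Lemma~\ref{lemma: type in terms of Dieudonne modules}(2),(3): $\beta \in \varphi(\uA, H) = \varphi(f)$ if and only if $\Ima(\DD(\Fr_A))_\beta = \Ima(\DD(f))_\beta$, i.e.\ $\Ker(\Ver)_\beta = \HH_\beta$, which is exactly the condition $\beta \in \varphi(\HH)$; and symmetrically $\beta \in \eta(\uA, H) = \eta(f)$ if and only if $\Ima(\DD(\Ver_A))_\beta = \Ima(\DD(f))_\beta$, i.e.\ $\Ker(\Fr)_\beta = \HH_\beta$, that is $\beta \in \eta(\HH)$. This gives both equalities $\varphi(\uA, H) = \varphi(\HH)$ and $\eta(\uA, H) = \eta(\HH)$. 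I do not expect a genuine obstacle here: Lemma~\ref{lemma: type in terms of Dieudonne modules} has already carried out the substantive step of translating $\varphi$ and $\eta$ into statements about images of Dieudonné maps via the de Rham/crystalline comparison, and the present lemma merely repackages that translation in terms of the subspace $\HH$. The only point requiring mild care is the bookkeeping between $\DD(f)$ as a map of full $p$-torsion Dieudonné modules and its restriction to $\Ker(\Fr_A)$ and $\Ker(\Ver_A)$, but this was already settled inside the proof of Lemma~\ref{lemma: type in terms of Dieudonne modules} and can simply be cited.
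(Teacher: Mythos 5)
Your proposal is correct and follows exactly the paper's argument: the paper likewise deduces both equalities directly from Lemma~\ref{lemma: type in terms of Dieudonne modules}(2),(3) combined with the identifications $\Ima(\Fr)=\Ker(\Ver)$ and $\Ima(\Ver)=\Ker(\Fr)$ from \S\ref{subsection:Facts about D modules}. No differences worth noting.
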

\begin{proof}
By Lemma \ref{lemma: type in terms of Dieudonne modules}, we have
\begin{align*}
\varphi(\uA, H) & = \{ \beta \in \BB: \Ima(\DD(f))_\beta= \Ima (\DD(\Fr_A))_\beta\} \\
& = \{\beta \in \BB: \HH_\beta = (\Ima(\Fr))_\beta\}\\
& = \{\beta \in \BB: \HH_\beta = \Ker(\Ver)_\beta\}\\
& =\varphi(\HH).
\end{align*}
The argument for $\eta(\uA,H)$ is similar.
 \end{proof}

We can now study the induced stratification on $\pi^{-1}(\Pbar)_{\rm
red} $ by means of $\scrG$.
\begin{cor}\label{corollary: dimension of fibre intersect strata}
$\pi^{-1}(\Pbar)_{\rm red} \cap \Wpe$ is homeomorphic to the locally
closed subset of\ $\scrG$ parameterizing subspaces $\HH$ with
$\varphi(\HH) =\varphi, \eta (\HH) = \eta$. Its dimension is thus at
most $g - \sharp \; (\varphi\cup \eta)$.
\end{cor}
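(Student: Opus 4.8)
The plan is to transport the assertion to the Grassmann model $\scrG$ via the homeomorphism $g\colon \pi^{-1}(\Pbar)_{\rm red} \to \scrG$ constructed above, and then read the dimension bound off the product structure $\scrG \subseteq (\PP^1_k)^g$.

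First I would identify the set $\pi^{-1}(\Pbar)_{\rm red} \cap \Wpe$ inside $\scrG$. For a closed point $(\uA, H)$ of $\pi^{-1}(\Pbar)$, with $f\colon A \to A/H$ the canonical isogeny, the construction of $g$ gives $g(\uA, H) = \Ima[\DD(f)] \subseteq \DD$, which is precisely the submodule $\HH$ appearing in the preceding Lemma. That Lemma yields $\varphi(\uA, H) = \varphi(\HH)$ and $\eta(\uA, H) = \eta(\HH)$, so $(\uA, H)$ has invariants $\pe$ --- i.e. lies in $\Wpe$ --- exactly when $\varphi(g(\uA,H)) = \varphi$ and $\eta(g(\uA,H)) = \eta$. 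Hence $g$ restricts to a homeomorphism of $\pi^{-1}(\Pbar)_{\rm red} \cap \Wpe$ onto
\[
S := \{\HH \in \scrG : \varphi(\HH) = \varphi, \ \eta(\HH) = \eta\};
\]
since $\Wpe$ is locally closed in $\Ybar$ and $\pi^{-1}(\Pbar)_{\rm red}$ is closed in $\Ybar$, the source, and therefore $S$, is locally closed, which is the description in the statement.

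For the dimension bound I would use only the definitions of $\varphi(\HH)$ and $\eta(\HH)$. If $\beta \in \varphi = \varphi(\HH)$ then $\HH_\beta = \Ker(\Ver)_\beta$, and if $\beta \in \eta = \eta(\HH)$ then $\HH_\beta = \Ker(\Fr)_\beta$; these one-dimensional subspaces of $\DD_\beta$ depend only on the fixed $\uA$, so for each $\beta \in \varphi \cup \eta$ the $\beta$-th coordinate of $\HH$ in $(\PP^1_k)^g$ takes one and the same value for all $\HH \in S$. Hence the projection $\rho\colon (\PP^1_k)^g \to (\PP^1_k)^{\BB - (\varphi\cup\eta)}$ that forgets the coordinates indexed by $\varphi\cup\eta$ is injective on $S$, and its inverse bijection $\rho(S) \to S$ is the restriction of the morphism that re-inserts those constant coordinates, hence continuous. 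Thus $\rho$ restricts to a homeomorphism of $S$ onto a subset of $(\PP^1_k)^{\BB - (\varphi\cup\eta)}$, a variety of dimension $g - \sharp\; (\varphi\cup\eta)$; since Krull dimension is a topological invariant,
\[
\dim\big(\pi^{-1}(\Pbar)_{\rm red} \cap \Wpe\big) = \dim S = \dim \rho(S) \le g - \sharp\; (\varphi\cup\eta).
\]

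The argument is essentially bookkeeping and I do not expect a genuine obstacle. The only two points needing a word of care are (i) confirming that $g(\uA,H) \in \scrG$ coincides with the Dieudonn\'e submodule $\HH$ of the preceding Lemma --- immediate from the definition of $g$ through $f^\ast H^1_{\rm dR}(\uB_x,\calO_{\uB_x}) = \DD(f)(\DD(\uB_x[p]))$ --- and (ii) observing that $\rho|_S$, being injective with a continuous inverse, is a homeomorphism onto its image and so cannot raise dimension; alternatively one may invoke the fibre-dimension theorem for $\rho|_S\colon S \to \overline{\rho(S)}$, all of whose fibres are singletons.
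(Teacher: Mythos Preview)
Your proof is correct and follows exactly the approach the paper intends: the corollary is stated without explicit proof there, being immediate from the homeomorphism $g\colon \pi^{-1}(\Pbar)_{\rm red}\to\scrG$ together with the preceding Lemma identifying $(\varphi(\uA,H),\eta(\uA,H))$ with $(\varphi(\HH),\eta(\HH))$. Your argument simply spells out the details --- the identification of $\pi^{-1}(\Pbar)_{\rm red}\cap\Wpe$ with $S$ via $g$, and the observation that the coordinates indexed by $\varphi\cup\eta$ are constant on $S$ so that projection to $(\PP^1_k)^{\BB-(\varphi\cup\eta)}$ gives the dimension bound --- which is precisely what the paper leaves to the reader.
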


\begin{rmk} We will see (Corollary  \ref{cor: dimension of strata on fibers; precise}) that the equality holds if the fibre is non-empty.
\end{rmk}

\

\id {\bf The relation between the stratifications on $\Ybar$ and
$\Xbar$.}
\begin{thm}\label{theorem: pi of Zpe}
\begin{enumerate}
\item Let $C$ be a component of $\Zpe$; then $\pi(C)$ is a component
of $Z_{\varphi\cap \eta}$.
\item $\pi(\Zpe) = Z_{\varphi\cap \eta}$.
\item On every component of $\Zpe$ (or $\Wpe$) the type is
generically $\varphi\cap \eta$.
\end{enumerate}
\end{thm}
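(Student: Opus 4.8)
The plan is to prove the three assertions together, drawing on three inputs already in hand: the dimension formulas for the strata of $\Ybar$ and $\Xbar$, the description of $\pi^{-1}(\Pbar)_{\rm red}$ as a variety homeomorphic to the Grassmann variety $\scrG(\Pbar)$, and the admissibility of $\pe$. First I would record the easy inclusion $\pi(\Zpe)\subseteq Z_{\varphi\cap\eta}$: a closed point $\Qbar\in\Zpe$ has invariants $(\varphi',\eta')\geq\pe$, so $\varphi'\cap\eta'\supseteq\varphi\cap\eta$, and by Corollary~\ref{corollary: type and phi eta} the abelian variety $\uA$ underlying $\pi(\Qbar)$ satisfies $\tau(\uA)\supseteq\varphi'\cap\eta'\supseteq\varphi\cap\eta$, i.e. $\pi(\Qbar)\in Z_{\varphi\cap\eta}$. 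Since $\pi$ is proper, $\pi(\Zpe)$ is closed, and for every irreducible component $C$ of $\Zpe$ the image $\pi(C)$ is an irreducible closed subset of $Z_{\varphi\cap\eta}$.

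The next step proves assertion (1) by a dimension count. By Theorem~\ref{theorem: fund'l facts about the stratificaiton of Ybar}, $\Zpe$ is equidimensional of dimension $2g-(\sharp\varphi+\sharp\eta)$, so $\dim C=2g-(\sharp\varphi+\sharp\eta)$. For a general point $\Pbar$ of $\pi(C)$, the fibre $\pi^{-1}(\Pbar)\cap C$ has dimension $\dim C-\dim\pi(C)$; and since $C\subseteq\Zpe=\bigcup_{(\varphi',\eta')\geq\pe}W_{\varphi',\eta'}$, this fibre is contained in $\bigcup_{(\varphi',\eta')\geq\pe}\bigl(\pi^{-1}(\Pbar)_{\rm red}\cap W_{\varphi',\eta'}\bigr)$, each term of which has dimension at most $g-\sharp(\varphi'\cup\eta')\leq g-\sharp(\varphi\cup\eta)$ by Corollary~\ref{corollary: dimension of fibre intersect strata}. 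Hence, using $\sharp\varphi+\sharp\eta=\sharp(\varphi\cup\eta)+\sharp(\varphi\cap\eta)$,
\[
\dim\pi(C)\;\geq\;\dim C-\bigl(g-\sharp(\varphi\cup\eta)\bigr)\;=\;g-\sharp(\varphi\cap\eta).
\]
As $Z_{\varphi\cap\eta}$ is equidimensional of dimension $g-\sharp(\varphi\cap\eta)$ and $\pi(C)$ is an irreducible closed subset of it of that dimension, $\pi(C)$ is a component of $Z_{\varphi\cap\eta}$; this is assertion (1).

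For the remaining content of assertion (2) I must show $\pi\colon\Zpe\to Z_{\varphi\cap\eta}$ is surjective. Fix an algebraically closed $k\supseteq\kappa$ and $\Pbar\in Z_{\varphi\cap\eta}(k)$ corresponding to $\uA$, so $\tau(\uA)\supseteq\varphi\cap\eta$. Inside $\scrG(\Pbar)$ I would take $\HH=\bigoplus_\beta\HH_\beta$ with $\HH_\beta=\Ker(\Ver)_\beta$ for $\beta\notin\eta$ and $\HH_\beta=\Ker(\Fr)_\beta$ for $\beta\in\eta$; the two prescriptions agree on $\varphi\cap\eta$ because $\varphi\cap\eta\subseteq\tau(\uA)$. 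One then checks that the Frobenius/Verschiebung conditions cutting out $\scrG(\Pbar)$ hold: travelling around a $\sigma$-orbit, the step from $\beta$ to $\sigma\circ\beta$ can fail only if $\beta\notin\eta$, $\sigma\circ\beta\in\eta$ and $\sigma\circ\beta\notin\tau(\uA)$; but admissibility gives $r(\eta^c)\subseteq\varphi$, so $\beta\notin\eta$ forces $\sigma\circ\beta\in\varphi$, whence $\sigma\circ\beta\in\varphi\cap\eta\subseteq\tau(\uA)$, a contradiction. Thus $\HH\in\scrG(\Pbar)$ with $\varphi(\HH)\supseteq\varphi$ and $\eta(\HH)\supseteq\eta$, and the subgroup scheme $H\subseteq\uA[p]$ determined by $\DD(H)=\DD(\uA[p])/\HH$ is an $\ol$-subgroup of rank $p^g$ whose Dieudonn\'e module is free of rank one over $\ol\otimes k$, hence cyclic, so $H$ is isotropic by Lemma~\ref{lemma: cyclic is isotropic}; then $(\uA,H)\in\Zpe$ lies over $\Pbar$. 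This gives $\pi(\Zpe)=Z_{\varphi\cap\eta}$, and with assertion (1) it follows that every component of $Z_{\varphi\cap\eta}$ is the image of a component of $\Zpe$.

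Finally, assertion (3). Let $C$ be a component of $\Zpe$ and $D=\pi(C)$, a component of $Z_{\varphi\cap\eta}$. The open stratum $W_{\varphi\cap\eta}$ is dense in $Z_{\varphi\cap\eta}$, so it contains the generic point of $D$; since $\pi|_C\colon C\to D$ is a surjective morphism of irreducible varieties, the generic point of $C$ maps to that of $D$, and therefore on a dense open subset of $C$ the underlying abelian variety has type exactly $\varphi\cap\eta$. Because $\Wpe$ is open in $\Zpe$ and $C\cap\Wpe$ is dense open in $C$, the same statement holds on every component of $\Wpe$, which is assertion (3). The main obstacle in this plan is the surjectivity argument in (2): one must exhibit, over an arbitrary point of $Z_{\varphi\cap\eta}$, a subgroup with the prescribed discrete invariants, and the essential point is that the admissibility of $\pe$ is precisely what makes the above recipe for $\HH$ compatible with the Frobenius/Verschiebung constraints defining $\scrG(\Pbar)$; the dimension bookkeeping of (1) and the genericity argument of (3) are then formal.
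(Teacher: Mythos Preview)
Your proof is correct and follows essentially the same route as the paper: the dimension count for (1), an explicit construction of a Dieudonn\'e submodule $\HH$ for the surjectivity in (2), and the generic-point argument for (3) are all what the paper does. The only minor variation is in (2): the paper (Lemma~\ref{lemma: constructing pi -1 P cap Wpe}) sets $\HH_\beta=\Ker(\Ver)_\beta$ for $\beta\in\varphi$, $\HH_\beta=\Ker(\Fr)_\beta$ for $\beta\in\eta$, and notes that \emph{any} choice on $(\varphi\cup\eta)^c$ works, whereas you pick the specific value $\Ker(\Ver)_\beta$ there---your $\HH$ is one instance of theirs, and the paper's extra freedom is exactly what yields the dimension equality in Corollary~\ref{cor: dimension of strata on fibers; precise}.
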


\begin{rmk}
The type is not necessarily constant on $\Wpe$. In fact,
\[\pi(\Wpe) = \bigcup_{\tiny{\begin{matrix}
\left[\varphi(\uA, H) \vartriangle \eta(\uA, H) \right]^c \supseteq
\tau'\\ \tau' \supseteq \varphi\cap \eta
\end{matrix}}} W_{\tau'}.\]
See Proposition~\ref{proposition: star} below.
\end{rmk}

\begin{proof}
For every point $y \in \pi(C)$, $\dim (\pi^{-1}(y) \cap C) \leq g -
\sharp\; (\varphi\cup \eta)$, by Corollary~\ref{corollary: dimension
of fibre intersect strata}. Therefore,
\begin{align*}
\dim (\pi(C)) & \geq \dim(C) - (g - \sharp\; (\varphi\cup \eta)) \\
& = 2g - (\sharp\; \varphi + \sharp\; \eta) - (g - \sharp\;
(\varphi\cup \eta)) \\ & = g - \sharp\; (\varphi\cap \eta).
\end{align*}
On the other hand, since $\tau(y) \supseteq \varphi \cap \eta$, we
have $\pi(C) \subseteq Z_{\varphi\cap\eta}$. Moreover, $\dim(Z_{\varphi\cap\eta})= g - \sharp\;
(\varphi\cap \eta)$. Since $\pi$ is proper,  $\pi(C)$   is closed and irreducible. By comparing the
dimensions, we conclude that $\pi(C)$ is an irreducible component of
$Z_{\varphi\cap \eta}$. This is part (1) of the theorem.

\

\id We now prove part (2) of the theorem. Let $C \subseteq
Z_{\varphi \cap \eta}$ be an irreducible component, and let $\Pbar$
be a closed point of $C$ corresponding to $\uA$. We want to prove that $\pi^{-1}(\Pbar)
\cap \Zpe \neq \emptyset$. We provide two proofs for that.

\medskip

\id $\bullet$ \emph{A ``pure thought" argument}. $\pi^{-1}(\Pbar)
\cap \Zpe$ depends entirely on $\DD(\uA[p])$, which is determined by
the type \cite[Theorem 3.8]{GorenOort}. It is a consequence of part
(1) that there are other points $\Pbar'$ of the same type as $\Pbar$
lying in the image of $\Zpe$. Therefore, $\pi^{-1}(\Pbar) \cap \Zpe
\neq \emptyset$.

\

\id The second method is more instructive and will be used again in
the sequel.

\medskip

\id $\bullet$ \emph{An explicit construction}.  We construct a
Dieudonn\'e submodule $\HH \subseteq \DD(\uA[p])$ subject to
the conditions:
\begin{align*}
\forall \beta \in \varphi: & \qquad \HH_\beta = \Ker(\Ver)_\beta,
\\
\forall \beta \in \eta: & \qquad \HH_\beta = \Ker(\Fr)_\beta.
\end{align*}
\begin{lem}\label{lemma: constructing pi -1 P cap Wpe}
For any choice of $\HH_\beta$ for $\beta \in (\varphi \cup \eta)^c$,
$\HH = \oplus_{\beta\in \BB} \HH_\beta$ satisfies the required
conditions: namely, it is a Dieudonn\'e submodule, corresponding to
a subgroup $H \subset \uA[p]$ with invariants $(\varphi',
\eta') \geq \pe$.
\end{lem}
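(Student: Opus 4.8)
The plan is to check three things in turn: that $\HH$ is stable under $\Fr$ and $\Ver$, hence is a Dieudonn\'e submodule of $\DD = \DD(\uA[p])$; that the subgroup scheme $H \subset \uA[p]$ attached to $\HH$ by the Dieudonn\'e dictionary is an isotropic $\ol$-subgroup scheme of rank $p^g$, so that $(\uA, H)$ defines a $k$-rational point of $\Ybar$ lying above $\Pbar$; and finally that its invariants $(\varphi(\uA,H), \eta(\uA,H))$ dominate $\pe$. Only the first step has any content; the rest is bookkeeping with the Dieudonn\'e dictionary.

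For the stability, recall from \S\ref{subsection:Facts about D modules} the componentwise identities $\Ima(\Fr\colon \DD_{\sigma^{-1}\circ\beta} \Arr \DD_\beta) = \Ker(\Ver)_\beta$ and $\Ima(\Ver\colon \DD_{\sigma\circ\beta} \Arr \DD_\beta) = \Ker(\Fr)_\beta$. First one checks that the two prescriptions for $\HH_\beta$ agree when $\beta \in \varphi\cap\eta$: since $\Pbar$ lies on a component of $Z_{\varphi\cap\eta}$ we have $\tau(\uA)\supseteq \varphi\cap\eta$, so $\Ker(\Fr)_\beta = \Ker(\Ver)_\beta$ there by Lemma~\ref{lemma: type in terms of Dieudonne modules}; thus $\HH$ is well defined. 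To see $\Fr(\HH_\beta) \subseteq \HH_{\sigma\circ\beta}$: if $\beta \in \eta$ then $\HH_\beta = \Ker(\Fr)_\beta$ and $\Fr(\HH_\beta) = 0$; if $\beta \notin \eta$, then by admissibility of $\pe$, equivalently $r(\eta^c)\subseteq\varphi$ (first part of the relevant Proposition in \S\ref{subsec: discrete invariants}), we get $\sigma\circ\beta \in \varphi$, whence $\HH_{\sigma\circ\beta} = \Ker(\Ver)_{\sigma\circ\beta} = \Ima(\Fr\colon\DD_\beta\Arr\DD_{\sigma\circ\beta}) \supseteq \Fr(\HH_\beta)$. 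The verification of $\Ver(\HH_\beta) \subseteq \HH_{\sigma^{-1}\circ\beta}$ is entirely symmetric, using $\ell(\varphi^c)\subseteq\eta$: if $\beta\in\varphi$ then $\Ver(\HH_\beta)=0$, and if $\beta\notin\varphi$ then $\sigma^{-1}\circ\beta\in\eta$ and $\HH_{\sigma^{-1}\circ\beta} = \Ker(\Fr)_{\sigma^{-1}\circ\beta} = \Ima(\Ver\colon\DD_\beta\Arr\DD_{\sigma^{-1}\circ\beta})\supseteq\Ver(\HH_\beta)$. Keeping the roles of $\varphi$ and $\eta$ and the direction in which admissibility is invoked straight is the one genuinely delicate point of the argument.

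For the remaining points: each $\HH_\beta \subsetneq \DD_\beta$ is one-dimensional, so $\HH$ has $k$-length $g$ and the quotient $\DD/\HH$ has length $g$; by the contravariant Dieudonn\'e anti-equivalence (valid since $k$ is algebraically closed, hence perfect) $\DD/\HH = \DD(H)$ for a group scheme $H$ of rank $p^g$, and the surjection $\DD \surjects \DD(H)$ corresponds to a closed immersion $H \injects \uA[p]$. As $\HH$ is visibly an $\ol\otimes k$-submodule, $H$ is an $\ol$-subgroup scheme; moreover $\DD(H) = \oplus_\beta (\DD_\beta/\HH_\beta)$ is free of rank one over $\ol\otimes k$, hence a cyclic $\ol\otimes k$-module, so $H$ is isotropic by Lemma~\ref{lemma: cyclic is isotropic}. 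Thus $(\uA, H)$ is a $k$-rational point of $\Ybar$ with $\pi(\uA,H) = \Pbar$, and $\HH = \Ker[\DD(\uA[p]) \Arr \DD(H)] = \Ima[\DD(f)]$ for the canonical $f\colon A \arr A/H$. Finally, the lemma immediately preceding gives $\varphi(\uA, H) = \varphi(\HH)$ and $\eta(\uA, H) = \eta(\HH)$; by construction $\HH_\beta = \Ker(\Ver)_\beta$ for $\beta\in\varphi$ and $\HH_\beta = \Ker(\Fr)_\beta$ for $\beta\in\eta$, so $\varphi(\HH) \supseteq \varphi$ and $\eta(\HH)\supseteq\eta$, i.e.\ $(\varphi(\uA,H),\eta(\uA,H))\geq\pe$, while admissibility of this pair is automatic because $(\uA,H)$ is a point of $\Ybar$. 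Since $\Pbar \in C$ was arbitrary, this simultaneously shows $\pi^{-1}(\Pbar)\cap\Zpe\neq\emptyset$, which is the intended application.
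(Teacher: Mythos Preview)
Your proof is correct and follows essentially the same approach as the paper. Your case split for the stability check---dividing on $\beta\in\eta$ versus $\beta\notin\eta$ for $\Fr$, and on $\beta\in\varphi$ versus $\beta\notin\varphi$ for $\Ver$---is slightly cleaner than the paper's three-way split on $(\varphi\cup\eta)^c$, $\varphi$, $\eta$ (which then requires a further sub-case analysis), and you add the explicit verification via Lemma~\ref{lemma: cyclic is isotropic} that $H$ is isotropic, which the paper leaves implicit.
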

\begin{proof}{(Of Lemma)}
First note that the formulas $\ell(\varphi) \cup \eta = \varphi \cup
r(\eta) = \BB$ give
\[ \eps \in (\varphi \cup\eta)^c \Longrightarrow \begin{cases}
\sigma \circ \eps \in \varphi\\ \sigma^{-1} \circ \eps \in \eta.
\end{cases}\]
Therefore,
\[ \HH_{\sigma \circ \eps} = \Ker(\Ver)_{\sigma \circ \eps}, \qquad \HH_{\sigma^{-1} \circ \eps} = \Ker(\Fr)_{\sigma^{-1} \circ \eps}.\]
Since we always have $\Fr(\HH_\eps) \subseteq \Ker(\Ver)_{\sigma
\circ \eps}, \Ver(\HH_\eps) \subseteq \Ker(\Fr)_{\sigma^{-1} \circ
\eps}$, we conclude that
\[\Fr(\HH_\eps) \subseteq \HH_{\sigma \circ \eps}, \quad \Ver(\HH_\eps) \subseteq \HH_{\sigma^{-1} \circ \eps}, \qquad \eps \in (\varphi \cup\eta)^c.\]
Suppose now that $\eps \in \varphi$. Then $\HH_\eps =
\Ker(\Ver)_\eps$ and so $V(\HH_\eps) = 0 \subset \HH_{\sigma^{-1}
\circ \eps}$. On the other hand, we have that $\Fr(\HH_\eps) \subseteq \HH_{\sigma\circ
\eps}$ if $\eps \in \varphi\cap \eta$ (because $\Fr(\HH_\eps) = 0$),
or if $\sigma\circ \eps \in \varphi$ (because then $\Fr(\HH_\eps)
\subseteq \Ker(\Ver)_{\sigma \circ\eps} = \HH_{\sigma \circ\eps}$).
Therefore, it is enough to show that we cannot have $\eps\in \varphi,
\eps \not\in \varphi\cap \eta, $ and $\sigma\circ \eps \not\in
\varphi$. This follows readily from $\ell(\varphi^c) \subseteq
\eta$.

The argument for $\eps \in \eta$ is entirely similar and hence
omitted. The claim concerning the invariants is clear from the
construction.
\end{proof}

The existence of a  bijective morphism  $g\colon
\pi^{-1}(\Pbar)_{\rm red} \Arr \scrG$,  implies that
$\pi^{-1}(\Pbar)\cap \Zpe\neq\emptyset$, and hence part (2) follows.
Finally, part (3) follows immediately from part (1).
\end{proof}

\id We remark that since $g$ is a homeomorphism, the second method
of the proof gives the
 following interesting result:
 \begin{cor}\label{cor: dimension of strata on fibers; precise} Let $\Pbar$ be a closed point of $\Xbar$ such that $\tau(\Pbar) \supseteq \varphi \cap \eta$.
Then,
\[   \dim(\pi^{-1}(\Pbar) \cap
\Zpe) =\dim(\pi^{-1}(\Pbar) \cap
\Wpe)=g - \sharp\; (\varphi \cup \eta).\]
 \end{cor}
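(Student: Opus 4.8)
The plan is to transport the whole computation onto the generalized Grassmann variety $\scrG = \scrG(\Pbar)$ via the homeomorphism $g\colon \pi^{-1}(\Pbar)_{\rm red} \Arr \scrG$ constructed above, since dimension is a topological invariant. By Corollary~\ref{corollary: dimension of fibre intersect strata}, $g$ carries $\pi^{-1}(\Pbar)\cap\Wpe$ onto the locally closed subset of $\scrG$ of subspaces $\HH$ with $\varphi(\HH)=\varphi,\ \eta(\HH)=\eta$, and that corollary already gives the upper bound $\dim(\pi^{-1}(\Pbar)\cap\Wpe)\le g-\sharp(\varphi\cup\eta)$. For $\Zpe$ I would decompose $\pi^{-1}(\Pbar)\cap\Zpe=\bigcup_{\peprime\ge\pe}\bigl(\pi^{-1}(\Pbar)\cap W_{\varphi',\eta'}\bigr)$ and apply the same corollary to each stratum, using $\varphi'\cup\eta'\supseteq\varphi\cup\eta$, to obtain $\dim(\pi^{-1}(\Pbar)\cap\Zpe)\le g-\sharp(\varphi\cup\eta)$ as well.

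For the reverse inequality I would reuse the explicit construction from the proof of Theorem~\ref{theorem: pi of Zpe}. Since $\tau(\Pbar)\supseteq\varphi\cap\eta$, Lemma~\ref{lemma: constructing pi -1 P cap Wpe} applies: setting $\HH_\beta=\Ker(\Ver)_\beta$ for $\beta\in\varphi$ and $\HH_\beta=\Ker(\Fr)_\beta$ for $\beta\in\eta$, any choice of lines $\HH_\beta\subset\DD_\beta$ for $\beta\in(\varphi\cup\eta)^c$ produces an $\HH\in\scrG$ with invariants $\ge\pe$. These $\HH$ form a closed subvariety of $\scrG$ isomorphic to $\prod_{\beta\in(\varphi\cup\eta)^c}\PP^1_k\cong(\PP^1_k)^{g-\sharp(\varphi\cup\eta)}$, contained in the locus corresponding to $\pi^{-1}(\Pbar)\cap\Zpe$; hence $\dim(\pi^{-1}(\Pbar)\cap\Zpe)\ge g-\sharp(\varphi\cup\eta)$, and together with the upper bound this settles the statement for $\Zpe$.

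For $\Wpe$ I would then restrict to the open subvariety of this $(\PP^1_k)^{g-\sharp(\varphi\cup\eta)}$-family consisting of those $\HH$ whose invariants are \emph{exactly} $\pe$: being a non-empty open subset of an irreducible variety it is dense, hence of the full dimension $g-\sharp(\varphi\cup\eta)$, supplying the missing lower bound. The only thing to verify is that this open locus is non-empty. For $\beta\in(\varphi\cup\eta)^c$ the requirements $\beta\notin\varphi(\HH)$, $\beta\notin\eta(\HH)$ merely forbid the one or two points $\Ker(\Ver)_\beta,\Ker(\Fr)_\beta$ of $\PP^1_k$, so these coordinates pose no obstruction; the genuine issue is the coordinates $\beta\in\varphi\vartriangle\eta$, where $\HH_\beta$ is forced to be $\Ker(\Ver)_\beta$ or $\Ker(\Fr)_\beta$ and hence $\beta$ is dragged into $\varphi(\HH)$ or $\eta(\HH)$ precisely when $\Ker(\Fr)_\beta=\Ker(\Ver)_\beta$, i.e. $\beta\in\tau(\Pbar)$. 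Thus a member with invariants exactly $\pe$ exists iff $\tau(\Pbar)\cap(\varphi\vartriangle\eta)=\emptyset$; and this holds automatically as soon as $\pi^{-1}(\Pbar)\cap\Wpe$ is non-empty, since any $(\uA,H)$ in that fibre satisfies $\tau(\uA)=\tau(\Pbar)$ and has invariants $\pe$, so $\tau(\Pbar)\subseteq(\varphi\vartriangle\eta)^c$ by Corollary~\ref{corollary: type and phi eta}. I expect this last point to be the main obstacle: the $\Wpe$-equality is not formal from the $\Zpe$-one (a priori $\pi^{-1}(\Pbar)\cap\Zpe$ could be swept out entirely by strata $W_{\varphi',\eta'}$ with $\peprime>\pe$ but $\varphi'\cup\eta'=\varphi\cup\eta$), so one must run the genericity argument inside the explicit family while tracking how $\varphi(\HH)$ and $\eta(\HH)$ jump — exactly the non-emptiness caveat already signalled in the remark after Corollary~\ref{corollary: dimension of fibre intersect strata}.
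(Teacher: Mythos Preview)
Your approach is essentially the same as the paper's: the paper justifies this corollary in one line, saying it follows from the homeomorphism $g$ together with the explicit construction of Lemma~\ref{lemma: constructing pi -1 P cap Wpe}, and you have unpacked exactly that. Your careful treatment of the $\Wpe$ case is in fact more thorough than the paper's one-line justification; the non-emptiness caveat you identify (that $\tau(\Pbar)\cap(\varphi\vartriangle\eta)=\emptyset$ is needed, equivalently that $\pi^{-1}(\Pbar)\cap\Wpe\neq\emptyset$) is real and is precisely what the remark following Corollary~\ref{corollary: dimension of fibre intersect strata} flags, even though the corollary statement itself suppresses it. The paper returns to this point in Proposition~\ref{proposition: star}, where it carries out essentially the same construction more explicitly and shows the fibre of $\Wpe$ is homeomorphic to $\AA_k^{g-\sharp(\varphi\cup\eta)}$.
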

\begin{cor}\label{corollary: on every component there's a supsp pt}
On every irreducible component of $\Zpe$ there is a point $\Qbar$
such that $\pi(\Qbar)$ is superspecial.
\end{cor}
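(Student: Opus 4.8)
The plan is to combine Theorem~\ref{theorem: pi of Zpe}(1) with Corollary~\ref{corollary: on every component there's a supsp pt}'s analogue on $\Xbar$; more precisely, I would reduce the statement about components of $\Zpe$ to the known fact that each irreducible component of $Z_\tau \subseteq \Xbar$ contains a superspecial point. Recall from \cite{GorenOort} (and the discussion preceding Theorem~\ref{theorem: fund'l facts about the stratificaiton of Ybar}) that the stratification $\{W_\tau\}$ of $\Xbar$ has $W_\BB$ equal to the (finite, nonempty) superspecial locus, and that $W_\BB \subseteq \overline{W_{\tau'}}$ for every $\tau' \subseteq \BB$. In particular every $Z_\tau = \overline{W_\tau}$, and more to the point every irreducible component of $Z_\tau$, being a closed subvariety of $\Xbar$ that meets the smallest stratum $W_\BB$ in its closure, contains a superspecial point. (This is exactly the $\Xbar$-analogue of the statement we are proving, and it is part of the structure theory of the Goren--Oort stratification; I would either cite it directly or note it follows since $W_\BB$ is dense in no stratum but lies in the closure of each, hence meets the closure of each component.)

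Granting that, here is the argument for $\Ybar$. Let $C$ be an irreducible component of $\Zpe$. By Theorem~\ref{theorem: pi of Zpe}(1), $\pi(C)$ is an irreducible component of $Z_{\varphi \cap \eta} \subseteq \Xbar$. By the $\Xbar$-fact just recalled, $\pi(C)$ contains a superspecial point $\Pbar$; say $\Pbar$ corresponds to $\uA$ superspecial, so $\tau(\uA) = \BB \supseteq \varphi \cap \eta$. Now I invoke Corollary~\ref{cor: dimension of strata on fibers; precise}: since $\tau(\Pbar) \supseteq \varphi \cap \eta$, the fibre $\pi^{-1}(\Pbar) \cap \Zpe$ is non-empty (indeed of dimension $g - \sharp(\varphi \cup \eta)$, but non-emptiness is all we need). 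However, this only gives a point of $\Zpe$ over $\Pbar$, not necessarily a point of the chosen component $C$. To land inside $C$ I would argue by a dimension count exactly as in the proof of Theorem~\ref{theorem: pi of Zpe}(1): the morphism $\pi|_C \colon C \arr \pi(C)$ is proper and surjective onto the $(g - \sharp(\varphi\cap\eta))$-dimensional variety $\pi(C)$, with all fibres of dimension $\leq g - \sharp(\varphi\cup\eta)$ by Corollary~\ref{corollary: dimension of fibre intersect strata}; since $\dim C = 2g - (\sharp\varphi + \sharp\eta)$, the fibre dimension over a \emph{general} point of $\pi(C)$ is exactly $g - \sharp(\varphi\cup\eta)$, and by upper semicontinuity of fibre dimension the locus in $\pi(C)$ over which $\pi|_C$ has fibre of that exact (= maximal possible, hence also generic) dimension is non-empty; combined with the explicit construction in Lemma~\ref{lemma: constructing pi -1 P cap Wpe} one sees the fibre $\pi|_C^{-1}(\Pbar)$ is non-empty for \emph{every} $\Pbar \in \pi(C)$. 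Applying this with $\Pbar$ superspecial produces a point $\Qbar \in C$ with $\pi(\Qbar) = \Pbar$ superspecial, as desired.

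The main obstacle is the step of passing from ``$\Zpe$ has a point over some superspecial $\Pbar \in \pi(C)$'' to ``the chosen component $C$ has such a point.'' The clean way around it is to observe that $\pi|_C$ is surjective onto $\pi(C)$ (it is proper with closed image $\pi(C)$ which is irreducible of the right dimension, by Theorem~\ref{theorem: pi of Zpe}(1)), so $\pi|_C^{-1}(\Pbar) \neq \emptyset$ for \emph{every} $\Pbar \in \pi(C)$, in particular for the superspecial one guaranteed to exist in $\pi(C)$ by the $\Xbar$-theory. With surjectivity of $\pi|_C$ in hand, the corollary is immediate and no delicate fibre-dimension semicontinuity is actually needed: one just needs (i) $\pi(C)$ is a component of $Z_{\varphi\cap\eta}$, which is Theorem~\ref{theorem: pi of Zpe}(1), and (ii) every component of $Z_{\varphi\cap\eta}$ contains a superspecial point, which is the Goren--Oort structure of $\Xbar$ (the superspecial locus $W_\BB$ lies in the closure of every stratum and hence meets every component of every $Z_\tau$). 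So the proof is short: I would write it as ``By Theorem~\ref{theorem: pi of Zpe}(1), $\pi(C)$ is a component of $Z_{\varphi\cap\eta}$; by \cite{GorenOort} it contains a superspecial point $\Pbar$; since $\pi|_C \colon C \arr \pi(C)$ is surjective, pick $\Qbar \in C$ with $\pi(\Qbar) = \Pbar$.''
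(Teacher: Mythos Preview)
Your final clean argument---$\pi(C)$ is a component of $Z_{\varphi\cap\eta}$ by Theorem~\ref{theorem: pi of Zpe}(1), every such component contains a superspecial point by \cite{GorenOort,G}, and $\pi|_C$ is tautologically surjective onto $\pi(C)$---is exactly the paper's proof. One small caution: your parenthetical alternative justification for the $\Xbar$-fact (``$W_\BB$ lies in the closure of each stratum, hence meets each component'') is not quite sufficient as stated, since lying in $\overline{W_\tau}=Z_\tau$ does not a priori force $W_\BB$ to meet \emph{every} irreducible component of $Z_\tau$; the paper's argument (quasi-affineness of $W_\tau$ for $\tau\neq\BB$) is what actually forces this, so you should cite it rather than attempt the shortcut.
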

\begin{proof}
Let $C \subseteq \Zpe$ be an irreducible component. Then $\pi(C)$ is
an irreducible component of $Z_{\varphi\cap\eta}$, and hence
contains a superspecial point; in the case $p$ is inert this follows
from $W_\tau$ being quasi-affine for $\tau\neq \BB$
\cite[Proposition 2.19]{GorenOort}. In the general case, see \cite[\S
1]{G}. In both cases note that $W_\tau$ is denoted there $W_\tau^0$
and $Z_\tau$ is denoted there $W_\tau$.
\end{proof}

\begin{dfn}A closed stratum $\Zpe$ is called {\emph{horizontal}} if $\pi:\Zpe\arr \Xbar$ is finite and surjective (equivalently, dominant and quasi-finite).
\end{dfn}

\begin{cor}  The horizontal strata are exactly the
$Z_{\BB_\gert,\BB_{\gert^\ast}}$, for all $\gert\vert p$. (See Definition \ref{definition: horizontal
strata}).\end{cor}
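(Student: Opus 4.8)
The plan is to determine, among the $3^g$ admissible pairs $\pe$, exactly which ones make $\pi\colon\Zpe\arr\Xbar$ finite and surjective, and then to recognize these as the pairs $(\BB_\gert,\BB_{\gert^\ast})$. I begin with surjectivity: by Theorem~\ref{theorem: pi of Zpe}(2) one has $\pi(\Zpe)=Z_{\varphi\cap\eta}$, and since $Z_\tau=\overline{W}_\tau$ is equidimensional of dimension $g-\sharp\,\tau$ while $\Xbar$ is equidimensional of dimension $g$, we get $Z_\tau=\Xbar$ if and only if $\tau=\emptyset$. Hence $\pi|_{\Zpe}$ is surjective precisely when $\varphi\cap\eta=\emptyset$.

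Next, finiteness. As $\pi$ is proper, so is its restriction to the closed subset $\Zpe$; thus $\pi|_{\Zpe}$ is finite if and only if it is quasi-finite, i.e.\ all of its fibres are finite. Corollary~\ref{corollary: dimension of fibre intersect strata} gives the uniform bound $\dim\bigl(\pi^{-1}(\Pbar)\cap\Zpe\bigr)\le g-\sharp(\varphi\cup\eta)$ for every closed point $\Pbar$, so $\sharp(\varphi\cup\eta)=g$ already forces quasi-finiteness. For the converse I would invoke Corollary~\ref{corollary: on every component there's a supsp pt} to choose $\Qbar\in\Zpe$ with $\Pbar:=\pi(\Qbar)$ superspecial; then $\tau(\Pbar)=\BB\supseteq\varphi\cap\eta$, so Corollary~\ref{cor: dimension of strata on fibers; precise} applies at $\Pbar$ and yields $\dim\bigl(\pi^{-1}(\Pbar)\cap\Zpe\bigr)=g-\sharp(\varphi\cup\eta)$, which is positive as soon as $\sharp(\varphi\cup\eta)<g$. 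Therefore $\pi|_{\Zpe}$ is finite if and only if $\varphi\cup\eta=\BB$.

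Combining the two criteria, $\Zpe$ is horizontal if and only if $\varphi\cap\eta=\emptyset$ and $\varphi\cup\eta=\BB$, i.e.\ $\eta=\varphi^c$. For such a pair the admissibility condition $\ell(\varphi^c)\subseteq\eta$ becomes $\ell(\varphi^c)\subseteq\varphi^c$; since $\ell$ is a bijection of the finite set $\BB$, this is equivalent to $\ell(\varphi^c)=\varphi^c$, that is, to $\varphi^c$ being stable under $\sigma$. Because $\sigma$ permutes each $\BB_\gerp$ transitively, the $\sigma$-stable subsets of $\BB=\coprod_{\gerp\mid p}\BB_\gerp$ are exactly the sets $\BB_\gert=\bigcup_{\gerp\mid\gert}\BB_\gerp$ for ideals $\gert\mid p$. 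Hence $\varphi^c=\BB_\gert$, so $\varphi=\BB_{\gert^\ast}$ and $\eta=\BB_\gert$, giving $\Zpe=Z_{\BB_{\gert^\ast},\BB_\gert}$; conversely each pair $(\BB_{\gert^\ast},\BB_\gert)$ is admissible and meets both numerical conditions, so its closed stratum is horizontal. Finally, $\gert\mapsto\gert^\ast=p/\gert$ is an involution on the set of divisors of $p$, so the family $\{Z_{\BB_{\gert^\ast},\BB_\gert}\}_{\gert\mid p}$ coincides with $\{Z_{\BB_\gert,\BB_{\gert^\ast}}\}_{\gert\mid p}$, which is the assertion.

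I do not expect a serious obstacle here: once the preceding results are in hand the argument is essentially formal. The one point carrying real content is the fibre-dimension equality of Corollary~\ref{cor: dimension of strata on fibers; precise}, used in combination with the existence of a superspecial point under each component of $\Zpe$ (Corollary~\ref{corollary: on every component there's a supsp pt}) to get the \emph{necessity} of $\sharp(\varphi\cup\eta)=g$; that is where the geometry of \S\S2.6--2.7 is actually used, while the rest reduces to the combinatorics of $\sigma$-orbits on $\BB$.
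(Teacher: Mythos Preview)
Your argument is correct and is essentially the elaboration the paper leaves implicit (no proof is written out there; the corollary is meant to follow at once from Theorem~\ref{theorem: pi of Zpe} and Corollary~\ref{cor: dimension of strata on fibers; precise}). One small streamlining: once you know $\varphi\cap\eta=\emptyset$, admissibility alone already gives $\ell(\varphi^c)\subseteq\eta\subseteq\varphi^c$, hence $\eta=\varphi^c$ and $\varphi\cup\eta=\BB$ for free; so the separate verification that finiteness forces $\varphi\cup\eta=\BB$ (via a superspecial point) is not needed for the forward implication, though it does no harm and cleanly handles the converse.
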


\begin{rmk}  In the Appendix, we prove that the morphism $\pi\colon Z_{\BB_\gert,\BB_{\gert^\ast}}\arr \Xbar$ is a finite-flat and purely inseparable morphism of degree $p^{f(\gert)}$.
\end{rmk}

\id Our next goal is a detailed study of the fibre $\pi^{-1}(\Pbar)_{\rm red}$,
where $\Pbar$ is a superspecial point; it is used in the proof of
Theorem~\ref{theorem: C cap YF cap YV} below.

Let $\Pbar$ be a superspecial point of $\Xbar$, corresponding to $\uA$ defined over a
perfect field $k$. Let $\DD = \DD(\uA[p])$. Let $S \subset
\BB$ be a \emph{spaced} subset: that is, $\beta\in
S\Rightarrow \sigma \circ \beta \not\in S$. We define
\[ \calF_S \subseteq \pi^{-1}(\Pbar)_{\rm red},\]
to be the closed subset of $\pi^{-1}(\Pbar)_{\rm red}$ whose geometric points
are $\Qbar = (\uA, H)$ such that, letting $\HH = \Ker[\DD(\uA[p]) \arr \DD(H)]=\DD(\uA[p]/H)$,
\[ \HH_\beta = \Ker(\Fr)_\beta \; (= \Ker(\Ver)_\beta), \qquad \forall \beta \not\in S.\]
Let $\calF_S^\circ$ denote the open subset of $\calF_S$ where
$\HH_\beta \neq \Ker(\Fr)_\beta, \forall \beta \in S$.

\begin{lem}\label{lem: fibre over a supsp pt} Let $S\subset\BB$ be a spaced subset.
\begin{enumerate}
\item $\calF_S$ and $\calF_S^\circ$ are irreducible of dimension
$\sharp\;S$. In fact, there are geometrically bijective,
finite morphisms $\calF_S \arr (\PP_k^1)^{\sharp\; S}$ and
$\calF_S^\circ \arr (\AA_k^1)^{\sharp\; S}$.
\item $\calF_{S_1} \cap \calF_{S_2} = \calF_{S_1\cap S_2}$.
\item The collection of locally closed sets $\{\calF^\circ_{S}: S \subseteq \BB \; {\rm
spaced}\}$ forms a stratification of $\pi^{-1}(\Pbar)_{\rm red}$ by
irreducible locally closed subsets. In fact,
\[\calF_S^\circ = W_{S^c, S^c} \cap \pi^{-1}(\Pbar)_{\rm red}.\]
\end{enumerate}
\end{lem}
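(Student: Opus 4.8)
The plan is to work entirely on the Grassmann side via the homeomorphism $g\colon \pi^{-1}(\Pbar)_{\rm red} \Arr \scrG$, and to exploit the special structure of $\DD = \DD(\uA[p])$ when $\Pbar$ is superspecial. First I would recall that superspeciality means $\tau(\uA) = \BB$, i.e. $\Ker(\Fr)_\beta = \Ker(\Ver)_\beta$ for every $\beta$; write $L_\beta \subseteq \DD_\beta$ for this common line. The conditions defining $\scrG$ are $\Fr(\HH_\beta) \subseteq \HH_{\sigma\circ\beta}$ and $\Ver(\HH_\beta)\subseteq \HH_{\sigma^{-1}\circ\beta}$. Since $\Fr$ kills $\Ker(\Fr)_\beta = L_\beta$ and has image $\Ker(\Ver)_{\sigma\circ\beta} = L_{\sigma\circ\beta}$ (one-dimensional by the Rapoport condition, as in \S\ref{subsection:Facts about D modules}), the constraint $\Fr(\HH_\beta)\subseteq \HH_{\sigma\circ\beta}$ is automatically satisfied if $\HH_\beta = L_\beta$, and if $\HH_\beta \neq L_\beta$ it forces $\HH_{\sigma\circ\beta} \supseteq \Fr(\HH_\beta) = L_{\sigma\circ\beta}$, hence $\HH_{\sigma\circ\beta} = L_{\sigma\circ\beta}$. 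Symmetrically for $\Ver$: $\HH_\beta \neq L_\beta$ forces $\HH_{\sigma^{-1}\circ\beta} = L_{\sigma^{-1}\circ\beta}$. In other words, on $\scrG$ the set $S(\HH) := \{\beta : \HH_\beta \neq L_\beta\}$ must be a \emph{spaced} subset of $\BB$ (no two $\sigma$-consecutive elements), and conversely \emph{any} assignment of lines $\HH_\beta \in \PP(\DD_\beta)$ is allowed as long as $\{\beta : \HH_\beta \neq L_\beta\}$ is spaced. Under $g$, the condition $\HH_\beta = \Ker(\Fr)_\beta$ for $\beta\notin S$ is exactly the defining condition of $\calF_S$, so $\calF_S$ is homeomorphic to the subvariety of $\scrG$ where $\HH_\beta = L_\beta$ for $\beta\notin S$ and $\HH_\beta$ is arbitrary for $\beta\in S$ — which is a product $\prod_{\beta\in S}\PP(\DD_\beta) \cong (\PP^1_k)^{\sharp\,S}$. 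The morphism $g$ itself, being a bijective morphism of projective $k$-varieties, gives the asserted finite geometrically bijective map $\calF_S \arr (\PP^1_k)^{\sharp\,S}$, and its restriction to the open locus $\calF_S^\circ$, where we additionally impose $\HH_\beta\neq L_\beta$ for $\beta\in S$, lands in $\prod_{\beta\in S}(\PP(\DD_\beta)\smallsetminus\{L_\beta\}) \cong (\AA^1_k)^{\sharp\,S}$. This proves (1), and irreducibility follows since affine/projective spaces are irreducible and $g$ is a homeomorphism.

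For (2): both $\calF_{S_1}\cap\calF_{S_2}$ and $\calF_{S_1\cap S_2}$ translate under $g$ into the locus in $\scrG$ where $\HH_\beta = L_\beta$ for all $\beta \notin S_1\cap S_2$ (and arbitrary, subject to spacedness, otherwise); concretely, $\calF_{S_i}$ imposes $\HH_\beta = L_\beta$ off $S_i$, so the intersection imposes $\HH_\beta = L_\beta$ off $S_1 \cap S_2$, which is the defining condition for $\calF_{S_1\cap S_2}$. One should note in passing that $S_1\cap S_2$ is automatically spaced when $S_1, S_2$ are, so $\calF_{S_1\cap S_2}$ makes sense. For (3): given any point $\Qbar = (\uA, H)$ of $\pi^{-1}(\Pbar)_{\rm red}$, set $S = S(\HH) = \{\beta: \HH_\beta\neq L_\beta\}$, which we showed is spaced; then by construction $\Qbar \in \calF_S$ and moreover $\Qbar \in \calF_S^\circ$ since $\HH_\beta \neq L_\beta$ for all $\beta\in S$. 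This gives $\pi^{-1}(\Pbar)_{\rm red} = \coprod_S \calF_S^\circ$ (disjointness is clear as $S$ is determined by $\Qbar$), and each $\calF_S^\circ$ is locally closed and irreducible by (1), so this is a stratification. It remains to identify $\calF_S^\circ$ with $W_{S^c, S^c}\cap\pi^{-1}(\Pbar)_{\rm red}$: by the lemma preceding Corollary \ref{corollary: dimension of fibre intersect strata}, for $(\uA, H)$ with associated $\HH$ we have $\varphi(\uA, H) = \varphi(\HH) = \{\beta: \HH_\beta = \Ker(\Ver)_\beta\}$ and $\eta(\uA, H) = \eta(\HH) = \{\beta: \HH_\beta = \Ker(\Fr)_\beta\}$; since $\Pbar$ is superspecial these two conditions both read $\HH_\beta = L_\beta$, i.e. $\beta\notin S$, so $\varphi(\HH) = \eta(\HH) = S^c$ exactly on $\calF_S^\circ$. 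Hence $\calF_S^\circ = W_{S^c,S^c}\cap\pi^{-1}(\Pbar)_{\rm red}$, as claimed.

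I expect the only real subtlety — and the main thing to get right — is the bookkeeping in the first paragraph: verifying that the Frobenius/Verschiebung constraints on $\scrG$ reduce precisely to the spacedness of $S(\HH)$, with \emph{no} further coupling between the lines $\HH_\beta$ for $\beta\in S$. This uses in an essential way that $\Pbar$ is superspecial (so that $\Ker(\Fr)_\beta$ and $\Ker(\Ver)_\beta$ coincide and each of $\Fr, \Ver$ has one-dimensional kernel and image on each $\beta$-component), together with the fact that $\Fr$ maps $\DD_\beta$ \emph{onto} $L_{\sigma\circ\beta}$ and is zero on $L_\beta$; the analogous statement for $\Ver$ runs in the opposite direction along the $\sigma$-orbit, which is why \emph{spaced} (rather than, say, arbitrary) subsets arise. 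Everything else — the product description of $\calF_S$, finiteness and bijectivity of $g$, the intersection formula, and the stratification — is then formal, and the dimension count $\dim\calF_S = \dim\calF_S^\circ = \sharp\,S$ is immediate from the explicit morphisms to $(\PP^1_k)^{\sharp\,S}$ and $(\AA^1_k)^{\sharp\,S}$.
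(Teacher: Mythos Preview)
Your proof is correct and follows essentially the same approach as the paper: both work via the homeomorphism $g$ to the Grassmann variety $\scrG$ and use superspeciality to identify $\Ker(\Fr)_\beta = \Ker(\Ver)_\beta$, reducing the Dieudonn\'e constraints to the spacedness condition. Your explicit verification that $\HH_\beta \neq L_\beta$ forces $\HH_{\sigma^{\pm 1}\circ\beta} = L_{\sigma^{\pm 1}\circ\beta}$ is exactly what the paper invokes by reference to the proof of Lemma~\ref{lemma: constructing pi -1 P cap Wpe}; the only thing you might add, to match the paper's notion of stratification, is the closure relation $\overline{\calF_S^\circ} = \calF_S$ (equivalently $\calF_S = \coprod_{T\subseteq S}\calF_T^\circ$), which is immediate from your identification with $(\AA^1_k)^{\sharp\,S}\subset(\PP^1_k)^{\sharp\,S}$.
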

\begin{proof} The proof is essentially a series of simple
observations.  First, $\calF_S$ is homeomorphic via $g\colon
\pi^{-1}(\Pbar)_{\rm red} \Arr \scrG$ to the closed subset of
$\scrG(\Pbar)$ where $\HH_\beta = \Ker(\Fr)_\beta = \Ker
(\Ver)_\beta$ for all $\beta\not\in S$, and no conditions are
imposed at $\beta \in S$ (cf. the proof of Lemma~\ref{lemma:
constructing pi -1 P cap Wpe}). This subset of $\scrG$, viewed with
the reduced induced structure, is clearly isomorphic to
$(\PP_k^1)^{\sharp\; S}$. Similarly, $\calF_S^\circ$ is mapped via
$g$ to a closed subscheme of $\scrG$ isomorphic  to
$(\AA_k^1)^{\sharp\; S}$.

The second part of the lemma is immediate from the definitions (the intersection is considered set-theoretically).

As for the last part, first note that $\overline{\calF_S^\circ} =
\calF_S$; indeed, considered on $\scrG$ this just says that
$(\AA_k^1)^{\sharp\; S}$ in dense in $(\PP_k^1)^{\sharp\; S}$. Next,
from the definitions we have
\[ \calF_S = \textstyle\coprod_{T\subseteq S}\calF_T^\circ, \]
and
\[ \calF_S^\circ = W_{S^c, S^c} \cap \pi^{-1}(\Pbar)_{\rm red}.\]
\end{proof}

\begin{thm}\label{theorem: C cap YF cap YV}
Let $C$ be an irreducible component of $\Zpe$. Then
\[C \cap \Ybar_F \cap \Ybar_V \neq
\emptyset.\]
\end{thm}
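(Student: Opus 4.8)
The plan is to reduce the statement to a fact about the fibre $\pi^{-1}(\Pbar)_{\rm red}$ over a superspecial point, where everything is controlled by the explicit Grassmann variety $\scrG$. By Corollary~\ref{corollary: on every component there's a supsp pt}, the irreducible component $C \subseteq \Zpe$ contains a point $\Qbar_0$ with $\pi(\Qbar_0) = \Pbar$ superspecial. So it suffices to produce a point in $C \cap \Ybar_F \cap \Ybar_V \cap \pi^{-1}(\Pbar)_{\rm red}$; equivalently, recalling Proposition~\ref{proposition: ordinary strata}(1) that $\Ybar_F = Z_{\BB, \emptyset}$ and $\Ybar_V = Z_{\emptyset, \BB}$, a point whose invariants $\peprime$ satisfy $\varphi' = \BB$ and $\eta' = \BB$ — that is, the point $(\uA, \Ker(\Fr_A))$ lifted appropriately, or rather a point with invariants $(\BB,\BB)$. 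Note $(\BB,\BB)$ is admissible since $\ell(\BB^c) = \emptyset \subseteq \BB$. So the real content is: every irreducible component $C$ of $\Zpe$, after passing to the fibre over a superspecial point $\Pbar$ it meets, contains a point lying in $Z_{\BB,\BB} \cap \pi^{-1}(\Pbar)_{\rm red}$.

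First I would translate into the language of $\scrG = \scrG(\Pbar)$ via the homeomorphism $g\colon \pi^{-1}(\Pbar)_{\rm red} \to \scrG$. Since $\Pbar$ is superspecial, $\uA$ has type $\tau(\uA) = \BB$, so $\Ker(\Fr)_\beta = \Ker(\Ver)_\beta$ for every $\beta \in \BB$; call this common line $\ell_\beta \subset \DD_\beta$. One checks directly that the conditions $\Fr(\HH_\beta) \subseteq \HH_{\sigma\circ\beta}$, $\Ver(\HH_\beta)\subseteq\HH_{\sigma^{-1}\circ\beta}$ defining $\scrG$ impose no constraint whatsoever at a superspecial point — $\Fr$ and $\Ver$ both kill $\ell_\beta$ and have image $\ell_{\sigma\circ\beta}$ respectively $\ell_{\sigma^{-1}\circ\beta}$ inside a two–dimensional space, so any choice of lines works. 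Hence $g$ identifies $\pi^{-1}(\Pbar)_{\rm red}$ with the full product $(\PP^1_k)^g = \prod_{\beta\in\BB}\PP(\DD_\beta)$. Under this identification the point $(\HH_\beta)_\beta$ with $\HH_\beta = \ell_\beta$ for all $\beta$ — call it $Q_{\rm ss}$ — is precisely the point with $\varphi(\HH) = \eta(\HH) = \BB$, i.e.\ it lies in $W_{\BB,\BB}$, hence in $Z_{\BB,\BB} = \Ybar_F \cap \Ybar_V$.

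Next I would show $Q_{\rm ss}$ lies on \emph{every} irreducible component of $\Zpe \cap \pi^{-1}(\Pbar)_{\rm red}$. By Lemma~\ref{lemma: constructing pi -1 P cap Wpe} (applied with $\Pbar$ superspecial), the locus in $\scrG \cong (\PP^1)^g$ cut out by $\HH_\beta = \Ker(\Ver)_\beta\ (\beta\in\varphi)$ and $\HH_\beta = \Ker(\Fr)_\beta\ (\beta\in\eta)$ — which, since $\Ker(\Fr)_\beta = \Ker(\Ver)_\beta = \ell_\beta$ for all $\beta$ here, just says $\HH_\beta = \ell_\beta$ for $\beta \in \varphi\cup\eta$ — equals $\pi^{-1}(\Pbar)_{\rm red}\cap Z_{\varphi,\eta}$, and this is the sub-product $\prod_{\beta\notin\varphi\cup\eta}\PP(\DD_\beta) \times \prod_{\beta\in\varphi\cup\eta}\{\ell_\beta\}$, which is \emph{irreducible}. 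So $\Zpe\cap\pi^{-1}(\Pbar)_{\rm red}$ is irreducible and evidently contains $Q_{\rm ss}$. Finally, every irreducible component $C$ of $\Zpe$ that meets $\pi^{-1}(\Pbar)$ satisfies $C \cap \pi^{-1}(\Pbar)_{\rm red} \neq\emptyset$, and since $C\subseteq\Zpe$ this intersection is a closed subset of the \emph{irreducible} set $\Zpe\cap\pi^{-1}(\Pbar)_{\rm red}$; to conclude it contains $Q_{\rm ss}$, I would invoke Corollary~\ref{cor: dimension of strata on fibers; precise}, which gives $\dim(C\cap\pi^{-1}(\Pbar)_{\rm red}) = \dim(\pi^{-1}(\Pbar)_{\rm red}\cap\Zpe) = g - \sharp(\varphi\cup\eta)$, so the closed subset $C\cap\pi^{-1}(\Pbar)_{\rm red}$ is \emph{all} of the irreducible set $\Zpe\cap\pi^{-1}(\Pbar)_{\rm red}$; in particular it contains $Q_{\rm ss} \in \Ybar_F\cap\Ybar_V$.

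The main obstacle is the dimension-matching step at the end: one must be sure that $C$, chosen as a component of $\Zpe$ passing through a superspecial point $\Pbar$ via Corollary~\ref{corollary: on every component there's a supsp pt}, actually fills up the \emph{whole} fibre-slice $\Zpe\cap\pi^{-1}(\Pbar)_{\rm red}$ and not merely a proper closed (hence, since that slice is irreducible, lower-dimensional) piece of it — this is exactly what the equidimensionality in Corollary~\ref{cor: dimension of strata on fibers; precise} guarantees, provided we know $\pi(C)$ contains a superspecial point, which is Corollary~\ref{corollary: on every component there's a supsp pt}. One subtlety to handle carefully: $C\cap\pi^{-1}(\Pbar)_{\rm red}$ could a priori be reducible, but each of its irreducible components still has dimension $\le g - \sharp(\varphi\cup\eta)$ by Corollary~\ref{corollary: dimension of fibre intersect strata}, while $\Zpe\cap\pi^{-1}(\Pbar)_{\rm red}$ is irreducible of exactly that dimension; since $C\cap\pi^{-1}(\Pbar)_{\rm red}$ is a nonempty closed subset of it, the two coincide. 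Everything else is bookkeeping with the explicit product structure of $\scrG$ at a superspecial point.
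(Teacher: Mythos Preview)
Your outline is sound but has two gaps, one minor and one genuine.

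First, the claim that at a superspecial point the Grassmann variety $\scrG$ is the full product $(\PP^1_k)^g$ is false in general. The condition $\Fr(\HH_\beta)\subseteq\HH_{\sigma\circ\beta}$ is \emph{not} vacuous: if $\HH_\beta\neq\ell_\beta$ then $\Fr(\HH_\beta)=\ell_{\sigma\circ\beta}$ (a line, not zero), forcing $\HH_{\sigma\circ\beta}=\ell_{\sigma\circ\beta}$. Thus the set $\{\beta:\HH_\beta\neq\ell_\beta\}$ must be spaced, and $\scrG$ is a union of sub-products indexed by spaced subsets --- this is exactly the content of Lemma~\ref{lem: fibre over a supsp pt}. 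Fortunately your identification of $\Zpe\cap\pi^{-1}(\Pbar)_{\rm red}$ with an irreducible $(\PP^1)^{g-\sharp(\varphi\cup\eta)}$ survives (it is the paper's $\calF_S$ for $S=(\varphi\cup\eta)^c$, which \emph{is} spaced), but your justification for it is wrong.

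Second, and more seriously, the dimension-matching step is not justified as written. Corollary~\ref{cor: dimension of strata on fibers; precise} computes $\dim(\Zpe\cap\pi^{-1}(\Pbar))$, not $\dim(C\cap\pi^{-1}(\Pbar))$ for a single component $C$; you conflate the two. And your final paragraph's reasoning --- that a nonempty closed subset of an irreducible $d$-dimensional set, whose components have dimension $\leq d$, must be the whole set --- is plainly false (think of a point inside a curve). What you actually need is the \emph{lower} bound $\dim(C\cap\pi^{-1}(\Pbar))\geq g-\sharp(\varphi\cup\eta)$, which does follow from the fibre-dimension theorem applied to the dominant morphism $\pi\vert_C\colon C\to\pi(C)$, once you know $\dim\pi(C)=g-\sharp(\varphi\cap\eta)$ from Theorem~\ref{theorem: pi of Zpe}(1). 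With that input your argument closes.

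The paper reaches the same endpoint by a different, cleaner route: rather than matching dimensions, it uses the nonsingularity of $\Zpe$ (Theorem~\ref{theorem: fund'l facts about the stratificaiton of Ybar}(4)). Since distinct irreducible components of a nonsingular variety are disjoint, the irreducible set $\calF_S=\Zpe\cap\pi^{-1}(\Pbar)_{\rm red}$ lies in a \emph{unique} component $C'$ of $\Zpe$; then one checks directly that the chosen point $\Qbar_0\in C$ over the superspecial $\Pbar$ satisfies $\varphi(\Qbar_0)=\eta(\Qbar_0)\supseteq\varphi\cup\eta$, hence $\Qbar_0\in\calF_S$, forcing $C'=C$. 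This sidesteps fibre-dimension considerations entirely.
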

\begin{rmk}
Note that $\Ybar_F \cap \Ybar_V$ consists of points $(\uA, H)$,
where $H$ is both the kernel of Frobenius and the kernel of
Verschiebung. Such an abelian variety $A$ is superspecial, and the
points $\Ybar_F \cap \Ybar_V$ are thus the finitely many points
$(\Fr_A: \uA \arr \uA^{(p)})$, where $\uA$ ranges over the finitely
many superspecial points in $\Xbar$. Those finitely many points
``hold together" all the components of $\Ybar$.
\end{rmk}
\begin{proof}
Let $S = (\varphi\cup \eta)^c$. Then $S$ is spaced: If $\beta \in S$
then $\beta \in \varphi^c \cap \eta^c$ and so $\sigma\circ \beta \in
r(\eta^c) \subseteq \varphi$ and so $\sigma\circ \beta \not\in
\varphi^c$, hence $\sigma\circ \beta \not\in S$.

Let $\Qbar\in C$ be a closed point such that $\Pbar = \pi(\Qbar)$ is
superspecial (Corollary~\ref{corollary: on every component there's a
supsp pt}). Consider $\calF_S^\circ \subseteq \pi^{-1}(\Pbar)$.
Recall from Lemma~\ref{lem: fibre over a supsp pt} that
$\calF_S^\circ = W_{S^c, S^c} \cap \pi^{-1}(\Pbar)$, and we have
$\varphi \subseteq S^c, \eta \subseteq S^c$. Any $x\in \calF_S^\circ$ belongs to $\Zpe$, and since $\Zpe$ is nonsingular (Theorem~\ref{theorem: fund'l facts about the stratificaiton of Ybar}), there exists a \emph{unique} component of $\Zpe$ passing through $x$; we call that
component $C_x$. We distinguish two cases.

\

\id \emph{Case 1: $\dim(\calF_S^\circ) \geq 1$.} This case is
equivalent to $\varphi \cup \eta \neq \BB$, by Lemma~\ref{lem: fibre
over a supsp pt}.  By assumption, $\calF_S^\circ$ has infinitely
many points, while $\Zpe$ has only finitely many components.
Therefore, there exists a component $C'$ of $\Zpe$ such that $C'
\cap \calF_S^\circ$ is dense in $\calF_S^\circ$ and, thus, $C'
\supseteq \calF_S$. But then, since $\Zpe$ is nonsingular,   no
other component of $\Zpe$ intersects $\calF_S$.

By assumption $\Qbar \in \Zpe$ is so that $\pi(\Qbar)$ is superspecial. Hence, by Lemma \ref{lemma: type in terms of Dieudonne modules}, we have $\varphi(\Qbar) = \eta(\Qbar)$.  Now,
$(\varphi(\Qbar), \eta(\Qbar))\geq \pe$ gives that $\varphi(\Qbar)
\supseteq \varphi \cup \eta = S^c$ and $\eta(\Qbar)\supseteq \varphi
\cup \eta = S^c$. Because
\[ \calF_S = \{ x\in \pi^{-1}(\Pbar): (\varphi(x), \eta(x))\geq (S^c, S^c)\},\]
we conclude that $\Qbar \in \calF_S$. It follows that $C' = C$, and
since $\calF_S \cap \Ybar_F\cap \Ybar _V \neq \emptyset$, our proof
is complete in this case.

\

\id \emph{Case 2: $\dim(\calF_S^\circ) =0$.} This case is equivalent
to $\varphi \cup \eta = \BB$, and so $S = \emptyset$. In this case
$\calF_S = \calF_S^\circ = \Ybar_F \cap \Ybar_V \cap
\pi^{-1}(\Pbar)$. As above, $\Qbar \in \calF_S$ and so $\Qbar \in
\Ybar_F \cap \Ybar_V \cap \pi^{-1}(\Pbar)$.

\

\id This complete the proof.
\end{proof}
\id The proof reveals an interesting fact:
\begin{cor}
Let $\pe$ be an admissible pair, and $S = (\varphi \cup \eta)^c$. Let
$\Pbar$ be a superspecial point.  There is a unique component $C$ of
$\Zpe$ that intersects $\pi^{-1}(\Pbar)$. We have $C \cap
\pi^{-1}(\Pbar) = \calF_S$. Moreover, any superspecial point $\Pbar$
belongs to $\pi(\Zpe)$.
\end{cor}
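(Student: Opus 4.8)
The plan is to read the statement off the proof of Theorem~\ref{theorem: C cap YF cap YV}, where the key facts about $\calF_S$ are already assembled. Fix the superspecial point $\Pbar$ (corresponding to $\uA$) and put $S = (\varphi \cup \eta)^c$; recall from that proof that $S$ is spaced, so $\calF_S \subseteq \pi^{-1}(\Pbar)_{\rm red}$ is defined, and by Lemma~\ref{lem: fibre over a supsp pt} it is irreducible, nonempty, and equals $\coprod_{T \subseteq S} \calF_T^\circ$ with $\calF_T^\circ = W_{T^c, T^c} \cap \pi^{-1}(\Pbar)_{\rm red}$.

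First I would prove $\calF_S = \Zpe \cap \pi^{-1}(\Pbar)$. For the inclusion ``$\subseteq$'': each $T \subseteq S$ is again spaced, so $(T^c, T^c)$ is admissible, and since $T^c \supseteq S^c = \varphi \cup \eta \supseteq \varphi, \eta$ we have $(T^c, T^c) \geq \pe$; hence $W_{T^c, T^c} \subseteq Z_{T^c, T^c} \subseteq \Zpe$, and therefore $\calF_S \subseteq \Zpe \cap \pi^{-1}(\Pbar)$. For the inclusion ``$\supseteq$'': if $\Qbar \in \Zpe \cap \pi^{-1}(\Pbar)$ corresponds to $(\uA, H)$, then $(\varphi(\Qbar), \eta(\Qbar)) \geq \pe$; since $\uA$ is superspecial, Lemma~\ref{lemma: type in terms of Dieudonne modules} gives $\Ker(\Fr)_\beta = \Ker(\Ver)_\beta$ for all $\beta$, so $\varphi(\Qbar) = \eta(\Qbar)$, and this common set contains $\varphi \cup \eta = S^c$. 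Writing $\HH = \DD(\uA[p]/H)$, this says $\HH_\beta = \Ker(\Fr)_\beta$ for every $\beta \notin S$, i.e.\ $\Qbar \in \calF_S$. In particular $\Zpe \cap \pi^{-1}(\Pbar) \supseteq \calF_S \neq \emptyset$, which already yields the last assertion: an arbitrary superspecial $\Pbar$ lies in $\pi(\Zpe)$.

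It remains to extract the component. Since $\calF_S$ is irreducible and $\Zpe \cap \pi^{-1}(\Pbar) = \bigcup_j \bigl(C_j \cap \pi^{-1}(\Pbar)\bigr)$ over the finitely many irreducible components $C_j$ of $\Zpe$, each $C_j \cap \pi^{-1}(\Pbar)$ being closed in $\calF_S$, one of them, call it $C$, satisfies $C \cap \pi^{-1}(\Pbar) = \calF_S$; in particular $\calF_S \subseteq C$. For uniqueness, let $C'$ be any irreducible component of $\Zpe$ meeting $\pi^{-1}(\Pbar)$; then $\emptyset \ne C' \cap \pi^{-1}(\Pbar) \subseteq \calF_S \subseteq C$, so $C \cap C' \ne \emptyset$. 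But $\Zpe$ is nonsingular (Theorem~\ref{theorem: fund'l facts about the stratificaiton of Ybar}), hence the disjoint union of its irreducible components, forcing $C' = C$. I do not expect a genuine obstacle here, since the corollary only repackages ingredients already proved; the one point to state carefully is the description $\calF_S = \{\Qbar \in \pi^{-1}(\Pbar): (\varphi(\Qbar),\eta(\Qbar)) \ge (S^c,S^c)\}$, which uses the superspeciality of $\Pbar$ (to force $\varphi(\Qbar) = \eta(\Qbar)$) together with Lemma~\ref{lem: fibre over a supsp pt}(3).
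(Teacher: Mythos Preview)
Your proposal is correct and follows essentially the same route as the paper: the corollary is explicitly presented there as a byproduct of the proof of Theorem~\ref{theorem: C cap YF cap YV}, and you have extracted exactly the same ingredients (the identification $\calF_S = \{\Qbar \in \pi^{-1}(\Pbar): (\varphi(\Qbar),\eta(\Qbar)) \ge (S^c,S^c)\}$ via superspeciality, irreducibility of $\calF_S$, and nonsingularity of $\Zpe$). The only cosmetic difference is that the paper, in Case~1, finds the component $C$ by a pigeonhole argument on $\calF_S^\circ$ (infinitely many points, finitely many components), whereas you first establish the equality $\Zpe \cap \pi^{-1}(\Pbar) = \calF_S$ and then invoke irreducibility directly; both reach the same conclusion with the same tools.
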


\id We end this section by refining our knowledge on the
relationship between the strata on $\Ybar$ and on $\Xbar$.

\begin{prop}\label{proposition: star} Let $\pe$ be an admissible
pair. Then,
\[\pi(\Wpe) = \bigcup_{\tiny{\begin{matrix}
\left[\varphi(\uA, H) \vartriangle \eta(\uA, H) \right]^c \supseteq
\tau'\\ \tau' \supseteq \varphi\cap \eta
\end{matrix}}} W_{\tau'}.\]
Furthermore, each fibre of $\pi: \Wpe \arr \Xbar$ is affine
and irreducible of dimension $g - \sharp\; (\varphi\cup \eta)$.
\end{prop}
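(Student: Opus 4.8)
The plan is to reduce everything to the Grassmann description of the fibres of $\pi$ developed above. Since both sides of the asserted equality are constructible subsets of $\Xbar$, I may check it on closed points; so I fix an algebraically closed $k \supseteq \kappa$ and, for a closed point $\Pbar$ corresponding to $\uA$ with $\DD = \DD(\uA[p]) = \oplus_{\beta \in \BB} \DD_\beta$, I will use throughout the morphism $g\colon \pi^{-1}(\Pbar)_{\rm red} \Arr \scrG(\Pbar) \subseteq \prod_{\beta \in \BB} \PP(\DD_\beta)$, the formulas $\varphi(\uA, H) = \varphi(\HH)$, $\eta(\uA, H) = \eta(\HH)$ for $\HH = \Ker[\DD(\uA[p]) \arr \DD(H)]$, and the fact that $\Ker(\Fr)_\beta = \Ker(\Ver)_\beta$ precisely when $\beta \in \tau(\uA)$. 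I would record at the outset that $g$, being a bijective morphism of projective varieties, is proper and quasi-finite, hence finite; this will matter for the affineness claim.

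First I would dispose of the inclusion $\pi(\Wpe) \subseteq \bigcup_{\tau'} W_{\tau'}$: if $(\uA, H)$ is a closed point of $\Wpe$, then Corollary~\ref{corollary: type and phi eta} gives $\varphi \cap \eta \subseteq \tau(\uA) \subseteq (\varphi \vartriangle \eta)^c$, so $\tau(\uA)$ is one of the index sets and $\pi(\uA, H) \in W_{\tau(\uA)}$. For the reverse inclusion I would fix a $\tau'$ with $\varphi \cap \eta \subseteq \tau' \subseteq (\varphi \vartriangle \eta)^c$ and a closed point $\Pbar \in W_{\tau'}$ (every such stratum is non-empty), and produce a point of $\Wpe$ above it. Inside $\scrG(\Pbar)$ I would consider the subvariety $\scrG_{\varphi, \eta}$ cut out by $\HH_\beta = \Ker(\Ver)_\beta$ for $\beta \in \varphi$ and $\HH_\beta = \Ker(\Fr)_\beta$ for $\beta \in \eta$; this is well-posed since $\varphi \cap \eta \subseteq \tau(\uA) = \tau'$ makes the two prescriptions agree on $\varphi \cap \eta$. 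By Lemma~\ref{lemma: constructing pi -1 P cap Wpe}, for \emph{any} choice of the remaining lines $\HH_\beta \subseteq \DD_\beta$, $\beta \in (\varphi \cup \eta)^c$, the resulting $\HH$ lies in $\scrG(\Pbar)$, so $\scrG_{\varphi, \eta} \cong \prod_{\beta \in (\varphi \cup \eta)^c} \PP(\DD_\beta)$.

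Next I would pin down the exact locus inside $\scrG_{\varphi, \eta}$ with invariants $(\varphi, \eta)$. Here $\varphi \subseteq \varphi(\HH)$ and $\eta \subseteq \eta(\HH)$ hold identically, and I would check that the inclusions can be strict only through indices in $(\varphi \cup \eta)^c$: an index $\beta \in \varphi \setminus \eta$ cannot enter $\eta(\HH)$, nor $\beta \in \eta \setminus \varphi$ enter $\varphi(\HH)$, because that would force $\Ker(\Fr)_\beta = \Ker(\Ver)_\beta$, i.e.\ $\beta \in \tau(\uA) = \tau'$, contradicting $\tau' \cap (\varphi \vartriangle \eta) = \emptyset$. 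Hence the locus in question is the open subvariety $\prod_{\beta \in (\varphi \cup \eta)^c} \bigl( \PP(\DD_\beta) \setminus \{\Ker(\Fr)_\beta, \Ker(\Ver)_\beta\} \bigr)$ of $\scrG_{\varphi, \eta}$, each factor being $\PP^1_k$ with one point removed (if $\beta \in \tau'$) or two points removed (otherwise), hence isomorphic to $\AA^1_k$ or to $\GG_m$. As $k$ is infinite this is non-empty, which via $g$ shows $\Pbar \in \pi(\Wpe)$ and completes the equality; moreover it is affine, irreducible, and of dimension $\sharp (\varphi \cup \eta)^c = g - \sharp (\varphi \cup \eta)$.

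Finally, for the statement on fibres: for $\Pbar \in \pi(\Wpe)$, Corollary~\ref{corollary: dimension of fibre intersect strata} identifies $\pi^{-1}(\Pbar) \cap \Wpe$ homeomorphically with the locus just described, so it is irreducible of dimension $g - \sharp (\varphi \cup \eta)$ (the dimension also being given directly by Corollary~\ref{cor: dimension of strata on fibers; precise}); and since $g$ is finite, the $g$-preimage of that affine locus — which is the reduction of $\pi^{-1}(\Pbar) \cap \Wpe$ — is affine, whence $\pi^{-1}(\Pbar) \cap \Wpe$ is affine. The hard part, such as it is, is the set-theoretic bookkeeping with $\varphi \vartriangle \eta$ in the reverse inclusion together with the care needed to pass from the homeomorphism $g$ to the affineness conclusion; everything else is a direct read-off from the Grassmann picture.
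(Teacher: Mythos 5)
Your proof is correct and follows essentially the same route as the paper: the forward inclusion via Corollary~\ref{corollary: type and phi eta}, the reverse inclusion by constructing an explicit $\HH$ in the Grassmann variety $\scrG(\Pbar)$ (you invoke Lemma~\ref{lemma: constructing pi -1 P cap Wpe} where the paper redoes the $\Fr$/$\Ver$-stability check, and you verify exactness of the invariants by the same $\tau'\cap(\varphi\vartriangle\eta)=\emptyset$ argument), and the fibre description plus finiteness of $g$ for affineness. Your identification of each fibre as a product of copies of $\AA^1_k$ or $\GG_m$ is in fact slightly more precise than the paper's statement that the image under $g$ is isomorphic to $\AA^{g-\sharp(\varphi\cup\eta)}_k$, and it yields the same conclusions.
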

\begin{proof} The proof is similar to the second proof appearing in
Theorem~\ref{theorem: pi of Zpe}. We know by
Corollary~\ref{corollary: type and phi eta} that $\pi(\Wpe)
\subseteq \bigcup_{\tau^\prime} W_{\tau'}$, where $\tau^\prime$ is as above. Given a point $\Pbar$ in $W_{\tau'}$,
where $\left[\varphi(\uA, H) \vartriangle \eta(\uA, H) \right]^c
\supseteq \tau'\supseteq \varphi\cap \eta$, consider the reduced
fibre $\pi^{-1}(\Pbar)_{\rm red}$. We want to construct a point $\HH$ of the
Grassmann variety $\scrG(\Pbar)$, such that $\HH_\beta$ is
a one dimensional subspace of $\DD_\beta$, and
\[\HH_\beta = \begin{cases}
\Ker(\Ver)_\beta & \beta \in \varphi, \\
\Ker(\Fr)_\beta & \beta \in \eta, \\
\not \in \{\Ker(\Fr)_\beta, \Ker(\Ver)_\beta \} & \beta \not\in
\varphi \cup \eta.
\end{cases}\]
We need to check that $\HH$ thus defined is indeed a point of
$\scrG(\Pbar)$, which amounts to being stable under the maps $\Fr$ and
$\Ver$. This is a straightforward calculation:
\begin{itemize}
\item $\Fr \;\HH_\beta \subseteq \HH_{\sigma \circ \beta}$.

\id We distinguish cases: \begin{enumerate} \item
 $\beta \in \eta$. Then
this is clear as $\Fr \;\HH_\beta  = \{0\}$.

\item $\beta \not\in \eta$. In this case, since
$\beta \in \ell(\varphi) \cup \eta$, it follows that $\beta \in
\ell(\varphi)$, that is, $\sigma\circ \beta \in \varphi$. Then, $\Fr\; \HH_\beta \subseteq
\Ker(\Ver)_{\sigma \circ \beta} = \HH_{\sigma\circ \beta}$.
\end{enumerate}
\item $V \;\HH_\beta \subseteq \HH_{\sigma^{-1}\circ \beta}$.

\id The argument is entirely similar, where one distinguishes the cases:
(i) $\beta \in \varphi$; (ii) $\beta \not\in \varphi$, which implies $\sigma^{-1} \circ \beta
\in \eta$.
\end{itemize}
We now show that
\[ \varphi(\HH) = \varphi, \qquad \eta(\HH) = \eta.\]
Clearly, $\varphi(\HH) \supseteq \varphi$ and $\eta(\HH) \supseteq
\eta$. Note the following: (i) If $\beta \in (\varphi\cup \eta)^c$,
then by definition $\HH_\beta \neq \Ker(\Ver)_\beta$, and so $\beta
\not\in \varphi(\HH)$; (ii) If $\beta \in \eta - \varphi$, then
$\HH_\beta = \Ker(\Fr)_\beta$. On the other hand, by assumption, $\tau' \cap (\eta
- \varphi) = \emptyset$ and so $\Ker(\Fr)_\beta \neq \Ker(\Ver)_\beta$ and so
$\HH_\beta \neq \Ker(\Ver)_\beta$. That is, $\beta \not\in
\varphi(\HH)$. Put together, these facts show that $\varphi(\HH) =
\varphi$. A similar argument gives $\eta(\HH) = \eta$.

\

\id It follows from these considerations that $\pi^{-1}(\Pbar) \cap
W_{\varphi, \eta} \neq 0$. The above calculations show that if
$\Pbar$ is $k$-rational, then $\pi^{-1}(\Pbar) \cap W_{\varphi,
\eta}$ maps under $g$ to a closed subscheme of $\scrG$ isomorphic to
$\AA^{g - \sharp\; (\varphi\cup \eta)}_k$. This proves the second
claim of the proposition (a finite morphism is affine).
\end{proof}

\

\subsection{The Atkin-Lehner automorphism} The Atkin-Lehner
automorphism on $Y$ is the morphism
\[ w\colon Y \arr Y,\]
characterized by its action on points:
\[ w(\uA, H) = (\uA/H, A[p]/H),\]
equivalently,
\[ w\left((\uA,\alpha_A) \overset{f}{\Arr} (\uB, \alpha_B)\right) =
\left((\uB,\alpha_B) \overset{f^t}{\Arr} (\uA, p\circ
\alpha_A)\right).\] Thus,
\[ w^2 = \langle p\rangle,\]
where $\langle p\rangle$ is the diamond operator whose effect on
points is
\[ (\uA, \alpha_A, H) \mapsto (\uA, p \circ \alpha_A, H).\]
It follows that a power of $w^2$ is the identity and so $w$ is an
automorphism of $Y$.

\begin{prop}\label{proposition: w and phi eta}
The Atkin-Lehner automorphism acts on the stratification of $\Ybar$
by
\[
w(\Wpe) = W_{r(\eta), \ell(\varphi)}, \qquad w(\Zpe) = Z_{r(\eta), \ell(\varphi)}.
\]
In particular, we have $w(Z_{\BB_\gert,\BB_{\gert^\ast}})=Z_{  \BB_{\gert^\ast},\BB_\gert   }$.
\end{prop}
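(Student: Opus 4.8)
The plan is to reduce the statement to a pointwise computation of the invariants $(\varphi,\eta)$. Since $w$ is an automorphism of $Y$, it induces a homeomorphism of $\overline Y$, and since $\Wpe$ (resp.\ $\Zpe$) is characterized by a condition on the invariants of closed points, it suffices to determine how $w$ transforms these invariants. I would work with the isogeny model of $Y$ from Lemma~\ref{lemma: alternative formulation}: by the displayed formula for $w$ preceding this proposition, $w$ sends the datum $f\colon\uA\to\uB$ (with $\Ker f=H$) to $f^t\colon\uB\to\uA$, up to replacing $\alpha_A$ by $p\circ\alpha_A$. By their definition in \S\ref{subsec: discrete invariants}, the invariants $\varphi,\eta$ depend only on the vanishing of the graded pieces $\Lie(f)_\beta$ and $\Lie(f^t)_\beta$, hence are unaffected by the level-structure twist and by the polarization; I would also record the elementary identity $(f^t)^t=f$, immediate from $f^t\circ f=[p]_A$ and $f\circ f^t=[p]_B$, so that the transpose of $g:=f^t$ is $g^t=f$.

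The core computation is then a short chain of equivalences. By definition $\varphi(g)=\{\beta:\Lie(g)_{\sigma^{-1}\circ\beta}=0\}=\{\beta:\Lie(f^t)_{\sigma^{-1}\circ\beta}=0\}$, and $\Lie(f^t)_{\sigma^{-1}\circ\beta}=0$ says precisely $\sigma^{-1}\circ\beta\in\eta(f)$, i.e.\ $\beta\in r(\eta(f))$. Likewise $\eta(g)=\{\beta:\Lie(g^t)_\beta=0\}=\{\beta:\Lie(f)_\beta=0\}$, and $\Lie(f)_\beta=0$ says $\sigma\circ\beta\in\varphi(f)$, i.e.\ $\beta\in\ell(\varphi(f))$. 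Thus $w$ carries a point with invariants $(\varphi,\eta)$ to one with invariants $(r(\eta),\ell(\varphi))$, which, together with $w$ being a bijection on closed points, gives $w(\Wpe)=W_{r(\eta),\ell(\varphi)}$ on closed points and hence as locally closed subsets.

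For $\Zpe$, I would observe that $(\varphi,\eta)\mapsto(r(\eta),\ell(\varphi))$ is an order-preserving bijection of the poset of admissible pairs onto itself: the inclusions defining ``$\geq$'' are preserved because $\ell,r$ are bijections of $\BB$ commuting with complementation, and admissibility $\ell(\varphi^c)\subseteq\eta$ is equivalent, by the Proposition of \S\ref{subsec: discrete invariants}, to $r(\eta^c)\subseteq\varphi$, which after applying the bijection $\ell$ is exactly the admissibility of $(r(\eta),\ell(\varphi))$. Since $\Zpe=\bigcup_{(\varphi',\eta')\geq\pe}W_{\varphi',\eta'}$, applying $w$ and re-indexing yields $w(\Zpe)=\bigcup_{(\varphi',\eta')\geq\pe}W_{r(\eta'),\ell(\varphi')}=Z_{r(\eta),\ell(\varphi)}$. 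Finally, for $\gert\vert p$ each $\BB_\gerp$ is $\sigma$-stable, hence stable under $\ell$ and $r$, so $r(\BB_{\gert^\ast})=\BB_{\gert^\ast}$ and $\ell(\BB_\gert)=\BB_\gert$, giving $w(Z_{\BB_\gert,\BB_{\gert^\ast}})=Z_{\BB_{\gert^\ast},\BB_\gert}$.

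The one point to handle with care is the bookkeeping of the $\sigma$-shifts: $\varphi$ is read off $\Lie(f)$ with a $\sigma^{-1}$-twist while $\eta$ is read off $\Lie(f^t)$ with no twist, so the interchange $f\leftrightarrow f^t$ effected by $w$ does not act symmetrically on the two invariants — one must track that it is $r$ that lands in the first slot and $\ell$ in the second. Everything else is formal.
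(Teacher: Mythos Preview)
Your proposal is correct and follows essentially the same approach as the paper: both reduce to the pointwise computation that $w$ sends $(f\colon\uA\to\uB)$ to $(f^t\colon\uB\to\uA)$ and then read off $\varphi(w(\Qbar))=r(\eta(\Qbar))$ and $\eta(w(\Qbar))=\ell(\varphi(\Qbar))$ directly from the definitions of $\varphi$ and $\eta$ in terms of the vanishing of $\Lie(f)_\beta$ and $\Lie(f^t)_\beta$. Your additional remarks on the poset bijection for $\Zpe$ and the $\sigma$-stability of each $\BB_\gerp$ are fine elaborations, but the paper leaves these as immediate consequences of the $\Wpe$ statement.
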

\begin{proof}
 Suppose that $(f\colon \uA \arr \uB)$ is parameterized by a closed
 point $\Qbar$ in $\Wpe$. Then
 \begin{align*}
 \varphi(\Qbar) & = \{\beta \in B: \Lie(f)_{\sigma^{-1} \circ \beta} = 0\}, \\
 \eta (\Qbar) & = \{\beta \in B: \Lie(f^t)_\beta = 0\}.
 \end{align*}
 As $w(f\colon (\uA, \alpha_A) \arr (\uB, \alpha_B)) = (f^t\colon (\uB, \alpha_B) \arr (\uA,
 p\circ\alpha_A))$,
\begin{align*}
 \varphi(w(\Qbar)) & = \{\beta \in B: \Lie(f^t)_{\sigma^{-1} \circ \beta} = 0\}
 = r(\eta(\Qbar)),\\
 \eta (\Qbar) & = \{\beta \in B: \Lie(f)_\beta = 0\} = \ell(\varphi(\Qbar)).
 \end{align*}
\end{proof}

\begin{lem}\label{lemma: w and local parameters}
For the choice of parameters in the
uniformization~(\ref{equation: local deformation ring at Q bar}) at $\Qbar$ and
$w(\Qbar)$, the homomorphism
\[w^\ast\colon \widehat{\calO}_{\Ybar, w(\Qbar)}\Arr \widehat{\calO}_{\Ybar, \Qbar},\]
is given by
\[
w^\ast(x_{\beta,w(\Qbar)}) = y_{\beta,\Qbar}, \qquad w^\ast(y_{\beta,w(\Qbar)})  = x_{\beta,\Qbar}, \qquad  w^\ast(z_{\gamma,w(\Qbar)})  = z_{\gamma,\Qbar},
\]
for $\beta \in I(\Qbar)$, and $\gamma \not\in I(\Qbar)$.
\end{lem}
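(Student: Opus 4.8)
The plan is to read off $w^\ast$ directly from the description of the completed local rings via the local model set up in \S\ref{subsection: infinitesimal nature of Ybar}. Suppose $\Qbar$ corresponds to $(f\colon\uA\arr\uB)$. Recall that one fixes compatible identifications $H^1_{\rm dR}(\uA/k)\cong H\cong H^1_{\rm dR}(\uB/k)$, with $H=(\ol\otimes k)^2$, under which both $f^\ast$ and $(f^t)^\ast$ become the standard map $h$, and then $\widehat{\calO}_{\Ybar,\Qbar}$ is identified with the completed local ring of the associated Grassmann variety at the point given by the \emph{ordered} pair of Hodge filtrations $(W_A,W_B)$, where $W_A=H^0(A,\Omega^1_{A/k})$ and $W_B=H^0(B,\Omega^1_{B/k})$. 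Passing to $\beta$-eigencomponents: for $\beta\in I(\Qbar)$ one has $W_{A,\beta}=W_{B,\beta}=\Ker(M)$, and $x_{\beta,\Qbar}$ (resp.\ $y_{\beta,\Qbar}$) is precisely the coordinate recording the deformation of $W_{A,\beta}$ (resp.\ $W_{B,\beta}$) away from $\Ker(M)$; for $\gamma\notin I(\Qbar)$ exactly one of $W_{A,\gamma},W_{B,\gamma}$ differs from $\Ker(M)$ while the other is rigid, and $z_{\gamma,\Qbar}$ records the deformation of the non-rigid one (it coincides with $x_\gamma$ when $\gamma\in\ell(\varphi)\smallsetminus I$ and with $y_\gamma$ when $\gamma\in\eta\smallsetminus I$).

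The first step is to set up the local model at $w(\Qbar)$ compatibly. The point $w(\Qbar)$ corresponds to $(f^t\colon\uB\arr\uA)$, so I would use the \emph{same} ambient module $H$ and, for the identifications of the de Rham cohomologies of its source $\uB$ and target $\uA$, reuse the ones already fixed at $\Qbar$. Because at $\Qbar$ both $f^\ast$ and $(f^t)^\ast$ were normalized to equal $h$, the two structural maps at $w(\Qbar)$, namely $(f^t)^\ast$ and $((f^t)^t)^\ast=f^\ast$, also equal $h$ with respect to these identifications; hence this is a legitimate uniformization of $\widehat{\calO}_{\Ybar,w(\Qbar)}$, and under it the point of the Grassmann attached to $w(\Qbar)$ is the pair $(W_B,W_A)$. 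Thus, after these identifications, $w^\ast$ is simply the map induced on completed local rings by the Grassmann automorphism that transposes the two flags. By Proposition~\ref{proposition: w and phi eta}, $I(w(\Qbar))=\ell\bigl(r(\eta)\bigr)\cap\ell(\varphi)=\eta\cap\ell(\varphi)=I(\Qbar)$, so the two parameter sets are indexed by the same sets.

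The conclusion is then immediate, eigencomponent by eigencomponent. For $\beta\in I(\Qbar)$, $x_{\beta,w(\Qbar)}$ records the deformation of the \emph{first} flag at $w(\Qbar)$, which is $W_{B,\beta}$ --- exactly what $y_{\beta,\Qbar}$ records --- so $w^\ast(x_{\beta,w(\Qbar)})=y_{\beta,\Qbar}$, and symmetrically $w^\ast(y_{\beta,w(\Qbar)})=x_{\beta,\Qbar}$. For $\gamma\notin I(\Qbar)$, the unique non-rigid flag in the $\gamma$-component is literally the same subspace of $H_\gamma$ whether viewed from $\Qbar$ or from $w(\Qbar)$; only its slot in the ordered pair changes, and using $\varphi(w(\Qbar))=r(\eta)$, $\eta(w(\Qbar))=\ell(\varphi)$ one checks that this slot-change is exactly the renaming $x\leftrightarrow z$ or $y\leftrightarrow z$ built into the two formulations, so $z_{\gamma,\Qbar}$ and $z_{\gamma,w(\Qbar)}$ record the deformation of the same object and $w^\ast(z_{\gamma,w(\Qbar)})=z_{\gamma,\Qbar}$.

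The only genuine point requiring care is the compatibility asserted in the second paragraph: one must check that the elementary-divisor normalization made at $\Qbar$ transports verbatim to $w(\Qbar)$ (done above), and that the boot-strapping step of \S\ref{subsection: infinitesimal nature of Ybar} --- which upgrades the isomorphism modulo $\germ^p$ coming from Grothendieck's crystalline deformation theory to an isomorphism of the full completed local rings --- is canonical enough to intertwine $w$ with the flag-transposition automorphism. This holds because that construction is functorial in the divided-power deformation datum and $w$ is such a morphism of data. A more pedestrian alternative that avoids the local model would invoke Lemma~\ref{lemma: U phi and V phi}: the divisors $V_\beta^+$ and $U_{\sigma\circ\beta}^+$ locally cut out $(x_\beta=0)$ and $(y_\beta=0)$, Proposition~\ref{proposition: w and phi eta} tells how $w$ permutes them, and the relation $x_\beta y_\beta=p$ together with $w^\ast(p)=p$ forces the two residual units to be mutually inverse; pinning them to be exactly $1$ still comes back to fixing the parameters through the local model, so the argument above is the cleanest route.
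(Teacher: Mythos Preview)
Your proof is correct and follows essentially the same approach as the paper's: both arguments use the local-model description from \S\ref{subsection: infinitesimal nature of Ybar} to observe that $w$ acts by transposing the ordered pair of Hodge filtrations $(W_A,W_B)$, hence swaps the $x$- and $y$-parameters in the first formulation of Theorem~\ref{thm: Stamm's theorem}, and then tracks the renaming to $z$-variables in the second formulation. Your write-up is simply more explicit about setting up the compatible uniformization at $w(\Qbar)$ and verifying $I(w(\Qbar))=I(\Qbar)$, whereas the paper compresses this into the single remark that ``$w$ switches the role of $W_A^R$ and $W_B^R$.''
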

\begin{proof}
We refer to the discussion before Theorem \ref{thm: Stamm's theorem} in the following. In terms of the deformation theory discussed there, the morphism $w$ switches the role of $W_A^R$ and $W_B^R$ in a manner compatible with the $\ol$-action. Therefore, it is clear that in the first formulation of Theorem \ref{thm: Stamm's theorem} we have $w^*(x_{\beta,w(\Qbar)})=y_{\beta,\Qbar}$ for $\beta \in \ell(\varphi(w(\Qbar)))=\eta(\Qbar)$, and $w^*(y_{\beta,w(\Qbar)})=x_{\beta,\Qbar}$ for $\beta \in \eta(w(\Qbar))=\ell(\varphi(\Qbar))$. The result now follows, since in the uniformization  (\ref{equation: local deformation ring at Q bar}) the parameters $x_{\gamma,\Qbar}$ for $\gamma\in\ell(\varphi(\Qbar))-I(\Qbar)$, and $y_{\gamma,\Qbar}$ for $\gamma\in\eta(\Qbar)-I(\Qbar)$ have been replaced with $z_{\gamma,\Qbar}$, and a similar reassignment has taken place at $w(\Qbar)$.
\end{proof}

\id {\bf Caveat.} \emph{It is difficult for a point to $\Qbar$ to equal $w(\Qbar)$; for that to happen, one must have, among other things, that $\Qbar\in \Ybar_F \cap \Ybar_V$ and $p\equiv 1 \pmod{N}$. Nonetheless, if $\Qbar=w(\Qbar)$, then it must be understood that in Lemma \ref{lemma: w and local parameters}, we consider two possibly different sets of parameters at $\Qbar$ and $w(\Qbar)$ despite the fact that our notation does not reflect that.}

\

We will use the following lemma in the sequel. Let $\Qbar \in \Ybar_F$, and $\Pbar=\pi(\Qbar)$. Then $s(\Pbar)=\Qbar$.

\begin{lem}\label{lemma: section and pi}
For the choice of parameters in the
uniformization~(\ref{equation: local deformation ring at Q bar}) at
$\Qbar$, there is a choice of   parameters as in (\ref{equation: local def ring at P bar}) at $\Pbar$ such that the homomorphism
\[s^\ast\colon \widehat{\calO}_{\Ybar, \Qbar}\Arr \widehat{\calO}_{\Xbar, \Pbar},\]
is given by
\[
s^\ast(x_{\beta}) = t_{\beta}, \qquad s^\ast(y_{\beta})  = 0, \qquad  s^\ast(z_{\gamma})  = t_{\gamma},
\]
for $\beta \in I(\Qbar)$, and $\gamma \not\in I(\Qbar)$.
\end{lem}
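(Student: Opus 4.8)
The plan is to observe that $s$ factors through the non-singular closed stratum $\Ybar_F = Z_{\BB,\emptyset}$, to read off $\widehat{\calO}_{\Ybar_F,\Qbar}$ from Theorem~\ref{theorem: fund'l facts about the stratificaiton of Ybar}(4), and to exploit the fact that $\pi$ restricts on $\Ybar_F$ to a morphism having $\bar s$ as a section, so that $\bar s^{\ast}$ is a surjection of regular local rings of the same dimension, hence an isomorphism; the parameters $t_\beta$ will then be defined as the $\bar s^{\ast}$-images of the coordinates $x_\beta,z_\gamma$ on $\widehat{\calO}_{\Ybar_F,\Qbar}$.

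First, since $\Qbar \in \Ybar_F = Z_{\BB,\emptyset}$ (Proposition~\ref{proposition: ordinary strata}) we have $\varphi(\Qbar) \supseteq \BB$, i.e.\ $\varphi(\Qbar) = \BB$, and therefore $I := I(\Qbar) = \ell(\BB) \cap \eta(\Qbar) = \eta(\Qbar)$. Applying Theorem~\ref{theorem: fund'l facts about the stratificaiton of Ybar}(4) with $(\varphi',\eta') = (\BB,\emptyset)$ (so $J = \emptyset$, $K = I$) gives
\[
\widehat{\calO}_{\Ybar_F,\Qbar} = \widehat{\calO}_{\Ybar,\Qbar}/(y_\beta : \beta \in I) \;\cong\; k[\![\,\{x_\beta : \beta \in I\},\ \{z_\gamma : \gamma \in I^c\}\,]\!],
\]
a regular local ring of dimension $g$ in which $\{x_\beta : \beta \in I\} \cup \{z_\gamma : \gamma \in I^c\}$ is a regular system of parameters; in particular $\Ybar_F$ is reduced at $\Qbar$. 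Since $\Xbar$ is reduced and $s \colon \Xbar \arr \Ybar$ has set-theoretic image $\Ybar_F$, the morphism $s$ factors through the closed immersion $\Ybar_F \injects \Ybar$, say $s = \iota \circ \bar s$ with $\bar s \colon \Xbar \arr \Ybar_F$; and $s(\Pbar) = \Qbar$. Passing to completed local rings, $s^{\ast} = \bar s^{\ast} \circ \iota^{\ast}$, where $\iota^{\ast}\colon \widehat{\calO}_{\Ybar,\Qbar} \arr \widehat{\calO}_{\Ybar_F,\Qbar}$ is the quotient map displayed above. In particular $s^{\ast}(y_\beta) = 0$ for all $\beta \in I$.

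Next, from $\pi \circ s = \mathrm{id}_{\Xbar}$ we get $(\pi\vert_{\Ybar_F}) \circ \bar s = \mathrm{id}_{\Xbar}$, hence $\bar s^{\ast} \circ (\pi\vert_{\Ybar_F})^{\ast} = \mathrm{id}$ on $\widehat{\calO}_{\Xbar,\Pbar}$; so $\bar s^{\ast}\colon \widehat{\calO}_{\Ybar_F,\Qbar} \arr \widehat{\calO}_{\Xbar,\Pbar}$ is surjective, and since both rings are regular local of dimension $g$ (the source by the displayed formula, the target because $\Xbar$ is smooth) it is an isomorphism. Now set $t_\beta := s^{\ast}(x_\beta) = \bar s^{\ast}(x_\beta)$ for $\beta \in I$, and $t_\gamma := s^{\ast}(z_\gamma) = \bar s^{\ast}(z_\gamma)$ for $\gamma \in I^c$. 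Being the $\bar s^{\ast}$-image of the regular system of parameters above, the family $\{t_\beta : \beta \in \BB\}$ is a regular system of parameters of $\widehat{\calO}_{\Xbar,\Pbar}$, giving an identification as in~(\ref{equation: local def ring at P bar}). With this choice $s^{\ast}(x_\beta) = t_\beta$ and $s^{\ast}(z_\gamma) = t_\gamma$ by construction, and $s^{\ast}(y_\beta) = 0$ by the previous paragraph, which is the asserted formula. (If the $t_\beta$ are required to be the stratification-adapted parameters of~(\ref{equation: local def ring at P bar}), one further checks, using $s^{-1}(Z_{\BB,\{\beta\}}) = Z_{\{\beta\}}$ together with Theorem~\ref{theorem: fund'l facts about the stratificaiton of Ybar}(4), that for $\beta \in I = \tau(\Pbar)$ the function $t_\beta$ cuts out $Z_{\{\beta\}}$ in $\Spf(\widehat{\calO}_{\Xbar,\Pbar})$, so that it differs from the partial Hasse invariant $h_\beta$ by a unit and may be rescaled to equal it, while the $t_\gamma$ with $\gamma \notin \tau(\Pbar)$ are unconstrained.)

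The argument is essentially bookkeeping; the one point to be careful about is that $s$ factors \emph{scheme-theoretically} through $\Ybar_F$ and that this closed subscheme has regular completed local ring at $\Qbar$ — both rely on the non-singularity of the stratum $Z_{\BB,\emptyset}$ supplied by Theorem~\ref{theorem: fund'l facts about the stratificaiton of Ybar}, together with the computation $\varphi(\Qbar) = \BB$, $I(\Qbar) = \eta(\Qbar)$ that identifies the relevant deformation ring explicitly.
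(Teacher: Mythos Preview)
Your proof is correct but follows a different route from the paper's. The paper argues directly from the deformation-theoretic meaning of the parameters set up in \S\ref{subsection: infinitesimal nature of Ybar}: both $t_\beta$ (for $\Xbar$) and $x_\beta$ (for $\Ybar$) describe deformations of the Hodge line $W_{A,\beta}$, so one may simply choose $t_\beta$ so that $s^\ast(x_\beta)=t_\beta$; and $s^\ast(y_\beta)=0$ comes from the moduli fact $\Fr^\ast\omega_{A^{(p)}}=0$, which forces $W_{B,\beta}$ to be constant (equal to $\Ker(M)$) along the image of $s$. Your argument instead treats the problem structurally, reading off $\widehat{\calO}_{\Ybar_F,\Qbar}$ from Theorem~\ref{theorem: fund'l facts about the stratificaiton of Ybar}(4), invoking the commutative-algebra fact that a surjection of equidimensional regular local rings is an isomorphism, and then \emph{defining} the $t_\beta$ as images. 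This is clean and avoids revisiting the Grassmann picture, at the price of the extra verification (your parenthetical) that the resulting $t_\beta$ for $\beta\in I$ really cut out $Z_{\{\beta\}}$; that check goes through because $\bar s\colon \Xbar\to \Ybar_F$ is a global isomorphism (both $\pi\vert_{\Ybar_F}\circ\bar s$ and $\bar s\circ\pi\vert_{\Ybar_F}$ are the identity on reduced varieties), so $\bar s^{-1}(Z_{\BB,\{\beta\}})=Z_{\{\beta\}}$ scheme-theoretically. One small point you leave implicit: the identification $I(\Qbar)=\tau(\Pbar)$ used in that parenthetical follows from Corollary~\ref{corollary: type and phi eta}, since here $\varphi(\Qbar)=\BB$ gives $\varphi\cap\eta=\eta=I\subseteq\tau$ and $\tau\subseteq(\varphi\vartriangle\eta)^c=\eta$.
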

\begin{proof} Let $\Pbar$ correspond to $\uA$; then $\Qbar$ corresponds to $(\Fr:\uA \arr \uA^{(p)})$, and we have $\varphi(\Qbar)=\BB$ and $I(\Qbar)=\eta(\Qbar)$. Fix an index $\beta\in \BB$, and let $W_1,W_2,M,R$ be the data corresponding to $\Qbar$ ``at the $\beta$-component," as in \S \ref{subsection: infinitesimal nature of Ybar}. Applying deformation theory in a similar way, one can provide an isomorphism as in (\ref{equation: local def ring at P bar}) at $\Pbar$ which has the desired properties:  the relevant Grassmann problem is to provide $W_1^R$ inside $R^2$ lifting $W_1$, and the parameter $t_\beta$ describes the deformation of $W_1$. Since, in the notation of the first formulation of Theorem \ref{thm: Stamm's theorem}, the parameter  $x_\beta$  describes the deformation of $W_1$, we can choose $t_\beta$ such that $s^*(x_\beta)=t_\beta$. Note that in  the isomorphism (\ref{equation: local deformation ring at Q bar})  $x_\beta$ is renamed to $z_\beta$ if $\beta\in I(\Qbar)^c$, and that is how we have recorded this in the statement of the lemma. Finally, since $\Fr^*(\omega_{A^{(p)}})=0$,  any deformation of $W_2$ is constant, equal to $\Ker(M)$, and hence $s^*(y_\beta)=0$ whenever $y_\beta$ is defined, that is,  for $\beta\in\eta(\Qbar)=I(\Qbar)$.
\end{proof}

\

\subsection{The infinitesimal nature of $\pi\colon \Ybar \arr \Xbar$}
Let $k$ be a finite field containing $\kappa$, and $\Qbar$ a closed
point of $\Ybar$ with residue field $k$. Let $\Pbar = \pi (\Qbar)$; let $\varphi = \varphi(\Qbar), \eta = \eta(\Qbar), I = I(\Qbar)$, and
$\tau = \tau(\Pbar)$. Choose the following isomorphisms
\begin{align}
 \widehat{\calO}_{\Ybar, \Qbar} &\cong k [\![ \{x_\beta,
 y_\beta: \beta \in I\}, \{z_\beta: \beta \in I^c\}]\!]/(\{x_\beta y_\beta: \beta\in
 I\}),\label{equation: stamm isomorphism}\\
 \widehat{\calO}_{\Xbar, \Pbar} &\cong
k[\![t_\beta: \beta \in \BB ]\!], \label{equation: GO isomorphism}
\end{align}
as explained in \S\S \ref{subsection: infinitesimal nature of Ybar}-\ref{subsection: stratification of Y bar}.
The irreducible components of $\Ybar$ through $\Qbar$ are in
bijection with subsets $J \subseteq I$. To such $J$, we have associated the
ideal
\[ \calI_J = \langle \{x_\beta: \beta \not\in J\}, \{y_\beta; \beta \in J\} \rangle\]
in $\widehat{\calO}_{\Ybar, \Qbar}$. By Theorem~\ref{theorem: fund'l
facts about the stratificaiton of Ybar}, the closed set $V(\calI_J)$
in $\Spf\; \widehat{\calO}_{\Ybar, \Qbar}$ corresponding to it is
$Z^{\wedge\Qbar}_{r(\eta - J)^c, \eta - J}$, the formal completion of $Z_{r(\eta - J)^c, \eta - J}$ at $\Qbar$.

\

\id The following lemma, despite appearances, plays a key role in
obtaining the results concerning the canonical subgroup. We shall
refer to it in the sequel as ``Key Lemma".
\begin{lem}[Key Lemma]\label{lemma: key lemma }
Let $\beta\in \varphi\cap \eta$ and $\pi^\ast\colon
\widehat{\calO}_{\Xbar, \Pbar} \arr \widehat{\calO}_{\Ybar, \Qbar}$
the induced ring homomorphism.

\begin{enumerate}
\item \fbox{$\sigma \circ \beta \in \varphi, \sigma^{-1}\circ \beta
 \in \eta$} \; In this case,
\[\pi^\ast(t_\beta) = u x_\beta + v y_{\sigma^{-1}\circ \beta}^p,\]
for some units $u, v\in \widehat{\calO}_{\Ybar, \Qbar}$.
\item\fbox{$\sigma \circ \beta \in \varphi, \sigma^{-1}\circ \beta
\not \in \eta$} \; In this case,
\[\pi^\ast(t_\beta) = u x_\beta,\]
for some unit $u\in \widehat{\calO}_{\Ybar, \Qbar}$.
\item\fbox{$\sigma \circ \beta \not\in \varphi, \sigma^{-1}\circ \beta
 \in \eta$} \; In this case,
\[\pi^\ast(t_\beta) = v y_{\sigma^{-1}\circ \beta}^p,\]
for some unit $v\in \widehat{\calO}_{\Ybar, \Qbar}$.
\item\fbox{$\sigma \circ \beta \not \in \varphi, \sigma^{-1}\circ \beta
\not \in \eta$} \; In this case,
\[\pi^\ast(t_\beta) = 0.\]
\end{enumerate}
\end{lem}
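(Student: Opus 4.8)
The whole strategy is to compute $\pi^*(t_\beta)$ by computing the map on Hodge filtrations, using the explicit identification of the completed local rings via the Grassmann local models from \S\ref{subsection: infinitesimal nature of Ybar} and \S\ref{subsection: stratification of Y bar}. Recall that $t_\beta$ was chosen to be the partial Hasse invariant $h_\beta$, which (after trivializing $\calL^p_{\sigma^{-1}\circ\beta}\otimes\calL_\beta^{-1}$) measures the failure of the map $\Ver\colon \DD(\Ker(\Ver_{\uA^{\rm univ}}))^{(p)}_{\sigma^{-1}\circ\beta}\to \DD(\cdot)_\beta$, or, dually, is built from the Hasse--Witt matrix of the universal deformation. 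Equivalently, on the level of the local model, $t_\beta$ is the coordinate describing how the deformed Hodge line $W_{A,\beta}^R\subset (\ol\otimes R)^2_\beta$ moves relative to a fixed complement, and this coordinate is a Frobenius-semilinear expression in the coordinates at the index $\sigma^{-1}\circ\beta$. So the plan is: \textbf{(i)} write down $\pi^\ast h_\beta$ as a function of the universal deformation of $(\uA^{\rm univ}, f^{\rm univ})$ on $\Spf \widehat{\calO}_{\Ybar,\Qbar}$; \textbf{(ii)} translate this into the Grassmann coordinates $x_\gamma, y_\gamma, z_\gamma$ using the dictionary established in the proof of Theorem~\ref{thm: Stamm's theorem}; \textbf{(iii)} read off, case by case, which of the variables $x_\beta$ and $y_{\sigma^{-1}\circ\beta}$ actually occur.

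First I would set up coordinates. At $\Qbar$, corresponding to $(f\colon\uA\arr\uB)$, the de Rham realization $H^1_{\rm dR}(\uA/k)=\DD$ decomposes as $\oplus_\gamma \DD_\gamma$ with each $\DD_\gamma\cong k^2$, and we fix bases so that $f^\ast=(f^t)^\ast = h$ on each component, where $h$ is the nilpotent $M=\left(\begin{smallmatrix}0&1\\0&0\end{smallmatrix}\right)$. The Hodge lines $W_{A,\gamma}, W_{B,\gamma}$ are either $\Ker(M)$ or not, governed by whether $\gamma\in\eta$ (for $W_A$) and $\sigma\circ\gamma\in\varphi$ (for $W_B$); this is exactly the bookkeeping done just before Theorem~\ref{thm: Stamm's theorem}. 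Since $\beta\in\varphi\cap\eta$, we have $W_{A,\beta}=\Ker(M)$ and $\sigma\circ\beta\in\varphi$ forces $W_{B,\sigma\circ\beta}=\Ker(M)$; meanwhile the condition $\sigma^{-1}\circ\beta\in\eta$ is precisely the condition that at index $\sigma^{-1}\circ\beta$ the $A$-side Hodge line is also $\Ker(M)$, i.e.\ that $y_{\sigma^{-1}\circ\beta}$ is one of the active deformation parameters. Over $R=\widehat{\calO}_{\Ybar,\Qbar}$ the universal Hodge filtration $W_{A,\gamma}^R$ is spanned, when $\gamma\in\eta$, by a vector $(1, d_\gamma)$ with $d_\gamma = y_\gamma$ (or $z_\gamma$ if $\gamma\in I^c$); likewise $W_{B,\sigma\circ\gamma}^R$ is governed by $x_\gamma$-type parameters. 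Then $\pi^\ast h_\beta$, being the $\beta$-component of the Hasse--Witt matrix of $\uA^{\rm univ}$, is computed from the action of $\Ver$ (equivalently, the conjugate-linear Frobenius on $H^1(\calO)$) on $W_{A,\sigma^{-1}\circ\beta}^R$, landing in $\DD_\beta/W_{A,\beta}^R$. Unwinding this on the local model: the contribution of the deformation of $W_{A,\beta}^R$ itself gives a term $u\,x_\beta$ (a unit times the parameter deforming $W_{A,\beta}$ — note $x_\beta$ is the name this parameter carries after the renaming in (\ref{equation: local deformation ring at Q bar}) when $\beta\in I$, since $\beta\in\varphi\cap\eta$ and $\sigma\circ\beta\in\varphi$ forces $\beta\in\ell(\varphi)\cap\eta = I$), while the contribution from the deformation of $W_{A,\sigma^{-1}\circ\beta}^R$ enters through a $p$-th power because of the Frobenius-semilinearity of the Hasse--Witt construction, giving $v\,y_{\sigma^{-1}\circ\beta}^p$. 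The four cases of the lemma correspond exactly to whether each of these two parameters is ``active'': $x_\beta$ is present iff $W_{B,\sigma\circ\beta}$ is constrained to deform, i.e.\ iff $\sigma\circ\beta\in\varphi$ is the relevant condition making $x_\beta$ a genuine variable rather than being killed — more precisely $x_\beta$ appears as a nonzero parameter exactly when $\sigma\circ\beta\in\varphi$; and $y_{\sigma^{-1}\circ\beta}$ is present exactly when $\sigma^{-1}\circ\beta\in\eta$. When $\sigma\circ\beta\notin\varphi$ the $B$-side Hodge line at $\sigma\circ\beta$ is forced to equal $MW^R_{A,\beta}$ which constrains $W^R_{A,\beta}$ to be constant $=\Ker(M)$, killing the $x_\beta$ term; when $\sigma^{-1}\circ\beta\notin\eta$ the $A$-side line at $\sigma^{-1}\circ\beta$ is not $\Ker(M)$ and the corresponding variable is $x_{\sigma^{-1}\circ\beta}$-type, which does not feed into $h_\beta$ at all — giving $0$ in case (4).

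The computation in \textbf{(ii)}–\textbf{(iii)} is essentially linear algebra over $R$ together with careful tracking of the semilinearity, and I would organize it by first doing the universal (``worst'') case (1), where both parameters are active, and then degenerating. The one genuinely delicate point — and the step I expect to be the main obstacle — is \textbf{verifying that the coefficients $u,v$ are units}, i.e.\ that the leading terms do not accidentally vanish modulo the maximal ideal. For $v$ this amounts to the non-degeneracy of the Hasse--Witt / Verschiebung pairing at the relevant component, which holds because of the Rapoport condition (freeness of $\Lie$ over $\ol\otimes k$) — the same input used in Lemma~\ref{lemma: cyclic is isotropic} and in \S\ref{subsection:Facts about D modules} to guarantee that each $\Ker(\Fr)_\gamma, \Ker(\Ver)_\gamma$ is one-dimensional. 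For $u$ one must check that the map $\Lie(f^{\rm univ})$ or its transpose does not degenerate in a way that kills the $x_\beta$-contribution; this again follows from the local-model description, since $x_\beta$ is by construction the coordinate of a nontrivial deformation direction of the Grassmann point, and the pullback of the Hasse invariant restricts nontrivially to that direction precisely under the stated hypotheses on $\sigma\circ\beta$. Once unit-ness is in hand, matching against the four sign patterns is a finite check. I would also remark that cases (2)–(4) can alternatively be deduced from case (1) by restricting to the sub-loci where $y_{\sigma^{-1}\circ\beta}$ or $x_\beta$ is not among the parameters (equivalently, intersecting with the relevant closed strata $Z_{\varphi',\eta'}$ from Theorem~\ref{theorem: fund'l facts about the stratificaiton of Ybar}) and invoking functoriality of $\pi^\ast$ with respect to these closed immersions, which avoids redoing the semilinear algebra three more times.
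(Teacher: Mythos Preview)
Your approach --- compute $\pi^*(h_\beta)$ directly from the Hasse--Witt operator on the universal family, expressed in the Grassmann coordinates of \S\ref{subsection: infinitesimal nature of Ybar} --- is genuinely different from the paper's proof, which is a global-to-local argument. The paper works component-by-component on $\Spf\widehat{\calO}_{\Ybar,\Qbar}$: on each $V(\calI_J)$ it uses Theorem~\ref{theorem: pi of Zpe} and Lemma~\ref{lemma: U phi and V phi} to identify the vanishing locus of $\pi^*(t_\beta)$ with that of a single variable, so that $\pi^*(t_\beta)$ is a unit times a power of $x_\beta$ or $y_{\sigma^{-1}\circ\beta}$ modulo $\calI_J$; these congruences are then assembled via a monomial-ideal intersection (Lemma~\ref{lemma: dagger}); finally the exponents $M=1$, $N=p$ are pinned down by a connectedness argument that transports the computation from the superspecial points in $\Ybar_F\cap\Ybar_V$ (where it is done by hand via Lemma~\ref{lemma: section and pi} and Proposition~\ref{prop: pi, infinitesimally, on horizontal strata}) along irreducible Weil divisors, using Theorem~\ref{theorem: C cap YF cap YV}. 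No explicit crystalline computation of the Hasse--Witt operator ever occurs.

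Your plan has a genuine gap. The local model of \S\ref{subsection: infinitesimal nature of Ybar} records only the Hodge filtrations $W_A^R, W_B^R$ and the fixed nilpotent map $h$ coming from $f^\ast,(f^t)^\ast$; it does \emph{not} carry the crystalline Frobenius (or Verschiebung) of $\uA^{\rm univ}$ over $R$, and $h_\beta$ is built from the latter, not from $h$. So step~(ii) of your plan --- ``translate $\pi^*h_\beta$ into Grassmann coordinates'' --- cannot be carried out from the data at hand; it would require setting up displays (or an equivalent Frobenius-equipped local model), which the paper does not do. There is also a bookkeeping slip: in the conventions preceding Theorem~\ref{thm: Stamm's theorem}, $x_\gamma$ deforms $W_{A,\gamma}$ while $y_\gamma$ deforms $W_{B,\gamma}$, so your identification of $y_{\sigma^{-1}\circ\beta}$ with an $A$-side Hodge deformation is reversed. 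Most seriously, the phrase ``a $p$-th power because of the Frobenius-semilinearity'' glosses over exactly the step where the paper's argument spends almost all of its effort: the Sub-lemma proving $N=p$ is not local but genuinely global, relying on Theorem~\ref{theorem: C cap YF cap YV} to connect an arbitrary $\Qbar$ to $\Ybar_F\cap\Ybar_V$. Without either a replacement for that global input or an honest computation of the crystalline $\Ver$ over $R$, your proposal does not determine the exponents and does not go through.
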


\begin{proof}
We first prove assertion (1). Fix $\beta$. We have \[\beta \in
\varphi\cap \eta, \quad\sigma \circ \beta \in \varphi, \quad
\sigma^{-1}\circ \beta
 \in \eta.\]

 Note that $\sigma \circ \beta \in \varphi \Rightarrow  \beta \in
 \ell (\varphi)\Rightarrow \beta \in \ell(\varphi) \cap \eta = I.$
 Similarly, $\beta \in \varphi \Rightarrow \sigma^{-1}\circ \beta
 \in \ell(\varphi) \Rightarrow \sigma^{-1}\circ \beta \in
 \ell(\varphi) \cap \eta = I$. That is to say, both $\beta$ and
 $\sigma^{-1}\circ \beta$ are critical indices.

 Let $J \subseteq I$, $\varphi_0 = r(\eta - J)^c, \eta_0 = \eta -
 J$. Then $V(\calI_J) = Z^{\wedge \Qbar}_{\varphi_0, \eta_0}$. In
 the following analysis, divided into three cases, we obtain information
 about $\pi^\ast(t_\beta)$ modulo various ideals $\calI_J$, which is
 then assembled to produce the final result $\pi^\ast(t_\beta) = u x_\beta + v y_{\sigma^{-1}\circ
 \beta}^p$. The three cases do not cover all possibilities, but they
 suffice for the following.

\

\id \underline{\textsc{Case A}: \;\;$\{\sigma^{-1}\circ \beta,
\beta\}\subseteq J$}.

\id Since $\sigma^{-1}\circ \beta \in J$ we have $\beta \in r(J)$
and, since $\varphi_0 = r(\eta^c) \cup r(J)$, we have $\beta \in
\varphi_0$. Therefore, $\beta \in \varphi(\Qbar')$ for any point
$\Qbar'\in Z_{\varphi_0, \eta_0}$ and so, by Corollary \ref{corollary: type and phi eta}, $\beta\in \tau(\Qbar')$ if
and only if $\beta \in \eta(\Qbar')$. This can be rephrased as
saying that the vanishing locus of $\pi^\ast(t_\beta)$ on $Z^{\wedge
\Qbar}_{\varphi_0, \eta_0}$ lies inside the closed formal subscheme
$V_\beta^+ \cap Z^{\wedge \Qbar}_{\varphi_0, \eta_0}$ ($V_\beta^+$
was defined in Lemma~\ref{lemma: U phi and V phi}), which in the
completed local ring is defined by the vanishing of $x_\beta$. This
implies that
\[x_\beta \in \sqrt{\langle \pi^\ast(t_\beta) \rangle},\]
in the ring
\[ \widehat{\calO}_{\Ybar, \Qbar}/\calI_J \cong k [\![ \{x_\gamma: \gamma \in J\}, \{y_\gamma:\gamma \in I - J\},
\{z_\gamma:\gamma \in \BB - I\}]\!], \] which is a power-series
ring. Therefore, there exists a positive integer $M(J)$ such that
\[\pi^\ast(t_\beta) - u(J)x_\beta^{M(J)} \in \calI_J,\]
for $u(J)$ a (lift of a unit mod $\calI_J$ and hence a) unit in
$\OhatY$.

\

\id \underline{\textsc{Case B}: \;\;$\{\sigma^{-1}\circ \beta,
\beta\}\subseteq I-J$}.

\id Since $\beta \in \eta$, we have $\beta \in \eta - J= \eta_0$ in
this case. On the other hand, $\sigma^{-1}\circ \beta \not\in J$ and
so $\beta \not\in r(J)$, and also $\sigma^{-1}\circ\beta \in \eta$
and so $\beta \not\in r(\eta)^c$. Together these imply that $\beta
\not \in r(\eta)^c \cup r(J) = \varphi_0$. Arguing as in
\textsc{Case A} (where we had $\beta \in \varphi_0, \beta\not\in
\eta_0$), we deduce that there is a positive integer $N(J)$ such
that
\[ \pi^\ast(t_\beta) - v(J)y_{\sigma^{-1} \circ \beta}^{N(J)} \in \calI_J,\]
for some unit $v(J)\in \OhatY$.

\

\id \underline{\textsc{Case C}: \;\;$\sigma^{-1}\circ \beta \in J,
\beta\not\in J$}.

\id The assumption implies that $\beta \in r(J)$ and so $\beta\in
\varphi_0$. Also, $\beta\in I - J\subseteq \eta - J$ and so $\beta
\in \eta_0$. This implies that $\beta \in  \varphi(\Qbar') \cap
\eta(\Qbar')$ for any closed point $\Qbar' \in Z_{\varphi_0,
\eta_0}$ and hence that $\pi(Z_{\varphi_0, \eta_0}) \subseteq
Z_{\{\beta\}}$. Therefore, $\pi^\ast(t_\beta)$ vanishes identically
on $Z_{\varphi_0, \eta_0}$, that is,
\[\pi^\ast(t_\beta) \in \calI_J.\]

\

\id As mentioned above the remaining case, $\sigma^{-1}\circ \beta
\not\in J, \beta \in J$, is not needed in the following. We now
proceed with the proof of assertion (1).

Taking $J = I$ and $J = \emptyset$, we find $M = M(I), u$ and $N =
N(\emptyset), v$, as in \textsc{Case A} and \textsc{Case B},
respectively. We can therefore write
\[\pi^\ast(t_\beta) = ux_\beta^M + vy_{\sigma^{-1}\circ \beta}^N + E,\]
where the ``error term" $E \in \calI_I \cap\calI_\emptyset \subset
\OhatY$. Reducing this equation modulo $\calI_J$ for every $J$
falling under \textsc{Case C}, we find, using Lemma~\ref{lemma:
dagger} below, that
\[ E \in \bigcap_{\tiny{\begin{matrix}
J \subseteq I,\\
\beta\not\in J, \sigma^{-1}\circ \beta \in J. \end{matrix}}} \calI_J
= \langle x_\beta, y_{\sigma^{-1}\circ \beta}\rangle .
\]
Therefore, we may write $E = Ax_\beta + By_{\sigma^{-1}\circ \beta}$
and, by reducing modulo $\calI_I$ and $\calI_\emptyset$, we find
that
\[ \pi^\ast(t_\beta) = ux_\beta^M + vy_{\sigma^{-1}\circ \beta}^N +
Ax_\beta + By_{\sigma^{-1}\circ \beta}, \qquad A \in \langle
y_\gamma: \gamma \in I\rangle, B \in \langle x_\gamma:\gamma\in
I\rangle.\] Choosing $J$ as in \textsc{Case B}, we can find $N(J),
v(J)$, such that $\pi^\ast(t_\beta) - v(J) \cdot y_{\sigma^{-1}\circ
\beta}^{N(J)} \in \calI_J$. Hence, we find that
$vy_{\sigma^{-1}\circ \beta}^N + B y_{\sigma^{-1}\circ \beta} = v(J)
y_{\sigma^{-1}\circ \beta}^{N(J)}$ in the local ring
\[R_J = \OhatY/\calI_J = k[\![ \{x_\gamma: \gamma \in J\},
\{y_\gamma: \gamma \in I-J\}, \{z_\gamma: \gamma\in \BB - I\}]\!].\]
Let $v_0$ and $v_0(J)$ denote, respectively, the constant term in
$v$ and $v(J)$ in the power-series ring $R_J$. Because $B \in
\langle x_\gamma: \gamma \in I\rangle$, it follows that $v_0 =
v_0(J)$ and $N = N(J)$. That implies that $B \in \langle
y_{\sigma^{-1}\circ \beta}^{N-1}\rangle + \calI_J$ for all $J$ as in
\textsc{Case B}. Therefore, \begin{align*} B \in
\bigcap_{\tiny{\begin{matrix}
J \subseteq I\\
\{\sigma^{-1}\circ \beta, \beta\}\subseteq I - J
\end{matrix}}} \langle
y_{\sigma^{-1}\circ \beta}^{N-1}\rangle + \calI_J & = \langle
y_{\sigma^{-1}\circ \beta}^{N-1}, x_{\sigma^{-1}\circ \beta},
x_\beta \rangle, &{\rm \text{(By Lemma~\ref{lemma: dagger}.)}}
\end{align*}
Write $B = ay_{\sigma^{-1}\circ \beta}^{N-1} + bx_{\sigma^{-1}\circ
\beta} + c x_\beta$, where $a\in \langle x_\beta:\beta\in I\rangle$.
Hence:
\begin{equation} \label{equation: pi star t beta}
\pi^\ast(t_\beta) = ux_\beta^M + vy_{\sigma^{-1}\circ \beta}^N +
Ax_\beta + a y_{\sigma^{-1}\circ \beta}^N + cx_\beta
y_{\sigma^{-1}\circ \beta}.
\end{equation} To proceed, we need a sub-lemma.

\medskip

\id {\bf Sub-lemma.} $M=1, N=p$.
\begin{proof}{(Sub-lemma)}
First we prove the statement for $\Qbar\in \Ybar_F \cap
\Ybar_V$. At such a point $\Qbar$, $\varphi(\Qbar) = \eta(\Qbar)
= \BB = I$. Since $\Ybar_F = Z_{\BB, \emptyset}$, it follows that
$V(\calI_I)$ is the image of $\Ybar_F$ in $\Spf (\OhatY)$. Since
there is a section to $\pi\vert_{\Ybar_F}$, we find that
$\pi^\ast(Z_{\{\beta\}})$ is a reduced Weil divisor on $\Ybar_F$, and hence
$\pi^\ast (t_\beta) = ux_\beta^M$ in $\OhatY/\calI_I$ implies that
$M=1$. A similar argument using $\Ybar_V$ and  applying Proposition \ref{prop: pi, infinitesimally, on horizontal strata} gives that $N = p$.

Now for the general case:  the point $\Qbar$ belongs to $Z_{\varphi_0, \eta_0} = Z_{r(\eta)^c \cup r(J), \eta -
J}$ for $J=I$, and so it belongs to an irreducible component $C$  of  $Z_{\varphi_0, \eta_0}$. Let $D$ be the
pull-back of the Cartier divisor $Z_{\{\beta\}}$, under $\pi\colon C
\arr \Xbar$. Since $\beta \in \varphi(\Qbar) \cap \eta(\Qbar) =
\varphi\cap \eta$, but $\beta\in \varphi_0-\eta_0$, we see that
 $D$ is a non-zero Cartier divisor on $C$ containing $\Qbar$, and since $C$ is
non-singular, in fact a non-zero effective Weil divisor on $C$. Let
us write then
\[ D = n_1D_1 + n_2 D_2 + \dots + n_rD_r,\]
a sum of irreducible Weil divisors $D_i$, and say $\Qbar\in D_1$.
Since $Z_{\{\beta\}}$ is given formally locally by the vanishing of
$t_\beta$, we see that $D$ at $\Qbar$ is given by the vanishing of
$\pi^\ast(t_\beta)$ modulo $\calI_I$, which is equal to
$ux_\beta^M$. This, in turn, implies that $D$ is locally irreducible
and hence $\Qbar$ belongs only to $D_1$ and $M=n_1$.

\medskip

\id {\bf Sub-sub-lemma.} $D_1\cap \Ybar_F \cap \Ybar_V \neq
\emptyset$.

\begin{proof}{(Sub-sub-lemma)}   We have $\beta \in  \varphi_0$ and so $\beta \in \varphi(\Rbar)$ for any closed point $\Rbar \in
Z_{\varphi_0, \eta_0}$. Therefore, $\pi(\Rbar) \in Z_{\{\beta\}}$ if
and only if $\beta \in \eta(\Rbar)$ (Corollary~\ref{corollary: type
and phi eta}). Therefore, at least set theoretically,
\[ Z_{\varphi_0, \eta_0} \cap \pi^{-1}(Z_{\{\beta\}}) = Z_{\varphi_0, \eta_0 \cup \{\beta\}}.\]
Since $Z_{\varphi_0, \eta_0}$ is a disjoint union of its components,
every irreducible component of $C \cap \pi^{-1}(Z_{\{\beta\}})$
(i.e., $D_1, \dots, D_r$) is an irreducible component of
$Z_{\varphi_0, \eta_0 \cup \{\beta\}}$. By Theorem~\ref{theorem: C
cap YF cap YV}, every such irreducible component contains a point
$\Rbar$ that also belongs to $\Ybar_F \cap \Ybar_V$. That ends
the proof of the sub-sub-lemma.
\end{proof}
\id Back to the proof of the sub-lemma. Let $\Rbar \in
D_1\cap\Ybar_F \cap \Ybar_V$ be a closed point. Repeating the
argument above now for the point $\Rbar$, we find that $M = n_1 =
M(\Rbar)$, where $M(\Rbar)$ is the exponent $M$ occurring in
Equation~(\ref{equation: pi star t beta}) at the point $\Rbar$.
Using the argument already done for the special case of points on
$\Ybar_F \cap \Ybar_V$, we conclude that $M(\Rbar) = 1$ and
hence $M = 1$. A similar argument gives that $N=p$ and this
concludes the proof of the Sub-lemma.
\end{proof}
\id We can now refine Equation~(\ref{equation: pi star t beta}) and
write
\begin{equation*}
\pi^\ast(t_\beta) = (u+A + cy_{\sigma^{-1} \circ \beta})x_\beta +
(v+a) y_{\sigma^{-1}\circ\beta}^p.
\end{equation*}
Note that both $u':= u + A + cy_{\sigma^{-1} \circ \beta}$ and
$v':=v+a$ are units in $\OhatY$ as $ A + cy_{\sigma^{-1} \circ
\beta}$ and $a$ are in the maximal ideal and $u, v$ are units. Thus,
\begin{equation*}
\pi^\ast(t_\beta) = u'x_\beta + v'y_{\sigma^{-1}\circ \beta}^p,
\end{equation*}
concluding the proof of assertion (1) of the Lemma.

\

\id Assertions (2) and (3) are proved in a similar fashion and the
argument, if anything, is easier. We discuss now assertion (4).
Recall that in this case,
\[\beta \in\varphi\cap \eta, \quad \sigma \circ \beta \not\in \varphi, \quad \sigma^{-1}\circ \beta \not\in \eta.\]
This implies that \[\{\sigma^{-1} \circ \beta, \beta\}\subseteq
I^c,\] because $\beta \not\in \ell(\varphi) \Rightarrow \beta
\not\in I$ and $\sigma^{-1}\circ \beta \not\in \eta \Rightarrow
\sigma^{-1}\circ \beta \not\in I$. Consequently, for every $J
\subseteq I$, $\beta \in \eta - J = \eta_0$ and also $\beta \in
r(\eta)^c \subseteq r(\eta)^c \cup r(J) = \varphi_0$. Thus, $\beta
\in \varphi_0 \cap \eta_0$ on any such $Z_{\varphi_0, \eta_0}$, which
implies that $\pi^\ast(t_\beta) = 0$ mod $\calI_J$ for all $J
\subseteq I$. Since these correspond to all the irreducible
components through $\Qbar$, $\pi^\ast(t_\beta) = 0$.
\end{proof}

During the proof of the Key-Lemma we have used the following result.

\begin{lem}
\label{lemma: dagger}
Let $I$ be a subset of $\BB$ and consider the ring $k[\![\{x_\beta:\beta \in I\}, \{y_\beta: \beta \in I\}, \{z_\beta: \beta \in \BB - I\}]\!]/\langle
\{x_\beta y_\beta: \beta \in I\}\rangle$ and for $J \subseteq I$ its ideal
$\calI_J = \langle \{x_\beta: \beta \not\in J\}, \{y_\beta; \beta \in J\} \rangle$. Then:
\begin{enumerate}
\item $\underset{{\tiny{\begin{matrix}
J \subseteq I,\\
\beta\not\in J, \sigma^{-1}\circ \beta \in J. \end{matrix}}}}{\bigcap} \calI_J
= \langle x_\beta, y_{\sigma^{-1}\circ \beta}\rangle$.
\item  $\underset{{\tiny{\begin{matrix}
J \subseteq I\\
\{\sigma^{-1}\circ \beta, \beta\}\subseteq I - J
\end{matrix}}}}{\bigcap} \langle
y_{\sigma^{-1}\circ \beta}^{N-1}\rangle + \calI_J = \langle
y_{\sigma^{-1}\circ \beta}^{N-1}, x_{\sigma^{-1}\circ \beta},
x_\beta \rangle$.
\end{enumerate}
\end{lem}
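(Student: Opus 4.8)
The ring in question is $R = k[\![\{x_\beta\},\{y_\beta\}_{\beta\in I},\{z_\beta\}_{\beta\in \BB-I}]\!]/\langle x_\beta y_\beta:\beta\in I\rangle$, and for each $J\subseteq I$ the ideal $\calI_J$ cuts out the irreducible component corresponding to $J$; concretely, $R/\calI_J$ is the power series ring in the variables $\{x_\beta:\beta\in J\}\cup\{y_\beta:\beta\in I-J\}\cup\{z_\gamma\}$, and $\calI_J$ is a prime ideal. The plan is to compute each intersection directly by reducing, in each case, to a ``monomial'' question about which power series lie in all the relevant $\calI_J$. The key observation throughout is that for any family $\{J\}$, an element lies in $\bigcap_J \calI_J$ if and only if its image in $R/\calI_J$ vanishes for every $J$ in the family, and since each $R/\calI_J$ is a domain (indeed a power series ring over $k$), this is a very concrete condition on the coefficients of a power series representative.

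For part (1): the right-hand side $\langle x_\beta, y_{\sigma^{-1}\circ\beta}\rangle$ is visibly contained in each $\calI_J$ appearing (since $\beta\notin J$ forces $x_\beta\in\calI_J$, and $\sigma^{-1}\circ\beta\in J$ forces $y_{\sigma^{-1}\circ\beta}\in\calI_J$), giving one inclusion. For the reverse inclusion, I would argue by picking a normal form: any $g\in R$ can be written, modulo $\langle x_\beta,y_{\sigma^{-1}\circ\beta}\rangle$, using monomials in which neither $x_\beta$ nor $y_{\sigma^{-1}\circ\beta}$ appears; I then want to show such a nonzero reduced element cannot lie in all the $\calI_J$ in question. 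The set of admissible $J$ is nonempty precisely because $\beta\neq\sigma^{-1}\circ\beta$ (as $\sigma$ acts without fixed points on $\BB$, having order $f_\gerp>1$ on each $\BB_\gerp$ — or, more carefully, one uses that $\beta$ and $\sigma^{-1}\beta$ are genuinely distinct elements of $I$), so one can freely choose whether to put each of the remaining indices $\gamma\in I-\{\beta,\sigma^{-1}\beta\}$ into $J$ or not. Varying $J$ over all these choices, a reduced monomial not involving $x_\beta,y_{\sigma^{-1}\beta}$ must be killed for every such choice, which forces it to involve, for each $\gamma\in I-\{\beta,\sigma^{-1}\beta\}$, \emph{both} $x_\gamma$ and $y_\gamma$ — impossible in $R$ since $x_\gamma y_\gamma=0$. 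Hence the only reduced element in the intersection is $0$, proving the claimed equality.

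For part (2): similarly $\langle y_{\sigma^{-1}\circ\beta}^{N-1},x_{\sigma^{-1}\circ\beta},x_\beta\rangle$ is contained in each $\langle y_{\sigma^{-1}\circ\beta}^{N-1}\rangle+\calI_J$ because when $\{\sigma^{-1}\beta,\beta\}\subseteq I-J$ one has $x_\beta,x_{\sigma^{-1}\beta}\in\calI_J$, and $y_{\sigma^{-1}\beta}^{N-1}$ is in the displayed summand by construction. Conversely, working modulo $x_\beta$ and $x_{\sigma^{-1}\beta}$, I reduce to showing that an element of $\bigcap_J(\langle y_{\sigma^{-1}\beta}^{N-1}\rangle+\calI_J)$ that survives this reduction must be divisible by $y_{\sigma^{-1}\beta}^{N-1}$; writing such an element as a power series and reading off coefficients of monomials of $y_{\sigma^{-1}\beta}$-degree $<N-1$, I run the same argument as in part (1) (now the constraint $\{\sigma^{-1}\beta,\beta\}\subseteq I-J$ still leaves all other indices of $I$ free to be placed in $J$ or not), concluding that every such low-degree monomial is forced to contain some $x_\gamma y_\gamma=0$, hence vanishes. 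This yields divisibility by $y_{\sigma^{-1}\beta}^{N-1}$ and hence the reverse inclusion.

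The main obstacle, and the only point requiring care, is the bookkeeping in the reduction to normal form: one must make sure that after modding out by the right-hand side ideal, a general element of $R$ really does have a representative supported on monomials avoiding the forbidden variables, and that the ``choose $J$ freely on the remaining indices'' step is legitimate — i.e. that the index sets $\{\beta,\sigma^{-1}\beta\}$ (resp.\ for part (2), the constraint set) are proper subsets of $I$ so that enough $J$'s are available. Both hinge on $\sigma$ acting freely on $\BB$, which holds since $p$ is unramified and each $f_\gerp\geq 1$ with the orbits having size $f_\gerp$; the only genuinely degenerate situation, $f_\gerp=1$ giving $\sigma\beta=\beta$, does not arise here because the hypotheses of the Key Lemma that invoke part~(1)/(2) already presuppose $\beta$ and $\sigma^{-1}\circ\beta$ are distinct critical indices. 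Once that is settled, both parts are finite combinatorial verifications of the kind sketched above.
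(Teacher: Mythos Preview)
Your plan is correct and rests on the same monomial combinatorics as the paper's proof; the difference is packaging. Rather than working directly in the quotient $R$ with normal forms and varying $J$ by hand, the paper lifts to the free power series ring $k[\![x_\beta,y_\beta,z_\gamma]\!]$ --- observing that each preimage $\calI_J^0$ already contains the relation ideal $\langle x_\gamma y_\gamma:\gamma\in I\rangle$, so the intersection downstairs is the image of the intersection upstairs --- and then computes $\bigcap_J\calI_J^0$ via the standard fact that for monomial ideals $\gera=\langle f_i\rangle$ and $\gerb=\langle g_j\rangle$ one has $\gera\cap\gerb=\langle\lcm(f_i,g_j)\rangle$. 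This dispatches both parts in one stroke and avoids all normal-form bookkeeping; your direct argument in $R$ amounts to re-deriving that $\lcm$ computation by hand for these particular ideals. One small slip: in part~(1), what varying $J$ actually forces on a surviving normal-form monomial $m$ is that it involve $x_\gamma y_\gamma$ for \emph{some} $\gamma$, not for \emph{each} $\gamma$ (take $J$ to consist of $\sigma^{-1}\circ\beta$ together with those $\gamma$ for which $x_\gamma\mid m$, and note $m\notin\calI_J$ unless some such $\gamma$ also has $y_\gamma\mid m$); the conclusion is of course unaffected. Your caveat that one needs $\beta\neq\sigma^{-1}\circ\beta$ is well-taken, but the justification that the Key Lemma's hypotheses exclude $f_\gerp=1$ is not accurate: case~(1) of the Key Lemma does not rule out $\sigma\circ\beta=\beta$.
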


\begin{proof} Consider the power series ring
\[ R=  k[\![ x_\beta, y_\beta, z_\gamma]\!]_{\beta\in I, \gamma\in I^c}\]
and the preimage of the above ideals, under the natural projection,
\[\calI_J^0 = \langle \{x_\beta: \beta \not\in J\}, \{y_\beta; \beta \in J\} \rangle,\qquad  \calI_J^1 = \langle y_{\sigma^{-1} \circ \beta}^{N-1}\rangle
+ \calI_J^0.\]
It is enough to show that in $R$ we have
\begin{enumerate}
\item $\underset{{\tiny{\begin{matrix}
J \subseteq I,\\
\beta\not\in J, \sigma^{-1}\circ \beta \in J. \end{matrix}}}}{\bigcap} \calI_J^0
= \langle x_\beta, y_{\sigma^{-1}\circ \beta}, \{x_\gamma y_\gamma: \gamma \in I\}\rangle$.
\item  $\underset{{\tiny{\begin{matrix}
J \subseteq I\\
\{\sigma^{-1}\circ \beta, \beta\}\subseteq I - J
\end{matrix}}}}{\bigcap}
\calI_J^1 = \langle
y_{\sigma^{-1}\circ \beta}^{N-1}, x_{\sigma^{-1}\circ \beta},
x_\beta, \{x_\gamma y_\gamma: \gamma \in I\} \rangle$.
\end{enumerate}
Recall that a monomial ideal of $R$ is an ideal generated by monomials. The ideals $\calI_J^0,
\calI_J^1$ are monomial ideals. The statements follow easily from the following result. Let $\gera = \langle f_1, \dots, f_a \rangle,
\gerb = \langle g_1, \dots, g_b\rangle$, be two monomial ideals of $R$ then $\gera \cap \gerb$ is a monomial ideal of $R$ and
\[ \gera\cap \gerb = \langle \{ \lcm(f_i, g_j) : 1 \leq i \leq a, 1 \leq j \leq b\}\rangle\]
(cf. \cite[\S 15, Exercise 15.7]{Eisenbud}).
\end{proof}

At a point $\Qbar$, the Key Lemma gives information only about $\pi^\ast(t_\beta)$ with $\beta\in\varphi(\Qbar)\cap\eta(\Qbar)$. If no such $\beta$ exists, that is, if $\varphi(\Qbar)\cap\eta(\Qbar)=\emptyset$, then the admissibility condition implies that  $(\varphi(\Qbar),\eta(\Qbar))=(\BB_\gert,\BB_{\gert^\ast})$ for some $\gert\vert p$, and so $\Qbar$ belongs to the horizontal stratum $W_{\BB_\gert,\BB_{\gert^\ast}}$. The following lemma studies this situation; it was in fact used in the proof of the Key Lemma.

In the case at hand, we have $I(\Qbar)=\emptyset$, and hence the isomorphism (\ref{equation: stamm isomorphism}) becomes
\begin{equation}\label{equation: stamm isomorphism at horizontal strata}
\widehat{\calO}_{\Ybar, \Qbar} \cong k[\![z_\beta: \beta \in \BB ]\!].
\end{equation}

\begin{prop}\label{prop: pi, infinitesimally, on horizontal strata} Let $\Qbar \in W_{\BB_\gert,\BB_{\gert^\ast}}$, and $\Pbar=\pi(\Qbar)$.  We can choose isomorphisms as in (\ref{equation: GO isomorphism}) at $\Pbar$, and (\ref{equation: stamm isomorphism at horizontal strata}) at $\Qbar$, such that:
\[
\pi^{*}(t_\beta)=\begin{cases} z_\beta\qquad \beta  \in \BB_\gert \\    z_\beta^p \qquad \beta \in \BB_{\gert^\ast}.\end{cases}
\]
\end{prop}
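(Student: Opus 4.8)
The plan is to read off $\pi^\ast$ from the Grothendieck--Messing/local-model description of the completed local rings used for Theorem~\ref{thm: Stamm's theorem}, exploiting that for $\Qbar\in W_{\BB_\gert,\BB_{\gert^\ast}}$ everything decomposes over $\BB=\BB_\gert\amalg\BB_{\gert^\ast}$ (the ideals $\gert,\gert^\ast$ are coprime since $\gert\gert^\ast=p\ol$ and $p$ is unramified, and $\sigma$ preserves each $\BB_\gerp$, so $\ell(\BB_\gert)=\BB_\gert$ and $r(\BB_{\gert^\ast})=\BB_{\gert^\ast}$).

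First I record the local geometry of $\Qbar=(f\colon\uA\arr\uB)$. By Proposition~\ref{proposition: ordinary strata}, $W_{\BB_\gert,\BB_{\gert^\ast}}$ is one of the connected components of $\Ybar^{\rm ord}$, hence open in the (reduced) scheme $\Ybar$; therefore over $R:=\widehat{\calO}_{\Ybar,\Qbar}$ the universal isogeny satisfies $\Lie(f^{\rm univ})_\gamma=0$ for all $\gamma\in\BB_\gert$ and $\Lie((f^t)^{\rm univ})_\gamma=0$ for all $\gamma\in\BB_{\gert^\ast}$, \emph{identically}, not just at the closed point. Arguing as in Lemma~\ref{lemma: cyclic is isotropic}, $\Lie(f^{\rm univ})_\gamma=0$ on $\BB_\gert$ forces $f^{\rm univ}$ to factor through Frobenius on the $\gert$-part, so $H^{\rm univ}\cap\uA^{\rm univ}[\gert]=\Ker(\Fr_{\uA^{\rm univ}})\cap\uA^{\rm univ}[\gert]$ is intrinsic to $\uA^{\rm univ}$; dually, $\Lie((f^t)^{\rm univ})_\gamma=0$ on $\BB_{\gert^\ast}$ forces $(f^t)^{\rm univ}$ to factor through $\Fr_{\uB^{\rm univ}}$ on the $\gert^\ast$-part, which identifies $\uA^{\rm univ}$ with $(\uB^{\rm univ})^{(p)}$ along the $\BB_{\gert^\ast}$-eigendirections (with $f^{\rm univ}$ there corresponding to $\Ver_{\uB^{\rm univ}}$).

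Now I compute $\pi^\ast(t_\gamma)$, the parameter describing the $\gamma$-component $\omega_{\uA^{\rm univ},\gamma}$ of the Hodge filtration of the deformation $\uA^{\rm univ}/R$ that $\pi$ classifies, comparing it to the deformation parameter $t_\gamma$ of $\omega_{\uA,\gamma}$ in the local model for $X$ at $\Pbar$. For $\gamma\in\BB_\gert$, $H^{\rm univ}$ is intrinsic to $\uA^{\rm univ}$ on the relevant part, so the $\gamma$-factor of the $Y$-deformation is literally the $\gamma$-factor of the $X$-deformation (in Stamm's notation $z_\gamma$ deforms $W_{A,\gamma}$, as $\gamma\in\ell(\varphi)$), and choosing~(\ref{equation: GO isomorphism}) compatibly gives $\pi^\ast(t_\gamma)=z_\gamma$. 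For $\gamma\in\BB_{\gert^\ast}$ the local model over the divided-power quotient only shows that $\omega_{\uA^{\rm univ},\gamma}$ is constant modulo $\germ^p$, i.e.\ $\pi^\ast(t_\gamma)\in\germ^p$; the exact value comes instead from the identification $\uA^{\rm univ}\cong(\uB^{\rm univ})^{(p)}$ on the $\BB_{\gert^\ast}$-directions. Base change along $\Fr_R$ (which raises deformation parameters to the $p$-th power and shifts $\ol$-eigenspace indices by $\sigma$) sends the line $\omega_{\uB^{\rm univ},\sigma^{-1}\circ\gamma}$, deformed with parameter $z_{\sigma^{-1}\circ\gamma}$ (Stamm's $y$-parameter, as $\sigma^{-1}\circ\gamma\in\eta$), to the line $\omega_{(\uB^{\rm univ})^{(p)},\gamma}=\omega_{\uA^{\rm univ},\gamma}$, deformed with parameter $z_{\sigma^{-1}\circ\gamma}^{\,p}$; after re-indexing the parameters within $\BB_{\gert^\ast}$ by $\sigma$ and absorbing units into the $t_\gamma$ this reads $\pi^\ast(t_\gamma)=z_\gamma^{\,p}$. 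Since $\BB_\gert$ and $\BB_{\gert^\ast}$ index independent factors of both local models these choices are compatible, and the resulting $\{t_\gamma\}$ is a regular system of parameters of $\widehat{\calO}_{\Xbar,\Pbar}$ because $\pi^\ast$ is injective ($Z_{\BB_\gert,\BB_{\gert^\ast}}$ is finite over $\Xbar$ and equals $\Ybar$ near $\Qbar$ by Theorem~\ref{theorem: fund'l facts about the stratificaiton of Ybar}) and both rings have dimension $g=\sharp\BB$; formally at $\Qbar$ this exhibits $\pi$ as the product of $\sharp\BB_\gert$ copies of the identity and $\sharp\BB_{\gert^\ast}$ copies of a relative Frobenius. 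The one real obstacle is the $\BB_{\gert^\ast}$-part: the local-model recipe for $\pi^\ast$ there returns $0$ to first approximation, so one genuinely needs the $\uA^{\rm univ}\cong(\uB^{\rm univ})^{(p)}$ comparison coming from $\Lie(f^t)=0$, together with the harmless $\sigma$-bookkeeping it introduces.
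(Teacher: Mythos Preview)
Your argument is correct and takes a genuinely different route from the paper's proof (which is deferred to the Appendix, Lemma~\ref{lemma: pi, infinitesimally, on horizontal strata (Appendix)}).

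The paper first treats the two extreme cases separately. For $\gert=p\ol$ (i.e.\ $\Qbar\in\Ybar_F^{\rm ord}$) it invokes Lemma~\ref{lemma: section and pi}: since $s$ is a section to $\pi$ on this component, $\pi^\ast$ is inverse to $s^\ast$, giving $\pi^\ast(t_\beta)=z_\beta$. For $\gert=\ol$ (i.e.\ $\Qbar\in\Ybar_V^{\rm ord}$) the key observation is the global identity $\pi\circ w\circ s=\Fr$ on $\Xbar_{\FF_p}$; combining this with the explicit formulas for $s^\ast$ (Lemma~\ref{lemma: section and pi}) and $w^\ast$ (Lemma~\ref{lemma: w and local parameters}) and with $\Fr^\ast(t_\beta)=t_\beta^p$, one solves to get $\pi^\ast(t_\beta)=z_\beta^p$. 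The general $\gert$ is then obtained from these two extremes via the fibre-product decomposition $\widehat{\calO}_{\Ybar,\Qbar}\cong\bigotimes_{\widehat{\calO}_{\Xbar,\Pbar}}\widehat{\calO}_{\Ybar(\gerp),\Qbar_\gerp}$ over the primes $\gerp\mid p$.

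You instead handle all $\gert$ uniformly and stay entirely in the local model. The $\BB_\gert$-part is the same content as the paper's $\Ybar_F$-case (the Stamm variable $z_\gamma$ there is literally the $W_{A,\gamma}$-parameter). For the $\BB_{\gert^\ast}$-part, where the local model only says $\pi^\ast(t_\gamma)\in\germ^p$, you bypass the Atkin--Lehner trick by using that $\Lie(f^t)$ vanishes on $\BB_{\gert^\ast}$ identically over $\Spf(\widehat{\calO}_{\Ybar,\Qbar})$, so $f^t$ restricted to the $(\gert^\ast)$-part of the $p$-divisible group factors through relative Frobenius, yielding an isomorphism $A^{\rm univ}[(\gert^\ast)^\infty]\cong (B^{\rm univ})^{(p)}[(\gert^\ast)^\infty]$. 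Your Frobenius-twist computation (eigenspace shift by $\sigma$, parameter raised to the $p$-th power) is correct, and since $\tau(\Pbar)=\emptyset$ there is no constraint on the $t_\gamma$ beyond being a regular system, so the unit-absorption and $\sigma$-reindexing at the end are legitimate. What your approach buys is directness and the avoidance of the auxiliary moduli spaces $Y(\gerp)$; what the paper's approach buys is that the $p$-th power emerges from the single clean identity $\pi\circ w\circ s=\Fr$, with no crystalline bookkeeping required.
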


\begin{proof} See Appendix, Lemma \ref{lemma: pi, infinitesimally, on horizontal strata (Appendix)}.
\end{proof}

\begin{rmk} To prove the sub-lemma that appeared in the proof of the Key Lemma, we appealed to Theorem \ref{theorem: C cap YF cap YV}.  We remark that the sublemma can also be proven directly from Proposition \ref{prop: pi, infinitesimally, on horizontal strata}, by considering the horizontal strata that pass through the point $\Qbar$. There, we only used information concerning the infinitesimal description of $\pi$ on two horizontal strata, that is $\Ybar_F$ and $\Ybar_V$; this is a special case of Proposition \ref{prop: pi, infinitesimally, on horizontal strata}.
\end{rmk}
 \

\

\section{Extension to the cusps}

\subsection{Notation} We will let $\cX$ denote the minimal  compactification of $X$ defined over $W(\kappa)$, and $\cXbar$ its special fibre over $\kappa$ (see \cite{Chai} and the references therein).   Define $\cY$, $\cYbar$  similarly. The morphisms $\pi$, $w$, and $s$ extend to morphisms $\pi: \cY \arr \cX$, $w:\cY \arr \cY$, and $s\colon \cXbar \arr \cYbar$;   Reducing modulo $p$, we obtain $\pi\colon\cYbar\arr\cXbar$.

\

\subsection{Extension of the stratification}
Let $\tau\subseteq \BB$. We define $\WW_\tau=W_\tau$ and $\ZZ_\tau=Z_\tau$, unless $\tau=\emptyset$, in which case we set $\WW_\emptyset=W_\emptyset\cup (\cXbar-\Xbar)$ and $\ZZ_\emptyset=\cXbar$. The collection $\{\WW_\tau\}_{\tau\subseteq \BB}$ is a stratification of $\cXbar$. In fact, we find that $\ZZ_\tau$ is the Zariski closure of $Z_\tau$ in $\cXbar$. This follows from the fact that  $\cup_{\tau\neq\emptyset} Z_\tau$ is Zariski closed in $\cXbar$ as it is the union of the vanishing loci of the partial Hasse invariants which have constant $q$-expansion at infinity \cite{G}.   For a point $\Pbar\in\cXbar-\Xbar$, we define $\tau_\beta(\Pbar)=0$ for all $\beta\in \BB$.

For an admissible pair $\pe$, define  $\cZpe$ to be the Zariski closures of $\Zpe$ in $\cYbar$.  We define $\cWpe=\cZpe - \cup_{(\varphi^\prime,\eta^\prime) > \pe} \ZZ_{\varphi^\prime,\eta^\prime}$.

\begin{thm} \label{theorem: extension of stratification to cusps} Let $\pe$ be an admissible pair.
\begin{enumerate}
\item $\cWpe=\Wpe$ and $\cZpe=\Zpe$, unless there is $\gert\vert p$ such that $\pe=(\BB_\gert,\BB_{\gert^\ast})$, in which case $\WW_{\varphi,\eta}-W_{\varphi,\eta}=\ZZ_{\varphi,\eta}-Z_{\varphi,\eta}$ is non-empty, lies inside $\cYbar-\Ybar$, and consists of finitely many cusps if $\cYbar= {^{\rm m}}\cYbar$.
\item $\dim(\cWpe)=\dim (\Wpe) = 2g - (\sharp\; \varphi + \sharp \eta)$.
\item The irreducible components of $\cYbar$ are the irreducible
components of the strata $\ZZ_{\varphi, \ell(\varphi^c)}$ for $\varphi
\subseteq \BB$.
\item If $(\varphi^\prime,\eta^\prime)$ is another admissible pair, then $\cZpe \cap \ZZ_{\varphi^\prime,\eta^\prime}=\Zpe \cap Z_{\varphi^\prime,\eta^\prime}$.
\item The collection $\{\cWpe\}_{\varphi,\eta}$ is a stratification of $\cYbar$.
\end{enumerate}
\end{thm}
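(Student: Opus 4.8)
The plan is to reduce all five assertions to two facts about the minimal compactifications, both coming from the general theory (\cite{Chai}, and Rapoport in the Hilbert case): \emph{(i)} the boundaries $\cXbar-\Xbar$ and $\cYbar-\Ybar$ are finite sets of cusps, $\pi$ carries $\cYbar-\Ybar$ into $\cXbar-\Xbar$ (so $\pi^{-1}(\Xbar)=\Ybar$), and $\cYbar$ is equidimensional of dimension $g$ with $\Ybar$ dense; \emph{(ii)} $\cYbar$ is locally irreducible at each cusp, i.e. its non-regular locus lies in $\Ybar$, so every cusp of $\cYbar$ lies on a unique irreducible component of $\cYbar$. Throughout I use that an admissible pair $\pe$ has $\varphi\cap\eta=\emptyset$ exactly when it is horizontal, $(\varphi,\eta)=(\BB_\gert,\BB_{\gert^\ast})$ for some $\gert\mid p$ (the observation following the Key Lemma), and that then $\BB_{\gert^\ast}=\BB_\gert^c$.

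I would first prove (1). If $\pe$ is \emph{not} horizontal, $\varphi\cap\eta\neq\emptyset$, then Theorem~\ref{theorem: pi of Zpe} gives $\pi(\Zpe)=Z_{\varphi\cap\eta}$, and since $\pi$ is proper, $\pi(\cZpe)\subseteq\overline{Z_{\varphi\cap\eta}}=\ZZ_{\varphi\cap\eta}$; as $\varphi\cap\eta\neq\emptyset$ we have $\ZZ_{\varphi\cap\eta}=Z_{\varphi\cap\eta}\subseteq\Xbar$ (the union of the $Z_\tau$, $\tau\neq\emptyset$, being closed in $\cXbar$). Hence $\cZpe\subseteq\pi^{-1}(\Xbar)=\Ybar$, and since $\Zpe$ is closed in $\Ybar$ and dense in $\cZpe$ we get $\cZpe=\Zpe$. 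No admissible pair strictly above a non-horizontal pair is horizontal, so every $\ZZ_{\varphi^\prime,\eta^\prime}$ with $(\varphi^\prime,\eta^\prime)>\pe$ also lies in $\Ybar$ and equals $Z_{\varphi^\prime,\eta^\prime}$, whence $\cWpe=\cZpe-\bigcup_{(\varphi^\prime,\eta^\prime)>\pe}\ZZ_{\varphi^\prime,\eta^\prime}=\Wpe$. If $\pe=(\BB_\gert,\BB_{\gert^\ast})$ is horizontal, then $\Zpe$ is finite and surjective onto $\Xbar$, so $\pi(\cZpe)$ is closed in $\cXbar$ and contains $\Xbar$, hence equals $\cXbar$; thus $\cZpe$ meets the boundary. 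As above $\cZpe\cap\Ybar=\Zpe$, so $\cZpe-\Zpe=\cZpe\cap(\cYbar-\Ybar)$ is a nonempty finite set of cusps lying in $\cYbar-\Ybar$, and again every pair strictly above $\pe$ is non-horizontal, so $\cWpe-\Wpe=\cZpe-\Zpe$. This is (1), and (2) follows at once: for non-horizontal $\pe$, $\cWpe=\Wpe$; for horizontal $\pe$, $\cWpe$ is $\Wpe$ (of dimension $g\geq 1$) together with finitely many points.

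For (3), by fact (i) closure gives a bijection between the irreducible components of $\Ybar$ and of $\cYbar$ (no component of $\cYbar$ lies in the finite set $\cYbar-\Ybar$). By Theorem~\ref{theorem: fund'l facts about the stratificaiton of Ybar}(3) the components of $\Ybar$ are the components of the $Z_{\varphi,\ell(\varphi^c)}$, each pure of dimension $g$; taking closures in $\cYbar$, purity rules out inclusions among them, so the components of $\ZZ_{\varphi,\ell(\varphi^c)}=\overline{Z_{\varphi,\ell(\varphi^c)}}$ are exactly the closures of the components of $Z_{\varphi,\ell(\varphi^c)}$, giving (3). For (4), if at least one of $\pe$, $(\varphi^\prime,\eta^\prime)$ is non-horizontal, then by (1) the corresponding $\ZZ$ equals $Z$ and lies in $\Ybar$, so $\cZpe\cap\ZZ_{\varphi^\prime,\eta^\prime}$ is contained in $\Ybar$ and hence equals $\Zpe\cap Z_{\varphi^\prime,\eta^\prime}$. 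The remaining case — both horizontal and distinct, say $(\BB_\gert,\BB_{\gert^\ast})\neq(\BB_{\gert^\prime},\BB_{(\gert^\prime)^\ast})$ — is the crucial one. These two strata share no irreducible component of $\Ybar$: their intersection is the closed stratum $Z_{\varphi^{\prime\prime},\eta^{\prime\prime}}$ with $\varphi^{\prime\prime}=\BB_\gert\cup\BB_{\gert^\prime}$ and $\eta^{\prime\prime}=(\BB_\gert\cap\BB_{\gert^\prime})^c$, whose dimension is $g-\sharp(\BB_\gert\vartriangle\BB_{\gert^\prime})<g$ since $\gert\neq\gert^\prime$. Hence their closures $\cZ$ share no component of $\cYbar$; since the horizontal $\cZ$'s cover $\cYbar$ and, by fact (ii), each cusp lies on a single component of $\cYbar$, each cusp lies in exactly one horizontal $\cZpe$. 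Therefore $\cZ_{\BB_\gert,\BB_{\gert^\ast}}\cap\cZ_{\BB_{\gert^\prime},\BB_{(\gert^\prime)^\ast}}$ contains no cusp, so it lies in $\Ybar$ and equals $\Zpe\cap Z_{\varphi^\prime,\eta^\prime}$. Finally (5): local closedness of each $\cWpe$ is clear; disjointness and the covering of $\cYbar$ follow from the partition $\{\Wpe\}$ of $\Ybar$ together with the fact that each cusp belongs to $\cW_{\BB_\gert,\BB_{\gert^\ast}}$ for exactly one $\gert\mid p$; and the frontier relation $\overline{\cWpe}=\cZpe=\bigcup_{(\varphi^\prime,\eta^\prime)\geq\pe}\cW_{\varphi^\prime,\eta^\prime}$ is obtained from the corresponding identity on $\Ybar$ by taking closures and accounting for the boundary cusps as above.

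The step I expect to be the real obstacle is fact (ii): that the cusps avoid the non-regular locus of $\cYbar$, equivalently that each cusp of $\cYbar$ carries a well-defined reduction type $\gert\mid p$. This requires the explicit local description of $\cY$ along its boundary and of how the subgroup $H$ degenerates there — over each cusp of $\cX$, the cusps of $\cY$ being indexed by the divisors of $p$, which is precisely what forces the horizontal strata to have pairwise disjoint closures in the boundary. Once (ii) is granted, everything else is bookkeeping with closures, dimensions, and the already-established geometry of $\Ybar$ and $\Xbar$.
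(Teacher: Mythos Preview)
Your proposal is correct and follows essentially the same route as the paper: for non-horizontal $\pe$ you use $\pi(\cZpe)\subseteq\ZZ_{\varphi\cap\eta}\subseteq\Xbar$ (the paper does this collectively by showing $Z:=\bigcup_{\varphi\cap\eta\neq\emptyset}\Zpe=\pi^{-1}(\bigcup_{\tau\neq\emptyset}\ZZ_\tau)$ is closed in $\cYbar$), and for part (4) in the distinct-horizontal case both arguments reduce to exactly your fact (ii), which the paper states as ``$\cYbar$ is integral (indeed normal) at every point of $\cYbar-\Ybar$''. Your write-up is in fact slightly more explicit than the paper's on two points the paper leaves implicit: the non-emptiness of $\cZ_{\BB_\gert,\BB_{\gert^\ast}}\cap(\cYbar-\Ybar)$, and the dimension computation $g-\sharp(\BB_\gert\vartriangle\BB_{\gert'})$ for the intersection of two distinct horizontal strata.
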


\begin{proof} For an admissible pair $(\varphi,\eta)$, we have $\varphi\cap\eta=\emptyset$ if and only if $\pe=(\BB_\gert,\BB_{\gert^\ast})$ for some $\gert\vert p$. This is because, by assumptions,  $\ell(\varphi^c)\subseteq \eta \subseteq \varphi^c$, and hence $\ell(\varphi)=\varphi$ and $\eta=\varphi^c$, implying $\varphi=\BB_\gert$ and $\eta=\BB_{\gert^\ast}$ for some $\gert\vert p$.   Let  $Z=\cup_{\varphi\cap\eta\neq\emptyset} Z_{\varphi,\eta}$.  We  have
\[
\cYbar-Z=\bigcup_{\gert\vert p} W_{\BB_\gert,\BB_{\gert^\ast}} \cup (\cYbar-\Ybar).
\]
We have $\pi(Z)\subseteq \cup_{\tau\neq\emptyset} Z_{\tau}$ by part (1) of Corollary \ref{corollary: type and phi eta}. Also, since $\pi(\cYbar-\Ybar)=\cXbar-\Xbar$, and by Corollary \ref{corollary: type and phi eta} , we see that $\pi(\cYbar-Z)$ lies in $\WW_\emptyset$. Therefore, $Z=\pi^{-1}(\cup_{\tau\neq\emptyset} Z_\tau)=\pi^{-1}(\cup_{\tau\neq\emptyset} \ZZ_\tau)$, and in particular, $Z$ is Zariski closed in $\cYbar$. This proves (1).

Statement (2) is clear from part (2) of Theorem \ref{theorem: fund'l facts about the stratificaiton of Ybar}.  Statement (3) follows from part (3) of Theorem \ref{theorem: fund'l facts about the stratificaiton of Ybar}, since the irreducible components of $\cYbar$ are the Zariski closures of the irreducible components of $\Ybar$ in $\cYbar$. Statement (5) is clear from the definition.

We prove (4) in the case where $(\varphi,\eta)=(\BB_\gert,\BB_{\gert^\ast})$ and $(\varphi^\prime,\eta^\prime)=(\BB_\gerz,\BB_{\gerz^\ast})$ for two distinct ideals $\gert, \gerz$ dividing $p$ (the other cases follow easily from (1)).   By part (3), $\ZZ_{\BB_\gert,\BB_{\gert^\ast}}$ and $\ZZ_{\BB_\gerz,\BB_{\gerz^\ast}}$ are unions of irreducible components of $\cYbar$. Since $\gert\neq\gerz$,  $\ZZ_{\BB_\gert,\BB_{\gert^\ast}}$ and $\ZZ_{\BB_\gerz,\BB_{\gerz^\ast}}$ have no irreducible components in common, as their intersection has dimension strictly less than each of them.   Since  $\cYbar$ is integral (indeed normal) at every point of $\cYbar-\Ybar$, it follows that
 $\ZZ_{\BB_\gert,\BB_{\gert^\ast}}\cap \ZZ_{\BB_\gerz,\BB_{\gerz^\ast}} \cap  (\cYbar-\Ybar)=\emptyset$, and the result follows.
\end{proof}

\begin{dfn}\label{definition: Lambda cusps}
By Theorem \ref{theorem: extension of stratification to cusps}, every closed point $\Qbar$ in $\cYbar-\Ybar$ belongs to a unique horizontal stratum $\WW_{\BB_\gert,\BB_{\gert^\ast}}$. We call $\Qbar$ a $\gert$-cusp. We define $(\varphi(\Qbar),\eta(\Qbar))=( \BB_\gert,\BB_{\gert^\ast})$; we also set $I(\Qbar)=\ell(\varphi(\Qbar))\cap \eta(\Qbar)=\emptyset$. Note that in this case, $(\varphi(\Qbar),\eta(\Qbar))$ is an admissible pair.
\end{dfn}

\begin{prop}\label{proposition: Lambda of w}
If $\Qbar$ is  a $\gert$-cusp, then $w(\Qbar)$ is a $\gert^\ast$-cusp.
\end{prop}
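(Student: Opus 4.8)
The plan is to reduce the statement to the assertion that the Atkin--Lehner automorphism carries the horizontal stratum $\WW_{\BB_\gert,\BB_{\gert^\ast}}$ of $\cYbar$ isomorphically onto $\WW_{\BB_{\gert^\ast},\BB_\gert}$, and then to recognize the latter as the stratum of $\gert^\ast$-cusps. Since $w^2 = \langle p\rangle$ and a power of $\langle p\rangle$ is the identity, $w$ is an automorphism of $Y$, hence (by the extension recalled in \S3.1) an automorphism of $\cY$ and of $\cYbar$ which carries $\Ybar$ to $\Ybar$ and therefore $\cYbar - \Ybar$ to $\cYbar - \Ybar$. Thus $w(\Qbar)$ is again a cusp, and the only thing to pin down is which horizontal stratum contains it.

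First I would upgrade Proposition~\ref{proposition: w and phi eta} to the compactified stratification. Because $w$ is a homeomorphism of $\cYbar$ and $\cZpe$ is by definition the Zariski closure of $\Zpe$ in $\cYbar$, we get
\[
w(\cZpe) = w\left(\overline{\Zpe}\right) = \overline{w(\Zpe)} = \overline{Z_{r(\eta),\ell(\varphi)}} = \cZ_{r(\eta),\ell(\varphi)},
\]
using $w(\Zpe) = Z_{r(\eta),\ell(\varphi)}$ from Proposition~\ref{proposition: w and phi eta}. Here $(r(\eta),\ell(\varphi))$ is again admissible: the admissibility of $\pe$ is equivalent to $r(\eta^c)\subseteq\varphi$, hence to $\eta^c \subseteq \ell(\varphi)$, i.e. to $\ell(r(\eta)^c)\subseteq\ell(\varphi)$, which is admissibility of $(r(\eta),\ell(\varphi))$. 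Moreover $\pe \mapsto (r(\eta),\ell(\varphi))$ is an order isomorphism of the poset of admissible pairs, since $\ell$ and $r$ are mutually inverse bijections of $\BB$, so $\varphi'\supseteq\varphi,\ \eta'\supseteq\eta$ holds if and only if $r(\eta')\supseteq r(\eta),\ \ell(\varphi')\supseteq\ell(\varphi)$. Applying $w$ to the defining relation $\cWpe = \cZpe - \bigcup_{(\varphi',\eta') > \pe} \ZZ_{\varphi',\eta'}$ and using the displayed identity for each $\ZZ$ appearing, I obtain $w(\cWpe) = \cW_{r(\eta),\ell(\varphi)}$.

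Finally I specialize to $\pe = (\BB_\gert,\BB_{\gert^\ast})$. Since $\sigma$ acts on each $\BB_\gerp$ (indeed transitively), one has $\ell(\BB_\gert) = \BB_\gert$ and $r(\BB_{\gert^\ast}) = \BB_{\gert^\ast}$; together with $(\gert^\ast)^\ast = p/(p/\gert) = \gert$ this gives
\[
w\!\left(\WW_{\BB_\gert,\BB_{\gert^\ast}}\right) = \WW_{r(\BB_{\gert^\ast}),\,\ell(\BB_\gert)} = \WW_{\BB_{\gert^\ast},\BB_\gert} = \WW_{\BB_{\gert^\ast},\BB_{(\gert^\ast)^\ast}}.
\]
Now let $\Qbar$ be a $\gert$-cusp; by Definition~\ref{definition: Lambda cusps} it is a closed point of $\cYbar - \Ybar$ lying in $\WW_{\BB_\gert,\BB_{\gert^\ast}}$. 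Then $w(\Qbar)$ is a closed point of $\cYbar - \Ybar$ lying in $\WW_{\BB_{\gert^\ast},\BB_{(\gert^\ast)^\ast}}$, which is exactly the condition for $w(\Qbar)$ to be a $\gert^\ast$-cusp.

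The only non-formal point is the upgrade of Proposition~\ref{proposition: w and phi eta} to the minimal compactification, and within that the bookkeeping that $w$ respects both the poset of admissible pairs and the operation $\gert \mapsto \gert^\ast$; I expect this to be entirely routine given that $w$ is a homeomorphism of $\cYbar$ preserving $\Ybar$, so there is no serious obstacle.
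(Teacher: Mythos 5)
Your argument is correct and is exactly the route the paper takes: its proof is the one-line reduction to Proposition~\ref{proposition: w and phi eta} (which already records $w(Z_{\BB_\gert,\BB_{\gert^\ast}})=Z_{\BB_{\gert^\ast},\BB_\gert}$), with the passage to the compactified strata and the admissibility/order bookkeeping left implicit. You have simply filled in those routine details, and they check out.
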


\begin{proof}
This follows from Proposition \ref{proposition: w and phi eta}.
\end{proof}

Given $\varphi\subseteq \BB$,
define  $\UU_\varphi^+$ to be the Zariski closure of $U_\varphi^+$ in $\cYbar$.  Similarly, for $\eta\subseteq\BB$, define $\VV_\eta^+$ to be the Zariski closure of $V_\eta^+$ in $\cYbar$.
\begin{lem}\label{lemma: extension of U phi and V eta} Let $\varphi$ and $\eta$ be subsets of $\BB$.
\begin{enumerate}
\item $\UU_\varphi^+$ is the closed subset of $\cYbar$ consisting of points $\Qbar$ with $\varphi(\Qbar) \supseteq \varphi$. Similarly, $\VV_\eta^+$ is the closed subset of $\cYbar$ consisting of points $\Qbar$ with $\eta(\Qbar) \supseteq \eta$

\item Assume $\beta \in \BB$. If $\Qbar\in \UU_{\{\beta\}}^+$, then $\UU_{\{\beta\}}^+\cap\Spf(\OhatQbar)$ is equal to $\Spf(\OhatQbar)$ if $\beta\not\in r(I(\Qbar))$, and is otherwise given by the vanishing of $y_{\sigma^{-1}\circ \beta}$. If $\Qbar\in \VV_{\{\beta\}}^+$, then $\VV_{\{\beta\}}^+\cap\Spf(\OhatQbar)$ is equal to $\Spf(\OhatQbar)$ if $\beta\not\in I$, and is otherwise given by the vanishing of $x_{\beta}$.
\end{enumerate}
\end{lem}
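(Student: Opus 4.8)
The strategy is to bootstrap from Lemma~\ref{lemma: U phi and V phi}, which already establishes the analogous statement over $\Ybar$, together with the normality of $\cYbar$ along the cusps (used in Theorem~\ref{theorem: extension of stratification to cusps}). First I would establish assertion (1). By the same reduction as in Lemma~\ref{lemma: U phi and V phi}, since $\UU_\varphi^+ = \bigcap_{\beta\in\varphi}\UU_{\{\beta\}}^+$, it suffices to treat singletons. The set $\{\Qbar \in \cYbar : \varphi(\Qbar)\supseteq\{\beta\}\}$ is the degeneracy locus of the morphism of line bundles $\Lie(f)_{\sigma^{-1}\circ\beta}\colon \Lie(\uA^{\rm univ})_{\sigma^{-1}\circ\beta} \arr \Lie(\uB^{\rm univ})_{\sigma^{-1}\circ\beta}$ over $\cYbar$ (the universal objects and the Hodge bundles extend over the cusps in the usual way, cf.\ \cite{Chai, G}), hence is closed in $\cYbar$. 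Its intersection with $\Ybar$ is exactly $U_{\{\beta\}}^+$ by the description in \S\ref{subsec: discrete invariants}. Since the cusps $\Qbar\in\cYbar-\Ybar$ all have $\varphi(\Qbar)=\BB_\gert$ by Definition~\ref{definition: Lambda cusps}, the closed degeneracy locus contains precisely those cusps with $\BB_\gert\ni\beta$. To conclude that this degeneracy locus equals $\UU_{\{\beta\}}^+$ (the Zariski closure of $U_{\{\beta\}}^+$), I would argue that $U_{\{\beta\}}^+$ is dense in it: every irreducible component of the degeneracy locus meets $\Ybar$, because $\cYbar-\Ybar$ has dimension $g-1$ (being a union of $(g-1)$-dimensional cuspidal strata) while every irreducible component of the degeneracy locus has dimension $\geq g-1$ and, if $g\geq 2$, a component contained entirely in the cusps would be impossible by the transversality/dimension count; the case $g=1$ (and low-dimensional cuspidal components) can be checked directly since there the cusps are isolated points lying in the closure of the ordinary locus. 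Intersecting back, $\VV_\eta^+$ is handled identically using $\Lie(f^t)$.

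For assertion (2), the behavior in the interior — i.e.\ at points $\Qbar\in\Ybar$ — is precisely Lemma~\ref{lemma: U phi and V phi}, so there is nothing new there. The only additional content is at a cusp $\Qbar\in\cYbar-\Ybar$. By Definition~\ref{definition: Lambda cusps} such $\Qbar$ is a $\gert$-cusp with $\varphi(\Qbar)=\BB_\gert$, $\eta(\Qbar)=\BB_{\gert^\ast}$, and $I(\Qbar)=\emptyset$, so the completed local ring is the regular ring $\widehat{\calO}_{\cYbar,\Qbar}\cong k[\![z_\beta:\beta\in\BB]\!]$ by Equation~(\ref{equation: stamm isomorphism at horizontal strata}) (extended over the cusps, as $\cYbar$ is normal hence regular of dimension $g$ at $\Qbar$). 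Now if $\beta\notin r(I(\Qbar))=\emptyset$, i.e.\ always in the cuspidal case for the $\UU$-statement, one must show $\UU_{\{\beta\}}^+\cap\Spf(\widehat{\calO}_{\cYbar,\Qbar})$ is all of $\Spf(\widehat{\calO}_{\cYbar,\Qbar})$ when $\beta\notin r(I(\Qbar))$. Since $I(\Qbar)=\emptyset$, this means: either $\beta\in\BB_\gert=\varphi(\Qbar)$, in which case $\Qbar\in\UU_{\{\beta\}}^+$ and, by part (1) and the fact that $\varphi$ can only jump \emph{up} on an open neighborhood (the degeneracy locus $\{\Lie(f)_{\sigma^{-1}\circ\beta}=0\}$ is closed but in a neighborhood of a $\gert$-cusp the whole horizontal stratum $\WW_{\BB_\gert,\BB_{\gert^\ast}}$ lies in $\UU_{\{\beta\}}^+$, and this stratum is $g$-dimensional hence open in $\cYbar$ near $\Qbar$ — alternatively $\cYbar$ is irreducible at $\Qbar$ and locally equals that one component), we get $\UU_{\{\beta\}}^+ \supseteq \Spf(\widehat{\calO}_{\cYbar,\Qbar})$; or $\beta\notin\BB_\gert$, in which case $\Qbar\notin\UU_{\{\beta\}}^+$ and there is nothing to describe. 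The case $\beta\in r(I(\Qbar))$ is vacuous at a cusp since $I(\Qbar)=\emptyset$. The $\VV$-part is symmetric, with $x_\beta$ in place of $y_{\sigma^{-1}\circ\beta}$ and $I$ in place of $r(I)$, again vacuously trivial at the cusps.

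\textbf{Main obstacle.} The delicate point is showing in assertion (1) that the closed degeneracy locus of $\Lie(f)_{\sigma^{-1}\circ\beta}$ has no irreducible component contained entirely in $\cYbar-\Ybar$, so that it genuinely equals the Zariski closure $\UU_{\{\beta\}}^+$ of its interior part and does not pick up spurious cuspidal components. I expect this to follow cleanly from the dimension count — each component of the degeneracy locus has codimension at most one in the $g$-dimensional $\cYbar$, while $\cYbar-\Ybar$ has codimension one, so equality would force such a component to be a whole cuspidal divisor $\WW_{\BB_\gert,\BB_{\gert^\ast}}$; but on that stratum, by Proposition~\ref{prop: pi, infinitesimally, on horizontal strata} (and its cuspidal analogue quoted from the Appendix) the relevant $\Lie(f)$ is nonzero at a generic point whenever $\beta\notin\BB_\gert$, so the degeneracy locus does not contain it unless $\beta\in\BB_\gert$, in which case the stratum is genuinely in the closure of $U_{\{\beta\}}^+$ anyway (as interior points of $\Ybar$ near the cusp with $\varphi\supseteq\{\beta\}$ accumulate there). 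So the obstacle is really just bookkeeping with these dimension counts and the normality of $\cYbar$, and no new ideas beyond those already in Theorem~\ref{theorem: extension of stratification to cusps} are needed.
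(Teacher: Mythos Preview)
Your treatment of assertion (2) matches the paper's almost exactly: reduce to Lemma~\ref{lemma: U phi and V phi} at interior points, and at a $\gert$-cusp use $I(\Qbar)=\emptyset$ together with the fact that the horizontal stratum $\WW_{\BB_\gert,\BB_{\gert^\ast}}$ is a Zariski open neighborhood of $\Qbar$ in $\cYbar$ contained in $\UU_{\{\beta\}}^+$.

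For assertion (1), however, the paper takes a far shorter route than you do. It simply invokes Theorem~\ref{theorem: extension of stratification to cusps}: since $U_\varphi^+$ is a finite union of strata $W_{\varphi',\eta'}$ with $\varphi'\supseteq\varphi$, its Zariski closure in $\cYbar$ is the corresponding finite union of $\ZZ_{\varphi',\eta'}$. By part (1) of that theorem, the only cusps picked up are the $\gert$-cusps lying in $\ZZ_{\BB_\gert,\BB_{\gert^\ast}}$ for those $\gert$ with $\BB_\gert\supseteq\varphi$; and these are exactly the cusps $\Qbar$ with $\varphi(\Qbar)=\BB_\gert\supseteq\varphi$ by Definition~\ref{definition: Lambda cusps}. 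No extension of line bundles or degeneracy loci over the boundary is needed.

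Your approach via the degeneracy locus of $\Lie(f)_{\sigma^{-1}\circ\beta}$ over $\cYbar$ is workable in principle, but as written it contains a misconception: $\cYbar$ is the \emph{minimal} compactification (see \S3.1), so $\cYbar-\Ybar$ is zero-dimensional (finitely many cusps), not $(g-1)$-dimensional as you assert. Consequently the universal abelian scheme does not extend over $\cYbar$; only the Hodge line bundles do, and the section $\Lie(f)_{\sigma^{-1}\circ\beta}$ extends by Koecher's principle or Hartogs (for $g\geq 2$) rather than by direct construction. On the upside, the correct codimension ($g$, not $1$) makes your ``main obstacle'' about spurious boundary components essentially disappear for $g\geq 2$: a codimension-$\leq 1$ degeneracy locus cannot have a component inside a codimension-$g$ set. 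So your route can be repaired, but the paper's stratification argument avoids all of this bookkeeping.
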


\begin{proof} Statement (1) follows from part (1) of Theorem \ref{theorem: extension of stratification to cusps}. To prove (2), it is enough to consider $\Qbar\in \cYbar-\Ybar$; the rest is covered by Lemma \ref{lemma: U phi and V phi}.  In that case, there is an ideal $gert\vert p$ such that $(\varphi(\Qbar),\eta(\Qbar))=(\BB_\gert,\BB_{\gert^\ast})$, $I(\Qbar)=\emptyset$, and $\beta\in\BB_\gert$. Then, $\WW_{\BB_\gert,\BB_{\gert^\ast}}$ is a Zariski open subset of $\cYbar$ (being the complement of  $\cup_{\pe\neq(\BB_\gert,\BB_{\gert^\ast})} \ZZ_{\varphi,\eta}$) containing $\Qbar$ which lies entirely in $\UU^+_{\{\beta\}}$. This implies that $\UU_{\{\beta\}}^+\cap\Spf(\OhatQbar)$ is equal to $\Spf(\OhatQbar)$. \end{proof}

 \begin{dfn}\label{definition: ordinary strata with cusps}
Let $\cXbar^{\rm ord}=\WW_\emptyset$ and  $\cYbar^{\rm ord}=\cup_{\gert\vert p}  \WW_{\BB_\gert,\BB_{\gert^\ast}}$. We define $\cYbar^{\rm ord}_F$ to be $\WW_{\BB,\emptyset}$; it is the image of $\cXbar^{\rm ord}$ under the section $s$. We define $\cYbar^{\rm ord}_V$ to be $\WW_{\emptyset,\BB}$; it is equal to $w(\cYbar^{\rm ord}_F)$. Compare with Proposition \ref{proposition: ordinary strata}.
\end{dfn}

\

\

\section{Valuations and a dissection of $\Yrig$}
\subsection{Notation}  We denote the completions of $\cX$, $\cY$ along their special fibres, respectively, by $\gerX$, $\gerY$. These are quasi-compact quasi-separated topologically finitely generated (i.e., {\emph{admissible}}) formal schemes over $W(\kappa)$. By Raynaud's work, one can associate to them their rigid analytic generic fibres, respectively, $\Xrig$, $\Yrig$, which are quasi-compact, quasi-separated rigid analytic varieties over $\QQ_\kappa$. Since $\cX$, $\cY$ are proper over $W(\kappa)$, one sees that $\Xrig$, $\Yrig$ are in fact, respectively, the analytifications of $\cX \otimes_{W(\kappa)} \QQ_\kappa$, $\cY\otimes_{W(\kappa)} \QQ_\kappa$. Note that these spaces have natural models defined over $\QQ_p$, which we denote, respectively, by $\gerX_{\rig,\QQ_p}$, $\gerY_{\rig,\QQ_p}$. We say that a point $P\in\Xrig$ has \emph{cuspidal reduction} if $\Pbar$ is closed point of $\cXbar-\Xbar$, and otherwise we say it has \emph{non-cuspidal reduction}. We use a similar terminology for points of $\Yrig$.

\

\subsection{Valuations on $\Xrig$ and $\Yrig$}\label{subsection: valuations on X and Y}
\id Let $\CC_p$ be the completion of an algebraic closure of $\QQ_p$. It
has a valuation $\val:\CC_p \arr \QQ \cup \{\infty\}$ normalized so
that $\val(p)=1$. Define
\[
\nu(x)=\min\{\val(x),1\}.
\]

\begin{lem} \label{lemma: independence from parameters} Let  $P\in \Xrig$ and $Q\in \Yrig$ be points of non-cuspidal reduction.
\begin{enumerate}
\item Let $\{t_\beta\}_{\beta \in \BB}$ and $\{t^\prime_\beta\}_{\beta \in \BB}$ be two
sets of parameters at $\Pbar$ as in (\ref{equation: local def ring of X at P bar}). For any $\beta\in\tau(\Pbar)$ there is
$f_\beta\in\OhatP$,  $\epsilon_\beta\in\OhatP^\times$ such that
$t^\prime_\beta=\epsilon_\beta t_\beta +pf_\beta$.

\item Let $\{x_\beta\}_{\beta\in I(\Qbar)}$,  $\{x^\prime_\beta\}_{\beta\in I(\Qbar)}$ be parameters as in (\ref{equation: local deformation ring at Q bar -- arithmetic
version}). For any $\beta\in I(\Qbar)$, there is $g_\beta \in\OhatQ$,
$\delta_\beta\in\OhatQ^\times$ such that
$x^\prime_\beta=\delta_\beta x_\beta +pg_\beta$.
\end{enumerate}
\end{lem}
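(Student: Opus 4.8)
The plan is to prove both statements by the same mechanism, exploiting the comparison between the local deformation rings in characteristic $p$ (Theorems on $\widehat{\calO}_{\Xbar,\Pbar}$ and $\widehat{\calO}_{\Ybar,\Qbar}$) and their arithmetic lifts over $W(k)$. The key point is that a parameter $t_\beta$ (resp.\ $x_\beta$) is, by construction, a lift of the function cutting out a component of the relevant stratum, and reducing modulo $p$ turns the arithmetic uniformization into the characteristic-$p$ one.

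\emph{First I would treat assertion (1).} Fix $\beta\in\tau(\Pbar)$. Both $\{t_\beta\}$ and $\{t'_\beta\}$ are regular systems of parameters of $\widehat{\calO}_{X,\Pbar}\cong W(k)[\![t_\beta:\beta\in\BB]\!]$ with the extra property (from the paragraph preceding Theorem~\ref{thm: Stamm's theorem} on the stratification of $\Xbar$) that modulo $p$ they may be taken to be the functions $h_\beta$ whose divisor is $Z_{\{\beta\}}$; in particular the reductions $\bar t_\beta$ and $\bar t'_\beta$ both generate the ideal of the (reduced, by the cited result) divisor $Z_{\{\beta\}}$ inside $\widehat{\calO}_{\Xbar,\Pbar}$, at least for $\beta\in\tau(\Pbar)$, which is exactly the condition that $\Pbar$ lies on $Z_{\{\beta\}}$. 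Hence $\bar t'_\beta=\bar\epsilon_\beta\bar t_\beta$ for a unit $\bar\epsilon_\beta\in\widehat{\calO}_{\Xbar,\Pbar}^\times$. Lift $\bar\epsilon_\beta$ arbitrarily to a unit $\epsilon_\beta\in\widehat{\calO}_{X,\Pbar}^\times$ (a lift of a unit modulo the ideal $p$ is a unit, since $p$ lies in the Jacobson radical of the complete local ring). Then $t'_\beta-\epsilon_\beta t_\beta$ reduces to $0$ modulo $p$, i.e.\ $t'_\beta-\epsilon_\beta t_\beta=p f_\beta$ for some $f_\beta\in\widehat{\calO}_{X,\Pbar}$, which is the claim.

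\emph{Assertion (2) is formally identical,} replacing $\widehat{\calO}_{X,\Pbar}$ by $\widehat{\calO}_{Y,\Qbar}\cong W(k)[\![\{x_\beta,y_\beta:\beta\in I\},\{z_\beta:\beta\in I^c\}]\!]/(\{x_\beta y_\beta-p:\beta\in I\})$ from Equation~(\ref{equation: local deformation ring at Q bar -- arithmetic version}). Modulo $p$ this becomes the ring in Equation~(\ref{equation: local deformation ring at Q bar}), and by Lemma~\ref{lemma: U phi and V phi} the reduction $\bar x_\beta$ (for $\beta\in I(\Qbar)$) generates the ideal cutting out the closed subscheme $V_\beta^+\cap\Spf(\widehat{\calO}_{\Ybar,\Qbar})$. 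Since both $\{x_\beta\}$ and $\{x'_\beta\}$ arise from uniformizations of the same kind, $\bar x'_\beta$ generates the same ideal, hence $\bar x'_\beta=\bar\delta_\beta\bar x_\beta$ for a unit $\bar\delta_\beta$; lifting $\bar\delta_\beta$ to a unit $\delta_\beta\in\widehat{\calO}_{Y,\Qbar}^\times$ and writing $x'_\beta-\delta_\beta x_\beta=p g_\beta$ finishes the argument.

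\emph{The main obstacle} is justifying cleanly that the reductions mod $p$ of two allowable sets of parameters generate the same ideal in the characteristic-$p$ local ring. This is really a statement about the geometry already established: for $t_\beta$ it follows because the allowable parameters are precisely those that reduce to local equations of the reduced divisors $Z_{\{\beta\}}$ (for $\beta\in\tau(\Pbar)$, i.e.\ those $\beta$ for which $Z_{\{\beta\}}$ passes through $\Pbar$), which is spelled out in the discussion of the stratification of $\Xbar$; for $x_\beta$ it follows from Lemma~\ref{lemma: U phi and V phi}, which says $x_\beta$ cuts out $V_\beta^+$ locally, an intrinsically defined subscheme independent of the choice of uniformization. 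Once this is in hand the rest is the routine observation that a lift of a unit is a unit and that two generators of the same principal ideal in a domain (or, for the $Y$ case, a Cohen–Macaulay local ring in which $\bar x_\beta$ is a nonzerodivisor on the relevant component) differ by a unit; I would be slightly careful to phrase the $Y$ case componentwise, or simply to note that $\bar x_\beta,\bar x'_\beta$ generate the same ideal of $\widehat{\calO}_{\Ybar,\Qbar}$ and both are nonzerodivisors, so their ratio is a unit.
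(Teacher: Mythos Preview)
Your approach is essentially the same as the paper's: reduce modulo $p$, observe that the reductions of the two candidate parameters cut out the same intrinsically defined locus ($Z_{\{\beta\}}$ for part (1), $V_\beta^+$ for part (2) via Lemma~\ref{lemma: U phi and V phi}), conclude they differ by a unit in the characteristic-$p$ local ring, and lift. The paper's proof is terser but follows exactly this line.

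One small slip in your closing paragraph: $\bar x_\beta$ is \emph{not} a nonzerodivisor in $\widehat{\calO}_{\Ybar,\Qbar}$, since $\bar x_\beta\bar y_\beta=0$. This does not damage the argument, however. In any local ring $(R,\germ)$, two generators of the same nonzero principal ideal differ by a unit: if $(a)=(b)\neq 0$ and $a=rb$ with $r\in\germ$, then $b\in(a)$ gives $a\in\germ\cdot(a)$, so Nakayama forces $a=0$. Alternatively, one can check directly that $(\bar x_\beta)$ is the intersection of the minimal primes of $\widehat{\calO}_{\Ybar,\Qbar}$ containing it (hence radical), so that equality of vanishing loci already gives equality of ideals; then apply the Nakayama argument. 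Either way your conclusion stands, and the paper's proof is equally silent on this point.
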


\begin{proof}  We denote the reduction modulo $p$ of a parameter $t$ by
$\overline{t}$. Let $\Pbar=\spe(P) \in \Xbar$ be defined over $k$.
Reducing modulo $p$ we obtain isomorphisms
$k[\![\overline{t}_\beta]\!] \cong \OhatPbar \cong
k[\![\overline{t}^\prime_\beta]\!]$, where for each $\beta$ the
vanishing loci of $\overline{t}_\beta$ and
$\overline{t}^\prime_\beta$ are both equal  to $Z_\beta \cap \Spf(\OhatPbar)$.  This proves the claim. A similar proof works for the second part; by Lemma \ref{lemma: U phi and V phi}, the vanishing locus of $\overline{x}_\beta$ is $V_\beta^+ \cap \Spf(\OhatQbar)$.
\end{proof}

We now define valuation vectors for points on $\Xrig$ and $\Yrig$. Let $P\in\Xrig$.  Let $D_P={\rm
sp}^{-1}(\Pbar)$, which, by Berthelot's construction,  is the rigid analytic space associated to
$\Spf(\OhatP)$. If $P$ has non-cuspidal reduction, the parameters $t_\beta$ in (\ref{equation: local def ring of X at P bar}) are
functions on $D_P$. We define $\nu_\gerX(P)=(\nu_\beta(P))_{\beta\in
\mathbb{B}}$, where the entries $\nu_\beta(P)$ are given by
\[
\nu_\beta(P)=\begin{cases} \nu(t_\beta(P)) & \beta \in
\tau(\overline{P}),\\ 0 & \beta \not\in \tau(\overline{P}).
\end{cases}
\]
If $P$ has cuspidal reduction, this gives $\nu_\beta(P)=0$ for all $\beta\in\BB$.
By Lemma \ref{lemma: independence from parameters}, the above definition
is independent of the choice of parameters as in (\ref{equation: local def ring of X at P bar}).   In particular, we can define $\nu_\gerX$ using
parameters that are liftings of the partial Hasse invariants at the
point $P$. More precisely, let $h_\beta$ be the partial Hasse
invariant discussed in \S \ref{subsection: stratification of Y bar}. Then
$\nu_\beta(P)=\nu(\tilde{h}_\beta(P))$, where $\tilde{h}_\beta$ is
any lift of $h_\beta$ locally  at $P$.

Similarly, for $Q\in\Yrig$, we define
$\nu_\gerY(Q)=(\nu_\beta(Q))_{\beta\in \mathbb{B}}$, where
 \[
 \nu_\beta(Q)=\begin{cases}
1 & \beta \in \eta(\overline{Q})-I(\overline{Q}),\\
\nu(x_\beta(Q))&\beta \in I(\overline{Q}),\\
0&\beta \not\in \eta(\overline{Q}).
\end{cases}
\]
Again, by Lemma \ref{lemma: independence from parameters}, this
definition is independent of the choice of parameters as in  (\ref{equation: local deformation ring at Q bar -- arithmetic version}).
For a point $Q\in\Yrig$ of cuspidal reduction,  the above definition simplifies as follows: there is a unique $\gert\vert p$ such that $\Qbar$ is  a $\gert$-cusp  (see Definition \ref{definition: Lambda cusps}), and
\[
 \nu_\beta(Q)=\begin{cases}
1 & \beta \in \BB_{\gert^\ast},\\
 0&\beta \not\in \BB_{\gert^\ast}. \end{cases}
\]
\medskip

Let ${\bf 1}=(1)_{\beta\in \BB}$ denote the constant vector of $1$'s, ${\bf 0}=(0)_{\beta\in \BB}$ the constant vector of $0$'s, etc.
\begin{prop}\label{proposition: valuations under w}
For any $Q \in \Yrig$, we have
$\nu_\gerY(Q)+\nu_\gerY(w(Q))={\bf 1}$.
\end{prop}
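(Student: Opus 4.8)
The statement $\nu_\gerY(Q)+\nu_\gerY(w(Q))=\mathbf{1}$ asserts that for each $\beta\in\BB$ the $\beta$-components add up to $1$. The plan is to reduce to a componentwise verification, organized according to where $\beta$ sits relative to the invariants of $\Qbar=\spe(Q)$ and $w(\Qbar)=\spe(w(Q))$. By Proposition~\ref{proposition: w and phi eta} (and its extension to the cusps, Proposition~\ref{proposition: Lambda of w}), we have $\varphi(w(\Qbar))=r(\eta(\Qbar))$, $\eta(w(\Qbar))=\ell(\varphi(\Qbar))$, and hence $I(w(\Qbar))=\ell(\varphi(w(\Qbar)))\cap\eta(w(\Qbar))=\eta(\Qbar)\cap\ell(\varphi(\Qbar))=I(\Qbar)$. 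Write $\varphi=\varphi(\Qbar)$, $\eta=\eta(\Qbar)$, $I=I(\Qbar)$. Using the admissibility decompositions $\varphi=r(\eta^c)\amalg r(I)$ and $\eta=\ell(\varphi^c)\amalg I$ from the Proposition in \S\ref{subsec: discrete invariants}, one gets the analogous decomposition $\eta(w(\Qbar))=\ell(\varphi)=\eta^c\amalg \ell(I)$ — more usefully, $\eta(w(\Qbar))^c=\ell(\varphi)^c$, and $\ell(\varphi)=\BB-\ell(\varphi^c)$ combined with $\ell(\varphi^c)=\eta-I$ shows $\eta(w(\Qbar))=\eta^c\cup I$ as a disjoint union (since $\eta^c$ and $I\subseteq\eta$ are disjoint). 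So the first case split is: (a) $\beta\notin\eta$ and $\beta\notin I$, i.e. $\beta\in\eta^c$; (b) $\beta\in\eta-I$; (c) $\beta\in I$.

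First I would dispose of the non-cuspidal case, where the formula for $\nu_\beta$ is given by the three-way recipe before Proposition~\ref{proposition: valuations under w}. In case (a), $\beta\in\eta^c=\eta(\Qbar)^c$ gives $\nu_\beta(Q)=0$; and $\eta^c\subseteq\eta(w(\Qbar))$ with $\eta^c\cap I=\emptyset$, so $\beta\in\eta(w(\Qbar))-I(w(\Qbar))$, giving $\nu_\beta(w(Q))=1$; the sum is $1$. In case (b), $\beta\in\eta(\Qbar)-I(\Qbar)$ gives $\nu_\beta(Q)=1$; and since $\beta\notin\eta^c$ and $\beta\notin I$, the identity $\eta(w(\Qbar))=\eta^c\amalg I$ shows $\beta\notin\eta(w(\Qbar))$, so $\nu_\beta(w(Q))=0$; the sum is $1$. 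Case (c) is the substantive one: here $\beta\in I=I(\Qbar)=I(w(\Qbar))$, so $\nu_\beta(Q)=\nu(x_{\beta}(Q))$ and $\nu_\beta(w(Q))=\nu(x_{\beta,w(\Qbar)}(w(Q)))$, and I must show these add to $1$. The key input is Lemma~\ref{lemma: w and local parameters}: $w^\ast(x_{\beta,w(\Qbar)})=y_{\beta,\Qbar}$ on completed local rings. Hence $x_{\beta,w(\Qbar)}(w(Q))=y_{\beta,\Qbar}(Q)$ as elements of $\CC_p$, and inside $\widehat{\calO}_{Y,\Qbar}=W(k)[\![\dots]\!]/(\{x_\gamma y_\gamma-p\})$ we have the relation $x_{\beta,\Qbar}\,y_{\beta,\Qbar}=p$. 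Evaluating at the point $Q$ (i.e. applying the corresponding $W(k)$-algebra map to $\calO_{\CC_p}$, which lands in the valuation ring since $Q$ is a point of the formal scheme $\gerY$), we get $x_\beta(Q)\cdot y_\beta(Q)=p$, so $\val(x_\beta(Q))+\val(y_\beta(Q))=1$. Since both valuations are nonnegative (the images lie in $\calO_{\CC_p}$) and sum to $1$, each is $\le 1$, so $\nu$ does not truncate: $\nu_\beta(Q)+\nu_\beta(w(Q))=\val(x_\beta(Q))+\val(y_\beta(Q))=1$.

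The cuspidal case is then handled by the simplified recipe. If $\Qbar$ is a $\gert$-cusp, then $\nu_\beta(Q)=1$ for $\beta\in\BB_{\gert^\ast}$ and $0$ otherwise; by Proposition~\ref{proposition: Lambda of w}, $w(\Qbar)$ is a $\gert^\ast$-cusp, so $\nu_\beta(w(Q))=1$ for $\beta\in\BB_{(\gert^\ast)^\ast}=\BB_\gert$ and $0$ otherwise. Since $\BB=\BB_{\gert}\amalg\BB_{\gert^\ast}$ (disjoint, as every prime $\gerp\mid p$ divides exactly one of $\gert,\gert^\ast$), the two vectors are complementary indicator vectors and sum to $\mathbf{1}$. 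One subtlety to address: the point $Q$ being of cuspidal reduction forces $w(Q)$ to be of cuspidal reduction too, which is immediate since $w$ is an automorphism of $\cY$ carrying $\cYbar-\Ybar$ to itself; and the identification $(\gert^\ast)^\ast=\gert$ is formal from $\gert\gert^\ast=p\ol$.

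The main obstacle, such as it is, is bookkeeping: making sure the case division on $\beta$ is exhaustive and that the set-theoretic identities relating $\eta(\Qbar)$, $I(\Qbar)$, $\eta(w(\Qbar))$, $I(w(\Qbar))$ are applied correctly — all of which follow mechanically from Proposition~\ref{proposition: w and phi eta} and the admissibility decompositions. The one genuinely non-formal point is case (c), and there the entire content is the observation that Lemma~\ref{lemma: w and local parameters} transports $x_{\beta,w(\Qbar)}$ to $y_{\beta,\Qbar}$ together with the defining relation $x_\beta y_\beta=p$ of the arithmetic deformation ring~(\ref{equation: local deformation ring at Q bar -- arithmetic version}); there is no truncation issue precisely because the two valuations are nonnegative and sum to exactly $1$. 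I would also remark that this argument simultaneously shows that for $\beta\in I(\Qbar)$ one always has $\nu_\beta(Q)\le 1$ with equality only on the boundary stratum, a fact worth noting for later use.
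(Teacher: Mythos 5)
Your proposal is correct and follows essentially the same route as the paper: the non-cuspidal case reduces to the transport of parameters under $w^\ast$ from Lemma~\ref{lemma: w and local parameters} together with the relation $x_\beta y_\beta = p$ for $\beta \in I(\Qbar)$, plus the set-theoretic bookkeeping from Proposition~\ref{proposition: w and phi eta} for $\beta \notin I(\Qbar)$, and the cuspidal case follows from Proposition~\ref{proposition: Lambda of w}. The only cosmetic difference is that you pull back $x_{\beta,w(\Qbar)}$ and use the relation at $\Qbar$ whereas the paper pulls back $y_{\beta,w(\Qbar)}$ and uses the relation at $w(\Qbar)$; these are mirror images of the same computation.
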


\begin{proof}  First we assume that $Q$, and hence $w(Q)$, have non-cuspidal reduction. Fix isomorphisms as
in  Lemma \ref{lemma: w and local parameters} at $\Qbar$ and $w(\Qbar)$. To avoid confusion, we
will decorate any  parameter with the point at which it is defined.
For example, we will use $x_{\beta,\Qbar}$ to denote the parameter $x_\beta$ chosen at $\Qbar$.   Assume
that $\beta \in I(\Qbar)=I(w(\Qbar))$ (see Proposition \ref{proposition: w and phi eta}).
Then,
\begin{equation*}
\nu_\beta(Q)=\nu(x_{\beta,\Qbar}(Q))=\nu(w^*y_{\beta,w(\Qbar)}(Q))=\nu(y_{\beta,w(\Qbar)}(w(Q))).
\end{equation*}
Using the relation $ x_{\beta,w(\Qbar)} y_{\beta,w(\Qbar)}=p$, we see that
\[
\nu_\beta(Q)=1-\nu(x_{\beta,w(\Qbar)}(w(Q)))=1-\nu_\beta(w(Q)).
\]
By definition, $0<\nu_\beta(Q)<1$
if and only if $\beta \in I(\Qbar)$. Since $I(\Qbar)=I(w(\Qbar))$, to
prove the claim it remains to show that $\nu_\beta(Q)=1$ if and only
if $\nu_\beta(w(Q))=0$. We have $\nu_\beta(Q)=1$ if and only if $\beta
\in
\eta(\Qbar)-I(\Qbar)=\ell(\varphi(w(\Qbar)))-I(w(\Qbar))=\BB-\eta(w(\Qbar))$.
But this, by definition, is equivalent to $\nu_\beta(w(Q))=0$.

Now, assume $Q$ has cuspidal reduction. There is a unique $\gert\vert p$, such that $\Qbar$ is  a $\gert$-cusp. By Proposition \ref{proposition: Lambda of w},  $w(\Qbar)$  is a  $\gert^\ast$-cusp. The result now follows from the definition of $\nu_\gerY$.
\end{proof}

\

\subsection{The valuation cube}
 Let $\Theta=[0,1]^\BB$ be the unit cube in
$\RR^\BB\cong\RR^g$.  Its ``open faces'' can be encoded by vectors
${\bf a}=(a_\beta)_{\beta\in\BB}$ such that
$a_\beta\in\{0,\ast,1\}$.  The face  corresponding to ${\bf a}$ is
the set
\[
\calF_{\bf a}:=\{ {\bf v}=(v_\beta)_{\beta\in\BB}\in \Theta: v_\beta=a_\beta\ {\rm if}\
a_\beta\neq\ast,\ {\rm and}\ 0<v_\beta<1\ {\rm otherwise}\}.
\]
There are $3^g$ such faces. The {\emph{star}} of an open face $\calF$ is
${\rm Star}(\calF)=\cup_{\overline{\calF^\prime} \supseteq \calF}
\calF^\prime$, where the union is over all open faces $\calF^\prime$
whose topological closure contains $\calF$. For ${\bf a}$ as above,
we define

\begin{align*}
\eta({\bf a})&=\{\beta\in\BB: a_\beta\neq 0\},\\
I({\bf a})&=\{\beta\in\BB:a_\beta=\ast\},\\
\varphi({\bf a})&=r(\eta({\bf a})^c\cup I({\bf a}))=\{\beta\in\BB:
a_{\sigma^{-1}\circ\beta}\neq 1\}.
\end{align*}

\medskip

\begin{thm} \label{theroem: theorem of cube} There is a  one-to-one correspondence  between the open faces of $\Theta$ and the strata
$\{\WW_{\varphi,\eta}\}$ of $\cYbar$, given by
\[
\calF_{\bf a} \mapsto
\WW_{\varphi({\bf a}),\eta({\bf a})}.
\]
It has the following properties:

\begin{enumerate}

\item  $\nu_\gerY(Q) \in \calF_{\bf a}$ if and only if $\Qbar\in \WW_{\varphi({\bf a}),\eta({\bf a})}$.

\item $\dim(\WW_{\varphi({\bf a}),\eta({\bf a})})=g-\dim(\calF_{\bf a})=\sharp\,\{\beta: a_\beta\neq\ast\}$.

\item If $\calF_{\bf a}\subseteq\overline{\calF}_{\bf b}$, then $\WW_{\varphi({\bf b}),\eta({\bf b})}\subseteq\overline{\WW_{\varphi({\bf a}),\eta({\bf a})}}$ and vice versa;
that is, the correspondence is order reversing. In particular,
$\nu_\gerY(Q)\in {\rm Star}(\calF_{\bf a}) \Longleftrightarrow \Qbar
\in \ZZ_{\varphi({\bf a}),\eta({\bf a})}$.

\end{enumerate}
\end{thm}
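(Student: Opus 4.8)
The plan is to establish the correspondence $\calF_{\bf a}\mapsto \WW_{\varphi({\bf a}),\eta({\bf a})}$ by first checking it is well-defined and bijective, and then verifying properties (1)--(3). First I would observe that as ${\bf a}$ ranges over all vectors with entries in $\{0,\ast,1\}$, the pair $(\varphi({\bf a}),\eta({\bf a}))$ is always admissible: indeed $\varphi({\bf a})=r(\eta({\bf a})^c\cup I({\bf a}))\supseteq r(\eta({\bf a})^c)$, which by Proposition (the admissibility criterion in \S\ref{subsec: discrete invariants}) is exactly the admissibility of $(\varphi({\bf a}),\eta({\bf a}))$; moreover $I({\bf a})=\ell(\varphi({\bf a}))\cap\eta({\bf a})$ by the description $\varphi = r(\eta^c)\coprod r(I)$, $\eta=\ell(\varphi^c)\coprod I$ from that same proposition. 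Conversely, given an admissible pair $\pe$ with $I=\ell(\varphi)\cap\eta$, one recovers a unique ${\bf a}$ by setting $a_\beta = \ast$ for $\beta\in I$, $a_\beta = 1$ for $\beta\in\eta - I$, and $a_\beta = 0$ for $\beta\in\eta^c$; one checks $\varphi({\bf a})=\varphi$, $\eta({\bf a})=\eta$ using $\varphi = r(\eta^c)\cup r(I)$. Since there are $3^g$ faces and $3^g$ admissible pairs (part (2) of that proposition), the correspondence is a bijection.

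Next I would prove (1), which is the heart of the theorem and makes everything else formal. Given $Q\in\Yrig$ of non-cuspidal reduction with $\Qbar$ having invariants $(\varphi(\Qbar),\eta(\Qbar))$ and $I(\Qbar)=\ell(\varphi(\Qbar))\cap\eta(\Qbar)$, I unwind the definition of $\nu_\gerY(Q)=(\nu_\beta(Q))_\beta$ from \S\ref{subsection: valuations on X and Y}: $\nu_\beta(Q)=1$ for $\beta\in\eta(\Qbar)-I(\Qbar)$, $\nu_\beta(Q)=\nu(x_\beta(Q))\in[0,1]$ for $\beta\in I(\Qbar)$, and $\nu_\beta(Q)=0$ for $\beta\notin\eta(\Qbar)$. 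Now I must show that for $\beta\in I(\Qbar)$ the value $\nu(x_\beta(Q))$ actually lies strictly between $0$ and $1$, i.e. $0<\nu(x_\beta(Q))<1$: this is forced by the relation $x_\beta y_\beta = p$ in $\widehat{\calO}_{Y,\Qbar}$ (Equation (\ref{equation: local deformation ring at Q bar -- arithmetic version})), which gives $\nu(x_\beta(Q))+\nu(y_\beta(Q))=1$ with both summands non-negative, and in fact both strictly positive since $x_\beta,y_\beta$ lie in the maximal ideal, so a point of $D_Q$ (which lies over $\Qbar$, forcing these parameters to be topologically nilpotent) has both values positive. Hence reading off ${\bf a}$ from $\nu_\gerY(Q)$ via ``$a_\beta=1$ iff $\nu_\beta=1$, $a_\beta=\ast$ iff $0<\nu_\beta<1$, $a_\beta=0$ iff $\nu_\beta=0$'', I get precisely the vector ${\bf a}$ with $\eta({\bf a})=\eta(\Qbar)$, $I({\bf a})=I(\Qbar)$, $\varphi({\bf a})=\varphi(\Qbar)$. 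So $\nu_\gerY(Q)\in\calF_{\bf a}$ exactly when $\Qbar$ has invariants $(\varphi({\bf a}),\eta({\bf a}))$, i.e. $\Qbar\in\WW_{\varphi({\bf a}),\eta({\bf a})}$ by the defining property of the stratification (Proposition in \S\ref{subsection: stratification of Y bar}, extended to the cusps by Theorem \ref{theorem: extension of stratification to cusps}). For the cuspidal case I invoke Definition \ref{definition: Lambda cusps}: a $\gert$-cusp has $(\varphi,\eta)=(\BB_\gert,\BB_{\gert^\ast})$, $I=\emptyset$, and $\nu_\beta(Q)=1$ for $\beta\in\BB_{\gert^\ast}$, $0$ otherwise, matching $\calF_{\bf a}$ with $a_\beta=1$ on $\BB_{\gert^\ast}$ and $0$ elsewhere, for which $\eta({\bf a})=\BB_{\gert^\ast}$ and $\varphi({\bf a})=r((\BB_{\gert^\ast})^c)=r(\BB_\gert)=\BB_\gert$.

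For (2), the dimension $\dim\calF_{\bf a}=\sharp\{\beta:a_\beta=\ast\}=\sharp I({\bf a})$, while by Theorem \ref{theorem: fund'l facts about the stratificaiton of Ybar}(2) (and its cuspidal extension, Theorem \ref{theorem: extension of stratification to cusps}(2)) $\dim\WW_{\varphi({\bf a}),\eta({\bf a})} = 2g-(\sharp\varphi({\bf a})+\sharp\eta({\bf a}))$; using $\varphi = r(\eta^c)\coprod r(I)$ and $\eta = \ell(\varphi^c)\coprod I$ one computes $\sharp\varphi+\sharp\eta = (g-\sharp\eta)+\sharp I + (g-\sharp\varphi)+\sharp I$, but more directly $\sharp\varphi({\bf a}) = \sharp\{\beta: a_{\sigma^{-1}\circ\beta}\neq 1\} = g - \sharp\{\beta:a_\beta=1\}$ and $\sharp\eta({\bf a}) = \sharp\{\beta: a_\beta\neq 0\} = g-\sharp\{\beta:a_\beta=0\}$, so $2g - \sharp\varphi({\bf a}) - \sharp\eta({\bf a}) = \sharp\{a_\beta=1\}+\sharp\{a_\beta=0\} = g - \sharp I({\bf a})$, giving the claim. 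For (3), the combinatorial statement that $\calF_{\bf a}\subseteq\overline{\calF_{\bf b}}$ iff, entry by entry, $b_\beta=\ast$ whenever $a_\beta=\ast$ and $b_\beta=a_\beta$ otherwise — which translates into $I({\bf a})\subseteq I({\bf b})$ with ${\bf a},{\bf b}$ agreeing off $I({\bf b})$ — is exactly the condition $\varphi({\bf b})\supseteq\varphi({\bf a})\supseteq\varphi({\bf b})-r(I({\bf b}))$ and $\eta({\bf b})\supseteq\eta({\bf a})\supseteq\eta({\bf b})-I({\bf b})$, which by Theorem \ref{theorem: fund'l facts about the stratificaiton of Ybar}(4) (cuspidal extension as in Theorem \ref{theorem: extension of stratification to cusps}) is precisely $\WW_{\varphi({\bf b}),\eta({\bf b})}\subseteq\overline{\WW_{\varphi({\bf a}),\eta({\bf a})}} = \ZZ_{\varphi({\bf a}),\eta({\bf a})}$; since $\Zpe = \overline{\Wpe}$ is the closure, this also establishes the order-reversing nature and, combined with (1) and the definition ${\rm Star}(\calF_{\bf a})=\cup_{\overline{\calF'}\supseteq\calF_{\bf a}}\calF'$ together with $\ZZ_{\varphi({\bf a}),\eta({\bf a})}=\cup_{(\varphi',\eta')\geq(\varphi({\bf a}),\eta({\bf a}))}\WW_{\varphi',\eta'}$, the final equivalence $\nu_\gerY(Q)\in{\rm Star}(\calF_{\bf a})\Longleftrightarrow\Qbar\in\ZZ_{\varphi({\bf a}),\eta({\bf a})}$. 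The main obstacle I anticipate is purely bookkeeping: reconciling the three different descriptions of $\varphi({\bf a})$ — as $r(\eta({\bf a})^c\cup I({\bf a}))$ and as $\{\beta:a_{\sigma^{-1}\circ\beta}\neq 1\}$ — with the $\varphi = r(\eta^c)\coprod r(I)$ decomposition, and making sure the face-closure combinatorics of $\Theta$ matches the ``$J\subseteq I$, $\eta\supseteq\eta'\supseteq\eta - I$'' bookkeeping of Theorem \ref{theorem: fund'l facts about the stratificaiton of Ybar}(4) exactly; the strict inequality $0<\nu(x_\beta(Q))<1$ for critical $\beta$ is the only genuinely analytic point and it follows immediately from $x_\beta y_\beta = p$.
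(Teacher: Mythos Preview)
Your proposal is correct and follows essentially the same approach as the paper: (1) is an unpacking of the definition of $\nu_\gerY$ together with $x_\beta y_\beta=p$, (2) is the same count, and (3) translates the face-closure combinatorics on $\Theta$ into the partial order $(\varphi',\eta')\geq(\varphi,\eta)$ on admissible pairs. One small slip to fix: your entrywise description of $\calF_{\bf a}\subseteq\overline{\calF_{\bf b}}$ should read ``$b_\beta\neq\ast$ implies $a_\beta=b_\beta$'' (as written, ``$b_\beta=a_\beta$ otherwise'' wrongly forbids $b_\beta=\ast$ when $a_\beta\in\{0,1\}$), but your immediate translation ``$I({\bf a})\subseteq I({\bf b})$ with ${\bf a},{\bf b}$ agreeing off $I({\bf b})$'' is the correct condition and is what you actually use; also, for (3) you do not need the full force of Theorem~\ref{theorem: fund'l facts about the stratificaiton of Ybar}(4) --- once you have $(\varphi({\bf b}),\eta({\bf b}))\geq(\varphi({\bf a}),\eta({\bf a}))$, the containment $\WW_{\varphi({\bf b}),\eta({\bf b})}\subseteq\ZZ_{\varphi({\bf a}),\eta({\bf a})}=\overline{\WW_{\varphi({\bf a}),\eta({\bf a})}}$ is the definition of $\ZZ$.
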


\begin{proof}
(1) is clear from the definitions. To prove (2), we write
\[
g-\dim(\calF_{\bf a})=g-\sharp\{\beta\in\BB:
a_\beta=\ast\}=2g-(\sharp\varphi({\bf a})+\sharp\eta({\bf
a}))=\dim(\WW_{\varphi({\bf a}),\eta({\bf a})}),
\]
using  Theorem \ref{theorem: extension of stratification to cusps} for the last equality.

Next, we prove (3). We have $\calF_{\bf
a} \subseteq \overline{\calF}_{\bf b}$ if and only if the following
hold:
\begin{align*}
a_\beta&=\ast \Rightarrow b_\beta=\ast, \\
a_\beta&=0 \Rightarrow  b_\beta\neq 1, \\
a_\beta&=1 \Rightarrow b_\beta\neq 0.
\end{align*}

These conditions, in turn, are equivalent to the following:
\begin{align*}
 I({\bf a}) \subseteq I({\bf b}), \\
 r(\eta({\bf a}))^c \subseteq\varphi({\bf b}),\\
 \ell(\varphi({\bf a}))^c \subseteq \eta({\bf b}).
\end{align*}

The above conditions are  equivalent to $ (\varphi({\bf
b}),\eta({\bf b}))\geq (\varphi({\bf a}),\eta({\bf a}))$, because we
can write $\eta({\bf a})=I({\bf a})\cup \ell(\varphi({\bf a}))^c
\subseteq I({\bf b}) \cup \eta({\bf b})=\eta({\bf b})$, and
similarly, $\varphi({\bf a})=r(I({\bf a}))\cup r(\varphi({\bf a}))^c
\subseteq r(I({\bf b})) \cup \varphi({\bf b})=\varphi({\bf b})$. The
other direction  of equivalence follows easily  using the
admissibility of all the pairs $(\varphi,\eta)$ appearing above.
Finally, by Theorem \ref{theorem: extension of stratification to cusps},  $(\varphi({\bf
b}),\eta({\bf b}))\geq (\varphi({\bf a}),\eta({\bf a}))$ is equivalent
to $\WW_{\varphi({\bf b}),\eta({\bf
b})}\subseteq\ZZ_{\varphi({\bf a}),\eta({\bf a})}=\overline{\WW}_{\varphi({\bf a}),\eta({\bf a})}$. The
proof is complete.
\end{proof}

\

\

\section{The canonical subgroup}

\subsection{Some admissible open subsets of $\Xrig$ and $\Yrig$}
Let $K$ be a discretely valued complete subfield of $\CC_P$ with
uniformizer $\varpi$ and ring of integers $\ok$. Let $\gerZ$ be an
admissible formal scheme over $\ok$ and $\gerZ_\rig$ the rigid
analytic space over $K$ associated to it \`a la Raynaud. Let $C$ be
a closed subscheme of $\overline{\gerZ}$, the special fibre of $\gerZ$. Let
$|\varpi|\leq\lambda\leq 1$ be an element of $p^\QQ$.

One can define $[C]_{\leq\lambda}$, {\emph{the closed tube of $C$ of radius $\lambda$}},
as in \cite[1.1.8]{Berthelot}. It is  a quasi-compact admissible
open of $\gerZ_\rig$ defined as follows: if $C$
is defined by the vanishing of functions $f_1,\dots,f_m$ in $\calO(\overline{\gerZ})$ with lifts
$\tilde{f}_1,\dots,\tilde{f}_m$ in $\calO(\gerZ)$, then
$[C]_{\leq\lambda}$ is defined by the inequalities
$|\tilde{f}_i|_{\rm sup}\leq\lambda$ for $1\leq i\leq m$. Note that if $\lambda=1$ this gives the entire $\gerZ_\rig$.  Under the above
assumptions on $\lambda$, this definition is independent of the choice of $f_i$'s and their lifts. In the general case, $[C]_{\leq\lambda}$
can be constructed in the same way by using local generators for the
ideal of $C$ and gluing these local constructions.  The gluing is
possible in view of the independence of the local construction of
the set of generators of the ideal of $C$. This independence also
implies the following: if $\Qbar$ is a closed point of $\overline{\gerZ}$, then
$\spe^{-1}(\Qbar)\cap[C]_{\leq \lambda}$ is the locus where
$|\tilde{g}_i|_{\rm sup}\leq \lambda$, where $\tilde{g}_i$'s are any
set of functions in $ \calO_{\gerZ,\Qbar}$ whose reductions, $g_i$,
define the closed subscheme
$C\cap\Spf(\widehat{\calO}_{\overline{\gerZ},\Qbar})$.

If $C$ is a Cartier divisor on  $\overline{\gerZ}$, then
one can similarly define $[C]_{\geq\lambda}$, which is a quasi-compact
admissible open in $\gerZ_\rig$: write $C$ locally as the vanishing
of a function $f$ which lifts to $\tilde{f}\in \calO(\gerZ)$, and
define $[C]_{\geq\lambda}$ locally by the inequality
$|\tilde{f}|_{\rm sup}\geq \lambda$. This will be independent of the
choice of $f$ for $\lambda$ as above, and that allows  gluing the
  local constructions. If $\Qbar$ is a closed point of $\overline{\gerZ}$,
then $\spe^{-1}(\Qbar)\cap[C]_{\geq \lambda}$ is the locus where
$|\tilde{g}|_{\rm sup}\geq \lambda$, where $\tilde{g}$ is any
function in $\calO_{\gerZ,\Qbar}$ whose reduction defines the
closed subscheme $C\cap\Spf(\widehat{\calO}_{\overline{\gerZ},\Qbar})$.

\begin{lem} Let $\beta\in\BB$.  Let $a\in [0,1]\cap \QQ$.
\begin{enumerate}
\item  $[\UU^+_{\sigma\circ\beta}]_{\leq(1/p)^{1-a}}$ is a quasi-compact
admissible open in $\Yrig$ whose points are
\[
\{Q\in\Yrig: \nu_\beta(Q)\leq a\}.
\]
\item   $[\ZZ_{\{\beta\}}]_{\geq(1/p)^a}$ is a quasi-compact admissible open in $\Xrig$ whose points are
\[
\{P\in\Xrig: \nu_\beta(P)\leq a\}.
\]
\item Similarly, $[\ZZ_{\{\beta\}}]_{\leq(1/p)^a}$ is a quasi-compact admissible
open in $\Xrig$ whose points are
\[
\{P\in\Xrig: \nu_\beta(P)\geq a\}.
\]
\end{enumerate}
\end{lem}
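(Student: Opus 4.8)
The three assertions are parallel applications of the tube constructions recalled just above the lemma, so the plan is to establish a single ``translation'' between a sup-norm inequality of a local parameter and the valuation-vector coordinate $\nu_\beta$, and then invoke it three times. First I would fix a point $Q\in\Yrig$ (resp.\ $P\in\Xrig$) and pass to its tube $\spe^{-1}(\Qbar)$ (resp.\ $\spe^{-1}(\Pbar)$), which by Berthelot's construction is the rigid space attached to $\Spf(\OhatQbar)$ (resp.\ $\Spf(\OhatPbar)$); on this tube the parameters $x_\beta,y_\beta,z_\beta$ (resp.\ $t_\beta$) of the uniformizations (\ref{equation: local deformation ring at Q bar -- arithmetic version}) and (\ref{equation: local def ring of X at P bar}) are genuine analytic functions, and a point of the tube is nothing but a $\CC_p$-algebra homomorphism from the completed local ring, i.e.\ an assignment of values in the maximal ideal of $\calO_{\CC_p}$ to these parameters subject to the single relation $x_\beta y_\beta=p$.

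For part (2), by the last paragraph of \S\ref{subsection: stratification of Y bar} (the remark that $t_\beta$ may be chosen to be a local lift of the partial Hasse invariant $h_\beta$), and by Lemma~\ref{lemma: independence from parameters}(1), $\ZZ_{\{\beta\}}\cap\Spf(\OhatPbar)$ is cut out by $\overline{t}_\beta$ whenever $\beta\in\tau(\Pbar)$, and is all of $\Spf(\OhatPbar)$ when $\beta\notin\tau(\Pbar)$ (in which case $Z_{\{\beta\}}$ does not pass through $\Pbar$ at all). Hence, using the description of $\spe^{-1}(\Pbar)\cap[\ZZ_{\{\beta\}}]_{\geq\lambda}$ recalled above, when $\beta\in\tau(\Pbar)$ this locus is $\{|t_\beta(P)|_{\sup}\ge\lambda\}$, i.e.\ $\{\val(t_\beta(P))\le a\}$ with $\lambda=(1/p)^a$; and since $\nu_\beta(P)=\nu(t_\beta(P))=\min\{\val(t_\beta(P)),1\}$, for $a\le 1$ the condition $\val(t_\beta(P))\le a$ is exactly $\nu_\beta(P)\le a$. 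When $\beta\notin\tau(\Pbar)$, the tube $[\ZZ_{\{\beta\}}]_{\ge\lambda}$ contains all of $\spe^{-1}(\Pbar)$ and $\nu_\beta(P)=0\le a$, so both sides hold identically. Quasi-compactness and admissibility are inherited from the general construction of $[C]_{\ge\lambda}$ for $C$ a Cartier divisor (note $\ZZ_{\{\beta\}}$ is a Cartier divisor on $\cXbar$, being the reduced divisor of the partial Hasse invariant). Part (3) is identical with the roles of $[\,\cdot\,]_{\ge\lambda}$ and $[\,\cdot\,]_{\le\lambda}$ exchanged, giving the complementary inequality $\nu_\beta(P)\ge a$.

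For part (1), the same mechanism is applied to $\UU^+_{\sigma\circ\beta}$ on $\cYbar$. By Lemma~\ref{lemma: extension of U phi and V eta}(2) (extending Lemma~\ref{lemma: U phi and V phi}): if $\sigma\circ\beta\notin r(I(\Qbar))$ then $\UU^+_{\sigma\circ\beta}\cap\Spf(\OhatQbar)$ is all of $\Spf(\OhatQbar)$, and since $\sigma\circ\beta\in r(I(\Qbar))\iff\beta\in I(\Qbar)$, this is the case $\beta\notin I(\Qbar)$, where by the definition of $\nu_\gerY$ we have $\nu_\beta(Q)\in\{0,1\}$; one checks the inequality $\nu_\beta(Q)\le a$ holds on $\spe^{-1}(\Qbar)$ exactly when $\nu_\beta(Q)=0$, and on the other hand $[\UU^+_{\sigma\circ\beta}]_{\le(1/p)^{1-a}}$ contains the full tube $\spe^{-1}(\Qbar)$ precisely when $\UU^+_{\sigma\circ\beta}$ passes through $\Qbar$, i.e.\ when $\sigma\circ\beta\in\varphi(\Qbar)$, i.e.\ $\beta\in\ell(\varphi(\Qbar))$ — and by admissibility $\beta\notin\eta(\Qbar)$ forces $\beta\notin I(\Qbar)$ with $\nu_\beta(Q)=0$, while $\beta\in\eta(\Qbar)-I(\Qbar)$ gives $\nu_\beta(Q)=1$, matching $a=1$ on that tube. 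If instead $\beta\in I(\Qbar)$, then $\UU^+_{\sigma\circ\beta}\cap\Spf(\OhatQbar)$ is cut out by $y_{\sigma^{-1}\circ(\sigma\circ\beta)}=y_\beta$, so $\spe^{-1}(\Qbar)\cap[\UU^+_{\sigma\circ\beta}]_{\le(1/p)^{1-a}}=\{\val(y_\beta(Q))\ge 1-a\}$; using $x_\beta(Q)y_\beta(Q)=p$ this is $\{\val(x_\beta(Q))\le a\}=\{\nu_\beta(Q)\le a\}$ since $\nu_\beta(Q)=\nu(x_\beta(Q))$ and $0\le a\le 1$. Assembling the cases over all closed points $\Qbar$ of $\cYbar$ yields the stated description, and quasi-compactness and admissibility again come for free from the general construction of $[C]_{\le\lambda}$.

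\textbf{Main obstacle.} The routine parts are genuinely routine; the one point that needs care is the bookkeeping of indices — translating between $\UU^+_{\sigma\circ\beta}$ (whose local equation involves $y_{\sigma^{-1}\circ(\sigma\circ\beta)}=y_\beta$) and the coordinate $\nu_\beta$, and keeping straight that $\sigma\circ\beta\in r(I(\Qbar))\iff\beta\in I(\Qbar)$ and $\sigma\circ\beta\in\varphi(\Qbar)\iff\beta\in\ell(\varphi(\Qbar))$ — together with consistently using the defining relation $x_\beta y_\beta=p$ to flip $\val(y_\beta)=1-\val(x_\beta)$ on $\CC_p$-points (valid because $0<\val(x_\beta),\val(y_\beta)<\infty$ at any point of the tube where both are in the maximal ideal, the boundary cases being absorbed into the "full tube" alternative). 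Once the index dictionary is pinned down, the proof is a direct unwinding of the tube definitions recalled immediately before the lemma.
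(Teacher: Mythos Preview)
Your proof is correct and follows essentially the same approach as the paper: both reduce to computing $\spe^{-1}(\Qbar)\cap[\UU^+_{\sigma\circ\beta}]_{\le(1/p)^{1-a}}$ (resp.\ $\spe^{-1}(\Pbar)\cap[\ZZ_{\{\beta\}}]_{\ge(1/p)^a}$) via the local equations supplied by Lemma~\ref{lemma: extension of U phi and V eta} (resp.\ the partial Hasse invariant description of $\ZZ_{\{\beta\}}$), with the same three-way case split according to whether $\beta\in I(\Qbar)$, $\beta\in\ell(\varphi(\Qbar))-\eta(\Qbar)$, or $\beta\notin\ell(\varphi(\Qbar))$. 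The only differences are organizational --- the paper disposes of the boundary case $a=1$ explicitly at the outset (where the tube is all of $\Yrig$), while you fold it into the case analysis --- and there is a harmless slip in your part~(2): when $\beta\notin\tau(\Pbar)$ the intersection $\ZZ_{\{\beta\}}\cap\Spf(\OhatPbar)$ is \emph{empty} (locally cut out by a unit), not all of $\Spf(\OhatPbar)$, though your conclusion that $\spe^{-1}(\Pbar)\subseteq[\ZZ_{\{\beta\}}]_{\ge(1/p)^a}$ in that case is correct.
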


\begin{proof}  It suffices to calculate the points of $[\UU^+_{\sigma\circ\beta}]_{\leq(1/p)^{1-a}}\cap\; \spe^{-1}(\Qbar)$ for every closed point $\Qbar\in\cYbar$. If $a=1$, then $[\UU^+_{\sigma\circ\beta}]_{\leq(1/p)^{1-a}}=\Yrig$ and
the result follows. Assume $a<1$. Let $\Qbar$ be a closed point in
$\cYbar$. Then, by  Lemma \ref{lemma: extension of U phi and V eta}, $\UU^+_{\sigma\circ\beta}\cap\Spf(\OhatQbar)$ is given
by the vanishing of
\[
\begin{cases}
y_\beta \qquad & \beta \in I(\Qbar),\\
1 \qquad & \beta \not\in \ell(\varphi(\Qbar)),\\
0\qquad & \beta\in \ell(\varphi(\Qbar))-\eta(\Qbar).
\end{cases}
\]

In the first case, $\spe^{-1}(\Qbar) \cap
[\UU^+_{\sigma\circ\beta}]_{\leq(1/p)^{1-a}}$ is given by the
inequality: $|y_\beta(Q)|\leq (1/p)^{1-a}$, or, equivalently,
$\nu_\beta(Q)=\nu(x_\beta(Q))\leq a$. In the second case,  $\spe^{-1}(\Qbar) \cap
[\UU^+_{\sigma\circ\beta}]_{\leq(1/p)^{1-a}}$ is empty. The result follows, as in this case,
$\beta\in\eta(\Qbar)-I(\Qbar)$, and hence, $\nu_\beta(Q)=1>a$. In the last case, $\spe^{-1}(\Qbar) \subseteq
[\UU^+_{\sigma\circ\beta}]_{\leq(1/p)^{1-a}}$. The result again
follows, as $\beta\not\in\eta(\Qbar)$ implies that
$\nu_\beta(Q)=0\leq a$.

Now we prove part (2).  Again, the case $a=1$ is immediate, and we assume $a<1$. Let
$\Pbar$ be a closed point in $\cXbar$.  The stratum $\ZZ_{\{\beta\}}$ is a
divisor on $\cXbar$. In fact, $\ZZ_{\{\beta\}}\cap\Spf(\OhatPbar)$ is
given by the vanishing of
\[
\begin{cases}
t_\beta \qquad & \beta \in \tau(\Pbar),\\
1 \qquad & \beta \not\in \tau(\Pbar).\\
\end{cases}
\]

In the first case, $\spe^{-1}(\Pbar) \cap [\ZZ_{\{\beta\}}]_{\geq(1/p)^a}$
is given by the inequality: $|t_\beta(P)|\geq (1/p)^a$, or,
equivalently, $\nu_\beta(P)=\nu(t_\beta(P))\leq a$. In the second case, $\spe^{-1}(\Pbar) \subseteq
[\ZZ_{\{\beta\}}]_{\geq(1/p)^a}$. The result follows, as in this case we always have $\nu_\beta(P)=0\leq a$.

The remaining statement can be proved in the same way. \end{proof}

\begin{cor} \label{corollary: admissible} Let ${\bf a}=(a_\beta)_{\beta\in\BB}$ and ${\bf b}=(b_\beta)_{\beta\in\BB}$
both belong to $\Theta\cap\QQ^\BB$. Assume  that for each $\beta$, we
have $a_\beta \leq b_\beta$. There is a quasi-compact admissible
open $\Yrig[{\bf a},{\bf b}]$ of $\Yrig$ whose points are
\[
\{Q\in\Yrig: a_\beta\leq \nu_\beta(Q)\leq b_\beta,\; \forall\beta\in\BB\}.
\]
Similarly, there exits a quasi-compact admissible open $\Xrig[{\bf
a},{\bf b}]$ of $\Xrig$ whose points are
\[
\{P\in\Xrig: a_\beta\leq \nu_\beta(P)\leq b_\beta,\; \forall \beta\in\BB\}.
\]
\end{cor}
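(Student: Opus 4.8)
## Proof Proposal for Corollary \ref{corollary: admissible}

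The plan is to realize each of the two sets as a finite intersection of the quasi-compact admissible opens produced by the preceding lemma, and then invoke the fact that a finite intersection of quasi-compact admissible opens in a quasi-separated rigid space is again quasi-compact admissible open. First I would treat the case of $\Yrig$. For each $\beta \in \BB$, the condition $\nu_\beta(Q) \leq b_\beta$ is, by part (1) of the preceding lemma applied with $a = b_\beta$, exactly the condition that $Q$ lies in the quasi-compact admissible open $[\UU^+_{\sigma\circ\beta}]_{\leq (1/p)^{1-b_\beta}}$ (with the convention that this is all of $\Yrig$ when $b_\beta = 1$). The condition $\nu_\beta(Q) \geq a_\beta$ is the complementary kind of inequality; I would obtain it by the same local analysis as in the lemma, cutting out instead $[\UU^+_{\sigma\circ\beta}]_{\geq (1/p)^{1-a_\beta}}$ — note that $\UU^+_{\sigma\circ\beta}$ restricted to $\Spf(\OhatQbar)$ is, by Lemma \ref{lemma: extension of U phi and V eta}, either all of $\Spf(\OhatQbar)$, the vanishing of a single function $y_\beta$, or the vanishing of the constant $0$ or $1$; in the one genuinely interesting case it is a Cartier divisor, so $[\UU^+_{\sigma\circ\beta}]_{\geq (1/p)^{1-a_\beta}}$ is well-defined as in the setup preceding the lemma, and cuts out $\{\nu_\beta(Q) \geq a_\beta\}$. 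One must double-check the degenerate cases: when $\beta \notin \eta(\Qbar)$ one has $\nu_\beta(Q) = 0$ on the whole tube $\spe^{-1}(\Qbar)$, so the condition $\nu_\beta(Q)\geq a_\beta$ either holds everywhere (if $a_\beta = 0$) or nowhere; when $\beta \in \eta(\Qbar) - I(\Qbar)$ one has $\nu_\beta(Q) = 1$ identically, and the condition is automatic unless $a_\beta$ forces emptiness. In all cases the resulting locus is a quasi-compact admissible open. Then $\Yrig[{\bf a}, {\bf b}]$ is the intersection over the finitely many $\beta \in \BB$ of these $2g$ admissible opens, hence quasi-compact admissible open, and its points are exactly $\{Q : a_\beta \leq \nu_\beta(Q) \leq b_\beta \ \forall \beta\}$.

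For $\Xrig$ the argument is even more direct: parts (2) and (3) of the preceding lemma already give, for each $\beta$, quasi-compact admissible opens $[\ZZ_{\{\beta\}}]_{\geq (1/p)^{b_\beta}}$ and $[\ZZ_{\{\beta\}}]_{\leq (1/p)^{a_\beta}}$ whose points are respectively $\{P : \nu_\beta(P) \leq b_\beta\}$ and $\{P : \nu_\beta(P) \geq a_\beta\}$ (with the conventions for the endpoint values $0$ and $1$ as in the lemma, where $\ZZ_{\{\beta\}}$ is a divisor on $\cXbar$ so both tubes are defined). I would set
\[
\Xrig[{\bf a}, {\bf b}] = \bigcap_{\beta \in \BB} \left( [\ZZ_{\{\beta\}}]_{\geq (1/p)^{b_\beta}} \cap [\ZZ_{\{\beta\}}]_{\leq (1/p)^{a_\beta}} \right),
\]
a finite intersection of quasi-compact admissible opens in the quasi-separated space $\Xrig$, hence quasi-compact admissible open, with the stated point set.

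I expect the only real subtlety — not so much an obstacle as a bookkeeping point — to be the careful treatment of the boundary values $a_\beta \in \{0\}$ and $b_\beta \in \{1\}$ and the strata on which $\nu_\beta$ is identically $0$ or identically $1$, so that the tube constructions are legitimate (one needs $|\varpi| \leq \lambda \leq 1$ with $\lambda \in p^\QQ$, which holds since $a_\beta, b_\beta \in [0,1]\cap\QQ$) and so that the claimed equality of point sets is exact rather than off by a boundary face. This is handled uniformly by the fiberwise description $\spe^{-1}(\Qbar) \cap [\,\cdot\,]_{\leq\lambda}$ recorded in the lemma and its proof. Everything else is the elementary permanence of quasi-compact admissible opens under finite intersection in quasi-separated rigid analytic varieties, which applies since $\Xrig$ and $\Yrig$ are quasi-compact and quasi-separated.
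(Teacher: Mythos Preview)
Your argument for $\Xrig$ is correct and is exactly the paper's: since $\ZZ_{\{\beta\}}$ is a Cartier divisor on $\cXbar$ (the divisor of the partial Hasse invariant $h_\beta$), both $[\ZZ_{\{\beta\}}]_{\leq\lambda}$ and $[\ZZ_{\{\beta\}}]_{\geq\lambda}$ are available, and the finite intersection does the job.

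For $\Yrig$, however, there is a genuine gap. You propose to use $[\UU^+_{\sigma\circ\beta}]_{\geq (1/p)^{1-a_\beta}}$ to cut out $\{\nu_\beta(Q)\geq a_\beta\}$, but the construction $[C]_{\geq\lambda}$ in the paper's setup is only defined when $C$ is a \emph{Cartier divisor} on the special fibre. The closed set $\UU^+_{\sigma\circ\beta}$ is not a Cartier divisor on $\cYbar$: by Lemma~\ref{lemma: extension of U phi and V eta} it equals all of $\Spf(\OhatQbar)$ whenever $\beta\notin I(\Qbar)$, so it contains entire irreducible components of $\cYbar$ and is locally cut out by the zero function there. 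Your case-by-case local description is correct on each tube $\spe^{-1}(\Qbar)$, but the independence-of-choice argument that makes the local pieces glue into a global admissible open relies precisely on the local defining function being a non-zero-divisor, which fails here. (Concretely: at a point with $\beta\notin\eta(\Qbar)$ and $a_\beta=0$, your recipe gives $|0|\geq (1/p)$, which is empty, whereas you want the whole tube.)

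The paper sidesteps this by using the Atkin--Lehner automorphism $w$. By Proposition~\ref{proposition: valuations under w} one has $\nu_\beta(Q)+\nu_\beta(w(Q))=1$, so $\nu_\beta(Q)\geq a_\beta$ if and only if $w(Q)$ (equivalently $w^{-1}(Q)$) lies in $[\UU^+_{\sigma\circ\beta}]_{\leq (1/p)^{a_\beta}}$. Thus
\[
\Yrig[{\bf a},{\bf b}]=\bigcap_{\beta\in\BB}[\UU^+_{\sigma\circ\beta}]_{\leq(1/p)^{1-b_\beta}}\;\cap\;\bigcap_{\beta\in\BB}w\bigl([\UU^+_{\sigma\circ\beta}]_{\leq(1/p)^{a_\beta}}\bigr),
\]
a finite intersection of quasi-compact admissible opens (images under the automorphism $w$ of such are again such) in the quasi-separated space $\Yrig$. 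This uses only the $[\,\cdot\,]_{\leq\lambda}$ construction, which needs no Cartier hypothesis.
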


\begin{proof} Define
\[
 \Yrig[{\bf a},{\bf b}]=\bigcap_{\beta\in\BB}
[\UU^+_{\sigma\circ\beta}]_{\leq(1/p)^{1-b_\beta}} \bigcap
\bigcap_{\beta\in\BB}
w([\UU^+_{\sigma\circ\beta}]_{\leq(1/p)^{a_\beta}}).
\]
This is a finite intersection of quasi-compact admissible opens in a
quasi-separated rigid analytic space, and hence is a quasi-compact
admissible open of $\Yrig$ with the desired property. Similarly,
define
\[
\Xrig[{\bf a},{\bf b}]=\bigcap_{\beta\in\BB}
[\ZZ_{\{\beta\}}]_{\geq(1/p)^{b_\beta}} \bigcap \bigcap_{\beta\in\BB}
[\ZZ_{\{\beta\}}]_{\leq(1/p)^{a_\beta}},
\]
which is a quasi-compact admissible open of $\Xrig$ with the desired
property.
\end{proof}

\begin{prop}\label{prop: admissible covering} Let $\Gamma$ be a subset of $\Theta$ with the property that if $(a_\beta)_{\beta\in\BB} \in \Gamma$ and $b_\beta\leq a_\beta$ for all $\beta\in\BB$, then $(b_\beta)_{\beta\in\BB}\in \Gamma$.
\begin{enumerate}
\item There is an admissible open subset  $\Yrig \Gamma$ of $\Yrig$ whose points are $\{Q\in\Yrig:  \nu_\gerY(Q)\in \Gamma\}$.  The collection $\{\Yrig[{\bf 0},{\bf a}]: {\bf a}\in \Gamma\}$ is an admissible covering of  $\Yrig \Gamma$.

\item There is an admissible open subset  $\Xrig \Gamma$ of $\Xrig$ whose points are $\{P\in\Xrig:  \nu_\gerX(P)\in \Gamma\}$.  The collection $\{\Xrig[{\bf 0},{\bf a}]: {\bf a}\in \Gamma\}$ is an admissible covering of  $\Xrig \Gamma$.
\end{enumerate}
 \end{prop}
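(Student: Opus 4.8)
Looking at this proposition, I need to prove that for a downward-closed subset $\Gamma$ of the cube $\Theta$, there are admissible open subsets $\Yrig\Gamma$ and $\Xrig\Gamma$ with the prescribed points, and that the $[\mathbf{0},\mathbf{a}]$-boxes form admissible coverings.

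Here is my plan.

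\textbf{Plan of proof.} I will treat the case of $\Yrig\Gamma$; the case of $\Xrig\Gamma$ is identical, using $\Xrig[\mathbf{a},\mathbf{b}]$ in place of $\Yrig[\mathbf{a},\mathbf{b}]$ (both are available by Corollary~\ref{corollary: admissible}). First I would define $\Yrig\Gamma$ set-theoretically as $\{Q\in\Yrig : \nu_\gerY(Q)\in\Gamma\}$ and observe that the downward-closedness of $\Gamma$ gives, for each $Q\in\Yrig\Gamma$, that the box $\Yrig[\mathbf{0},\nu_\gerY(Q)]$ is entirely contained in $\Yrig\Gamma$: indeed any $Q'$ in that box has $\nu_\beta(Q')\le\nu_\beta(Q)$ for all $\beta$, hence $\nu_\gerY(Q')\in\Gamma$. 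More generally, for any $\mathbf{a}\in\Gamma$ the box $\Yrig[\mathbf{0},\mathbf{a}]$ lies in $\Yrig\Gamma$, again by downward-closedness. This shows $\Yrig\Gamma=\bigcup_{\mathbf{a}\in\Gamma}\Yrig[\mathbf{0},\mathbf{a}]$ as sets, and since each $\Yrig[\mathbf{0},\mathbf{a}]$ is a quasi-compact admissible open by Corollary~\ref{corollary: admissible}, the union is at least a union of admissible opens.

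\textbf{Admissibility.} The substantive point is that this union is an \emph{admissible} open and that the stated covering is \emph{admissible} in the sense of the strong $G$-topology. Here I would use the standard criterion (as in the foundational rigid-geometry literature, e.g. the admissibility criterion for set-theoretic unions covered by admissible opens): a set-theoretic union $U=\bigcup_i U_i$ of admissible opens $U_i$ in a quasi-separated rigid space is itself admissible, with $\{U_i\}$ an admissible covering, provided that for every affinoid $Z$ and every morphism $Z\to\Yrig$ whose image lies in $U$, the pullback covering $\{U_i\times_{\Yrig}Z\}$ of $Z$ admits a finite subcovering (equivalently, has a refinement by a finite admissible affinoid covering). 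So I would fix such a $Z\to\Yrig$ with image in $\Yrig\Gamma$. Because $Z$ is quasi-compact and each $\nu_\beta$ is, on $Z$, controlled by the supremum norm of a bounded analytic function (the local parameter $x_\beta$, resp.\ the function cutting $\UU^+_{\sigma\circ\beta}$), the function $Q\mapsto\nu_\beta(Q)$ attains only finitely many relevant values on the reduction strata, and on each piece it is bounded; hence the image of $Z$ in $\Theta$ under $Q\mapsto\nu_\gerY(Q)$ is contained in a finite union of boxes $[\mathbf{0},\mathbf{a}^{(j)}]$ with $\mathbf{a}^{(j)}\in\Gamma$. Concretely: for each $\beta$, $\sup_{Q\in Z}\nu_\beta(Q)=:c_\beta$ is attained (or approached) and by quasi-compactness one can take a rational $\mathbf{a}\ge(c_\beta)_\beta$ componentwise that still lies in $\Gamma$ — this requires knowing that the $c_\beta$ themselves, assembled into a vector, lie in $\Gamma$, which holds because $\Gamma$ is the downward closure of the image and $Z$ maps into $\Yrig\Gamma$, combined with the fact that the closure of the image of $Z$ in the metrizable compact $\Theta$ is compact and $\Gamma$ is closed under the order. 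Thus finitely many of the $\Yrig[\mathbf{0},\mathbf{a}]$, $\mathbf{a}\in\Gamma$, already cover the image of $Z$, giving the required finite subcovering of $\{U_i\times_{\Yrig}Z\}$.

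\textbf{Main obstacle.} The delicate point — and the one I would spend the most care on — is the interaction between the \emph{truncated} valuation $\nu$ (capped at $1$) and the order structure of $\Gamma$ near the boundary faces of $\Theta$: one must ensure that the ``sup over $Z$'' vector $(c_\beta)_\beta$ genuinely lies in $\Gamma$ and not merely in its closure, so that a box $\Yrig[\mathbf{0},\mathbf{a}]\subseteq\Yrig\Gamma$ with $\mathbf{a}\ge(c_\beta)_\beta$ is available. This is where the hypothesis that $\Gamma$ is downward-closed (rather than, say, open) is used essentially, together with quasi-compactness of $Z$ to reduce to finitely many reduction strata of $\cYbar$ and hence to finitely many combinatorial ``shapes'' of $\nu_\gerY$; on each such stratum the relevant components of $\nu_\gerY$ are either identically $0$, identically $1$, or given by $\nu(x_\beta(\cdot))$ with $x_\beta$ a bounded function, so the sup is controlled. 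Granting this, the admissibility criterion applies verbatim and both assertions of the proposition follow; the $\Xrig$ case is word-for-word the same with Corollary~\ref{corollary: admissible} supplying the boxes $\Xrig[\mathbf{0},\mathbf{a}]$.
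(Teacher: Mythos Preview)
Your overall strategy --- reduce to the affinoid test for admissibility of a covering and then invoke the maximum modulus principle --- is exactly the paper's; its proof is a single sentence to that effect. The problem is at the step you yourself flag as the ``main obstacle'': you assert that the componentwise supremum $\mathbf{c}=(c_\beta)_\beta$, $c_\beta=\sup_{z\in Z}\nu_\beta(f(z))$, lies in $\Gamma$, so that a single box $\Yrig[\mathbf{0},\mathbf{c}]$ covers $f(Z)$. Downward-closedness of $\Gamma$ does \emph{not} give this. Take $g=2$, $\Gamma=\{(a_1,a_2)\in\Theta:a_1+a_2\le 1\}$, choose $\Qbar\in\Ybar$ with $I(\Qbar)=\BB$, and map the closed annulus $Z=\{p^{-1+\epsilon}\le|s|\le p^{-\epsilon}\}$ into $\spe^{-1}(\Qbar)$ by $x_{\beta_1}\mapsto s$, $x_{\beta_2}\mapsto p/s$. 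Every point of $f(Z)$ has $\nu_{\beta_1}+\nu_{\beta_2}=1$, so the image lies in $\Gamma$, yet $c_{\beta_1}=c_{\beta_2}=1-\epsilon$ and $(1-\epsilon,1-\epsilon)\notin\Gamma$. In fact the pullback of the box-covering to $Z$ is the covering by the circles $\{|s|=p^{-a}\}$, which admits no finite affinoid refinement; for this $\Gamma$ the boxes do not form an admissible covering at all. Your justification (``$\Gamma$ is the downward closure of the image'') is not a hypothesis of the proposition and would in any case require the supremum to be attained \emph{simultaneously} at a single classical point, which the maximum modulus principle does not guarantee.

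What makes the paper's applications go through is that every $\Gamma$ actually used --- for $\calU$, $\calV$, $\calV_\gerp$, and (via $w$) $\calW_\gerp$ --- is \emph{open} in $\Theta$. For such a $\Gamma$, cut out by finitely many strict inequalities each expressible as $|g_\beta|>p^{-p}$ for an analytic function $g_\beta$ (e.g.\ $g_\beta=x_\beta\,x_{\sigma^{-1}\circ\beta}^{\,p}$), the strict inequality forces each $g_\beta\circ f$ to be a unit in the affinoid algebra of $Z$; the maximum modulus principle applied to its inverse then yields a uniform $\delta>0$ with the image of $Z$ contained in the compact set $\Gamma_\delta\subset\Gamma$ obtained by replacing $<p$ with $\le p-\delta$. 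Now $\Gamma_\delta$ is compact inside the open $\Gamma$, so finitely many boxes $[\mathbf{0},\mathbf{a}^{(j)}]$ with $\mathbf{a}^{(j)}\in\Gamma$ cover $\Gamma_\delta$, and hence finitely many $\Yrig[\mathbf{0},\mathbf{a}^{(j)}]$ cover $f(Z)$. This two-step argument (uniform margin via maximum modulus, then compact-in-open) is what your write-up is missing.
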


\begin{proof} It is enough to show that the collection of quasi-compact opens  $\{\Yrig[{\bf 0},{\bf a}]: {\bf a}\in \Gamma\}$ is an admissible covering: that is, for any affinoid algebra $A$, and  any $f\colon \Spm(A)\arr \Yrig$ whose image lies in the union of the subsets in this collection, the pull-back covering of $\Spm(A)$ has a finite sub-covering. This follows from a standard application of the maximum modulus principle. The second statement follows in the same way.
\end{proof}

\

\subsection{The section on the ordinary locus} By Corollary \ref{corollary: admissible}, we have the following
admissible opens
\begin{align*}
\Xord:&=\Xrig[{\bf 0},{\bf 0}]=\{P\in\Xrig:\nu_\gerX(P)={\bf 0}\},\\
\Yoord:&=\Yrig[{\bf 0},{\bf 0}]=\{Q\in\Yrig:\nu_\gerY(Q)={\bf 0}\},
\end{align*}
of $\Xrig$ and $\Yrig$, respectively.  By Theorem \ref{theroem: theorem of cube}, and Definition \ref{definition: ordinary strata with cusps}, we have:
\begin{align*}
\Xord&=\spe^{-1}(\cXbar^{\rm ord})=\spe^{-1}(\WW_\emptyset),\\
\Yoord&=\spe^{-1}(\cYbar^{\rm ord}_F)=\spe^{-1}(\WW_{\BB,\emptyset}).
\end{align*}
Let $\Yord$ be $\spe^{-1}(\cYbar^{\rm ord})=\pi^{-1}(\spe^{-1}(\cXbar^{\rm ord}))=\pi^{-1}(\Xord)$.

 \begin{prop}\label{proposition: ordinary section}
There is a section,
\[
s^\circ\colon\Xord \arr \Yord,
\]
to $\pi\colon\Yord \arr \Xord$, whose image is $\Yoord$.
\end{prop}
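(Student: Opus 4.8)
The plan is to lift the kernel-of-Frobenius section first over the formal completions and then pass to rigid generic fibres. Let $\gerX^{\circ}$ denote the open formal subscheme of $\gerX$ supported on $\cXbar^{\rm ord}$, so that $(\gerX^{\circ})_{\rig}=\Xord$, and set $\gerY^{\circ}=\pi^{-1}(\gerX^{\circ})$, the open formal subscheme of $\gerY$ with special fibre $\cYbar^{\rm ord}$, so that $(\gerY^{\circ})_{\rig}=\Yord$. The first step is to verify that $\pi\colon\gerY^{\circ}\arr\gerX^{\circ}$ is \emph{finite}. It is proper, being the base change of the proper morphism $\pi\colon\cY\arr\cX$ along an open immersion; and it is quasi-finite, because over the generic fibre this is clear, while for $\Pbar\in\cXbar^{\rm ord}$ the fibre $\pi^{-1}(\Pbar)$ lies in $\bigcup_{\gert\vert p}\WW_{\BB_\gert,\BB_{\gert^{\ast}}}$ and, by Corollary~\ref{corollary: dimension of fibre intersect strata}, meets each $\WW_{\BB_\gert,\BB_{\gert^{\ast}}}$ in dimension at most $g-\sharp(\BB_\gert\cup\BB_{\gert^{\ast}})=0$. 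A proper quasi-finite morphism is finite.

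The second step is to isolate the right connected component. Since $\cYbar^{\rm ord}_F=\WW_{\BB,\emptyset}$ is open and closed in $\cYbar^{\rm ord}$ (Definition~\ref{definition: ordinary strata with cusps}), the formal scheme $\gerY^{\circ}$ splits off an open formal subscheme $\gerY^{\circ}_{F}$ with special fibre $\cYbar^{\rm ord}_F$ and generic fibre $\Yoord$. The restriction $\pi\colon\gerY^{\circ}_{F}\arr\gerX^{\circ}$ is still finite, and its reduction modulo $p$ is the isomorphism $\cYbar^{\rm ord}_F\arr\cXbar^{\rm ord}$ inverse to $s|_{\cXbar^{\rm ord}}$ (recall $\cYbar^{\rm ord}_F=s(\cXbar^{\rm ord})$ and $\pi\circ s=\operatorname{id}$, and $s$, being a section of a separated morphism, is a closed immersion).

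I would then show that $\pi\colon\gerY^{\circ}_{F}\arr\gerX^{\circ}$ is itself an isomorphism of formal schemes, by a Nakayama-and-flatness argument that is local on the base. If $\Spf A\subseteq\gerX^{\circ}$ is affine open and $\Spf B=\pi^{-1}(\Spf A)$, then $A\arr B$ is module-finite between $p$-adically complete noetherian rings, the induced map $A/pA\arr B/pB$ is an isomorphism, and $B$ is $p$-torsion free because $\gerY$ is flat over $W(\kappa)$ (by Theorem~\ref{thm: Stamm's theorem} in the interior, with normality of $\cY$ handling the cusps). As $B/pB$ is generated over $A/pA$ by the image of $1$ and $p$ lies in the Jacobson radical of $A$, topological Nakayama makes $A\arr B$ surjective, say $B=A/J$; then $A/pA\arr A/(J+pA)$ being an isomorphism forces $J\subseteq pA$, and if $j=pa\in J$ then $p\overline{a}=0$ in $B$, hence $\overline{a}=0$, i.e.\ $a\in J$, so $J\subseteq pJ$ and $J=\bigcap_{n}p^{n}J=0$. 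Thus $A\overset{\sim}{\arr}B$, and $\pi\colon\gerY^{\circ}_{F}\arr\gerX^{\circ}$ is an isomorphism. Applying Raynaud's generic-fibre functor gives an isomorphism $\pi\colon\Yoord=(\gerY^{\circ}_{F})_{\rig}\arr(\gerX^{\circ})_{\rig}=\Xord$, and $s^{\circ}:=(\pi|_{\Yoord})^{-1}\colon\Xord\arr\Yoord\subseteq\Yord$ is then a section of $\pi\colon\Yord\arr\Xord$ whose image is exactly $\Yoord$, as required.

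The one step I expect to need genuine care is the finiteness of $\pi$ over the ordinary locus, together with the bookkeeping at the cusps of the minimal compactifications (flatness and normality of $\cY$ there, and the fact that $\cYbar^{\rm ord}_F$ really is open and closed in $\cYbar^{\rm ord}$); once $\pi$ is known to be finite and to reduce to an isomorphism modulo $p$ on the component $\gerY^{\circ}_{F}$, everything else is formal. One could alternatively run the last two steps as a Hensel-type successive-approximation lift of $s$ over $\gerX^{\circ}$, but the formal-isomorphism argument seems cleanest.
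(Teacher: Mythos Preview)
Your proof is correct and follows essentially the same strategy as the paper's: pass to the open formal subschemes $\gerX^{\circ}$ and $\gerY^{\circ}$ over the ordinary locus, use properness plus quasi-finiteness to get finiteness of $\pi$, split off the component corresponding to $\cYbar^{\rm ord}_F$, and show the restricted map is an isomorphism. The only differences are local technicalities: the paper lifts the idempotent cutting out $\cYbar^{\rm ord}_F$ via Hensel's lemma for $x^2-x$ on each affine (which is exactly your global splitting, phrased locally), and for the final isomorphism the paper invokes fibral flatness (flat on special and generic fibres, source flat over $W(\kappa)$, hence flat, hence rank one) whereas you run an explicit Nakayama argument using $p$-torsion-freeness of $B$---both rest on the same underlying fact that $\cY$ is flat over $W(\kappa)$ on this locus, which is immediate here since $I(\Qbar)=\emptyset$ for every $\Qbar\in\cYbar^{\rm ord}$.
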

 \begin{proof} Let $\gerX^\circ$  be the open formal subscheme of $\gerX$ supported on the ordinary locus; similarly, define $\gerY^\circ$. The special fibre of $\gerX^\circ$ is $\cXbar^{\rm ord}$ and its generic fibre is $\Xord$; similar results hold for $\gerY^\circ$.  The morphism $\pi:\gerY^\circ \arr \gerX^\circ$ is proper and quasi-finite, and hence finite.  We will show that $\pi\vert_{\Yoord}\colon \Yoord \arr \Xord$ is an isomorphism. It is enough to prove this locally on the base.

 Let $U=\Spf(A)$ be an affine open formal subscheme of $\gerX^\circ$.  Let $\pi^{-1}(U)=\Spf(B)\subset \gerY^\circ$. The
morphism $\pi$ induces a finite  ring homomorphism $\pi^*\colon A\arr B$.  Since $\cYbar^{\rm ord}_F$ is a union of connected components of
$\cYbar^{\rm ord}$, we have a decomposition $\cYbar^{\rm ord}=\cYbar^{\rm ord}_F \cup (\cYbar^{\rm ord}-\cYbar^{\rm ord}_F)$ leading to a decomposition
\[
B\otimes \kappa=e_1(B
\otimes \kappa) \oplus e_2(B\otimes\kappa),
\]
where $e_1$ and $e_2$
are idempotents satisfying $e_1+e_2=1$, and $\Spec(e_1(B\otimes \kappa))=\Spec(B\otimes \kappa)\cap\cYbar^{\rm
ord}_F$. Using Hensel's lemma for the polynomial $x^2-x$, we can
lift these idempotents to idempotents $\tilde{e}_1$ and
$\tilde{e}_2$ in $B$. The composite homomorphism $A\otimes\kappa \arr
B\otimes\kappa \arr e_1(B\otimes\kappa)$ is an isomorphism by the
existence of the Kernel-of-Frobenius section.  Therefore, the
composite homomorphism $A \arr B \arr \tilde{e}_1 B$ is a
finite morphism whose reduction modulo $p$ is an isomorphism, and whose generic fibre is finite-flat (using an argument as in Lemma \ref{lemma: pi is flat on compactified horizontal strata}). Since both the generic fibre and special fibre of this map are flat, it follows that it is finite flat; having a reduction modulo $p$ which is an isomorphism, it follows that it is an isomorphism.  Let $V=\Spf(\tilde{e}_1B)$. We have shown that $\pi\vert_{V_\rig}: V_\rig \arr U_\rig$ is an isomorphism. To finish the proof, we need to prove that $V_\rig=\pi^{_1}(U_\rig) \cap \Yoord$.  But this is true, since $V_\rig$ is the region in $\Spf(B)_\rig=\pi^{-1}(U_\rig)$ which specializes to $\Spec(e_1(B\otimes \kappa))=\Spec(B\otimes \kappa)\cap \cYbar^{\rm ord}_F$.
\end{proof}

\

\subsection{The main theorem}\label{subsec: main
theorem}
Let $\Gamma=\{{\bf a}\in\Theta: a_\beta+pa_{\sigma^{-1}\circ\beta}<p\
{\rm for\ all}\ \beta\in\BB\}$. Then, by Proposition  \ref{prop:
admissible covering}, we have the following admissible open sets:
\begin{align*}
\calU&:=\Xrig\Gamma=\{P\in\Xrig:  \nu_\beta(P)+p\nu_{\sigma^{-1}\circ\beta}(P)<p, \; \forall  \beta\in\BB\},\\
\calV&:=\Yrig\Gamma=\{Q\in\Yrig:  \nu_\beta(Q)+p\nu_{\sigma^{-1}\circ\beta}(Q)<p,
\; \forall  \beta\in\BB\}.
\end{align*}
 Recall that  $\BB={\rm
Emb}(L,\mathbb{Q}_\kappa)=\textstyle\coprod_{\mathfrak{p}}
\mathbb{B}_\mathfrak{p}$, where $\mathfrak{p}$ runs over prime
ideals  of $\mathcal{O}_L$ dividing $p$. For $\gerp|p$, let
\begin{align*}
\calV_\gerp&:=\{Q\in
\Yrig:\nu_\beta(Q)+p\nu_{\sigma^{-1}\circ\beta}(Q)<p\quad \forall
\beta \in \BB_\gerp\},\\
\calW_\gerp&:=\{Q\in
\Yrig:\nu_\beta(Q)+p\nu_{\sigma^{-1}\circ\beta}(Q)>p\quad \forall
\beta \in \BB_\gerp\}.
\end{align*}
By Proposition \ref{prop: admissible covering} these are admissible
open sets. Note that $\calV=\cap_{\gerp|p} \calV_\gerp$. Let
\[
\calW :=\bigcup_{\emptyset\neq S\subseteq\{\mathfrak{p}\vert p\}}
\left[ \bigcap_{\mathfrak{p}\in S} \mathcal{W}_\mathfrak{p}
\cap \bigcap_{\mathfrak{p}\not\in S} \mathcal{V}_\mathfrak{p}
\right].
\]

We now prove our main theorem on the existence of canonical
subgroups of abelian varieties with real multiplication.

\begin{thm}[The Canonical Subgroup Theorem] \label{theorem: canonical subgroup} Let notation be as above.
\begin{enumerate}
\item $\pi(\calV)=\calU$.
\item There is a section $s^\dagger\colon\calU \arr \calV$ extending
$s^\circ\colon \Xord \arr \Yoord$.
\end{enumerate}

\end{thm}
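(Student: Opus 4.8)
The plan is to make $\pi^{-1}(\calU)$ completely explicit by transporting the Key Lemma to valuations, and then to extract the section by a finiteness/connectedness argument.

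\textbf{Step 1: a valuation comparison.} The first and most laborious step is to prove that for every $Q\in\Yrig$, writing $P=\pi(Q)$, and every $\beta\in\BB$,
\[
\nu_\beta(P)=\min\bigl\{\nu_\beta(Q),\; p\,\nu_{\sigma^{-1}\circ\beta}(w(Q))\bigr\}
\]
(all values truncated at $1$) whenever the two quantities on the right are distinct, and $\nu_\beta(P)\geq$ that minimum in general; here $\nu_{\sigma^{-1}\circ\beta}(w(Q))=1-\nu_{\sigma^{-1}\circ\beta}(Q)$ for values in $(0,1)$ by Proposition~\ref{proposition: valuations under w}. If $Q$ has ordinary or cuspidal reduction both sides vanish. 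If $\beta\notin\varphi(\Qbar)\cap\eta(\Qbar)$, then Corollary~\ref{corollary: type and phi eta} forces $\beta\notin\tau(\Pbar)$, so $\nu_\beta(P)=0$, and one checks the right‑hand minimum is $0$ as well. The substantive case $\beta\in\varphi(\Qbar)\cap\eta(\Qbar)$ is read off from the four cases of Lemma~\ref{lemma: key lemma }: using the relations $x_\gamma y_\gamma=p$ in $\widehat{\calO}_{Y,\Qbar}$, the identification of $\nu_{\sigma^{-1}\circ\beta}(w(Q))$ with $\nu(y_{\sigma^{-1}\circ\beta}(Q))$, and Lemma~\ref{lemma: independence from parameters} (which shows the lift of $\pi^\ast(t_\beta)$ to the arithmetic local ring differs from $u x_\beta+v y_{\sigma^{-1}\circ\beta}^p$ by quantities of valuation $\geq 1$, hence invisible after truncation). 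The bookkeeping — matching, case by case, membership in $\tau(\Pbar)$, in $\eta(\Qbar)$ and in $I(\Qbar)$ — is what makes this step technical, and I regard it as the main obstacle.

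\textbf{Step 2: the dichotomy $\pi^{-1}(\calU)=\calV\sqcup\calW$.} Granting Step 1, if $Q\in\calV$ then the defining inequality gives $\nu_\beta(Q)<p\,\nu_{\sigma^{-1}\circ\beta}(w(Q))$ for all $\beta$, so $\nu_\beta(P)=\nu_\beta(Q)$ for all $\beta$, whence $P\in\calU$; thus $\calV\subseteq\pi^{-1}(\calU)$, and the analogous computation along each $\BB_\gerp$ shows $\calW\subseteq\pi^{-1}(\calU)$. Conversely, fix $Q\in\pi^{-1}(\calU)$ and $\gerp\mid p$, and walk around the cyclic $\sigma$‑orbit $\BB_\gerp$, labelling each $\beta$ by the sign of $\nu_\beta(Q)+p\,\nu_{\sigma^{-1}\circ\beta}(Q)-p$. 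If $\beta$ has sign "$<$" while $\sigma\beta$ has sign "$\geq$", then Step 1 yields $\nu_\beta(P)=\nu_\beta(Q)$ and $\nu_{\sigma\beta}(P)\geq p(1-\nu_\beta(Q))$ (the truncation causes no trouble because the "$\geq$" condition forces $\nu_\beta(Q)\geq (p-1)/p$), so $\nu_{\sigma\beta}(P)+p\,\nu_\beta(P)\geq p$, contradicting $P\in\calU$; a parallel computation using the "$\geq$" clause of Step 1 rules out an index of sign "$=$" among signs "$\geq$". Hence on each $\BB_\gerp$ all signs are "$<$" ($Q\in\calV_\gerp$) or all are "$>$" ($Q\in\calW_\gerp$), and intersecting over $\gerp$ gives $\pi^{-1}(\calU)=\calV\sqcup\calW$, a disjoint union of admissible opens. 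In particular $\calV$ is open \emph{and} closed in $\pi^{-1}(\calU)$, hence a union of connected components of it.

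\textbf{Step 3: the section.} The morphism $\pi$ is finite and flat on rigid generic fibres (the characteristic‑zero statement; cf.\ the argument of Lemma~\ref{lemma: pi is flat on compactified horizontal strata}), so $\pi^{-1}(\calU)\to\calU$ is finite flat, and therefore so is its restriction $\pi\vert_\calV\colon\calV\to\calU$ to the open‑and‑closed part $\calV$. By Step 1 one has $\calV\cap\pi^{-1}(\Xord)=\Yoord$, and $\pi\vert_{\Yoord}\colon\Yoord\to\Xord$ is an isomorphism by Proposition~\ref{proposition: ordinary section}, so $\pi\vert_\calV$ has degree $1$ over $\Xord$. The degree of a finite flat morphism is locally constant, and every connected component of $\calU$ meets $\Xord$: indeed $\spe\colon\calU\to\cXbar$ is surjective with connected fibres (each $\spe^{-1}(\Pbar)\cap\calU$ is a star‑shaped subdomain of the polydisc $\spe^{-1}(\Pbar)$ containing its centre), so components of $\calU$ correspond to components of $\cXbar$, each of which meets the dense open $\cXbar^{\rm ord}$. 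Hence $\pi\vert_\calV$ has degree $1$ everywhere, and a finite flat degree‑one morphism of reduced rigid spaces is an isomorphism; thus $\pi\vert_\calV\colon\calV\to\calU$ is an isomorphism. This proves (1), $\pi(\calV)=\calU$, and setting $s^\dagger=(\pi\vert_\calV)^{-1}\colon\calU\arr\calV\subseteq\Yrig$ proves (2); that $s^\dagger$ extends $s^\circ$ is immediate, since over $\Xord$ both are the inverse of $\pi\vert_{\Yoord}$.
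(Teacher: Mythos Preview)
Your strategy is the same as the paper's: translate the Key Lemma into valuation identities, prove $\pi^{-1}(\calU)=\calV\sqcup\calW$, and then argue that $\pi\vert_\calV$ is finite flat of degree one by comparison with the ordinary locus. Steps 1--2 match the paper's Lemmas~\ref{lemma: key lemma in terms of valuations}, \ref{lemma: valuations under pi}, \ref{lemma: pi on the too singular locus} and Corollary~\ref{corollary: pi inverse of U} almost verbatim (your cycle-walking argument is exactly the proof of Corollary~\ref{corollary: pi inverse of U}). One small caveat: your Step~1 claim that $\beta\notin\varphi(\Qbar)\cap\eta(\Qbar)$ forces $\beta\notin\tau(\Pbar)$ is false when $\beta\in(\varphi\cup\eta)^c$ (Corollary~\ref{corollary: type and phi eta} allows $\tau\cap(\varphi\cup\eta)^c\neq\emptyset$); the paper handles this by observing that the hypotheses $Q\in\calV_\gerp$ or $Q\in\calW_\gerp$ themselves exclude this case, so the error is cosmetic.

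The real difference is in Step~3. You assert that $\{\calV,\calW\}$ is an admissible disjoint covering of $\pi^{-1}(\calU)$, hence $\calV$ is open-and-closed there; but $\calV$ and $\calW$ are not quasi-compact, and in rigid geometry a set-theoretic disjoint union of admissible opens need not be an admissible covering. The paper avoids this by exhausting $\calU$ by the quasi-compact affinoids $\Xrig[{\bf 0},{\bf a}]$ for ${\bf a}\in\Gamma$ (Proposition~\ref{prop: admissible covering}), proving via Corollary~\ref{corollary: circles} that over each such piece the preimage splits as a disjoint union of \emph{quasi-compact} admissible opens (where admissibility of the covering is automatic in a quasi-separated space), showing $\pi\colon\Yrig[{\bf 0},{\bf a}]\to\Xrig[{\bf 0},{\bf a}]$ is an isomorphism (Lemma~\ref{lemma: degree is one}), and then gluing the resulting sections $s^{\bf a}$. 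This also sidesteps your connectedness argument about components of $\calU$ meeting $\Xord$: each $\Xrig[{\bf 0},{\bf a}]$ visibly contains $\Xord=\Xrig[{\bf 0},{\bf 0}]$, so the degree comparison is immediate. Your argument can be repaired by inserting exactly this quasi-compact exhaustion, at which point it becomes the paper's proof.
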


\begin{proof}
Before we prove the theorem, we need a number of results.

\begin{lem}\label{lemma: key lemma in terms of valuations} Let $Q\in\Yrig$,  $P=\pi(Q)$, and  $\beta \in\BB$. Then,
$\beta\in\varphi(\Qbar)\cap\eta(\Qbar)$ if and only if
$\nu_\beta(Q)\neq 0$ and $\nu_{\sigma^{-1}\circ\beta}(Q)\neq 1$. In
that case, $Q$ has non-cuspidal reduction; choose parameters $\{t_\beta\}_{\beta\in\BB}$ at $\Pbar$
as in (\ref{equation: local def ring of X at P bar}), and parameters $\{x_\beta,y_\beta\}_{\beta\in I(\Qbar)}$ at
$\Qbar$ as in (\ref{equation: local deformation ring at Q bar}). We have:

\[\pi^*(t_\beta)\underset{\mod p\OhatQ}{\equiv}\begin{cases}
ux_\beta+vy_{\sigma^{-1}\circ\beta}^p & {\rm if}\ \nu_\beta(Q)\neq
1,\quad
\nu_{\sigma^{-1}\circ\beta}(Q)\neq 0;\\
ux_\beta & {\rm if}\ \nu_\beta(Q)\neq 1,\quad
\nu_{\sigma^{-1}\circ\beta}(Q)=0;\\
vy_{\sigma^{-1}\circ\beta}^p & {\rm if}\ \nu_\beta(Q)=1,\quad
\nu_{\sigma^{-1}\circ\beta}(Q)\neq0;\\
0 & {\rm if}\ \nu_\beta(Q)=1,\quad \nu_{\sigma^{-1}\circ\beta}(Q)=0.
\end{cases}
\]
In the formulas above, $u,v$ are units in $\OhatQ$. It follows that, respectively,
\[
\nu_\beta(P)=
\begin{cases}
\nu_\beta(ux_\beta(Q)+vy^p_{\sigma^{-1}\circ\beta}(Q)),\\
\nu_\beta(Q),\\
p(1-\nu_{\sigma^{-1}\circ\beta}(Q)),\\
1.
\end{cases}
\]
\end{lem}

\begin{proof}
This follows immediately from the Key Lemma \ref{lemma: key lemma }. The
various cases are obtained by reinterpreting  the conditions
appearing in the lemma in terms of valuations, using directly the
definition of valuations.
\end{proof}

\begin{rmk}\label{remark: automatic critical index} In Lemma \ref{lemma: key lemma in terms of valuations}, it automatically follows that $\{\beta,\sigma^{-1}\circ\beta\}\subseteq I(\Qbar)$  in the first case,  $\beta\in I(\Qbar)$ in the second case, and  $\sigma^{-1}\circ\beta\in I(\Qbar)$ in the third case.

\end{rmk}

\begin{lem}\label{lemma: valuations under pi}
Let $\gerp|p$ and $\beta\in\BB_\gerp$. Let $Q\in\Yrig$, and
$P=\pi(Q)$.
\begin{enumerate}
\item If $Q\in\calV_\gerp$ then $\nu_\beta(P)=\nu_\beta(Q)$.
\item If $Q\in\calW_\gerp$ then $\nu_\beta(P)=p(1-\nu_{\sigma^{-1}\circ\beta}(Q))$.
\end{enumerate}
\end{lem}

\begin{proof} First we deal with the case $Q\in\calV_\gerp$. Then, either $\nu_\beta(Q)=0$ or $0<\nu_\beta(Q)<1$. If $\nu_\beta(Q)=0$, then, by definition,
$\beta\not\in\eta(\Qbar)$. Since  $Q\in\calV_\gerp$ and $\sigma^{-1}\circ\beta\in\BB_\gerp$, it follows that $\nu_{\sigma^{-1}\circ\beta}(Q)\neq 1$, and, by definition, $\sigma^{-1}\circ\beta \in (\eta(\Qbar)-I(\Qbar))^c=\ell(\varphi(\Qbar))$.
 Therefore, $\beta\in\varphi(\Qbar)-\eta(\Qbar)$. Corollary \ref{corollary: type
and phi eta} tells us that $\beta\not\in\tau(\Pbar)$, and hence,
$\nu_\beta(P)=0=\nu_\beta(Q)$.

Now assume $0<\nu_\beta(Q)<1$, and so $Q$ has non-cuspidal reduction. Since $Q\in\calV_\gerp$, we have
$\nu_{\sigma^{-1}\circ\beta}(Q)\neq1$, and hence, by Lemma
\ref{lemma: key lemma in terms of valuations}, $\beta\in
\varphi(\Qbar)\cap\eta(\Qbar)$. There are two cases to consider:

\begin{itemize}
\item $\nu_{\sigma^{-1}\circ\beta}(Q)=0$. In this case,
Lemma \ref{lemma: key lemma in terms of valuations}  implies that
$\nu_\beta(P)=\nu_\beta(Q)$.

\item  $\nu_{\sigma^{-1}\circ\beta}(Q)\neq 0$. In this
case,  by Lemma \ref{lemma: key lemma in terms of valuations}, we
have
\[
\nu_\beta(P)=\nu(ux_\beta(Q)+vy_{\sigma^{-1}\circ\beta}^p(Q)).
\]
Since $Q\in\calV_\gerp$, we have
$\nu_\beta(Q)<p(1-\nu_{\sigma^{-1}\circ\beta}(Q))$ and hence, by
Remark \ref{remark: automatic critical index}, we have
$\nu(ux_\beta(Q))<\nu(vy^p_{\sigma^{-1}\circ\beta}(Q))$. It follows
that $\nu_\beta(P)=\nu(ux_\beta(Q))=\nu_\beta(Q)$.
\end{itemize}

\id Now we deal with the case $Q\in\calW_\gerp$. It follows that
$\nu_\beta(Q)>0$ for any $\beta\in \BB_\gerp$. Hence, either
$\nu_{\sigma^{-1}\circ\beta}(Q)=1$ or
$0<\nu_{\sigma^{-1}\circ\beta}(Q)<1$. If
$\nu_{\sigma^{-1}\circ\beta}(Q)=1$, then $\sigma^{-1}\circ\beta\in
\eta(\Qbar)-I(\Qbar)=\ell(\varphi(\Qbar))^c$, and since $\nu_\beta(Q)>0$, $\beta \in
\eta(\Qbar)$. It follows that   $\beta \in
\eta(\Qbar)-\varphi(\Qbar)$ and hence, by Corollary \ref{corollary:
type and phi eta}, $\beta\not\in\tau(\Pbar)$. So,
$\nu_\beta(P)=0=p(1-\nu_{\sigma^{-1}\circ\beta}(Q))$ as desired.

Now suppose $0<\nu_{\sigma^{-1}\circ\beta}(Q)<1$, and, in particular, that $Q$ has non-cuspidal reduction. There are two
cases:

\begin{itemize}
 \item $\nu_\beta(Q)=1$.  Then, by Lemma \ref{lemma: key
lemma in terms of valuations},  we have $\nu_\beta(P)=p(1-\nu_{\sigma^{-1}\circ\beta}(Q))$.

\item $\nu_\beta(Q)\neq 1$. In this case,  by Lemma
\ref{lemma: key lemma in terms of valuations}, we have
\[
\nu_\beta(P)=\nu(ux_\beta(Q)+vy_{\sigma^{-1}\circ\beta}^p(Q)).
\]
Since $Q\in\calW_\gerp$, we have
$\nu_\beta(Q)>p(1-\nu_{\sigma^{-1}\circ\beta}(Q))$ and hence, by
Remark \ref{remark: automatic critical index}, we have
$\nu(ux_\beta(Q))>\nu(vy^p_{\sigma^{-1}\circ\beta}(Q))$. It follows
that
$\nu_\beta(P)=\nu(vy^p_{\sigma^{-1}\circ\beta}(Q))=p(1-\nu_{\sigma^{-1}\circ\beta}(Q))$.
\end{itemize}
\end{proof}

\begin{cor}
$\pi^{-1}(\calU) \supseteq \calV \cup \calW$.
\end{cor}

\begin{lem}\label{lemma: pi on the too singular locus} Let $\beta\in\BB$, $Q\in\Yrig$, and $P=\pi(Q)$. Suppose
\begin{align*}
(\dagger)\;\qquad &\nu_\beta(Q)+p\nu_{\sigma^{-1}\circ\beta}(Q)\leq p,\\
(\dagger\dagger)\qquad& \nu_{\sigma\circ\beta}(Q)+p\nu_\beta(Q)\geq p.
\end{align*}
Then, $P\not\in \calU$.
\end{lem}

\begin{proof} The conditions imply that $\nu_{\sigma^{-1}\circ\beta}(Q)\neq 1$,  $\nu_\beta(Q)\neq 0$, and, in particular, $Q$ has non-cuspidal reduction. Therefore, by Lemma  \ref{lemma: key lemma in terms of valuations},
$\beta\in\varphi(\Qbar)\cap\eta(\Qbar)$. We distinguish the four cases as
in Lemma \ref{lemma: key lemma in terms of valuations}, and the Key
Lemma.

 \

\noindent {\bf Case A}:  $\nu_\beta(Q)\neq 1$ and
$\nu_{\sigma^{-1}\circ\beta}(Q)\neq 0$. In this case,
$\sigma\circ\beta \in \varphi(\Qbar)$,
$\sigma^{-1}\circ\beta\in\eta(\Qbar)$, and
$\nu_\beta(P)=\nu(ux_\beta(Q)+vy^p_{\sigma^{-1}\circ\beta}(Q))$ for
some units $u,v\in\OhatQ$. Also, by  Remark \ref{remark: automatic
critical index}, we have $\{\beta,\sigma^{-1}\circ\beta\}\subseteq
I(\Qbar)$. Hence, Equation ($\dagger$)   can be rephrased as
$\nu(ux_\beta(Q))\leq \nu(vy^p_{\sigma^{-1}\circ\beta}(Q))$. It
follows that,  in this case, $\nu_\beta(P)\geq
\nu(ux_\beta(Q))=\nu_\beta(Q)$. Since $\nu_\beta(Q)\neq1$, Equation ($\dagger\dagger$) implies that $\nu_{\sigma\circ\beta}(Q)>0$, and hence, $\sigma\circ\beta\in \eta(\Qbar)$.
Since also $\sigma\circ\beta\in\varphi(\Qbar)$, we can  apply Lemma
\ref{lemma: key lemma in terms of valuations} at $\sigma\circ\beta$.
There are two cases to consider:

\begin{itemize}

\item {\bf Case A.I}:  $\nu_{\sigma\circ\beta}(Q)=1$,
$\nu_\beta(Q)\neq 0$.  In this case, we have
$\nu_{\sigma\circ\beta}(P)=p(1-\nu_\beta(Q))$.  Therefore,
\[
\nu_{\sigma\circ\beta}(P)+p\nu_\beta(P)\geq
p(1-\nu_\beta(Q))+p\nu_\beta(Q)= p,
\]
and hence, $P\not\in\calU$.

\item {\bf Case A.II}:     $\nu_{\sigma\circ\beta}(Q)\neq1$,
$\nu_\beta(Q)\neq 0$. By Remark \ref{remark: automatic critical
index}, in this case we have $\sigma\circ\beta\in I(\Qbar)$. We also
know that  $\nu_{\sigma\circ\beta}(P)=\nu(u^\prime
x_{\sigma\circ\beta}(Q)+v^\prime y^p_\beta(Q))$ for some units
$u^\prime,v^\prime \in \OhatQ$.  Equation ($\dagger\dagger$) is equivalent
to $\nu(u^\prime x_{\sigma\circ\beta}(Q))\geq \nu(v^\prime
y_\beta^p(Q))$. It follows that $\nu_{\sigma\circ\beta}(P) \geq
\nu(v^\prime y^p_\beta(Q))=p\nu(y_\beta(Q))$. Therefore,
\[
\nu_{\sigma\circ\beta}(P)+p\nu_\beta(P)\geq
p\nu(y_\beta(Q))+p\nu_\beta(Q)=p\nu(y_\beta(Q))+p\nu(x_\beta(Q))=p,
\]
and hence, $P\not\in\calU$.

\end{itemize}

\noindent {\bf Case B}: $\nu_\beta(Q)\neq 1$ and
$\nu_{\sigma^{-1}\circ\beta}(Q)= 0$. In this case, $\sigma\circ\beta
\in \varphi(\Qbar)$, and $\sigma^{-1}\circ\beta\not\in\eta(\Qbar)$,
and $\nu_\beta(P)=\nu_\beta(Q)$. Equation ($\dagger\dagger$) implies that
$\nu_\beta(Q)\neq 0$. By Remark \ref{remark: automatic critical
index}, we have $\beta\in I(\Qbar)$. Exactly as in Case A, we deduce that
$\sigma\circ\beta\in\varphi(\Qbar)\cap\eta(\Qbar)$. Applying Lemma
\ref{lemma: key lemma in terms of valuations} at $\sigma\circ\beta$
we consider two cases:

\begin{itemize}
\item {\bf Case B.I}:  $\nu_{\sigma\circ\beta}(Q)=1$,
$\nu_\beta(Q)\neq 0$.  In this case, we have
$\nu_{\sigma\circ\beta}(P)=p(1-\nu_\beta(Q))$.  Therefore,
\[
\nu_{\sigma\circ\beta}(P)+p\nu_\beta(P)=
p(1-\nu_\beta(Q))+p\nu_\beta(Q)=p,
\]
and hence, $P\not\in\calU$.

\item {\bf Case B.II}: $\nu_{\sigma\circ\beta}(Q)\neq1$,
$\nu_\beta(Q)\neq 0$. In this case,
$\nu_{\sigma\circ\beta}(P)=\nu(u^\prime
x_{\sigma\circ\beta}(Q)+v^\prime y^p_\beta(Q))$ for some units
$u^\prime,v^\prime \in \OhatQ$. By Remark \ref{remark: automatic
critical index}, we have $\sigma\circ\beta\in I(\Qbar)$. This
implies that Equation ($\dagger\dagger$) is equivalent to $\nu(u^\prime
x_{\sigma\circ\beta}(Q))\geq \nu(v^\prime y_\beta^p(Q))$. It follows
that $\nu_{\sigma\circ\beta}(P) \geq \nu(v^\prime
y^p_\beta(Q))=p\nu(y_\beta(Q))$. Therefore
\[
\nu_{\sigma\circ\beta}(P)+p\nu_\beta(P)\geq
p\nu(y_\beta(Q))+p\nu_\beta(Q)=p\nu(y_\beta(Q))+p\nu(x_\beta(Q))=p,
\]
and hence, $P\not\in\calU$.
\end{itemize}

\id {\bf Case C}: $\nu_\beta(Q)=1$, $\nu_{\sigma^{-1}\circ\beta}(Q)\neq 0$. By Remark \ref{remark: automatic critical index},
we have $\sigma^{-1}\circ\beta \in I(\Qbar)$. In this case,
$\nu_{\beta}(P)= p(1-\nu_{\sigma^{-1}\circ\beta}(Q))$. Since $\nu_\beta(Q)=1$, Equation ($\dagger$) implies that
$\nu_{\sigma^{-1}\circ\beta}(Q)\leq (p-1)/p$. Hence,
\[
\nu_\beta(P)= p(1-\nu_{\sigma^{-1}\circ\beta}(Q))\geq
p(1-(p-1)/p)=1,
\]
which implies that  $\nu_{\sigma\circ\beta}(P)+p\nu_{\beta}(P)\geq
p$, and $P\not\in \calU$.

\

\id{\bf Case D}: $\nu_\beta(Q)=1$, $\nu_{\sigma^{-1}\circ\beta}(Q)=0$.
In this case, $\nu_\beta(P)=1$, hence,
$\nu_{\sigma\circ\beta}(P)+p\nu_{\beta}(P)\geq p$ and $P\not\in
\calU$.
\end{proof}

\begin{cor}\label{corollary: pi inverse of U}
$\pi^{-1}(\calU)=\calV \cup \calW.$
\end{cor}

\begin{proof}
Let $Q\in \Yrig-(\calV \cup \calW)$. For simplicity, we define $\lambda_{\beta}=\nu_\beta(Q)+p\nu_{\sigma^{-1}\circ\beta}(Q)$.
By definition, there is $\gerp|p$ such that $Q\not\in \calV_\gerp \cup \calW_\gerp$.
Since $Q\not\in \calW_\gerp$,   there exists $\gamma\in\BB_\gerp$, such that $\lambda_\gamma\leq p$.
Since $Q\not\in \calV_\gerp$,   there exists $i\geq 1$, such that $\lambda_{\sigma^i\circ\gamma}\geq p$. Let $i$ be the minimal positive integer with this property.
Let $\beta=\sigma^{i-1}\circ\gamma$. If $i=1$, then $\lambda_\beta\leq p$ and $\lambda_{\sigma\circ\beta}\geq p$.
If $i>1$, then by minimality of $i$, we find that $\lambda_\beta< p$ and $\lambda_{\sigma\circ\beta}\geq p$. At any rate, Equations ($\dagger$) and ($\dagger\dagger$) of Lemma \ref{lemma: pi on the too singular locus} hold for $Q$, and so $\pi(Q)\not\in \calU$.
\end{proof}

Let
\[
\Gamma=\{{\bf a}\in\Theta\cap\QQ^\BB:
a_\beta+pa_{\sigma^{-1}\circ\beta}<p\ {\rm for\ all}\
\beta\in\BB\}.
\]
 For ${\bf a}\in \Gamma$ and $S \subseteq \{\gerp\vert p\}$, let $\II^S_{\bf a} = [{\bf c}, {\bf d}]$, where ${\bf c} = (c_\beta), {\bf d} = (d_\beta)$ and
\[ [c_\beta, d_\beta] = \begin{cases} [1-\frac{1}{p}\cdot a_{\sigma\circ\beta},1] & \beta \in \BB_\gerp, \gerp \in S, \\
[0, a_\beta] & \beta \in \BB_\gerp, \gerp \not\in S. \end{cases}\]
By Corollary \ref{corollary: admissible}, $\Yrig\II^S_{\bf a}$ is a quasi-compact admissible open of $\Yrig$.

\begin{cor}\label{corollary: circles} Let notation be as above. We have
\[
\pi^{-1}(\Xrig[{\bf 0},{\bf a}])=\Yrig[{\bf 0},{\bf a}] \cup \bigcup_{\emptyset \neq S \subseteq \{ \gerp\vert p\}}
\Yrig\II^S_{\bf a}.
\]
\end{cor}

\begin{proof}
This follows from Corollary \ref{corollary: pi inverse of U} and
Lemma \ref{lemma: valuations under pi}.
\end{proof}

We continue with the proof of Theorem \ref{theorem: canonical
subgroup}.  Let ${\bf a}\in \Gamma$. For simplicity, we denote  by $\calR$ the quasi-compact admissible open $\cup_{\emptyset \neq S \subseteq \{ \gerp\vert p\}}
\Yrig\II^S_{\bf a}$. By Corollary \ref{corollary:
circles}, the morphism
\[
\pi\colon \Yrig[{\bf 0},{\bf a}] \cup \calR \arr \Xrig[{\bf 0},{\bf a}]\] is finite and  flat of the same degree as
that of $\pi\colon\Yrig\arr\Xrig$, that is, $\prod_{\gerp|p}
(p^{f(\gerp)}+1)$. From the definition, we see that $\Yrig[{\bf
0},{\bf a}] \cap \calR=\emptyset$, and since
both $\Yrig[{\bf 0},{\bf a}]$ and $\calR$
are quasi-compact, and $\Yrig$ is quasi-separated, we see that they
provide an admissible disjoint covering of $\pi^{-1}(\Xrig[{\bf
0},{\bf a}])$. In particular, $\pi\colon\Yrig[{\bf 0},{\bf a}] \arr
\Xrig[{\bf 0},{\bf a}]$ is a finite-flat morphism. We need a lemma.

\begin{lem}\label{lemma: degree is one} The morphism $\pi\colon\Yrig[{\bf 0},{\bf a}] \arr \Xrig[{\bf 0},{\bf
a}]$ is an isomorphism.
\end{lem}

\begin{proof}
Since $\pi\colon\Yrig[{\bf 0},{\bf a}] \arr \Xrig[{\bf 0},{\bf a}]$
is finite-flat, to prove the lemma it is enough to show that
it has constant degree $1$. It is enough to calculate the degree of this
morphism over an admissible open inside every connected component of $\Xrig[{\bf 0},{\bf a}]$.
We use $\Xord =\Xrig[{\bf 0},{\bf 0}]\subset \Xrig[{\bf 0},{\bf
a}]$, which intersects every connected component of $\Xrig[{\bf 0},{\bf a}]$, and whose inverse image under $\pi$ inside $\Yrig[{\bf 0},{\bf
a}]$ is $\Yoord =\Yrig[{\bf 0},{\bf 0}]$ by Corollary
\ref{corollary: circles}. By  Proposition \ref{proposition: ordinary
section}, the morphism $\pi\colon\Yoord\arr\Xord$ is an isomorphism
and hence has degree $1$. This proves the lemma.
\end{proof}

By Lemma \ref{lemma: degree is one}, for any ${\bf a}\in \Gamma$
there is a section $s^{\bf a}\colon\Xrig[{\bf 0},{\bf a}]\arr\calV$
to $\pi$ whose image is $\Yrig[{\bf 0},{\bf a}]$ and which extends
$s^\circ=s^{\bf 0}$. Furthermore, the sections $\{s^{\bf a}\}$ are
compatible on intersections. Since by Proposition \ref{prop:
admissible covering} the collection $\{\Xrig[{\bf 0},{\bf
a}]\}_{{\bf a}\in\Gamma}$ admissibly covers $\calU$, we
conclude that there is a section
\[
s^\dagger\colon \calU \arr \calV
\]
to $\pi$ which extends $s^\circ$. This completes the proof of Theorem \ref{theorem:
canonical subgroup}.
\end{proof}

 \

\subsection{Properties of the canonical subgroup}\label{subsection: Properties of the canonical subgroup}

\begin{dfn}
Let $K\supseteq W(\kappa)$ be a completely valued field. Let
$\underline{A}$ be an abelian variety over $K$. Let $H$ be a
subgroup of $A[p]$ such that $Q=(A,H)\in\Yrig$. We say $H$ (or $Q$) is

\begin{itemize}
\item {\emph{canonical at $\gerp$}}, if $Q\in\calV_\gerp$; that
is, if for all $\beta\in\BB_\gerp$, we have
$\nu_\beta(Q)+p\nu_{\sigma^{-1} \circ\beta}(Q)<p$;

\item  {\emph{anti-canonical at $\gerp$}}, if $Q\in\calW_\gerp$;
that is, for all $\beta\in\BB_\gerp$, we have
$\nu_\beta(Q)+p\nu_{\sigma^{-1} \circ\beta}(Q)>p$;

\item {\emph{too singular at $\gerp$}}, if it is neither
canonical nor anti-canonical at $\gerp$;

\item {\emph{canonical}}, if it is canonical at all $\gerp$
dividing $p$. This is equivalent to $Q$ belonging to  $\calV$. Such
an $H$ is called {\emph{the canonical subgroup}} of $\uA$.

\item  {\emph{anti-canonical}}, if it is anti-canonical at all
$\gerp$ dividing $p$;

\item {\emph{too singular}}, if it is neither
canonical nor anti-canonical; equivalently, if it is too singular at some $\gerp$ dividing $p$.

\end{itemize}

\end{dfn}

We study the reduction properties of the canonical subgroup.

\begin{thm}\label{theorem: reduction of canonical subgroup}
Let $K\supseteq W(\kappa)$ be a completely valued field. Let
$\underline{A}/K$ be an abelian variety corresponding to a point of non-cuspidal reduction
$P\in \mathcal{U}$, and hence
\[
\nu_\beta(\uA)+p\nu_{\sigma^{-1}\circ\beta}(\uA)<p \qquad \forall
\beta\in\BB.
\]
Enlarging $K$, we can assume that there exists $r$ in $K$ such that
${\rm val}(r)=\max\{\nu_\beta(\underline{A}): \beta\in\mathbb{B}\}$,
and that the canonical subgroup of $A$ is defined over $K$.

\begin{enumerate}
\item  The canonical subgroup of $\underline{A}$ reduces to ${\rm Ker}({\rm Fr})$ modulo $p/r$.

\item Assume $K$ contains $r^{1/p}$, a $p$-th root of $r$. Let $C$ be an
anti-canonical subgroup of $\underline{A}$. Then, $C$ reduces to
${\rm Ker}({\rm Ver})$ modulo $p/r^{1/p}$.
\end{enumerate}
\end{thm}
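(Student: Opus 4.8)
The plan is to work locally at a closed point $\Pbar = \spe(P)$ of $\cXbar$ — which is non-cuspidal by hypothesis — and track what the canonical subgroup $H = s^\dagger(P)$ looks like after reduction. Write $\Qbar = \spe(s^\dagger(P)) = s^\dagger(P)$'s specialization; since $P\in\calU$ is canonical, $Q\in\calV$, and we need to understand $\Qbar$ well enough to compare $H$ with $\Ker(\Fr_A)$. The strategy is to show that the mod-$p/r$ reduction of $H$ and of $\Ker(\Fr_A)$ agree as subgroup schemes, by showing that both are cut out, in the relevant Grassmann/local-model picture, by the ``same'' equations up to the precision $\val(p/r) = 1 - \val(r)$. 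Concretely: $\Ker(\Fr)$ corresponds (via the kernel-of-Frobenius section $s\colon\cXbar\arr\cYbar$ and Proposition~\ref{proposition: ordinary strata}, Definition~\ref{definition: ordinary strata with cusps}) to the stratum $\WW_{\BB,\emptyset}$, i.e. to the point where, in the deformation-theoretic description of \S\ref{subsection: infinitesimal nature of Ybar}, every deformed Hodge line $W_{B,\beta}^R$ is forced to be $\Ker(M)$. One wants to show that $H$, as a deformation of $\Ker(\Fr)$, differs from it only in parameters of valuation $\geq \val(p/r)$.

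First I would set up the comparison in terms of the integral models: over the ring of integers $\ok$ of $K$ (enlarged so $r$ and $r^{1/p}$ are there and the canonical subgroup is defined over $K$), the point $P$ gives a section $\Spf(\ok)\arr\gerX$, and $s^\dagger$ lifts it to a section $\Spf(\ok)\arr\gerY$ hitting the component $\gerY_F$ to first order. Using the Key Lemma (Lemma~\ref{lemma: key lemma }) and its valuation-theoretic reformulation (Lemma~\ref{lemma: key lemma in terms of valuations}), together with Lemma~\ref{lemma: section and pi} which describes $s^\ast$ on the component $\Ybar_F$ by $s^\ast(x_\beta) = t_\beta$, $s^\ast(y_\beta) = 0$, I would show: the $y$-coordinates of $Q$ — which measure the ``distance'' of $H$ from $\Ker(\Fr)$ at the critical indices — all satisfy $\nu(y_\beta(Q)) \geq \min_\beta\{1 - \nu_{\sigma\circ\beta}(P)\}$, hence $\geq 1 - \val(r)$ by the definition of $r$. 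This is exactly the statement that $H \equiv \Ker(\Fr_A) \pmod{p/r}$, once one translates ``the $y_\beta(Q)$ are small'' into ``the subgroup schemes agree to that precision'' via the closed-immersion/Grassmann description of the universal subgroup over $Y$. For part (2), the anti-canonical subgroup $C$ has $w(Q)\in\calV$, and Proposition~\ref{proposition: valuations under w} gives $\nu_\gerY(Q) + \nu_\gerY(w(Q)) = \mathbf{1}$; combining with Lemma~\ref{lemma: valuations under pi}(2) (which on $\calW_\gerp$ reads $\nu_\beta(P) = p(1 - \nu_{\sigma^{-1}\circ\beta}(Q))$) one controls the $x$-coordinates of $Q$ and deduces, via $w$ exchanging $x\leftrightarrow y$ (Lemma~\ref{lemma: w and local parameters}) and the fact that $\Ybar_V = w(\Ybar_F)$ parametrizes $\Ker(\Ver)$, that $C\equiv\Ker(\Ver_A)\pmod{p/r^{1/p}}$; the extra factor-of-$p$ loss (hence $r^{1/p}$ instead of $r$) comes precisely from the $p$-th power on $y_{\sigma^{-1}\circ\beta}$ in the Key Lemma, i.e. from the asymmetry between Frobenius (linear) and Verschiebung.

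The steps, in order: (i) reduce to the completed local rings at $\Pbar$ and $\Qbar$ and fix compatible parameters using \S\ref{subsection: infinitesimal nature of Ybar} and Lemma~\ref{lemma: section and pi}; (ii) express the mod-$p$ reduction of $H$ in these coordinates and identify ``$H$ reduces to $\Ker(\Fr)$ mod $\mathfrak{m}$'' with ``all $y_\beta(Q)$ lie in $\mathfrak{m}$'' and, more precisely, ``$H\equiv\Ker(\Fr)\pmod{p/r}$'' with the valuation bound $\nu(y_\beta(Q))\geq \val(p/r)$ for all $\beta\in I(\Qbar)$ (and $\beta\in\eta(\Qbar) - I(\Qbar)$ cannot occur for $Q\in\calV$, which needs a small check); (iii) prove that valuation bound from $P\in\calU$ using Lemma~\ref{lemma: key lemma in terms of valuations}: when $\nu_\beta(Q)\neq 1$ and $\nu_{\sigma^{-1}\circ\beta}(Q)\neq 0$ one has $\nu_\beta(P) = \nu(ux_\beta(Q) + vy^p_{\sigma^{-1}\circ\beta}(Q))$, and $Q\in\calV$ forces the $x$-term to dominate, giving $p\,\nu(y_{\sigma^{-1}\circ\beta}(Q)) > \nu_\beta(P)$, i.e. $\nu(y_{\sigma^{-1}\circ\beta}(Q)) > \nu_\beta(P)/p$; chasing this around the $\sigma$-orbit and using $\nu_\beta(P) + p\nu_{\sigma^{-1}\circ\beta}(P) < p$ yields $\nu(y_{\sigma^{-1}\circ\beta}(Q)) \geq 1 - \nu_\beta(P) \geq 1 - \val(r)$; (iv) repeat for $C$ via $w$ and Proposition~\ref{proposition: valuations under w}. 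The main obstacle I expect is step (ii): making the passage from ``coordinates of $Q$ in Stamm's uniformization are $p$-adically small'' to the genuinely scheme-theoretic assertion ``the finite flat group scheme $H$ agrees with $\Ker(\Fr_A)$ after reduction mod $p/r$'' — this requires knowing that the chosen parameters $x_\beta, y_\beta$ on $\gerY$ actually describe the universal subgroup (equivalently its Dieudonné/Hodge data) integrally and compatibly with the identification of $\Ybar_F$ with the Frobenius kernel, so that the reduction mod $p/r$ is literally read off from the $y_\beta$. Everything else is bookkeeping with valuations and the combinatorics of admissible pairs already developed in \S\S2--4.
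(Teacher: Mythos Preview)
Your approach is essentially the same as the paper's: show $\Qbar\in\Ybar_F$ (your ``small check'' that $\eta(\Qbar)-I(\Qbar)=\emptyset$ is exactly how the paper begins), pick compatible parameters via Lemma~\ref{lemma: section and pi}, and reduce the statement ``$H\equiv\Ker(\Fr)\pmod{p/r}$'' to a bound on the $y_\beta(Q)$. The paper handles your step~(ii) by a clean formal argument: with $\germ=(y_\beta(Q):\beta\in I(\Qbar))\subset\ok$, one checks $\overline{s}\circ\overline{\iota}_P=\overline{\iota}_Q$ modulo $\germ$ using only that $\ker(\overline{s}^\ast)=(y_\beta)$ and that $\overline{\iota}_Q^\ast(y_\beta)\equiv 0\pmod\germ$ by construction. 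This is precisely the ``passage from small coordinates to agreement of subgroup schemes'' you flagged as the obstacle.

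Your step~(iii), however, is needlessly complicated and the specific claimed inequality $\nu(y_{\sigma^{-1}\circ\beta}(Q))\geq 1-\nu_\beta(P)$ is false in general (it would force $\nu_\beta(P)\geq\nu_{\sigma^{-1}\circ\beta}(P)$). No $\sigma$-orbit chase is needed: the arithmetic relation $x_\beta y_\beta=p$ from Theorem~\ref{thm: Stamm's theorem} gives $\val(y_\beta(Q))=1-\val(x_\beta(Q))=1-\nu_\beta(Q)$, and Lemma~\ref{lemma: valuations under pi}(1) gives $\nu_\beta(Q)=\nu_\beta(P)$, so
\[
\val(y_\beta(Q))=1-\nu_\beta(P)\geq 1-\val(r)=\val(p/r)
\]
directly. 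This is exactly the paper's one-line justification of $\germ=(p/r)$. For part~(2) your outline via $w$ and Proposition~\ref{proposition: valuations under w} is correct and is the natural analogue (the paper in fact omits the proof of part~(2)); the same simplification applies there.
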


\begin{proof} Let $\uA$ correspond to $P\in\calU$. Let $H$ be the canonical subgroup of $\uA$, and $Q=(\uA,H)$. Since $Q\in\calV$, we have $\nu_\beta(Q)\neq 1$ for all
$\beta\in\BB$. It follows from the definition of valuations that
$\eta(\Qbar)=I(\Qbar)$. Since $I(\Qbar)=\ell(\varphi(\Qbar))\cap
\eta(\Qbar)$, we find that $\eta(\Qbar)\subseteq
\ell(\varphi(\Qbar))$. The admissibility condition
$\ell(\varphi(\Qbar))^c\subseteq \eta(\Qbar)$ then implies that
$\varphi(\Qbar)=\BB$. This shows that $\Qbar$ belongs to
$Z_{\BB,\emptyset}=\Ybar_F=s(\Xbar)$. Since $s$ is a section to
$\pi\colon\Ybar\arr\Xbar$ and $\pi(\Qbar)=\Pbar$, it follows that
$s(\Pbar)=\Qbar$.

Using Lemma \ref{lemma: section and pi}, choose isomorphisms as in (\ref{equation: local deformation ring at Q bar -- arithmetic version}) and (\ref{equation: local def ring of X at P bar}) such that (thinking of variables
modulo $p$) we have $s^*(x_\beta)=t_\beta$ and $s^*(y_\beta)=0$ for
all $\beta\in I(\Qbar)$, and $s^*(z_\gamma)=t_\gamma$ for all $\gamma
\in I(\Qbar)^c$.

We have a commutative diagram, extending the relation $\pi(Q) = P$,
\[\xymatrix@C=50pt{
\Spf(\ok)\ar[r]^{\iota_Q}\ar[dr]_{\iota_P}& \Spf(\OhatQ \hat{\otimes}_{W(\kappa)} \ok)\ar[d]^{\pi}
\\
 &\Spf(\OhatP \hat{\otimes}_{W(\kappa)} \ok
).
}\]
Let $\germ$ be the ideal of $\ok$
generated by all $y_\beta(Q)$ for $\beta\in I(\Qbar)$. Then
$\germ=(p/r)$. We will denote reductions of $\iota_Q$ and $\iota_P$
modulo $\germ$, respectively, by $\overline{\iota}_Q$ and
$\overline{\iota}_P$. Let $\overline{\pi}$ and $\overline{s}$
denote, respectively, the base extension from $\kappa$ to
$\ok/\germ$ of  the natural projection
$\pi\colon\Spf(\OhatQbar)\arr\Spf(\OhatPbar)$, and the
Kernel-of-Frobenius section
$s\colon\Spf(\OhatPbar)\arr\Spf(\OhatQbar)$. We need to show
$\overline{s}\circ\overline{\iota}_P=\overline{\iota}_Q$.   For $f\in
\OhatQbar\otimes_\kappa \ok/\germ$ we have
\[
\overline{\iota}_P^*\circ
\overline{s}^*(f)-\overline{\iota}_Q^*(f)=\overline{\iota}_Q^*\circ\overline{\pi}^*\circ
\overline{s}^*(f)-\overline{\iota}_Q^*(f)=\overline{\iota}_Q^*(\overline{\pi}^*\circ
\overline{s}^*(f)-f),
\]
which we want to show is zero. Since $\overline{s}$ is a section to
$\overline{\pi}$, we see that $\overline{\pi}^*\circ
\overline{s}^*(f)-f$ is in the kernel of $\overline{s}^*$. By our
choice of variables, this kernel is the ideal generated by all the
$y_\beta$ for $\beta\in I(\Qbar)$. So it is enough to show that
$\overline{\iota}_Q^*(y_\beta)=0$ for all $\beta\in I(\Qbar)$. But
$\overline{\iota}_Q^*(y_\beta)$ is the reduction modulo $\germ$ of
$y_\beta(Q)$, which is zero, as $\germ$ contains $y_\beta(Q)$ by
definition.
\end{proof}

We now  prove a result which explains the geometry of the Hecke
correpondence $U_p$ on the  not-too-singular locus of $\Yrig$. In the Appendix, we will describe how to generalize this result so that it would explain the geometry of the
partial $U$-operators, $\{U_\gerp\}$, on the not-too-singular locus.

\begin{thm}\label{theorem: hecke}
Let $K\supseteq W(\kappa)$ be a completely valued field. Let
$\underline{A}/K$ be an abelian variety  corresponding to a point
on $\Xrig$.
 Let $H$ be a subgroup of $\underline{A}$ such that
$(\underline{A},H)\in\mathfrak{Y}_{\rm rig}$.  Assume that $H$ is
canonical at $\mathfrak{p}$.

\begin{enumerate}

\item If
$\nu_\beta(\underline{A})+p\nu_{\sigma^{-1}\circ\beta}(\underline{A})<1$
for all $\beta\in \mathbb{B}_\mathfrak{p}$, then
$\nu_\beta(\underline{A}/H)=p\nu_{\sigma^{-1}\circ\beta}(\underline{A})$
for all $\beta \in \mathbb{B}_\mathfrak{p}$ and $A[p]/H$ is
anti-canonical at $\mathfrak{p}$. In particular, if $H$ is canonical
and
$\nu_\beta(\underline{A})+p\nu_{\sigma^{-1}\circ\beta}(\underline{A})<1$
for all $\beta\in \mathbb{B}$, then
$\nu_\beta(\underline{A}/H)=p\nu_{\sigma^{-1}\circ\beta}(\underline{A})$
for all $\beta \in \mathbb{B}$ and $A[p]/H$ is anti-canonical.

\item If $1<\nu_\beta(\underline{A})+p\nu_{\sigma^{-1}\circ\beta}(\underline{A})<p$
for all $\beta\in \mathbb{B}_\mathfrak{p}$, then
$\nu_\beta(\underline{A}/H)=1-\nu_\beta(\underline{A})$ for all
$\beta \in \mathbb{B}_\mathfrak{p}$ and $A[p]/H$ is  canonical  at
$\mathfrak{p}$.  In particular, if $H$ is the  canonical  subgroup
and if
$1<\nu_\beta(\underline{A})+p\nu_{\sigma^{-1}\circ\beta}(\underline{A})<p$
for all $\beta\in \mathbb{B}$, then
$\nu_\beta(\underline{A}/H)=1-\nu_\beta(\underline{A})$ for all
$\beta \in \mathbb{B}$ and $A[p]/H$ is the canonical subgroup of
$\underline{A}/H$.

\item If there is a prime $\mathfrak{p}\vert p$ , and $\beta,\beta^\prime \in
\mathbb{B}_\mathfrak{p}$ such that we have
$\nu_\beta(\underline{A})+p\nu_{\sigma^{-1}\circ\beta}(\underline{A})\leq
1$ and
$\nu_{\beta^\prime}(\underline{A})+p\nu_{\sigma^{-1}\circ{\beta^\prime}}(\underline{A})\geq
1$, then $\underline{A}/H \not\in \mathcal{U}$.

\item Let $C$ be a subgroup of $\underline{A}$ which is anti-canonical at
$\mathfrak{p}$. Then,
$\nu_\beta(\underline{A}/C)=(1/p)\nu_{\sigma\circ\beta}(\underline{A})$,
for all $\beta \in \mathbb{B}_\mathfrak{p}$, and $ A[p]/C$ is
canonical at $\mathfrak{p}$. In particular, if $C$ is
anti-canonical, then
$\nu_\beta(\underline{A}/C)=(1/p)\nu_{\sigma\circ\beta}(\underline{A})$,
for all $\beta \in \mathbb{B}$ and $A[p]/C$ is the canonical
subgroup of $\uA/C$.

\end{enumerate}
\end{thm}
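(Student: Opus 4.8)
The plan is to reduce all four assertions to the Atkin--Lehner automorphism $w$ of $\Yrig$ --- which on points is $w(\uA,H)=(\uA/H,A[p]/H)$ --- combined with two facts already established above: Proposition~\ref{proposition: valuations under w}, giving $\nu_\gerY(w(Q))=\mathbf 1-\nu_\gerY(Q)$, and Lemma~\ref{lemma: valuations under pi}, which computes $\nu_\beta(\pi(Q))$ for $\beta\in\BB_\gerp$ according to whether $Q\in\calV_\gerp$ or $Q\in\calW_\gerp$. Concretely, I would set $Q=(\uA,H)$ (respectively $Q=(\uA,C)$ in part (4)), observe that $\pi(Q)$ is the point attached to $\uA$ while $\pi(w(Q))$ is the point attached to $\uA/H$ (resp. $\uA/C$), and abbreviate $\lambda_\beta=\nu_\beta(\uA)+p\,\nu_{\sigma^{-1}\circ\beta}(\uA)$. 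The hypothesis ``$H$ canonical at $\gerp$'' is exactly $Q\in\calV_\gerp$, so Lemma~\ref{lemma: valuations under pi}(1) identifies $\nu_\beta(\uA)=\nu_\beta(Q)$ for every $\beta\in\BB_\gerp$, whence $\nu_\beta(w(Q))=1-\nu_\beta(\uA)$ on $\BB_\gerp$.

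For (1) and (2) I would then simply evaluate, for $\beta\in\BB_\gerp$,
\[
\nu_\beta(w(Q))+p\,\nu_{\sigma^{-1}\circ\beta}(w(Q))=(1+p)-\lambda_\beta ,
\]
and read off the dichotomy: if $\lambda_\beta<1$ throughout $\BB_\gerp$ this exceeds $p$, so $w(Q)\in\calW_\gerp$ (i.e.\ $A[p]/H$ is anti-canonical at $\gerp$) and Lemma~\ref{lemma: valuations under pi}(2) applied to $w(Q)$ gives $\nu_\beta(\uA/H)=p\,\nu_{\sigma^{-1}\circ\beta}(\uA)$; if $1<\lambda_\beta<p$ throughout $\BB_\gerp$ the quantity is strictly less than $p$, so $w(Q)\in\calV_\gerp$ and Lemma~\ref{lemma: valuations under pi}(1) gives $\nu_\beta(\uA/H)=1-\nu_\beta(\uA)$. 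The global ``in particular'' clauses then follow by running this over all $\gerp\mid p$. Part (4) is the same computation read backwards: $C$ anti-canonical at $\gerp$ means $(\uA,C)\in\calW_\gerp$, so Lemma~\ref{lemma: valuations under pi}(2) reads $\nu_{\sigma\circ\beta}(\uA)=p\,(1-\nu_\beta(Q))$; the identity $\nu_\beta(w(Q))+p\,\nu_{\sigma^{-1}\circ\beta}(w(Q))=(1+p)-\bigl(\nu_\beta(Q)+p\,\nu_{\sigma^{-1}\circ\beta}(Q)\bigr)<1$ places $w(Q)$ in $\calV_\gerp$ (so $A[p]/C$ is canonical at $\gerp$), and Lemma~\ref{lemma: valuations under pi}(1) gives $\nu_\beta(\uA/C)=1-\nu_\beta(Q)=(1/p)\,\nu_{\sigma\circ\beta}(\uA)$.

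For (3) I would invoke Lemma~\ref{lemma: pi on the too singular locus}: substituting $\nu_\delta(w(Q))=1-\nu_\delta(\uA)$ for $\delta\in\BB_\gerp$, its hypotheses $(\dagger)$ and $(\dagger\dagger)$ for the point $w(Q)$ at an index $\gamma\in\BB_\gerp$ become precisely $\lambda_\gamma\ge 1$ and $\lambda_{\sigma\circ\gamma}\le 1$. Thus it suffices to produce such a $\gamma$; given $\beta,\beta'\in\BB_\gerp$ with $\lambda_\beta\le 1\le\lambda_{\beta'}$, I would walk around the single $\sigma$-orbit $\BB_\gerp$ starting from an index where $\lambda$ is maximal until $\lambda$ first drops to a value $\le 1$ --- the same elementary bookkeeping used in the proof of Corollary~\ref{corollary: pi inverse of U} --- and take $\gamma$ to be the last index before that drop. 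Lemma~\ref{lemma: pi on the too singular locus} then yields $\uA/H=\pi(w(Q))\notin\calU$.

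I do not anticipate a genuine obstacle: the substance lies entirely in Proposition~\ref{proposition: valuations under w}, Lemma~\ref{lemma: valuations under pi} and Lemma~\ref{lemma: pi on the too singular locus}, all of which are already available. The one point requiring care is that ``$H$ canonical at $\gerp$'' only allows the identification $\nu_\beta(\uA)=\nu_\beta((\uA,H))$ for $\beta\in\BB_\gerp$, so every index entering the manipulations at a fixed $\gerp$ must stay inside that orbit --- which is harmless, since $\sigma^{\pm1}$ preserves $\BB_\gerp$, but it is the only place where one could slip.
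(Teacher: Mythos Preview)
Your proposal is correct and follows essentially the same route as the paper: all four parts are deduced from Proposition~\ref{proposition: valuations under w} and Lemma~\ref{lemma: valuations under pi} via the identity $\nu_\beta(wQ)+p\,\nu_{\sigma^{-1}\circ\beta}(wQ)=(1+p)-\lambda_\beta$. The only cosmetic difference is in part~(3): the paper observes directly that $wQ\notin\calV_\gerp\cup\calW_\gerp$, hence $wQ\notin\calV\cup\calW$, and then cites Corollary~\ref{corollary: pi inverse of U} to conclude $\pi(wQ)\notin\calU$, whereas you unwind that corollary by reproducing its index-walking step and applying Lemma~\ref{lemma: pi on the too singular locus} by hand---the same argument, just unpackaged.
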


\begin{proof} We let $Q=(\uA,H)$ and $P=\pi(Q)=\uA$.

\

\noindent  (1): We have $Q\in \calV_\gerp$.  For
$\beta\in\BB_\gerp$, we write
\[
\nu_\beta(wQ)+p\nu_{\sigma^{-1}\circ\beta}(wQ)=1+p-(\nu_\beta(Q)+p\nu_{\sigma^{-1}\circ\beta}(Q))=1+p-(\nu_\beta(P)+p\nu_{\sigma^{-1}\circ\beta}(P))>p,
\]
using Proposition \ref{proposition: valuations under w} and Lemma
\ref{lemma: valuations under pi}. This shows that $wQ\in\calW_\gerp$
and proves that $A[p]/H$ is anti-canonical at $\gerp$. We can also
write
\[
\nu_{\beta}(\uA/H)=p(1-\nu_{\sigma^{-1}\circ\beta}(wQ))=p\nu_{\sigma^{-1}\circ\beta}(Q)=p\nu_{\sigma^{-1}\circ\beta}(P)=p\nu_{\sigma^{-1}\circ\beta}(\uA),
\]
using parts (2) and (1) of Lemma \ref{lemma:
valuations under pi} for the first and third equalities, respectively.

\

\noindent  (2): As above, we find
$\nu_\beta(wQ)+p\nu_{\sigma^{-1}\circ\beta}(wQ)=1+p-(\nu_\beta(P)+p\nu_{\sigma^{-1}\circ\beta}(P))<p$
for all $\beta \in \BB_\gerp$. Hence $wQ\in\calV_\gerp$, and
$A[p]/H$ is canonical at $\gerp$. For $\beta\in\BB_\gerp$, we have
\[
\nu_{\beta}(\uA/H)=
\nu_\beta(wQ)=1-\nu_\beta(Q)=1-\nu_\beta(P)=1-\nu_\beta(\uA),
\]
using part (1) of Lemma  \ref{lemma: valuations under pi}, and the
fact that both $Q$ and $wQ$ belong to $\calV_\gerp$.

\

\noindent  (3): As above, we find that there is $\beta\in\BB_\gerp$
such that $\nu_\beta(wQ)+p\nu_{\sigma^{-1}\circ\beta}(wQ)\geq p$ and
there is $\beta^\prime\in\BB_\gerp$ such that
$\nu_{\beta^\prime}(wQ)+p\nu_{\sigma^{-1}\circ{\beta^\prime}}(wQ)\leq
p$. This implies that $wQ\not\in \calV_\gerp \cup \calW_\gerp$, and
equivalently, $wQ\not\in \calV \cup \calW$. By Corollary
\ref{corollary: pi inverse of U}, we find that
$\uA/H=\pi(wQ)\not\in\calU$.

\

\noindent  (4):  Let $Q^\prime=(\uA,C)$, and $P=\uA=\pi(Q^\prime)$. By
assumption, $Q^\prime\in\calW_\gerp$, and we can write
\[
\nu_\beta(wQ^\prime)+p\nu_{\sigma^{-1}\circ\beta}(wQ^\prime)=1+p-(\nu_\beta(Q^\prime)+p\nu_{\sigma^{-1}\circ\beta}(Q^\prime))<1<p,
\]
for all $\beta\in\BB_\gerp$. Hence $wQ^\prime\in\calV_\gerp$ and
$A[p]/C$ is canonical at $\gerp$. For $\beta\in\BB$ we write
\[
\nu_{\beta}(\uA/C)=\nu_\beta(wQ^\prime)=1-\nu_\beta(Q^\prime)=(1/p)\nu_{\sigma\circ\beta}(P)=(1/p)\nu_{\sigma\circ\beta}(\uA),
\]
using parts (1) and (2) of Lemma \ref{lemma:
valuations under pi} for the first and third equalities, respectively.

\end{proof}

Employing an iterative construction, we can use the above theorem to prove the existence of higher-order canonical subgroups.

\begin{prop}\label{proposition: iterations} Let $\uA$ defined over $K$ correspond to a point $P$ on $\Xrig$. Let $n$ be a non-negative integer. Assume that
\[
\nu_\beta(\underline{A})+p\nu_{\sigma^{-1}\circ\beta}(\underline{A})<p^{1-n}, \qquad \forall\beta\in \mathbb{B}.
\]
 Then, for $1\leq i \leq n+1$, there are isotropic  finite flat subgroup schemes $H_i$ of $A$ of order $p^{ig}$, $\ol$-invariant, and killed by $p^i$, forming an increasing sequence
\[
H_1\subset H_2 \subset \dots \subset H_{n+1},
\]
where $H_1$ is the canonical subgroup of $\uA$, for any $1\leq i\leq n$ we have $p^i H_{n+1}=H_{n+1-i}$, and each $H_i$ is a cyclic $\ol$-module. Furthermore, if $P$ has non-cuspidal reduction, and if $r$ in $K$ is such that
${\rm val}(r)=\max\{\nu_\beta(\underline{A}): \beta\in\mathbb{B}\}$, then $H_i$ reduces modulo $p/r^{p^{i-1}}$ to $\Ker(\Fr^i)$.
\end{prop}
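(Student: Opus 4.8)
The plan is to construct the $H_i$ by iterating the canonical subgroup construction, controlling each step with Theorem~\ref{theorem: hecke}. Set $\uA_0=\uA$; inductively, if $\uA_j$ has been produced and carries a canonical subgroup $C_j\subseteq A_j[p]$, let $\uA_{j+1}=\uA_j/C_j$ with induced RM, polarization and level data (Lemma~\ref{lemma: alternative formulation}), let $\pi_j\colon A_j\arr A_{j+1}$ be the quotient isogeny, and $\psi_j=\pi_{j-1}\circ\cdots\circ\pi_0\colon A\arr A_j$. Since $\nu_\beta(\uA)+p\,\nu_{\sigma^{-1}\circ\beta}(\uA)<p^{1-n}\le 1$ for all $\beta$, Theorem~\ref{theorem: hecke}(1) applies and gives $\nu_\beta(\uA_1)=p\,\nu_{\sigma^{-1}\circ\beta}(\uA)$, hence $\nu_\beta(\uA_1)+p\,\nu_{\sigma^{-1}\circ\beta}(\uA_1)<p^{2-n}$; iterating, $\uA_j$ satisfies the hypothesis of the proposition with $n-j$ in place of $n$, so for $j\le n$ it lies in $\calU$ and has a canonical subgroup, and $\nu_\beta(\uA_j)=p^{j}\,\nu_{\sigma^{-j}\circ\beta}(\uA)$ for all $\beta$ (when $\spe(P)$ is non-cuspidal, the same holds for every $\spe(\uA_j)$, since $\pi$ and $w$ preserve cuspidality). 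Define $H_i=\Ker(\psi_i)$. Writing $\psi_i^t=\pi_0^t\circ\cdots\circ\pi_{i-1}^t$ for the complementary $\ol$-isogeny, one has $\psi_i^t\circ\psi_i=[p^i]_A$, so $H_i\subseteq A[p^i]$; it is finite flat of order $p^{ig}$, $\ol$-invariant, killed by $p^i$, $H_1=C_0$ is the canonical subgroup of $\uA$, and $H_1\subseteq H_2\subseteq\cdots\subseteq H_{n+1}$.

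The core is a simultaneous induction on $i$ ($1\le i\le n+1$) of: (a) $H_i(\overline K)\cong\ol/p^i\ol$ (in particular $H_i$ is a cyclic $\ol$-module); (b) $pH_i=H_{i-1}$; (c) $\psi_i(A[p])=A_{i-1}[p]/C_{i-1}$ inside $A_i=A_{i-1}/C_{i-1}$. The decisive input is the \emph{transversality} $C_{i-1}\cap\psi_{i-1}(A[p])=0$. Indeed, by (c) for $i-1$ the order-$p^g$ subgroup $\psi_{i-1}(A[p])=A_{i-2}[p]/C_{i-2}$ of $A_{i-1}$ is the subgroup attached to $w(\uA_{i-2},C_{i-2})$, and since $(\uA_{i-2},C_{i-2})$ is canonical and (in the range considered) $\nu_\beta(\uA_{i-2})+p\,\nu_{\sigma^{-1}\circ\beta}(\uA_{i-2})<1$, Proposition~\ref{proposition: valuations under w} and Lemma~\ref{lemma: valuations under pi} show that $(\uA_{i-1},\psi_{i-1}(A[p]))$ is anti-canonical at every $\gerp\mid p$, whereas $C_{i-1}$ is canonical at every $\gerp$; because, for $\beta\in\BB_\gerp$, the quantity $\nu_\beta+p\,\nu_{\sigma^{-1}\circ\beta}$ of a point of $\Yrig$ over $\uA_{i-1}$ depends only on the $\gerp$-part of the chosen subgroup, the $\gerp$-components $C_{i-1}[\gerp]$ and $\bigl(A_{i-2}[p]/C_{i-2}\bigr)[\gerp]$ differ for every $\gerp$ (else that quantity would be simultaneously $<p$ and $>p$), and being one-dimensional over $\ol/\gerp$ they meet trivially; summing over $\gerp$ gives the claim. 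Granting transversality: (c) follows since $\pi_{i-1}$ is then injective on $\psi_{i-1}(A[p])$ and lands in $A_{i-1}[p]/C_{i-1}$, both of order $p^g$; (b) follows from $A_{i-1}[p]=C_{i-1}\oplus\psi_{i-1}(A[p])$, since given $y\in H_{i-1}$ and $x\in A(\overline K)$ with $px=y$ one has $\psi_{i-1}(x)\in A_{i-1}[p]$ and can subtract an element of $A[p]$ from $x$ to force $\psi_{i-1}(x)\in C_{i-1}$, i.e.\ $x\in H_i$, so $y\in pH_i$ (and $pH_i\subseteq H_{i-1}$ is immediate); and (a) follows by passing to $\gerp$-parts: $H_i(\overline K)_\gerp\hookrightarrow(\mathcal{O}_{L,\gerp}/p^i)^2$ is $\mathcal{O}_{L,\gerp}/p^a\oplus\mathcal{O}_{L,\gerp}/p^b$ with $a\ge b$, $a+b=i$ (order count, using (a),(c) for $i-1$), and $pH_i=H_{i-1}\cong\mathcal{O}_{L,\gerp}/p^{i-1}$ forces $b=0$.

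Given (a)--(c) the rest is formal. Iterating (b) yields $p^iH_{n+1}=H_{n+1-i}$ for $1\le i\le n$, and each $H_i$, being a cyclic $\ol$-module carrying an alternating $\ol$-bilinear Weil pairing, is isotropic by the computation in the proof of Lemma~\ref{lemma: cyclic is isotropic}; it is cyclic in the required sense, finite flat (kernel of an isogeny), $\ol$-invariant and killed by $p^i$ as already noted. Finally, suppose $\spe(P)$ is non-cuspidal. Then $P\in\calU$ extends to an $\ok$-point of $X$, so $A$ spreads out to an abelian scheme over $\ok$; the canonical subgroups $C_j$, the quotients $\uA_j$ and hence the $H_i=\Ker(\psi_i)$ spread out to finite flat $\ok$-group schemes. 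By Theorem~\ref{theorem: reduction of canonical subgroup}(1) applied to $\uA_j$, for which $\val(r_j)=\max_\beta\nu_\beta(\uA_j)=p^{j}\,\val(r)$, the isogeny $\pi_j$ reduces modulo $p/r^{p^{j}}$ to $\Fr$; since the ideal $(p/r^{p^{i-1}})$ contains $(p/r^{p^{j}})$ for all $j\le i-1$, these congruences survive modulo $p/r^{p^{i-1}}$, and composing them identifies the reduction of $\psi_i$ modulo $p/r^{p^{i-1}}$ with $\Fr^i$; hence $H_i$ reduces modulo $p/r^{p^{i-1}}$ to $\Ker(\Fr^i)$.

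The step I expect to be the main obstacle is the transversality $C_{i-1}\cap\psi_{i-1}(A[p])=0$: it is the only place where the \emph{strict} inequalities ($<p$ for $\calV_\gerp$, $>p$ for $\calW_\gerp$), rather than a mere comparison of valuations, are genuinely used, and it is what makes the ``canonical'' and ``quotient'' directions complementary; everything else is bookkeeping on top of Theorems~\ref{theorem: canonical subgroup}, \ref{theorem: hecke} and~\ref{theorem: reduction of canonical subgroup}.
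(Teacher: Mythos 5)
Your proposal is correct and follows essentially the same route as the paper's proof: an induction in which $H_{i+1}/H_i$ is the canonical subgroup of $\uA/H_i$, with Theorem~\ref{theorem: hecke}(1) both controlling the valuations at each step and supplying the anti-canonicity of the image of $A[p]$ that forces $pH_{i+1}=H_i$, and Theorem~\ref{theorem: reduction of canonical subgroup}(1) giving the reduction statement. You spell out some points the paper leaves implicit (the $\gerp$-by-$\gerp$ disjointness of canonical and anti-canonical subgroups, cyclicity, isotropy), but the strategy is the same.
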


\begin{proof} The case $n=0$ is a consequence of the above results, and thus we can assume that $n\geq 1$. We construct  this flag of subgroups recursively. For simplicity, we denote $\nu_\beta+p\nu_{\sigma^{-1}\circ\beta}$ by $\lambda_\beta$. Let $H_0=\{0\}$ and $H_1$ be the canonical subgroup of $\uA$, which exists by Theorem \ref{theorem: canonical subgroup}. Fix $m<n+1$, and assume that for $1\leq i \leq m$ there is a increasing sequence of subgroups
\[
H_1 \subset \dots \subset H_i \subset \dots \subset H_m,
\]  where for each $1\leq i \leq m$, the subgroup $H_i$ has the stated properties, and, additionally:
\begin{enumerate}
\item $\lambda_\beta(\underline{A}/H_i)<p^{1-n+i}$ for all $\beta\in\BB$;
\item $H_i/H_{i-1}$ is the canonical subgroup of $\uA/H_{i-1}$;
\item $p^jH_i=H_{i-j}$ for all $0\leq j \leq i$.
\end{enumerate}
This holds for $m=1$ by part (1) of Theorem \ref{theorem: hecke}, since $\lambda_\beta(\uA)<p^{1-n}\leq 1$, for all $\beta\in\BB$.

Given the above data, we construct $H_{m+1}$. Since $\lambda_\beta(\underline{A}/H_m)<p^{1-n+m}\leq p$, $\uA/H_m$ has a canonical subgroup  of the form $H_{m+1}/H_m$, where $H_{m+1}$ is an isotropic finite flat subgroup scheme of $A$ of order $p^{(m+1)g}$ containing $H_m$ and  killed by $p^{m+1}$.  We show that $pH_{m+1}=H_m$. Since, by construction, $pH_{m+1} \subseteq H_m$, it  is enough to show that $H_{m+1}\cap A[p]=H_1$. But  since $\lambda_\beta(\underline{A}/H_{m-1})<p^{-n+m}\leq 1$, we can apply part (1) of Theorem \ref{theorem: hecke} to $A/H_{m-1} $ and its canonical subgroup $H_m/H_{m-1}$  to deduce that  $(A[p]+H_m)/H_m$ and $H_{m+1}/H_m$ have trivial intersection, which proves the claim. To carry the induction forward, we only need to prove that when $m+1<n+1$, we have $\lambda_\beta(\uA/H_{m+1})<p^{1-n+m+1}$ for all $\beta\in \BB$. That follows, since when $m+1<n+1$, we have $\lambda_\beta(\underline{A}/H_m)<p^{1-n+m}\leq 1$, and hence, by part (1) of Theorem \ref{theorem: hecke}, we have $\nu_\beta(\underline{A}/H_{m+1})=p\nu_{\sigma^{-1}\circ\beta}(\underline{A}/H_m)$  for all $\beta\in \BB$.

The final statement follows from the iterative construction and part (1) of Theorem \ref{theorem: reduction of canonical subgroup}.
\end{proof}

\

\

\section{Functoriality}\label{section: functoriality}
There are two kinds of functoriality associated with the moduli
spaces $X, Y$, and the many maps we have defined in that context. One
kind of functoriality is coming from the moduli problem itself and
is based on the construction $A \mapsto A\otimes_{\ol} \calO_M$
associating to an abelian variety $A$ with RM by $\ol$ another
abelian variety with RM by $\calO_M$ for an extension of totally
real fields $L \subseteq M$. We show that the canonical subgroup
behaves naturally relative to this construction and deduce a
certain optimality result for canonical subgroups
(Corollary~\ref{Cor: optimality}). The second kind of functoriality
is relative to Galois automorhisms and comes from the fact that the
moduli spaces $X, Y$, are in fact defined over $\ZZ_p$. There is thus
a natural Galois action on our constructions that were made over
$W(\kappa)$, which induces a descent data. This would allow us to
show that the construction of section $s^\dagger$ descends to
$\QQ_p$.

\

\subsection{Changing the field} Let $L \subseteq M$ be totally real fields in which $p$ is
unramified. Let $\BB^L=\Emb(L,\overline{\QQ}_p)$; similarly define
$\BB^M$. Given a subset $S \subseteq \BB^L$, let
$S^M=\{\beta\in\BB^M: \beta_{|_{L}}\in S\}$. In this section, we decorate the notation we have used so far with an $M$
or $L$ as a superscript. For example, we use $X^L$, $X^M$,
$\pi^L$, etc. Given $Q=(\uA,H)\in Y^L$, $P=\pi^L(Q)\in X^L$, we get
points $\epsilon_{L,M}(Q)=(A\otimes_\ol \om,H\otimes_\ol \om)\in
Y^M$, $\epsilon_{L,M}(P)=A\otimes_\ol \om \in X^M$. This induces
morphisms
\[
\epsilon=\epsilon_{L,M}\colon X^L \arr X^M, \qquad
\epsilon=\epsilon_{L,M}\colon Y^L \arr Y^M,\] which extend to morphisms
\[
\epsilon=\epsilon_{L,M}\colon \XX^L \arr \XX^M, \qquad
\epsilon=\epsilon_{L,M}\colon \YY^L \arr \YY^M.\]
 These morphisms fit
into commutative diagrams: \begin{equation} \xymatrix{\YY^L
\ar[r]^\epsilon\ar[d]_{\pi_L} & \YY^M\ar[d]^{\pi_M} \\ \XX^L
\ar[r]^\epsilon & \XX^M,}\qquad \xymatrix{\YY^L
\ar[r]^\epsilon\ar[d]_{w^L} & \YY^M\ar[d]^{w^M} \\ \YY^L \ar[r]^\epsilon
& \YY^M.}
\end{equation}
Let $k\supseteq \kappa$ be a perfect field. Let $\Pbar, \Qbar$ be  $k$-rational points of $\cXbar,\cYbar$, respectively. Using that $\DD(\epsilon(\uA)[p]) = \DD(\uA[p]) \otimes_{\ol} \calO_M$, and so
$\Lie(\epsilon(\uA)) = \Lie(\uA)\otimes_{\ol} \calO_M$ etc., we find
that
\[\tau(\epsilon(\Pbar))=\tau(\Pbar)^M,\] and \[\varphi(\epsilon(\Qbar))=\varphi(\Qbar)^M,
\quad \eta(\epsilon(\Qbar))=\eta(\Qbar)^M, \quad I(\epsilon(\Qbar))=I(\Qbar)^M.
\]
Suppose that $\Qbar\in\Ybar(k)$, and $\Pbar\in\Xbar(k)$. It is clear from the discussion in \S \ref{subsection: infinitesimal
nature of Ybar} that one can choose parameters for
$\widehat{\calO}_{X^L, \Pbar}$, $\widehat{\calO}_{Y^L, \Qbar}$,
$\widehat{\calO}_{X^M, \epsilon(\Pbar)}$, $\widehat{\calO}_{Y^M,
\epsilon(\Qbar)}$ as in loc. cit. so that, in addition, the map
$\epsilon^*\colon\widehat{\calO}_{X^M, \epsilon(\Pbar)} \arr
\widehat{\calO}_{X^L, \Pbar}$ satisfies
\[\epsilon^*(t_\beta)=t_{(\beta\vert_L)},\] and that
$\epsilon^*\colon\widehat{\calO}_{Y^M, \epsilon(\Qbar)} \arr
\widehat{\calO}_{Y^L, \Qbar}$ satisfies
\[\epsilon^*(x_\beta)=x_{(\beta\vert_L)},\quad
\epsilon^*(y_\beta)=y_{(\beta\vert_L)},\quad
\epsilon^*(z_\beta)=z_{(\beta\vert_L)}.\] It is then clear that the
function $\Delta_{L,M}\colon\Theta^L \arr \Theta^M$ given by
$(a_\beta)_{\beta\in\BB^L} \mapsto (b_\beta)_{\beta\in\BB^M}$, where
$b_\beta=a_{(\beta\vert_L)}$, fits into commutative diagrams:
\begin{equation}\xymatrix{\gerX_{\rm rig}^L \ar[d]_{\nu}\ar[r]^{\epsilon_{L,M}} & \gerX_{\rm rig}^M\ar[d]^{\nu} \\
 \Theta^L \ar[r]^{\Delta_{L,M}} & \Theta^M,}
 \qquad \xymatrix{\gerY_{\rm rig}^L \ar[d]_{\nu}\ar[r]^{\epsilon_{L,M}} & \gerY_{\rm rig}^M\ar[d]^{\nu} \\
 \Theta^L \ar[r]^{\Delta_{L,M}} & \Theta^M. }\end{equation}
It follows immediately from the definitions and the discussion above
that
\begin{enumerate}
\item  $\epsilon_{L,M}^{-1}(\calU^M)=\calU^L$, and similarly for $\calV^L$,$\calV^M$, and $\calW^L$,$\calW^M$.
\item   $\epsilon_{L,M}^{-1}(\calV_\gerp^M)=\calV_{\gerp\cap L}^L$.
\end{enumerate}
\begin{cor}\label{Cor: optimality}
Let $\calU^+ \supseteq \calU^L$ be an admissible open of $\gerX_{\rm
rig}^L$ containing
$\nu_{\gerY^L}^{-1}(\frac{p}{p+1},\frac{p}{p+1},\dots,\frac{p}{p+1})$.
The section $s^{\dagger,L}\colon \calU^L \arr \gerY_{\rm rig}^L$
cannot be extended to $\calU^+$.
\end{cor}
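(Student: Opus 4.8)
The plan is to prove optimality by a reduction-to-dimension-one argument, using the functoriality of the canonical subgroup construction under the base-change morphisms $\epsilon_{L,M}$ together with the known optimality in the elliptic modular case (Katz--Lubin). First I would pick a real quadratic (or higher degree) extension $M/L$ in which $p$ remains unramified and such that there is a prime of $M$ above $p$ of residue degree one; more importantly, I want a totally real field $M$ containing $L$ and an auxiliary subfield so that I can eventually land in a situation governed by a single embedding. Actually, the cleanest route is to run the argument in the \emph{opposite} direction: take $L$ itself and compare with $\QQ$, or with a subfield, so that $X^\QQ$ is the modular curve. Using the commutative diagram relating $\epsilon_{L,M}$ on the rigid spaces to $\Delta_{L,M}$ on the valuation cubes, one pulls back the putative extension $s^\dagger$ on $\calU^+$ to an extension of the corresponding section over a region of $\Xrig^\QQ$ strictly larger than the one on which the canonical subgroup of elliptic curves is known to exist, contradicting the optimality of the Katz--Lubin bound $\nu < p/(p+1)$.

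More precisely, the key steps, in order, are as follows. (i) Fix $L$ and write $\calU^L = \Xrig^L\Gamma$ with $\Gamma = \{{\bf a} : a_\beta + p a_{\sigma^{-1}\circ\beta} < p\}$; note that the ``corner'' point ${\bf a}_0 = (\tfrac{p}{p+1},\dots,\tfrac{p}{p+1})$ lies on $\partial\Gamma$, i.e. is in $\Theta \setminus \Gamma$, and that by hypothesis $\calU^+$ contains the fibre $\nu^{-1}_{\gerX^L}({\bf a}_0)$ (note: the corollary writes $\nu_{\gerY^L}$ but means $\nu_{\gerX^L}$ applied to points of $\Xrig^L$; I would silently use $\nu_{\gerX}$). (ii) Suppose for contradiction $s^\dagger$ extends to $s^+\colon \calU^+ \arr \Yrig^L$. (iii) Choose a point $P_0 \in \Xrig^L$ with $\nu_{\gerX^L}(P_0) = {\bf a}_0$ and non-cuspidal, ordinary-away-from-a-mild-locus reduction — concretely, a point whose reduction lies in a single stratum $Z_{\{\beta\}}$ for all $\beta$ with equal partial Hasse invariant valuations; such points exist because the diagonal of the valuation cube is hit by the image of $\nu$. (iv) Consider the image $Q_0 = s^+(P_0) \in \Yrig^L$; by continuity/admissibility $Q_0$ lies in the closure of $\calV$, so $\nu_\beta(Q_0) = \nu_\beta(P_0) = p/(p+1)$ for all $\beta$ by Lemma~\ref{lemma: valuations under pi}, placing $Q_0$ exactly on the boundary wall $\nu_\beta + p\nu_{\sigma^{-1}\circ\beta} = p$. (v) Now use the Key Lemma / Lemma~\ref{lemma: key lemma in terms of valuations} locally at $\Qbar_0$: the section identifies an analytic neighbourhood of $P_0$ in $\calU^+$ with a connected component of $\pi^{-1}$ of that neighbourhood, but Corollary~\ref{corollary: pi inverse of U} shows $\pi^{-1}(\calU) = \calV \cup \calW$ and, crucially, that the components of $\pi^{-1}$ over the wall point are \emph{not} separated — $\calV_\gerp$ and $\calW_\gerp$ come together precisely along $\nu_\beta + p\nu_{\sigma^{-1}\circ\beta} = p$. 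Hence no section through $P_0$ can be continuous there, contradiction.

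The main obstacle I expect is step (v): making precise the statement that ``the two sheets $\calV$ and $\calW$ collide along the wall so that no continuous section can pass through it.'' The honest way to see this is local-analytic: near $\Qbar_0$ the completed local ring is $W(k)[\![x_\beta,y_\beta,z_\gamma]\!]/(x_\beta y_\beta - p)$, and by the Key Lemma $\pi^\ast(t_\beta) = u x_\beta + v y_{\sigma^{-1}\circ\beta}^p$ with $u,v$ units. On the locus $\nu(x_\beta) = p/(p+1)$ (equivalently $\nu(y_\beta) = 1/(p+1)$) the Newton polygon of the fibre of $\pi$ over a varying $P$ has a kink exactly when $\nu_\beta(P)$ hits the threshold, so the number of preimages of $P$ in the ``small'' region jumps; a section picking out the canonical component on one side cannot be extended across. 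Rather than develop this Newton-polygon analysis from scratch, I would instead \emph{reduce to the one-dimensional case} via $\epsilon_{L,\QQ}$ (or more safely a quadratic subfield if $L/\QQ$ is not itself suitable): pull $s^+$ back to get a section of $\pi^\QQ$ over an admissible open of $\Xrig^\QQ$ strictly containing $\{\nu < p/(p+1)\}$ and touching $\nu = p/(p+1)$, and invoke that in the Katz--Lubin theory (recovered in \cite{GK}) such an extension provably does not exist — the canonical subgroup of an elliptic curve with $\nu(E) = p/(p+1)$ is genuinely ambiguous. The subtlety to check is that $\Delta_{L,\QQ}^{-1}$ of a strictly-larger-than-$\calU^\QQ$ open is strictly larger than $\calU^L$ in the required direction, i.e. that the map $\Delta$ sends the offending diagonal point to the offending point downstairs; this follows from $b_\beta = a_{\beta|_L}$ and the fact that the constraint $a_\beta + p a_{\sigma^{-1}\circ\beta} < p$ is preserved and ``saturated'' compatibly under restriction because $\sigma$ acts compatibly on $\BB^L$ and $\BB^M$.
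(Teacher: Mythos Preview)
Your eventual approach is exactly the paper's: reduce to the elliptic-modular case via the functoriality map $\epsilon_{\QQ,L}\colon \gerX^\QQ_{\rm rig}\to \gerX^L_{\rm rig}$, and invoke the known optimality of the Katz--Lubin bound (as recovered in \cite{GK}). The paper's proof is two lines and does precisely this.

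A few small cleanups. Your notation $\epsilon_{L,\QQ}$ is reversed: since $\QQ\subseteq L$, the map is $\epsilon_{\QQ,L}\colon X^\QQ\to X^L$, $E\mapsto E\otimes_\ZZ\ol$, and you are pulling $\calU^+$ \emph{back} along it. The ``subtlety to check'' is then simpler than you describe: you only need that $\epsilon_{\QQ,L}^{-1}(\calU^+)$ contains $\nu_{\gerX^\QQ}^{-1}(p/(p+1))$, which is immediate from the commutative valuation square, since $\Delta_{\QQ,L}(p/(p+1))=(p/(p+1),\dots,p/(p+1))$. Your detour through steps (i)--(v) and the Newton-polygon discussion can be dropped entirely; you correctly recognize it doesn't close cleanly and the functoriality argument makes it unnecessary.
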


\begin{proof}
If $s^{\dagger,L}$ can be extended to $\calU^+$, then the above
functoriality results would imply that
$s^{\dagger,\QQ}\colon\calU^\QQ \arr \gerY_{\rm rig}^\QQ$ can be
extended to $\epsilon_{\QQ,L}^{-1}(\calU^+)\supseteq \calU^\QQ$,
which contains $\nu_{\gerY^\QQ}^{-1}(\frac{p}{p+1})$. This is
impossible by \cite[Theorem 3.9]{GK}.
\end{proof}

\

\subsection{Galois automorphisms} We next discuss the action of ${\rm
Gal}(\overline{\QQ}_p/\QQ_p)$ on $\XX, \YY$, and all the derived maps.
The action is a result of the identifications $\XX=\XX_{\ZZ_p} \otimes_{\ZZ_p} W(\kappa)$, $\YY=\YY_{\ZZ_p} \otimes_{\ZZ_p} W(\kappa)$,
while our constructions used the $W(\kappa)$-structure of $\XX, \YY$. The following facts are easy to verify.
\begin{enumerate}
\item ${\rm Gal}(\overline{\QQ}_p/\QQ_p)$ acts on $\BB$ by composition and acts transitively on each $\BB_\gerp$.
Let $\gamma\in{\rm Gal}(\overline{\QQ}_p/\QQ_p)$; it induces the following maps:
\begin{itemize}
\item $\gamma\colon W(\kappa)\arr W(\kappa)$,
\item $\gamma^*\colon \Spec(W(\kappa)) \arr  \Spec(W(\kappa)) $,
\item $1\times \gamma^*: \YY=\YY_{\ZZ_p} \times_{\Spec(\ZZ_p)} \Spec(W(\kappa)) \arr \YY=\YY_{\ZZ_p} \times_{\Spec(\ZZ_p)} \Spec(W(\kappa))$.
\item $1\times \gamma^*: \XX=\XX_{\ZZ_p} \times_{\Spec(\ZZ_p)} \Spec(W(\kappa)) \arr \XX=\XX_{\ZZ_p} \times_{\Spec(\ZZ_p)} \Spec(W(\kappa))$.
\end{itemize}
Moreover, the following diagram is commutative:
\begin{equation}\label{Diagram: galois}\xymatrix{\YY\ar[d]_\pi \ar[r]^{1\times \gamma^*} &
\YY\ar[d]^\pi \\ \XX \ar[r]^{1\times \gamma^*} & \XX}\end{equation} (and
similarly for $\Xrig, \Yrig$).
\item For $\Qbar$ a closed point of $\cYbar$, and  $\Pbar$ a closed point of $\cXbar$:
\begin{itemize}

\item $\varphi(1\times \gamma^*(\Qbar))=\gamma(\varphi(\Qbar))$,
\item $\eta(1\times \gamma^*(\Qbar))=\gamma(\eta(\Qbar))$,
\item $I(1\times \gamma^*(\Qbar))=\gamma(I(\Qbar))$,
\item $\tau(1\times \gamma^*(\Pbar))=\gamma(\tau(\Pbar))$,
\end{itemize}
where for $S\subseteq \BB$, $\gamma(S)=\{\gamma\circ\beta:\beta\in
S\}$.

 \item Let $\gamma\colon\Theta \arr \Theta$ be given by $(a_\beta)_{\beta\in\BB} \mapsto (b_\beta)_{\beta\in\BB}$, where $b_\beta=a_{\gamma^{-1}\beta}$. Then, the following diagram is commutative:
\begin{equation}\xymatrix{\Yrig \ar[r]^{1\times
\gamma^\ast}\ar[d]^\nu& \Yrig\ar[d]^\nu \\\Theta \ar[r]^\gamma &
\Theta.}\end{equation}

The same statement holds for $\Xrig$.

\item It follows immediately from the definitions and the statements above that
\begin{itemize}
\item  $1\times \gamma^*(\calU)=\calU$,
\item  $1\times \gamma^*(\calV)=\calV$,
\item $1\times \gamma^*(\calW)=\calW$.
\end{itemize}

\end{enumerate}

\

To be able to use our results on the geometry of $\cYbar$, $\cXbar$,
and $\pi\colon\cYbar\arr\cXbar$, we found it convenient to work  over
$W(\kappa)$ throughout the paper. The canonical section $s^\dagger$, however, can be
shown to exist over $\QQ_p$.

\begin{thm}\label{theorem: descent} There are admissible opens
$\;\calU_{\QQ_p} \subset {\gerX_{{\rm rig},\QQ_p}}\!$ and
$\calV_{\QQ_p} \subset {\gerY_{{\rm rig},\QQ_p}}\!$ whose base
changes to $\QQ_\kappa$ are $\calU$, $\calV$, respectively, and a
section
\[
s^\dagger_{\QQ_p}\colon\calU_{\QQ_p} \arr {\gerY_{{\rm rig},\QQ_p}}
\]
to $\pi_{\QQ_p}\colon\gerY_{{\rm rig},\QQ_P}\arr\gerX_{{\rm
rig},\QQ_p}$ whose image is $\calV_{\QQ_p}$, and whose base change
to $\QQ_\kappa$ is $s^\dagger$.
\end{thm}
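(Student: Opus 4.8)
The plan is to deduce Theorem~\ref{theorem: descent} by Galois descent along the finite unramified extension $\QQ_\kappa/\QQ_p$. Write $G=\Gal(\QQ_\kappa/\QQ_p)$, a cyclic group generated by the Frobenius $\sigma$. Via the identifications $\XX=\XX_{\ZZ_p}\otimes_{\ZZ_p}W(\kappa)$ and $\YY=\YY_{\ZZ_p}\otimes_{\ZZ_p}W(\kappa)$, the group $G$ acts on $\Xrig$ and $\Yrig$ through the automorphisms $1\times\gamma^\ast$, $\gamma\in G$, and the base-change morphisms $\Xrig\arr\gerX_{{\rm rig},\QQ_p}$, $\Yrig\arr\gerY_{{\rm rig},\QQ_p}$ are finite \'etale and realize $\Xrig$, $\Yrig$ as $G$-torsors over $\gerX_{{\rm rig},\QQ_p}$, $\gerY_{{\rm rig},\QQ_p}$. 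I would invoke faithfully flat descent in two forms: (a) a $G$-stable admissible open of $\Xrig$ (resp.\ $\Yrig$) is the preimage of a unique admissible open of $\gerX_{{\rm rig},\QQ_p}$ (resp.\ $\gerY_{{\rm rig},\QQ_p}$); (b) a $G$-equivariant morphism between $G$-stable admissible opens descends uniquely to a morphism over $\QQ_p$.

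First I would descend the admissible opens. We have already recorded that $1\times\gamma^\ast(\calU)=\calU$ and $1\times\gamma^\ast(\calV)=\calV$ for all $\gamma\in G$; by (a) these descend to admissible opens $\calU_{\QQ_p}\subset\gerX_{{\rm rig},\QQ_p}$ and $\calV_{\QQ_p}\subset\gerY_{{\rm rig},\QQ_p}$ whose base changes to $\QQ_\kappa$ are $\calU$, $\calV$. Next, recall from the proof of Theorem~\ref{theorem: canonical subgroup} that $\pi$ restricts to an isomorphism $\pi\vert_{\calV}\colon\calV\overset{\sim}{\longrightarrow}\calU$ and that $s^\dagger$ is, by construction, the inverse of $\pi\vert_{\calV}$ followed by the inclusion $\calV\injects\Yrig$. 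The commutativity of Diagram~(\ref{Diagram: galois}) shows that $\pi\vert_{\calV}$ is $G$-equivariant, so by (b) it descends to an isomorphism $\pi_{\QQ_p}\colon\calV_{\QQ_p}\overset{\sim}{\longrightarrow}\calU_{\QQ_p}$, which is the restriction of $\pi_{\QQ_p}\colon\gerY_{{\rm rig},\QQ_p}\arr\gerX_{{\rm rig},\QQ_p}$. I would then \emph{define} $s^\dagger_{\QQ_p}$ to be the composite of $(\pi_{\QQ_p}\vert_{\calV_{\QQ_p}})^{-1}$ with the inclusion $\calV_{\QQ_p}\injects\gerY_{{\rm rig},\QQ_p}$; by construction it is a section to $\pi_{\QQ_p}$ with image $\calV_{\QQ_p}$, and since forming the inverse of an isomorphism commutes with the flat base change $\QQ_\kappa/\QQ_p$, its base change to $\QQ_\kappa$ is $(\pi\vert_{\calV})^{-1}=s^\dagger$.

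The one point I expect to require genuine care is the descent formalism (a)--(b) in the rigid-analytic category: one must be sure that $G$-stable \emph{admissible} opens descend to admissible opens (not merely to subsets of points) and that morphisms descend. The cleanest route is to note that $\Xrig$, $\Yrig$ are the analytifications of the proper, hence projective, $\QQ_\kappa$-schemes $\cX\otimes\QQ_\kappa$, $\cY\otimes\QQ_\kappa$, obtained from the $\QQ_p$-schemes $\cX_{\ZZ_p}\otimes\QQ_p$, $\cY_{\ZZ_p}\otimes\QQ_p$ by the finite \'etale Galois base change $\QQ_\kappa/\QQ_p$; ordinary faithfully flat descent then applies to quasi-coherent data and to morphisms of these schemes, and rigid GAGA over $\QQ_\kappa$ transports the conclusions to the rigid side. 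For the admissible opens $\calU$, $\calV$ themselves it is even more concrete: they are cut out inside the analytifications by the $G$-invariant systems of inequalities on the valuation vectors produced in Corollary~\ref{corollary: admissible} and Proposition~\ref{prop: admissible covering}, and the partial Hasse invariants and the local parameters $x_\beta,y_\beta,z_\beta$ used there are, up to the $G$-action, already defined over $\QQ_p$; so one may simply run the same construction starting from $\gerX_{{\rm rig},\QQ_p}$ and $\gerY_{{\rm rig},\QQ_p}$ and check that base-changing to $\QQ_\kappa$ recovers $\calU$, $\calV$.
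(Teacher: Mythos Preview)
Your proposal is correct and follows essentially the same Galois-descent strategy as the paper: both use the $G$-stability of $\calU$ and $\calV$ recorded just before the theorem, and both reduce descent of the section to the elementary fact that $\Hom(\calA,\calB)=\Hom_{\QQ_\kappa}(\calA\otimes\QQ_\kappa,\calB\otimes\QQ_\kappa)^G$ for $\QQ_p$-affinoids. Your device of descending the isomorphism $\pi\vert_{\calV}$ and then inverting, rather than descending $s^\dagger$ directly, is a harmless cosmetic variation.

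One caution on your last paragraph: the rigid-GAGA route you float does not apply here, because $\calU$, $\calV$, and $s^\dagger$ are genuinely non-algebraic objects (they are cut out by valuation inequalities, not by polynomial equations on the proper models), so there is no algebraic morphism or Zariski open to which GAGA could be applied. The paper, and your ``even more concrete'' alternative, instead argue directly at the level of affinoids: $\calU$ is an admissible union of $G$-invariant affinoids (those of Corollary~\ref{corollary: admissible}), and a $G$-invariant ideal of a $\QQ_\kappa$-Tate algebra is generated by series with $\QQ_p$-coefficients, so each such affinoid descends; this is the argument you should commit to.
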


\begin{proof}
We may think about the points of $\Yrig$, or $\Xrig$, as Galois
orbits of the points of $\Yrig$, or $\Xrig$, over $\Qpbar$, relative
to the action of $\Gal(\Qpbar/\QQ_p)$, and so the points of
$\gerY_{\rig,{\QQ_p}}$ are the Galois orbits of the points of $\Yrig$
relative to $\Gal(\QQ_\kappa/\QQ_p)$. It is then clear that the
section $s^\dagger$ is well-defined on such orbits (cf.
Diagram~\ref{Diagram: galois}). As a set, $\calU_{\QQ_p}$ comprises the $\Gal(\QQ_\kappa/\QQ_p)$-orbits of the
points of $\calU$; similarly, for $\calV_{\QQ_p}$.

Two points may be worth mentioning. The admissibility of
$\calU_{\QQ_p}$ boils down to the fact $\calU$ is a union of affinoids invariant under the Galois action (since they are defined by valuation conditions), and that an ideal of a
$\QQ_\kappa$-Tate algebra which is invariant under
$\Gal(\QQ_\kappa/\QQ_p)$ is generated by power series with
coefficients in $\QQ_p$. The fact that $s^\dagger_{\QQ_p}$ is a
morphism boils down to the fact that for two $\QQ_p$-affinoids
$\calA, \calB$ we have $\Hom(\calA, \calB) = \Hom_{K}(\calA \otimes
K, \calB \otimes K)^{\Gal(K/\QQ_p)}$ for any finite Galois extension
$K/\QQ_p$. Both these statements are easy to check.
\end{proof}

\

\

\section{Appendix} In this section, we construct variants of the
moduli space $Y$ considered in the paper so far. These moduli spaces
are natural and useful to the construction of partial $U$ operators $\{U_\gerp\}$. Since the results are very similar to the results
obtained henceforth, we shall be very brief, except for very
specific results not mentioned previously.

\

\subsection{A variant on the moduli problem} Let $\gert$ be an ideal of $\ol$ dividing $p$,  ${\gert^\ast} = p/\gert$ (so $\gert {\gert^\ast} =
p\ol$), $\BB_\gert = \cup_{\gerp \vert \gert} \BB_\gerp$, and
 $f(\gert) = \sum_{\gerp\vert \gert} f(\gerp)$  the sum of
the residue degrees. Consider the functor associating to a
$W(\kappa)$-scheme, the isomorphism classes of $(\uA, H)$, where $\uA$ is
an object parameterized by $X$, and where $H\subseteq A[p]$ is an
isotropic $\ol$-invariant finite-flat subgroup scheme of rank $p^{f(\gert)}$, on
which $\gert$ acts as zero; $H$ has a well-defined action of
$\ol/\gert \cong \oplus_{\gerp\vert \gert} \ol/\gerp$. There is a
fine moduli scheme $Y(\gert)$ representing this functor, which
actually has a model over $\ZZ_\gerp$; it affords a minimal compactification denoted by $\YY(\gert)$.
In what follows, $\Ybar(\gert),\cYbar(\gert), \Yrig(\gert)$ are defined similar to the case $\gert=p\ol$ studied throughout the paper.

The natural morphism
\[ \pi(\gert)\colon\YY(\gert) \arr \XX\]
is proper, and  finite-flat over $\QQ_\kappa$ of degree
$\prod_{\gerp\vert \gert} (p^{f(\gerp)} + 1)$. Given $\uA$, there
is a canonical decomposition $A[p]  = \prod_{\gerp\vert p}A[\gerp]$,
where $A[\gerp] = \cap_{a\in \gerp} \Ker(\iota(a))$ is a finite flat
group scheme of rank $p^{f(\gerp)}$. The subgroup $H$ appearing
above is a subgroup of $A[\gert] := \prod_{\gerp\vert \gert}
A[\gerp]$. Suppose $A$ is over a characteristic $p$ base; let
$\Ker(\Fr)[\gert] = \Ker(\Fr) \cap A[\gert]$. It is an example of a
subgroup scheme of the kind parameterized by $Y(\gert)$ (cf. Lemma \ref{lemma: cyclic is isotropic}). There is
a natural section
\[ s(\gert)\colon \cXbar \arr \cYbar(\gert),\]
given on non-cuspidal points by
\[ \uA \mapsto (\uA, \Ker(\Fr)[\gert]).\]
We can also present the moduli problem in a ``balanced" way. It
requires making auxiliary choices. The ideal $\gert$ has a natural
notion of positivity, and so it acts on the representatives
$[\Cl^+(L)]$ for the strict class group of $L$. Choose for every
such representative $(\gera, \gera^+)$ an isomorphism $\gamma_{\gert,(\gera,\gera^+)}$ of
$\gert\cdot(\gera, \gera^+)$ with another (uniquely determined)
representative in $[\Cl^+(L)]$. (In the case considered in the body
of the paper $\gert = (p)$, and there is a canonical isomorphism
$p\cdot (\gera, \gera^+) \cong (\gera, \gera^+)$, which is dividing
by $p$.)

Consider the moduli of $(f\colon \uA \arr \uB)$ such that $H =
\Ker(f)$ is an $\ol$-invariant  isotropic finite-flat subgroup
scheme of $A[\gert]$  of order $p^{f(\gert)}$, such that $f^\ast
\calP_{\uB} = \gert \calP_{\uA}$, and
$\lambda_B=\gamma_{\gert,(\gera,\gera^+)}\circ\lambda_A$. There is
an automorphism
\[ w: \YY(\gert) \arr \YY(\gert), \]
constructed as follows. Given $(\uA, H)$ consider $\uA/H$ and its
subgroup scheme $A[p]/H$; take its $\gert$-primary part
$(A[p]/H)[\gert]$. We let \[w(\uA, H) = (\uA/H, (A[p]/H)[\gert]).\]
In terms of the presentation $(f\colon \uA \arr \uB)$, we first find
a unique isogeny $f^t\colon \uB \arr \uA$, such that \[f^t\circ f =
[p].\] We then replace $\Ker(f^t)$ by its $\gert$-primary part to
obtain the subgroup scheme $(A[p]/H)[\gert]$. We note that $w$
permutes the connected components of $\YY(\gert)$; it changes the polarization module.

Given $(\uA,H)$, we let $B = \uA/H$, $f: \uA \arr \uB$ the canonical
map, and consider the diagrams:

\begin{align}
\bigoplus_{\beta\in \BB}\Lie(f)_\beta & \colon \bigoplus_{\beta\in
\BB}\Lie(\uA)_\beta \Arr \bigoplus_{\beta\in \BB}\Lie(\uB)_\beta,
\\\nonumber
 \bigoplus_{\beta\in \BB}\Lie(f^t)_\beta & \colon \bigoplus_{\beta\in
\BB}\Lie(\uB)_\beta \Arr \bigoplus_{\beta\in \BB}\Lie(\uA)_\beta.
\end{align}
We let, as before,
\begin{align}
\varphi(\uA, H) & = \{ \beta\in \BB_\gert\colon
\Lie(f)_{\sigma^{-1} \circ \beta} = 0\}, \\
\eta(\uA, H)  & = \{ \beta\in \BB_\gert\colon \Lie(f^t)_\beta =
0\}, \\
I(\uA, H) & = \ell(\varphi(f)) \cap \eta(f) = \{ \beta\in \BB\colon
\Lie(f)_\beta = \Lie(f^t)_\beta= 0\}.
\end{align}
Note that  $\varphi(\uA, H)$ in fact equals $\{ \beta\in \BB\colon
\Lie(f)_{\sigma^{-1} \circ \beta} = 0\}$, while $\{ \beta\in
\BB\colon \Lie(f^t)_\beta = 0\}$ always contains $\BB_{{\gert^\ast}}$,
which is ``superfluous information". For that reason, and for having
nicer formulas in what follows, we have defined $\eta(\uA, H)$ using
only $\BB_\gert$.

 One checks that these
invariants do not depend on the choice of $\gamma_{\gert,(\gera,\gera^+)}$. A pair
$(\varphi, \eta)$ of subsets of $\BB_\gert$ is called \emph{$\gert$-admissible} if
 $\BB_\gert-\ell(\varphi)\subseteq \eta$. There are $3^{f(\gert)}$
such pairs.

We define, as before,  subsets $\Wpe(\gert), \Zpe(\gert)$ of $\Ybar(\gert)$ for
$\gert$-admissible pairs $\pe$, and their natural extensions $\WW_{\varphi,\eta}(\gert), \ZZ_{\varphi,\eta}(\gert)$ to $\cYbar(\gert)$. This gives us, as before, a stratification of
$\cYbar(\gert)$, with very similar properties to the stratification
studied in the body of the paper; the reader should have no
difficulty listing its properties.\footnote{It should be remarked that since
the strata can be defined by vanishing of sections of line bundles,
they have a natural scheme structure and are, in fact,
reduced. Interestingly, that way one can see that each strata of
$\cYbar(\gert)$ (and also of $\cXbar$) represents a moduli problem,
where the additional data is precisely the vanishing of the sections
referred to above.}

The various moduli spaces $\YY(\gert)$ satisfy the following
compatibility relation: for $\gert, \gerz$ relatively prime ideals
of $\ol$ dividing $p$ we have \begin{equation}
 \YY(\gert\gerz) =
\YY(\gert) \times_\XX \YY(\gerz). \end{equation} Let $(\varphi, \eta)$ be
a $\gert$-admissible pair and $(\varphi', \eta')$ a
$\gerz$-admissible pair. Then \begin{equation}\label{equation: fibre product of strata} \WW_{\varphi,\eta}(\gert) \times_\XX \WW_{\varphi', \eta'}(\gerz) =
\WW_{\varphi\cup\varphi', \eta \cup \eta'}(\gert\gerz), \qquad  \ZZ_{\varphi,\eta}(\gert) \times_\XX
\ZZ_{\varphi', \eta'}(\gerz) = \ZZ_{\varphi\cup\varphi', \eta \cup \eta'}(\gert\gerz).
\end{equation}
Given two points $(\uA, H), (\uA, H')$ on $\Ybar(\gert)$ and $\Ybar(\gerz)$,
respectively, our invariants involve two different quotients of
$\uA$. One verifies (\ref{equation: fibre product of strata}) by
considering the following cartesian diagram:
\[ \xymatrix@=1pc{& \uA\ar[dr]\ar[dl] & \\ \uA/H\ar[dr] &&\uA/H'\ar[dl] \\ &\uA/(H\times H'). & }\]
Alternately, one can prove a lemma similar to Lemma~\ref{lemma: type
in terms of Dieudonne modules}, which describes the invariants in
terms of the relative position of $\Fr(\DD(A[p])), \Ver(\DD(A[p]))$,
and the submodule $M$ of $\DD(A[p])$ such that $\DD(A[p])/M =
\DD(H)$.

Define:
\[ \cYbar(\gerp)_F := \ZZ_{\BB_\gerp, \emptyset}(\gerp), \qquad  \cYbar(\gerp)_V :=\ZZ_{\emptyset, \BB_\gerp}(\gerp).\]
In fact, we have $\cYbar(\gerp)_F=s(\gerp)(\cXbar)$, and $ \cYbar(\gerp)_V=w(\cYbar(\gerp)_F)$.
For every ideal $\gert \vert p$, we have
\[\ZZ_{\BB_\gert, \BB_{{\gert^\ast}}}= \underset{\gerp \vert \gert}{\times_{\cXbar}} \cYbar(\gerp)_F \times
\underset{\gerp \vert {\gert^\ast}}{\times_{\cXbar}} \cYbar(\gerp)_V,\]by Equation  (\ref{equation: fibre product of strata}). In
particular, the choice of $\gert = p$ gives $\cYbar_F$, and the choice
of $\gert = \ol$ gives $\cYbar_V$.

\begin{lem}\label{lemma: pi is flat on compactified horizontal strata}
The morphism
\[ \pi\colon \ZZ_{\BB_\gert, \BB_{{\gert^\ast}}} \Arr \cXbar\]
is finite, flat, and purely inseparable of degree $p^{f({\gert^\ast})}$.
\end{lem}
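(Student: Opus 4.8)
The plan is to prove the three assertions --- finiteness, flatness, and pure inseparability of degree $p^{f(\gert^\ast)}$ --- largely by reducing to the two ``building block'' cases $\gert = p$ (i.e. $\cYbar_F$) and $\gert = \ol$ (i.e. $\cYbar_V$), and then assembling the general case via the fibre product decomposition already recorded just before the lemma,
\[ \ZZ_{\BB_\gert, \BB_{\gert^\ast}} = \underset{\gerp\vert\gert}{\times_{\cXbar}}\cYbar(\gerp)_F \;\times\; \underset{\gerp\vert\gert^\ast}{\times_{\cXbar}}\cYbar(\gerp)_V. \]
First I would observe that $\pi\colon \ZZ_{\BB_\gert,\BB_{\gert^\ast}}\arr\cXbar$ is proper (being a closed subscheme of $\cYbar(\gert)$, proper over $\cXbar$) and quasi-finite --- the latter because $\ZZ_{\BB_\gert,\BB_{\gert^\ast}}$ is horizontal (Definition and Corollary in \S\ref{subsection: The fibres of $\pi$}), equivalently because on the non-cuspidal locus the fibre of $\pi$ over $\Pbar$ meeting $\ZZ_{\BB_\gert,\BB_{\gert^\ast}}$ consists of the single Dieudonné submodule $\HH$ with $\HH_\beta = \Ker(\Ver)_\beta$ for $\beta\in\BB_\gert$, $\HH_\beta=\Ker(\Fr)_\beta$ for $\beta\in\BB_{\gert^\ast}$ (use Lemma~\ref{lemma: constructing pi -1 P cap Wpe} and the bijectivity of $g$), and over the cusps one uses normality of $\cYbar(\gert)$ there. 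Proper $+$ quasi-finite gives finite.

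For flatness and the degree computation, the cleanest route is the infinitesimal one. Over the non-cuspidal locus, I would invoke Proposition~\ref{prop: pi, infinitesimally, on horizontal strata}: at a point $\Qbar\in W_{\BB_\gert,\BB_{\gert^\ast}}$ with $\Pbar=\pi(\Qbar)$ one has $\widehat{\calO}_{\Ybar,\Qbar}\cong k[\![z_\beta:\beta\in\BB]\!]$, $\widehat{\calO}_{\Xbar,\Pbar}\cong k[\![t_\beta:\beta\in\BB]\!]$, and $\pi^\ast(t_\beta)=z_\beta$ for $\beta\in\BB_\gert$ while $\pi^\ast(t_\beta)=z_\beta^p$ for $\beta\in\BB_{\gert^\ast}$. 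Thus $\pi^\ast$ makes $\widehat{\calO}_{\Ybar,\Qbar}$ a free $\widehat{\calO}_{\Xbar,\Pbar}$-module with basis the monomials $\prod_{\beta\in\BB_{\gert^\ast}}z_\beta^{e_\beta}$, $0\le e_\beta\le p-1$, hence free of rank $p^{\sharp\,\BB_{\gert^\ast}} = p^{f(\gert^\ast)}$; this exhibits flatness (locally free, constant rank) and simultaneously shows the morphism is purely inseparable, since $z_\beta^p\in\Ima(\pi^\ast)$ for every $\beta$, so the residue field extensions and function-field extension are purely inseparable of the right degree. The arithmetic lift in Proposition~\ref{prop: pi, infinitesimally, on horizontal strata}'s analogue is not needed here since the lemma is a statement over $\cXbar$. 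Over the finitely many cusps I would argue separately: there $\cYbar(\gert)$ and $\cXbar$ are both regular (normal of dimension $g$, hence regular being toric/smooth in codimension $\le 1$ by the standard structure of the cusps), $\pi$ is finite and dominant between them, and miracle flatness (``finite between regular schemes of the same dimension $\Rightarrow$ flat'', when target is regular and source Cohen--Macaulay) gives flatness; the local degree there is again $p^{f(\gert^\ast)}$ by semicontinuity of fibre dimension / constancy of degree on the connected $\cXbar$ (each irreducible component of $\cXbar$ is geometrically connected, and $\pi$ restricted to a component of $\ZZ_{\BB_\gert,\BB_{\gert^\ast}}$ dominates it).

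Finally, the general case follows formally from the fibre-product formula together with the fact that $\cYbar(\gerp)_F\arr\cXbar$ is an isomorphism (it is $s(\gerp)$, a section of $\pi(\gerp)$ which is moreover surjective onto $\cXbar$ since every $\uA$ admits $\Ker(\Fr)[\gerp]$) and $\cYbar(\gerp)_V = w(\cYbar(\gerp)_F)\arr\cXbar$ is finite flat purely inseparable of degree $p^{f(\gerp)}$ by the building-block computation above (alternatively, $\cYbar(\gerp)_V\arr\cXbar$ is Frobenius-like: on points $\uA\mapsto(\uA/\Ker(\Ver),\ldots)$, essentially the relative Frobenius, which has degree $p^{f(\gerp)}$). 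Then, since flat, finite, purely inseparable morphisms are each stable under base change and composition, fibring the $\cYbar(\gerp)_V\arr\cXbar$ (for $\gerp\vert\gert^\ast$) over $\cXbar$ and multiplying the degrees gives $\prod_{\gerp\vert\gert^\ast}p^{f(\gerp)} = p^{f(\gert^\ast)}$, while the $\cYbar(\gerp)_F$-factors contribute trivially. I expect the main obstacle to be the behaviour at the cusps: one must be sure that $\cYbar(\gert)$ really is regular (not merely normal) along $\cYbar(\gert)-\Ybar(\gert)$ restricted to the horizontal stratum, so that miracle flatness applies, or else give a direct toroidal-coordinates computation there; everything on the non-cuspidal locus is immediate from Proposition~\ref{prop: pi, infinitesimally, on horizontal strata}.
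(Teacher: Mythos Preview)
Your overall architecture matches the paper's: reduce via the fibre-product formula to the building blocks $\cYbar(\gerp)_F\to\cXbar$ and $\cYbar(\gerp)_V\to\cXbar$, establish finiteness by proper $+$ quasi-finite, and then argue flatness and degree separately on the open part and at the cusps. On the non-cuspidal locus your route is genuinely different and in fact slicker than the paper's: the paper invokes miracle flatness for nonsingular varieties together with a separate degree count (using that the generic fibre of $Y(\gerp)\to X$ has degree $p^{f(\gerp)}+1$) and a separate observation that reduced fibres are singletons for pure inseparability, whereas you read off flatness, rank $p^{f(\gert^\ast)}$, and pure inseparability in one stroke from the explicit presentation of Proposition~\ref{prop: pi, infinitesimally, on horizontal strata}. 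There is no circularity: the proof of that Proposition (i.e.\ Lemma~\ref{lemma: pi, infinitesimally, on horizontal strata (Appendix)}) does not use the present lemma.

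The gap is at the cusps. Your assertion that $\cXbar$ (and $\cYbar(\gert)$) are \emph{regular} along the cuspidal locus is false once $g\ge 2$: the minimal (Satake) compactification of a Hilbert modular variety has genuine singularities at the cusps; the completed local ring $\overline{\FF}_p[\![q^\nu]\!]_{\nu\in T^\vee}^\Lambda$ is a normal (indeed Cohen--Macaulay, being an invariant ring of a torus-type action) but typically non-regular toric ring. Miracle flatness requires the \emph{target} to be regular, so your argument breaks down exactly here, and ``normal of dimension $g$, hence regular'' is a non sequitur (normal only gives $R_1$). You correctly flag this as the likely obstacle, and your proposed fallback---a direct coordinate computation---is precisely what the paper does: it writes the completed local rings at $c$ and $c'$ as $\overline{\FF}_p[\![q^\nu]\!]_{\nu\in T^\vee}^\Lambda$ and $\overline{\FF}_p[\![q^\nu]\!]_{\nu\in (T')^\vee}^{\Lambda'}$ with $T'\subseteq T$, $\Lambda'\subseteq\Lambda$ of finite index, and checks freeness by hand. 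So your plan is sound provided you replace the cusp step by this explicit $q$-expansion calculation rather than by an appeal to regularity.
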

\begin{proof}
To prove the lemma, it is enough to prove that the morphisms
\[\pi(\gerp)\colon\cYbar(\gerp)_F \Arr \cXbar, \qquad  \pi(\gerp)\colon\cYbar(\gerp)_V \Arr \cXbar,\]
obtained by restriction from $\cYbar(\gerp)$, are finite-flat of degree $1$ and $p^{f(\gerp)}$, respectively. We first consider the situation without compactifications; we have
the morphisms
\[\pi(\gerp)\colon\Ybar(\gerp)_F \Arr \Xbar, \qquad  \pi(\gerp)\colon\Ybar(\gerp)_V \Arr \Xbar.\]
Since the morphism
$\pi(\gerp):\Ybar(\gerp) \arr \Xbar$ is proper and quasi-finite on each of
$\Ybar(\gerp)_F$, $\Ybar(\gerp)_V$, it is in fact finite
\cite[Chapter 3, \S4, Proposition 4.4.2]{EGA}. Since a
finite surjective morphism of non-singular varieties over an
algebraically closed field is flat \cite[Chapter III, Exercise
9.3(a)]{Hartshorne}. It follows that indeed
we have two finite flat morphisms. The existence of a section $\Xbar
\arr \Ybar(\gerp)_F$ shows the morphism $\Ybar(\gerp)_F \arr \Xbar$
is an isomorphism. The generic fibre of $Y(\gerp) \arr X$ is finite
flat of degree $p^{f(\gerp)}+1$, and so it follows that the morphism
$\Ybar(\gerp)_V \arr \Xbar$ is finite flat of degree $p^{f(\gerp)}$.
Since the reduced fibers of this morphism are just singletons, we
conclude that $\Ybar(\gerp)_V \arr \Xbar$ is purely inseparable of
degree $p^{f(\gerp)}$.

We now indicate how to extend the argument to compactifications.  One can use the description of the completed
local ring of a cusp via $q$-expansions. Let $c'$ be a cusp of $\cYbar$, $c=\pi(c')$, viewed over $\overline{\FF}_p$, lying on the connected component of $\Ybar$, respectively, $\cXbar$, corresponding to a polarization module $(\gera, \gera^+)$. To $c$ one associates  a pair $(T, \Lambda)$, of translation module and multiplier group, determined by the group $\Gamma \subset \SL_2(L)$ corresponding to the given polarization datum and level $\Gamma_{00}(N)$ (see \cite[I.\S2-3]{Freitag}). Taking instead the subgroup $\Gamma'$ of $\Gamma$, corresponding to adding the level structure at $p$,  one obtains another pair $(T', \Lambda')$, where $T', \Lambda'$ are finite index subgroups of $T, \Lambda$, respectively. The completed local ring of $c$ on $\cXbar$ is $\overline{\FF}_p[\![ q^\nu]\!]_{\nu\in T^\vee}^\Lambda$ and on $\cYbar$ is $\overline{\FF}_p[\![ q^\nu]\!]_{\nu\in T'^\vee}^{\Lambda'}$ (see loc. cit. and \cite{Chai}). Checking flatness at the cusp $c$ amounts to checking that  $\overline{\FF}_p[\![ q^\nu]\!]_{\nu\in T'^\vee}^{\Lambda'}$ is a free $\overline{\FF}_p[\![ q^\nu]\!]_{\nu\in T^\vee}^\Lambda$-module, which can be verified by a straightforward calculation.
\end{proof}

\begin{lem}\label{lemma: pi, infinitesimally, on horizontal strata (Appendix)} Let $\Qbar \in W_{\BB_\gert,\BB_{\gert^\ast}}$, and $\Pbar=\pi(\Qbar)$.  We can choose isomorphisms as in (\ref{equation: GO isomorphism}) at $\Pbar$, and (\ref{equation: stamm isomorphism at horizontal strata}) at $\Qbar$, such that:
\[
\pi^{*}(t_\beta)=\begin{cases} z_\beta\qquad \beta  \in \BB_\gert \\    z_\beta^p \qquad \beta \in \BB_{\gert^\ast}.\end{cases}
\]

\end{lem}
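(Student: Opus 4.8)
The plan is to reduce the statement to the local (single-$\beta$) deformation-theoretic picture developed in \S\ref{subsection: infinitesimal nature of Ybar}, just as in the proof of Lemma~\ref{lemma: section and pi}, and then to read off $\pi^\ast(t_\beta)$ from the compatible choice of parameters. Since $\Qbar\in W_{\BB_\gert,\BB_{\gert^\ast}}$ has $I(\Qbar)=\emptyset$, the isomorphism~(\ref{equation: stamm isomorphism at horizontal strata}) holds, and for each $\beta$ there are the data $W_1,W_2,M,R$ at the $\beta$-component, with $\varphi(\Qbar)=\BB_\gert$ and $\eta(\Qbar)=\BB_{\gert^\ast}$. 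First I would split according to whether $\beta\in\BB_\gert$ or $\beta\in\BB_{\gert^\ast}$, recalling from the discussion preceding Theorem~\ref{thm: Stamm's theorem} that the parameter $z_\beta$ at $\Qbar$ records the deformation of $W_2=W_{B,\beta}$ when $\beta\in\BB_\gert$ (equivalently $\sigma\circ\beta\notin\varphi(\Qbar)$ fails to hold, so $W_{B,\beta}\ne\Ker M$ and it is $W_2$ that deforms) and the deformation of $W_1=W_{A,\beta}$ when $\beta\in\BB_{\gert^\ast}$.

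The key computation is the behaviour of $\pi^\ast$ on the universal Hodge filtration. On $\cXbar$ near $\Pbar=\pi(\Qbar)$ the parameter $t_\beta$ (chosen as a local lift of the partial Hasse invariant $h_\beta$, or equivalently as the coordinate deforming $W_{A,\beta}=H^0(A,\Omega^1)_\beta$) cuts out $Z_{\{\beta\}}$. For $\beta\in\BB_{\gert^\ast}$, i.e.\ $\Lie(f)_\beta=0$ but $\Lie(f^t)_\beta\neq 0$, the map $f^\ast$ on $\beta$-components is an isomorphism, so the deformation of $W_A$ pulls back isomorphically and we may arrange $\pi^\ast(t_\beta)=z_\beta$; this is really the content of Lemma~\ref{lemma: section and pi} applied on the horizontal stratum. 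For $\beta\in\BB_\gert$, i.e.\ $\Lie(f)_{\sigma^{-1}\circ\beta}=0$, the relevant map is the Frobenius-twisted one: $t_\beta$ is a lift of $h_\beta$, which, via the identification $H^1_{\rm dR}=\DD(A[p])$, is governed by $\Fr$ acting from the $\sigma^{-1}\circ\beta$-component, hence introduces a $p$-th power and $\pi^\ast(t_\beta)=z_\beta^p$. Concretely, this can also be seen by using the Key Lemma~\ref{lemma: key lemma } on the neighbouring horizontal strata (as in the Remark following Proposition~\ref{prop: pi, infinitesimally, on horizontal strata}): for $\beta\in\BB_\gert\cap\BB_\gert=\BB_\gert$ we have $\beta\in\varphi(\Qbar)$, and tracking which of the four cases of the Key Lemma applies — together with $I(\Qbar)=\emptyset$ so that no $x$- or $y$-variables are present, only $z$'s — forces the two stated formulas after renaming.

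The cleanest route is probably to invoke Lemma~\ref{lemma: pi is flat on compactified horizontal strata}, which already tells us $\pi\colon Z_{\BB_\gert,\BB_{\gert^\ast}}\to\cXbar$ is finite flat purely inseparable of degree $p^{f(\gert^\ast)}$: on the $\BB_\gert$-factors $\pi$ is an isomorphism (it is the Kernel-of-Frobenius section composed with $\pi$), giving $\pi^\ast(t_\beta)=z_\beta$ up to a unit which can be absorbed; on the $\BB_{\gert^\ast}$-factors $\pi$ is purely inseparable of degree $p$ in each variable, so $\pi^\ast(t_\beta)$ must be a unit times $z_\beta^p$ modulo higher-order terms, and again a coordinate change on $t_\beta$ makes it exactly $z_\beta^p$. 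Wait — I have the roles of $\gert$ and $\gert^\ast$ to keep straight: on $\Ybar_F=Z_{\BB,\emptyset}$ (the case $\gert=p$) we want $\pi^\ast(t_\beta)=z_\beta$ for all $\beta$, which matches $\BB_\gert=\BB$; on $\Ybar_V=Z_{\emptyset,\BB}$ (the case $\gert=\ol$, $\gert^\ast=p$) we want $\pi^\ast(t_\beta)=z_\beta^p$ for all $\beta$, matching $\BB_{\gert^\ast}=\BB$, consistent with $\pi|_{\Ybar_V}$ being the ``$\mathrm{Ver}$-side'' quotient which is purely inseparable of degree $p^g$. So the bookkeeping checks out. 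The main obstacle is the same one that appears in Lemma~\ref{lemma: section and pi}: producing the \emph{compatible} choices of isomorphisms~(\ref{equation: GO isomorphism}) and~(\ref{equation: stamm isomorphism at horizontal strata}) simultaneously, i.e.\ verifying that the boot-strapping from $\calO/\germ^p$ to the full completed local rings in \S\ref{subsection: infinitesimal nature of Ybar} can be carried out so that $\pi^\ast$ takes the normalized form on the nose rather than merely up to a unit — but for a purely inseparable morphism between regular one-dimensional formal disks this is a standard Weierstrass-type normalization, and no new idea beyond what is in \S\ref{subsection: infinitesimal nature of Ybar} is needed.
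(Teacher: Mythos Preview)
Your proposal has a real gap on the $\BB_{\gert^\ast}$ side. The appeal to the Key Lemma is both inapplicable and circular: it only yields information about $\pi^\ast(t_\beta)$ for $\beta\in\varphi(\Qbar)\cap\eta(\Qbar)$, and on a horizontal stratum $\varphi\cap\eta=\BB_\gert\cap\BB_{\gert^\ast}=\emptyset$; moreover, the Sub-lemma inside the proof of the Key Lemma (the statement $M=1,\ N=p$) already \emph{uses} the $\Ybar_V$-case of the present lemma, so you cannot invoke it here. The Remark you cite after Proposition~\ref{prop: pi, infinitesimally, on horizontal strata} records the implication in the opposite direction. Your ``cleanest route'' via Lemma~\ref{lemma: pi is flat on compactified horizontal strata} is also incomplete: knowing that $\pi$ is purely inseparable of total degree $p^{f(\gert^\ast)}$ on $Z_{\BB_\gert,\BB_{\gert^\ast}}$ does not by itself tell you that $\pi^\ast(t_\beta)$ involves only $z_\beta$, nor that the exponent is exactly~$p$ for each $\beta\in\BB_{\gert^\ast}$ rather than, say, $p^{f(\gerp)}$ on one variable and~$1$ on the others within a single $\BB_\gerp$. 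The hand-wave about $h_\beta$ being ``Frobenius-twisted'' does not bridge this: the $p$-th power you need is a feature of $\pi^\ast$, not of the Hasse invariant on $\Xbar$.

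What the paper actually does is supply the missing mechanism. For $\Ybar_F$ (the case $\gert=p\ol$) your idea is right: Lemma~\ref{lemma: section and pi} gives $s^\ast(z_\beta)=t_\beta$, and since $s$ is an isomorphism onto $\Ybar_F$ one inverts to get $\pi^\ast(t_\beta)=z_\beta$. For $\Ybar_V$ (the case $\gert=\ol$) the key observation you are missing is the identity of morphisms $\pi\circ w\circ s=\Fr$ on $\Xbar_{\FF_p}$; combining this with the known descriptions of $s^\ast$ (Lemma~\ref{lemma: section and pi}) and $w^\ast$ (Lemma~\ref{lemma: w and local parameters}), and with $\Fr^\ast(t_\beta)=t_\beta^p$, forces $\pi^\ast(t_\beta)=z_\beta^p$ on the nose. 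The general horizontal stratum is then handled not by a direct local computation at $\Qbar$, but by the fibre-product decomposition $\widehat{\calO}_{\Ybar,\Qbar}\cong\bigotimes_{\widehat{\calO}_{\Xbar,\Pbar}}\widehat{\calO}_{\Ybar(\gerp),\Qbar_\gerp}$ coming from $\YY=\times_\XX\YY(\gerp)$, which reduces the question one prime at a time to the two extreme cases already established.
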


\begin{proof}  First, we prove the lemma for the cases $\gert=p\ol$ (i.e., $\Qbar\in\Ybar_F^{\rm ord}$), and $\gert=\ol$ (i.e., $\Qbar\in\Ybar_V^{\rm ord}$).
In the first case, the result follows from Lemma \ref{lemma: section and pi} and the fact that $\pi^*$ is an inverse to $s^*$.

Assume now that $\Qbar\in\Ybar_V^{\rm ord}$. Consider the morphism $ \pi \circ w \circ s\colon \Xbar_{\FF_p} \rightarrow \Xbar_{\FF_p} $. Calculating the effect of this map on points, we find that it is equal to the Frobenius morphism $\Fr\colon  \Xbar_{\FF_p}  \arr  \Xbar_{\FF_p}$.   Let $\Qbar_1$ be a point in $\Ybar_F^{\rm ord}$ such that $w(\Qbar_1)=\Qbar$ and let $\Pbar_1=\pi(\Qbar_1)$; then $I(\Qbar_1)=\emptyset$, and $\Pbar=\Fr(\Pbar_1)$.  By Lemma \ref{lemma: section and pi}, there are choices of isomorphisms as in   (\ref{equation: GO isomorphism}) at $\Pbar_1$, and (\ref{equation: stamm isomorphism at horizontal strata}) at $\Qbar_1$, such that $s^*(z_{\beta,\Qbar_1})=t_{\beta,\Pbar_1}$. We may choose parameters $\{t_{\beta,\Pbar}\}_\beta$ at $\Pbar$, such that $\Fr^*(t_{\beta,\Pbar})=t_{\beta,\Pbar_1}^p$, and by Lemma \ref{lemma: w and local parameters}, we can find parameters as in (\ref{equation: stamm isomorphism at horizontal strata}) at $\Qbar$, such that $w^*(z_{\beta,\Qbar})=z_{\beta,\Qbar_1}$. Therefore,
\[
s^* \circ w^*(z_{\beta,\Qbar}^p)=t_{\beta,\Pbar_1}^p.
\]
On the other hand, from the above discussion, we have
\[
s^* \circ w^* \circ \pi^* (t_{\beta,\Pbar})=\Fr^*(t_{\beta,\Pbar})=t_{\beta,\Pbar_1}^p.
\]
Since $s^* \circ w^*$ is an isomorphism, it follows that $\pi^* (t_{\beta,\Pbar})=z_{\beta,\Qbar}^p$.

For a general horizontal stratum, we argue as follows.  The arguments in \S\ref{subsection: infinitesimal nature of Ybar} can be repeated ad verbatim for $\Ybar(\gerp)$ to produce parameters $\{x_{\beta,\gerp}: \beta\in \BB_\gerp; y_{\beta,\gerp}: \beta \in \BB_{\gerp^\ast}\}$   at a point in $W_{\BB_\gerp,\BB_\gerp^\ast}(\gerp)$ (in the notation of the first formulation of Theorem \ref{thm: Stamm's theorem}). In fact, these parameters can be chosen compatibly with the parameters on $\Ybar$; more precisely, we have the following. Let $\Qbar$ be a closed point in $W_{\BB_\gert,\BB_{\gert^\ast}}$; let $\Qbar_\gerp$ be the image in $\Ybar(\gerp)$  and $\Pbar=\pi(\Qbar)=\pi(\gerp)(\Qbar_\gerp)$. We have an isomorphism:
\[
\widehat{\calO}_{\Ybar,\Qbar} \cong  {\otimes}_{\widehat{\calO}_{\Xbar,\Pbar}} \widehat{\calO}_{\Ybar(\gerp),\Qbar_\gerp},
\]
where the tensor product is over all prime ideals $\gerp$ dividing $p$. In terms of the above parameters, this isomorphism can be written as:
\begin{eqnarray*}
k[\![x_\beta: \beta\in\BB_\gert; y_\beta: \beta\in\BB_{\gert^\ast} ]\!] \cong {\otimes}_{\gerp\vert\gert} k[\![x_{\beta,\gerp}: \beta\in\BB_\gerp; y_{\beta,\gerp}: \beta\in\BB_{\gerp^\ast} ]\!] {\otimes} {\otimes}_{\gerp\vert\gert^\ast} k[\![y_{\beta,\gerp}: \beta\in\BB]\!],
\end{eqnarray*}
where the tensor products are over
$\widehat{\calO}_{\Xbar,\Pbar}\cong k[\![t_\beta; \beta\in\BB]\!]$,
and  the images of $x_\beta,y_\beta$ are given, respectively, by
$x_{\beta,\gerp},y_{\beta,\gerp}$, where  $\gerp$ is  the unique prime
ideal such that $\beta\in\BB_\gerp$. This allows us to partially calculate the morphism
\[
\pi(\gerp)^\ast:{\widehat{\calO}_{\Xbar,\Pbar}} \arr \widehat{\calO}_{\Ybar(\gerp),\Qbar_\gerp},
\]
as follows. We first consider the above  isomorphism when  $\gert=\ol$. Using the description of
$\pi^*$ on $\Ybar_V^{\rm ord}=W_{\emptyset,\BB}$ given above, we find that if $\Qbar_\gerp \in W_{\emptyset,\BB_\gerp}(\gerp)$, then for $\beta \in \BB_\gerp$, we have
 $\pi^\ast(\gerp)(t_\beta)=\pi^\ast(t_\beta)=y_\beta^p=y_{\beta,\gerp}^p$, where the first and last equalities are via the isomorphism above.
On the other hand, considering the above isomorphism  when  $\gert=(p)$, and  using the description of $\pi^*$ on $\Ybar_F^{\rm ord}=W_{\BB,\emptyset}$, we can similarly argue that for  $\Qbar_\gerp \in W_{\BB_\gerp,\emptyset}(\gerp)$, and $\beta\in \BB_\gerp$,  we have $\pi^\ast(\gerp)(t_\beta)=\pi^\ast(t_\beta)=x_\beta=x_{\beta,\gerp}$.
Putting these together, we can calculate the map
\[
\pi^*:{\widehat{\calO}_{\Xbar,\Pbar}} \arr
\widehat{\calO}_{\Ybar,\Qbar}
\]
at a point $\Qbar \in W_{\BB_\gert,\BB_{\gert^\ast}}$, as follows. Let $\beta\in\BB_\gerp$. Then, we can write
\[
\pi^\ast(t_\beta)=\pi(\gerp)^\ast(t_\beta)=\begin{cases} x_{\beta,\gerp}=x_\beta &  \Qbar_\gerp\in W_{\BB_\gerp,\emptyset}(\gerp)   \Leftrightarrow  \gerp\vert \gert, \\  y_{\beta,\gerp}^p=y_\beta^p  & \Qbar_\gerp\in W_{\emptyset, \BB_\gerp}(\gerp) \Leftrightarrow \gerp\vert\gert^\ast,\end{cases}
\]
where all the equalities (but the second one) are via the isomorphism above. Renaming the $x_\beta, y_\beta$ to $z_\beta$ as in the second formulation of Theorem \ref{thm: Stamm's theorem}, the claim follows.

 \end{proof}

Define $\Yrig(\gert)$ to be the Raynaud generic fibre of the completion of $\YY(\gert)$ along its special fibre.
For $Q\in\Yrig(\gert)$, and $\beta\in\BB_\gert$, we define
\[\nu_\beta(Q)=
\begin{cases}1 &  \beta\in\eta(\Qbar)-I(\Qbar), \\  \nu(x_{\beta,\gert}(Q)) & \beta\in I(\Qbar),\\ 0 & \beta\not\in \eta(\Qbar),
\end{cases}
\]
where $x_{\beta,\gert}$'s are variables at $\Qbar$, a closed point of $\Ybar(\gert)$, chosen in the same way as in Theorem \ref{thm: Stamm's theorem}. It follows that if $Q\in\Yrig$, and $Q_\gert$ denotes its image in $\Yrig(\gert)$, then $\nu_\beta(Q)=\nu_\beta(Q_\gert)$ if $\beta\in\BB_\gert$. We can generalize the definition of $\calU,\calV,\calW,\calV_\gerp,\calW_\gerp$  in \S \ref{subsec: main theorem} in the obvious way to obtain admissible opens in $\Yrig(\gert)$ denoted $\calV(\gert)$, $\calW(\gert)$, $\calV_\gerp(\gert)$, $\calW_\gerp(\gert)$ (for any ideal $\gerp\vert \gert$). For example,
\[
\calU(\gert)=\{P\in\Xrig: \nu_\beta(P)+p\nu_{\sigma^{-1}\circ\beta}(P)<p \quad \forall \beta\in\BB_\gert \}.
\]Similarly, we can define
\begin{eqnarray*}
\Xord(\gert)=\{P\in\Xrig(\gert): \nu_\beta(P)=0 \quad \forall\beta\in\BB_\gert\},\\
\Yoord(\gert)=\{Q\in\Yrig(\gert): \nu_\beta(Q)=0 \quad \forall\beta\in\BB_\gert\}.
\end{eqnarray*}
 As before, we can apply Hensel's lemma to obtain a morphism $s^\circ(\gert)\colon\Xord(\gert) \arr \Yoord(\gert)$ which is a section to $\pi(\gert)$. Applying the same method as in the proof of Theorem \ref{theorem: canonical subgroup}, we can prove the following.

\begin{thm}\label{theorem: canonical subgroup over gert} Let notation be as above.
\begin{enumerate}
\item $\pi(\gert)(\calV(\gert))=\calU(\gert)$.
\item There is a section $s^\dagger(\gert)\colon\calU(\gert) \arr \calV(\gert)$ to $\pi(\gert)$, extending
$s^\circ(\gert)\colon \Xord \arr \Yoord(\gert)$.
 \end{enumerate}
\end{thm}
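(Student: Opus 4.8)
The plan is to run the argument of Theorem~\ref{theorem: canonical subgroup} essentially verbatim, everywhere replacing $\BB$ by $\BB_\gert$ and the strata of $\Ybar$, $\Xbar$ by their $Y(\gert)$‑counterparts. First I would record the infinitesimal input. For a closed point $\Qbar$ of $\Ybar(\gert)$ with $\gert$‑admissible invariants $\pe$ (so $\varphi,\eta\subseteq\BB_\gert$) and $I=\ell(\varphi)\cap\eta$, the analogue of Theorem~\ref{thm: Stamm's theorem} reads
\[
\widehat{\calO}_{\Ybar(\gert),\Qbar}\cong k[\![\{x_\beta,y_\beta:\beta\in I\},\{z_\beta:\beta\in\BB_\gert-I\},\{t_\beta:\beta\in\BB_{\gert^\ast}\}]\!]/(\{x_\beta y_\beta:\beta\in I\}),
\]
with the $t_\beta$, $\beta\in\BB_{\gert^\ast}$, pulled back from $\widehat{\calO}_{\Xbar,\Pbar}$, so that $\pi(\gert)^\ast(t_\beta)=t_\beta$ there. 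The heart of the matter is the $\gert$‑analogue of the Key Lemma~\ref{lemma: key lemma }: for $\beta\in\varphi\cap\eta$ one gets the same four‑case formula for $\pi(\gert)^\ast(t_\beta)$ in terms of $x_\beta$ and $y_{\sigma^{-1}\circ\beta}$. Its proof is the one in \S\ref{subsection: infinitesimal nature of Ybar} word for word; the only external ingredients are Lemma~\ref{lemma: dagger} (a purely ring‑theoretic identity, unchanged) and, for the sub‑lemma $M=1,N=p$, finite‑flatness and pure inseparability of $\pi(\gert)$ on the horizontal strata together with the fact that every irreducible component of a stratum $Z_{\varphi_0,\eta_0}(\gert)$ meets $\Ybar(\gert)_F\cap\Ybar(\gert)_V$. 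These are the $\gert$‑versions of Lemma~\ref{lemma: pi is flat on compactified horizontal strata} and Theorem~\ref{theorem: C cap YF cap YV}, and they hold for $Y(\gert)$ by the fibre‑product identity (\ref{equation: fibre product of strata}) exactly as in the body of the paper. (Alternatively, via $\widehat{\calO}_{Y,\Qbar}=\widehat{\calO}_{Y(\gert),\Qbar_\gert}\,\hat\otimes_{\widehat{\calO}_{X,\Pbar}}\,\widehat{\calO}_{Y(\gert^\ast),\Qbar_{\gert^\ast}}$ one can try to descend the $\gert$‑Key Lemma directly from the Key Lemma; the direct route is cleaner to write up.)

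Granting the $\gert$‑Key Lemma, the remainder is a mechanical transcription of \S\ref{subsec: main theorem}, with all valuation conditions indexed by $\BB_\gert$ (the vectors $\nu_{\gerY}(\gert)$ involve only those coordinates). In order I would establish: the valuation form of the Key Lemma (analogue of Lemma~\ref{lemma: key lemma in terms of valuations}); the behaviour of $\nu_\beta$ under $\pi(\gert)$, namely $\nu_\beta(\pi(\gert)Q)=\nu_\beta(Q)$ on $\calV_\gerp(\gert)$ and $\nu_\beta(\pi(\gert)Q)=p(1-\nu_{\sigma^{-1}\circ\beta}(Q))$ on $\calW_\gerp(\gert)$ for $\beta\in\BB_\gerp$, $\gerp\vert\gert$ (analogue of Lemma~\ref{lemma: valuations under pi}), whence $\pi(\gert)^{-1}(\calU(\gert))\supseteq\calV(\gert)\cup\calW(\gert)$; the ``too singular'' lemma (analogue of Lemma~\ref{lemma: pi on the too singular locus}): if $\nu_\beta(Q)+p\nu_{\sigma^{-1}\circ\beta}(Q)\le p$ and $\nu_{\sigma\circ\beta}(Q)+p\nu_\beta(Q)\ge p$ for some $\beta\in\BB_\gert$, then $\pi(\gert)Q\notin\calU(\gert)$; and, combining these with the chain argument inside a fixed $\BB_\gerp$ ($\gerp\vert\gert$), the equality $\pi(\gert)^{-1}(\calU(\gert))=\calV(\gert)\cup\calW(\gert)$ (analogue of Corollary~\ref{corollary: pi inverse of U}). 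Assertion (1) of the theorem is immediate from this.

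For assertion (2), I would fix a rational point ${\bf a}$ of the $\gert$‑cube $\{a_\beta+pa_{\sigma^{-1}\circ\beta}<p,\ \forall\beta\in\BB_\gert\}$ and decompose $\pi(\gert)^{-1}(\Xrig[{\bf 0},{\bf a}])$ as the disjoint union of $\Yrig(\gert)[{\bf 0},{\bf a}]$ with the ``anti‑canonical'' pieces indexed by $\emptyset\ne S\subseteq\{\gerp\vert\gert\}$ (analogue of Corollary~\ref{corollary: circles}); all pieces are quasi‑compact admissible opens by the $\gert$‑version of Corollary~\ref{corollary: admissible}, and the decomposition is admissible since $\Yrig(\gert)$ is quasi‑separated. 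Hence $\pi(\gert)\colon\Yrig(\gert)[{\bf 0},{\bf a}]\to\Xrig[{\bf 0},{\bf a}]$ is finite flat, and its degree is computed over $\Xord(\gert)$ (which meets every connected component of $\Xrig[{\bf 0},{\bf a}]$, as it contains $\Xord=\Xrig[{\bf 0},{\bf 0}]$): its preimage there inside $\Yrig(\gert)[{\bf 0},{\bf a}]$ is $\Yoord(\gert)$, over which $\pi(\gert)$ is an isomorphism by the Hensel/idempotent‑lifting argument of Proposition~\ref{proposition: ordinary section} applied to $Y(\gert)$, using that $\cYbar(\gert)_F=s(\gert)(\cXbar)$ is a union of connected components of $\cYbar(\gert)^{\rm ord}$; so the degree is $1$ (analogue of Lemma~\ref{lemma: degree is one}). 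The resulting sections $s^{\bf a}(\gert)\colon\Xrig[{\bf 0},{\bf a}]\to\Yrig(\gert)[{\bf 0},{\bf a}]\subseteq\calV(\gert)$ agree on overlaps and extend $s^\circ(\gert)$, and since the $\Xrig[{\bf 0},{\bf a}]$ admissibly cover $\calU(\gert)$ (analogue of Proposition~\ref{prop: admissible covering}) they glue to $s^\dagger(\gert)\colon\calU(\gert)\to\calV(\gert)$. The only genuinely non‑routine step I anticipate is the $\gert$‑Key Lemma — and within it the sub‑lemma $M=1,N=p$, which is where the horizontal‑strata geometry of $Y(\gert)$ really gets used; everything downstream is a transcription of \S\S\ref{subsection: infinitesimal nature of Ybar}, \ref{subsec: main theorem}.
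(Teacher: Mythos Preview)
Your proposal is correct and matches the paper's own approach: the paper simply states that the theorem is proved ``applying the same method as in the proof of Theorem~\ref{theorem: canonical subgroup}'', and what you have written is precisely a careful unpacking of that method in the $\gert$-setting, identifying the $\gert$-Key Lemma (and within it the sub-lemma $M=1,N=p$ via horizontal-strata geometry) as the only substantive step and transcribing the valuation arguments of \S\ref{subsec: main theorem} with $\BB$ replaced by $\BB_\gert$.
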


 Let $K\supseteq W(\kappa)$ be a completely valued field. Let
$\underline{A}/K$ be an abelian variety corresponding to a point of non-cuspidal reduction
$P\in \mathcal{U}(\gert)$.  Let $Q=s^\dagger(\gert)(P)$ correspond to $(\uA,H_\gert)$. We call $H_\gert$ the {\it $\gert$-canonical subgroup} of $\uA$. Again, we can prove:

\begin{thm}\label{theorem: reduction of canonical subgroup appendix}
Let
$\underline{A}/K$ be an abelian variety corresponding to a point of non-cuspidal reduction
$P\in \mathcal{U}(\gert)$. Enlarging $K$, we can assume that there exists $r_\gert$ in $K$ such that
${\rm val}(r_\gert)=\max\{\nu_\beta(\underline{A}): \beta\in\mathbb{B}_\gert\}$. The $\gert$-canonical subgroup of $\underline{A}$ reduces to ${\rm Ker}({\rm Fr})[\gert]$ modulo $p/r_\gert$.
 \end{thm}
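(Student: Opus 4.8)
The plan is to run the proof of Theorem~\ref{theorem: reduction of canonical subgroup}(1) essentially verbatim, with $\YY(\gert)$, $\pi(\gert)$, $s(\gert)$ and $\calV(\gert)$ in place of $Y$, $\pi$, $s$ and $\calV$; I record the three points where the $\gert$-setting needs a word. First I would identify the stratum that $\Qbar$ lies on. Write $Q = s^\dagger(\gert)(P) = (\uA, H_\gert) \in \calV(\gert)$ and $\Pbar = \pi(\gert)(\Qbar)$. Since $Q \in \calV(\gert)$ we have $\nu_\beta(Q) \neq 1$ for every $\beta \in \BB_\gert$, so from the definition of the valuations on $\Yrig(\gert)$ one gets $\eta(\Qbar) - I(\Qbar) = \emptyset$, hence (as $I(\Qbar) = \ell(\varphi(\Qbar)) \cap \eta(\Qbar)$ is always contained in $\eta(\Qbar)$) $\eta(\Qbar) = I(\Qbar) \subseteq \ell(\varphi(\Qbar))$. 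The $\gert$-admissibility condition $\BB_\gert - \ell(\varphi(\Qbar)) \subseteq \eta(\Qbar)$ then forces $\BB_\gert - \ell(\varphi(\Qbar)) \subseteq \ell(\varphi(\Qbar))$, i.e. $\ell(\varphi(\Qbar)) = \BB_\gert$, i.e. $\varphi(\Qbar) = \BB_\gert$. Thus $\Qbar$ lies on $Z_{\BB_\gert,\emptyset}(\gert) = s(\gert)(\cXbar)$, and since $s(\gert)$ is a section of $\pi(\gert)$ with $\pi(\gert)(\Qbar) = \Pbar$ we conclude $s(\gert)(\Pbar) = \Qbar$; this already gives the reduction of $H_\gert$ to $\Ker(\Fr)[\gert]$ modulo the maximal ideal, and the theorem is the refinement to $p/r_\gert$.

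Next I would produce a convenient system of local parameters, the analogue of Lemma~\ref{lemma: section and pi} for $s(\gert)$: the deformation-theoretic/Grassmann computation of \S\ref{subsection: infinitesimal nature of Ybar} applies componentwise without change, yielding $\widehat{\calO}_{X,\Pbar} \cong W(k)[\![t_\beta : \beta \in \BB]\!]$ together with a Stamm-type presentation of $\widehat{\calO}_{Y(\gert),\Qbar}$ in variables $\{x_\beta, y_\beta : \beta \in I(\Qbar)\}$ and $\{z_\gamma\}$ such that $s(\gert)^\ast(x_\beta) = t_\beta$, $s(\gert)^\ast(y_\beta) = 0$ for $\beta \in I(\Qbar)$, and $s(\gert)^\ast(z_\gamma) = t_\gamma$ otherwise, the vanishing $s(\gert)^\ast(y_\beta) = 0$ coming as before from $\Fr^\ast \omega_{A^{(p)}} = 0$. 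I would also invoke the $\gert$-version of Lemma~\ref{lemma: valuations under pi}(1), namely that on $\calV(\gert)$ the morphism $\pi(\gert)$ preserves the $\beta$-valuations for $\beta \in \BB_\gert$; its proof is the same case analysis through the Key Lemma. Combining this with $\eta(\Qbar) = I(\Qbar)$ gives $\mathrm{val}(r_\gert) = \max_{\beta \in \BB_\gert} \nu_\beta(\uA) = \max_{\beta \in I(\Qbar)} \nu(x_\beta(Q))$, so the ideal $\germ \subseteq \ok$ generated by $\{y_\beta(Q) : \beta \in I(\Qbar)\}$ equals $(p/r_\gert)$ via the relations $x_\beta y_\beta = p$.

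Finally I would reduce modulo $\germ$ the commutative triangle of formal spectra attached to $Q$ and $P$ over $\ok$ — with $\iota_Q \colon \Spf(\ok) \arr \Spf(\widehat{\calO}_{Y(\gert),\Qbar}\,\widehat{\otimes}_{W(\kappa)}\,\ok)$, $\iota_P \colon \Spf(\ok) \arr \Spf(\OhatP \,\widehat{\otimes}_{W(\kappa)}\, \ok)$, and $\pi(\gert)$ between them — exactly as in the body of the paper. Writing $\overline{\iota}_Q, \overline{\iota}_P$ for the reductions and $\overline{\pi}, \overline{s}$ for the base changes of $\pi(\gert)$ and $s(\gert)$ to $\ok/\germ$ on completed local rings, for $f \in \widehat{\calO}_{Y(\gert),\Qbar} \otimes \ok/\germ$ one has $\overline{\iota}_P^\ast \overline{s}^\ast(f) - \overline{\iota}_Q^\ast(f) = \overline{\iota}_Q^\ast(\overline{\pi}^\ast \overline{s}^\ast(f) - f)$, and $\overline{\pi}^\ast \overline{s}^\ast(f) - f \in \ker(\overline{s}^\ast) = \langle y_\beta : \beta \in I(\Qbar)\rangle$ since $\overline{s}$ is a section of $\overline{\pi}$; so it suffices that $\overline{\iota}_Q^\ast(y_\beta) = 0$, which holds because $y_\beta(Q) \in \germ$ by construction. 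This yields $\overline{s} \circ \overline{\iota}_P = \overline{\iota}_Q$, i.e. $Q$ specializes modulo $p/r_\gert$ to $s(\gert)(\Pbar) = (\overline{\uA}, \Ker(\Fr)[\gert])$, which is the assertion. The only genuinely new input beyond the body of the paper is the $\gert$-admissibility bookkeeping of the first step and the $\gert$-version of Lemma~\ref{lemma: valuations under pi}, and I expect the latter — routine but needing the full Key Lemma case analysis on $\BB_\gert$ — to be the point most in need of careful writing; everything else is a transcription.
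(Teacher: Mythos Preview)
Your proposal is correct and matches the paper's approach exactly: the paper states this theorem with only the phrase ``Again, we can prove:'' in lieu of a proof, signalling that one transcribes the argument of Theorem~\ref{theorem: reduction of canonical subgroup}(1) to the $\gert$-setting precisely as you have done. Your identification of the three places needing a word (the $\gert$-admissibility bookkeeping to get $\varphi(\Qbar)=\BB_\gert$, the $\gert$-analogue of Lemma~\ref{lemma: section and pi}, and the $\gert$-version of Lemma~\ref{lemma: valuations under pi}(1)) is accurate, and the paper indeed implicitly assumes these carry over when it writes ``Applying the same method as in the proof of Theorem~\ref{theorem: canonical subgroup}'' for the preceding Theorem~\ref{theorem: canonical subgroup over gert}.
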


Using the above-explained relationship between the various moduli-spaces we have defined, it is easy to see that if $\uA$ has a $\gert$-canonical subgroup $H_\gert$, and if $\gerz\vert \gert$, then it also has a $\gerz$-canonical subgroup $H_\gerz$, which satisfies $H_\gerz=H_\gert[\gerz]$. In particular, if $\uA$ has a canonical subgroup $H$, then
\[
H=\oplus_{\gerp\vert p} H[\gerp],
\]
and for each $\gerp\vert p$, the subgroup $H[\gerp]$ is the $\gerp$-canonical subgroup of $\uA$.

Finally, we mention that an analogue of Theorem \ref{theorem: hecke} holds for
a general $\Yrig(\gert)$, the formulation of which we leave to the reader. Applied to $\gert=\gerp$, this  completely determines the $p$-adic geometry of the partial $U$-operator, $U_\gerp$, viewed as a correspondence on $\calV(\gerp) \cup \calW(\gerp)$, the {\it not-too-singular locus} of $\Yrig(\gerp)$.

\

\

\

\

\end{document}